\documentclass[letterpaper,reqno,11pt]{amsart}
\usepackage{amsmath,amsthm,amsfonts,amssymb,euscript,mathrsfs,graphics,color,latexsym,marginnote}
\usepackage{stmaryrd}
\usepackage[dvips]{graphicx}
\usepackage[left=0.75 in, right=0.75 in,top=0.75 in, bottom=0.75 in]{geometry}
\usepackage{hyperref}
\usepackage[toc,page]{appendix}
\usepackage{relsize}
\usepackage[shortlabels]{enumitem}

\usepackage[dvipsnames]{xcolor}

\usepackage{hyperref}
\hypersetup{colorlinks=true, pdfstartview=FitV,linkcolor=blue!70!black,citecolor=red!70!black, urlcolor=green!60!black}
\definecolor{labelkey}{rgb}{0.6,0,0} 



\makeatletter
\renewcommand \theequation {%
\ifnum \c@section>\z@ \@arabic\c@section.%
\fi\@arabic\c@equation} \@addtoreset{equation}{section}
\makeatother

\allowdisplaybreaks[4]
\newtheorem{theorem}{Theorem}[section]

\newtheorem{lemma}[theorem]{Lemma}
\newtheorem{proposition}[theorem]{Proposition}

\theoremstyle{definition}

\theoremstyle{remark}
\newtheorem{remark}{Remark}[section]

\def\charf {\mbox{{\text 1}\kern-.30em {\text l}}}









\def\XXint#1#2#3{{\setbox0=\hbox{$#1{#2#3}{\int}$ }
\vcenter{\hbox{$#2#3$ }}\kern-.6\wd0}}

\providecommand{\abs}[1]{\left\vert#1\right\vert}
\providecommand{\babs}[1]{\big\vert#1\big\vert}

\providecommand{\nm}[1]{\left\Vert#1\right\Vert}

\providecommand{\bnm}[1]{\big\Vert#1\big\Vert}

\providecommand{\br}[1]{\left\langle #1 \right\rangle}

\providecommand{\brv}[1]{\left( #1 \right)}
\providecommand{\Bbr}[1]{\Big\langle #1 \Big\rangle}

\providecommand{\abss}[1]{\left\vert#1\right\vert_{\sigma}}
\providecommand{\nms}[1]{\left\Vert#1\right\Vert_{\sigma}}
\providecommand{\bnms}[1]{\big\Vert#1\big\Vert_{\sigma}}
\providecommand{\tbs}[1]{\left\vert#1\right\vert_{L^2}}
\providecommand{\tnm}[1]{\left\Vert#1\right\Vert}
\providecommand{\btnm}[1]{\big\Vert#1\big\Vert}

\providecommand{\nmsw}[1]{\left\Vert#1\right\Vert_{w^0,\sigma}}
\providecommand{\tnmw}[1]{\left\Vert#1\right\Vert_{w^0}}
\providecommand{\nmsww}[1]{\left\Vert#1\right\Vert_{w^1,\sigma}}
\providecommand{\tnmww}[1]{\left\Vert#1\right\Vert_{w^1}}
\providecommand{\nmswww}[1]{\left\Vert#1\right\Vert_{w^2,\sigma}}
\providecommand{\tnmwww}[1]{\left\Vert#1\right\Vert_{w^2}}
\providecommand{\jump}[1]{\left\llbracket #1 \right\rrbracket }

\providecommand{\nmw}[1]{\left\Vert#1\right\Vert_{w^0}}
\providecommand{\nmww}[1]{\left\Vert#1\right\Vert_{w^1}}
\providecommand{\nmwww}[1]{\left\Vert#1\right\Vert_{w^2}}

\def\ud{\mathrm{d}}
\def\dt{\partial_t}
\def\p{\partial}
\def\ls{\lesssim}
\def\gs{\gtrsim}

\def\rt{\rightarrow}
\def\r{\mathbb{R}}
\def\no{\nonumber}
\def\ue{\mathrm{e}}

\def\ds{\displaystyle}

\def\R{\mathbb{R}}

\def\e{\varepsilon}
\def\nx{\nabla_x}

\def\li{\mathcal{L}}

\def\c{\mathcal{C}}
\def\m{\mathbf{M}}
\def\mh{\m^{\frac{1}{2}}}
\def\mhh{\m^{-\frac{1}{2}}}
\def\pk{\mathbf{P}}
\def\ik{\mathbf{I}}

\def\fr{F_R^{\e}}
\def\f{F^{\e}}
\def\fe{f_R^{\e}}
\def\hp{\hat{p}}
\def\ss{S}
\def\sb{\overline S}
\def\ee{\mathcal{E}}
\def\dd{\mathcal{D}}
\def\yy{Y}
\def\zz{Z}
\def\zzz{\mathcal{Z}}
\def\ww{W}

\def\aa{\mathcal{A}}
\def\kk{\mathcal{K}}

\def\en{\mathfrak{e}}
\def\tc{T_c}
\def\nc{N_c}

\begin{document}

\title{Hilbert Expansion for Coulomb Collisional Kinetic Models}

\author[Z. Ouyang]{Zhimeng Ouyang}
\address[Z. Ouyang]{
   \newline\indent Department of Mathematics, University of Chicago}
\email{zhimeng\_ouyang@alumni.brown.edu}
\thanks{Z. Ouyang was supported by an NSF Grant DMS-2202824. }

\author[L. Wu]{Lei Wu}
\address[L. Wu]{
   \newline\indent Department of Mathematics, Lehigh University}
\email{lew218@lehigh.edu}
\thanks{L. Wu was supported by an NSF Grant DMS-2104775.}

\author[Q. Xiao]{Qinghua Xiao}
\address[Q. Xiao]{
   \newline\indent Innovation Academy for Precision Measurement Science and Technology, Chinese Academy of Sciences}
\email{xiaoqh@apm.ac.cn}
\thanks{Q. Xiao was supported by NSFC Grants 11871469 and 12271506.}

\subjclass[2010]{Primary: 82C40; Secondary: 76P05, 35Q20}


\begin{abstract}
The relativistic Vlasov-Maxwell-Landau (r-VML) system and the relativistic Landau equation (r-LAN) are fundamental models that describe the dynamics of an electron gas. In this paper, we introduce a novel weighted energy method and establish the validity of the Hilbert expansion for the r-VML system and r-LAN equation. As the Knudsen number shrinks to zero, we rigorously demonstrate the relativistic Euler-Maxwell limit and relativistic Euler limit, respectively. This successfully resolves the long-standing open problem regarding the hydrodynamic limits of Landau-type equations. 
\end{abstract}

\keywords{Hilbert expansion; relativistic Landau equation; relativistic Vlasov-Maxwell-Landau system; local Maxwellian}

\maketitle

\pagestyle{myheadings} \thispagestyle{plain} \markboth{Z. OUYANG, L. WU, Q. XIAO}{HILBERT EXPANSION FOR COULOMB COLLISIONAL KINETIC MODELS}



\section{Introduction}

\subsection{Relativistic Vlasov-Maxwell-Landau System}


The relativistic Vlasov-Maxwell-Landau (r-VML) system is a fundamental and complete model describing the dynamics of a dilute collisional ionized plasma appearing in  nuclear fusion and the interior of stars, etc. Correspondingly, the relativistic Euler-Maxwell system, the foundation of the two-fluid theory in plasma physics, describes the dynamics of two compressible ion and electron fluids interacting with their own self-consistent electromagnetic field. It is also the origin of many celebrated dispersive PDE such as NLS, KP, KdV, Zaharov, etc, as various scaling limits and approximations.

Since the ion mass is
far larger than the electron mass in a plasma, the dynamics of ions is negligible for simplification
sometimes. In this special case, the plasma can be approximately described by the one-species r-VML system in the mesoscopic
level and treated as a single fluid in the macroscopic level. It has been a long-term open question if the general relativistic Euler-Maxwell system can be derived rigorously from its kinetic counter-part, the r-VML system, as the Knudsen number approaches zero. 

In this paper, we are able to answer this question in the affirmative. Consider the  r-VML  system for $\f(t,x,p)\in\r$:
\begin{align}\label{main1}
\dt  \f + c\hat{p}\cdot \nabla_x \f - e\big(E^{\e}+\hat{p}\times B^{\e} \big)\cdot\nabla_p \f =\dfrac{1}{\e}\c\left[\f,\f\right],
\end{align}
coupled with the Maxwell system for $\big(E^{\e}(t,x), B^{\e}(t,x)\big)\in\r^3\times\r^3$:
\begin{align}\label{main2}
\left\{
\begin{array}{l}
\ds\dt E^{\e}-  c\nabla_x \times B^{\e} =4\pi e\int_{\mathbb R^3}\hat{p} \f\ud p, \\\rule{0ex}{1.0em}
\dt B^{\e}+ c\nabla_x \times E^{\e}=0,\\\rule{0ex}{1.5em}
\ds\nabla_x\cdot E^{\e}=-4\pi e\Big(\overline{n}- \int_{\mathbb R^3}  \f\ud p\Big), \\\rule{0ex}{1.0em}
\nabla_x\cdot B^{\e}=0.
\end{array}
\right.
\end{align} 
Here $\e$ is the  Knudsen
number, $\f(t,x,p)$ is the number density function for electrons at time $t\geq0$, position $x=(x_1,x_2,x_3) \in \mathbb R^3$ and momentum $p=(p_1,p_2,p_3)\in \mathbb R^3$. $p^0=\sqrt{m^2c^2 + |p|^2}$ is the energy of an electron and $\hat{p}=\frac{p}{p^0} $. The constants $-e $ and $m$ are the electrons' charge and rest mass, respectively. $c$ is the speed of light, $\overline{n}$ is the uniform number density of ions, and $\big(E^{\e}(t,x), B^{\e}(t,x)\big)$ are the electromagnetic fields.

Denote the four-momentums $p^{\mu} = \left(p^0, p\right)$ and $q^{\mu} = \left(q^0, q\right)$.  We use the Einstein convention that repeated up-down indices be
summed and we raise and lower indices using the Minkowski metric $g_{\mu\nu}
:=
\text{diag}(-1, 1, 1, 1)$. The Lorentz inner product is then given by
\begin{align}
p^{\mu}q_{\mu}:=-p^0q^0 +
\sum_{i=1}^3 p_iq_i .
\end{align}
The collision operator $\c$ on the R.H.S. of \eqref{main1}, which registers binary collisions between particles, takes the following form:
\begin{align} \label{coll}
\c[g ,h] := \nabla_{p}\cdot\left\{\int_{\R^3} \Phi(p,q) \Big[\nabla_pg(p) h(q) -g(p) \nabla_q h(q)\Big]\ud q \right\},
\end{align}
where the collision kernel $\Phi(p,q)$ is a $3\times3$ non-negative matrix 
\begin{align}\label{cker}
    \Phi(p,q):=\frac{\Lambda(p,q)}{p^0q^0}\mathcal{S}(p,q)
\end{align}
with
\begin{align*}
\Lambda(p,q):=&\;\frac{1}{m^4c^4}\left(p^{\mu}q_{\mu}\right)^2\left(\frac{1}{m^4c^4}\left(p^{\mu}q_{\mu}\right)^2-1\right)^{-\frac{3}{2}},\\
\mathcal{S}(p,q):=&\;\left(\frac{1}{m^4c^4}\left(p^{\mu}q_{\mu}\right)^2-1\right)I_3-\frac{1}{m^2c^2}\big(p-q\big)\otimes\big(p-q\big)\\
&\;-\frac{1}{m^2c^2}\left(\frac{1}{m^2c^2}\left(p^{\mu}q_{\mu}\right)-1\right)\big(p\otimes q+q\otimes p\big).\no
\end{align*}

It is well-known that $\Phi(p,q)$ satisfies
\begin{align}\label{Phi}
    \sum_{i=1}^3\Phi^{ij}(p,q)\left(\frac{q_i}{q^0}-\frac{p_i}{p^0}\right)=\sum_{j=1}^3\Phi^{ij}(p,q)\left(\frac{q_j}{q^0}-\frac{p_j}{p^0}\right)=0.
\end{align}

The collision operator $\c$ satisfies the orthogonality property:
\begin{align}
    \int_{\r^3}\left\{\begin{pmatrix}1\\p\\p^0\end{pmatrix}\c[g,h](p)\right\}\ud p=\mathbf{0},
\end{align}
which, combined with \eqref{main1} and \eqref{main2}, yields the conservation laws
\begin{align*}
    &\frac{\ud}{\ud t}\iint_{\r^3\times\r^3}\f(t,x,p)\,\ud p\ud x=0,\\ 
    &\frac{\ud}{\ud t}\bigg\{\iint_{\r^3\times\r^3}p\f(t,x,p)\,\ud p\ud x +\frac{1}{4\pi}\int_{\r^3}\Big(E^{\e}(t,x)\times B^{\e}(t,x)\Big)\,\ud x\bigg\}=\mathbf{0},\\
    &\frac{\ud}{\ud t}\bigg\{\iint_{\r^3\times\r^3}p^0\f(t,x,p)\,\ud p\ud x+\frac{1}{4\pi} \int_{\r^3} \Big(|E^{\e}(t,x)|^2+|B^{\e}(t,x)|^2\Big)\,\ud x\bigg\}=0.\no
\end{align*}


Corresponding to \eqref{main1}--\eqref{main2}, at the
hydrodynamic level, the electron gas obeys the relativistic Euler-Maxwell system for 
$\big(n(t,x),u(t,x), T(t,x)\big)\in \r\times\r^3\times\r$: 
\begin{align}\label{rem}
\left\{
\begin{array}{l}
\dfrac{1}{c}\dt \big(n u^0\big) + \nabla_x\cdot\big(n u \big) =0,\\\rule{0ex}{2.0em}
\dfrac{1}{c}\dt \Big((\en+P)u^0u \Big) +\nabla_x\cdot\Big((\en+P)u \otimes u \Big)+c^2\nabla_xP+ c e n\big(u^0E+u\times B)=\mathbf{0},\\\rule{0ex}{2.0em}
\displaystyle \dfrac{1}{c}\dt \Big(\en(u^0)^2+P|u |^2\Big) +\nabla_x\cdot\Big((\en+P)u^0u \Big)+c e n(u \cdot E)=0, 
\end{array}
\right.
\end{align}
coupled with the Maxwell system for $\big(E(t,x), B(t,x)\big)\in\r^3\times\r^3$:
\begin{align}\label{rem'}
\left\{
\begin{array}{l}
\dt E-c\nabla_x\times B =4\pi e\dfrac{nu}{c} ,\\\rule{0ex}{1.5em}
\dt B+c\nabla_x\times E =0,\\\rule{0ex}{2em}
\nabla_x\cdot E=-4\pi e\Big(\overline{n}-\dfrac{nu^0}{c}\Big),\\\rule{0ex}{1.5em}
\nabla_x\cdot B=0,
\end{array}
\right.
\end{align}
where $n$ is the electrons' number density, $u =(u_{1}, u_{2}, u_{3})$, $u^0=\sqrt{|u |^2+c^2} $, and $T$ is the temperature. In particular, $\en(t,x)$ is the total energy (including the rest energy and internal energy) and $P(t,x)$ is the pressure given by
\begin{align}\label{aa 01}
    P &:=  \frac{nm  c^2}{\gamma } = \frac{k_B}{m }\rho T  \, , \\
    \en&:= \frac{nm  c^2}{  K_2(\gamma )}  \left\{ K_3 \left(\gamma 
    \right) - \frac{1}{\gamma }  K_2
\left(\gamma    \right) \right\},\label{aa 02}
\end{align}
where $\rho:=nm $ is the mass density, $\gamma :=m c^2(k_BT)^{-1}$ is a dimensionless variable,
$k_B$ is Boltzmann's constant,
$K_j(\gamma)$ for $j=0, 1, 2, \ldots$ are the modified second-order Bessel functions:
\begin{equation}\label{defini}
	 K_j(\gamma):=\frac{(2^j)j!}{(2j)!}\frac{1}{\gamma^j}\int_{\gamma}^{\infty}e^{-\lambda}(\lambda^2-\gamma^2)^{j-1/2}d\lambda,
    \quad(j\geq0).
\end{equation}

The system \eqref{rem'} has been well-studied in the irrotational context. Denote Faraday's tensor
\begin{align}
    \mathscr{F}^{ij}:=\begin{pmatrix}
    0&-c^{-1}E_1&-c^{-1}E_2&-c^{-1}E_3\\
    c^{-1}E_1&0&-B_3&B_2\\
    c^{-1}E_2&B_3&0&-B_1\\
    c^{-1}E_3&-B_2&B_1&0
    \end{pmatrix}.
\end{align}
Let $h$ be the specific enthalpy defined by $h'(x)=\frac{P(x)}{x}$ with $h>0$. Then we say the solution to \eqref{rem} and \eqref{rem'} is irrotational if 
\begin{align}\label{irrotational}
    e\mathscr{F}_{jk}=&-\p_j\left(hnu_{k}\right)+\p_k\left(hnu_{j}\right).
\end{align}

\begin{theorem}[Theorem 2.2 of \cite{Guo.Ionescu.Pausader2014}]\label{Euler-Maxwell}
Assume that the initial datum $\Big(n(0,x), u (0,x), T(0,x), E(0,x), B(0,x)\Big)$ satisfies \eqref{irrotational} and is sufficiently close to the equilibrium $\big(\overline n, \mathbf{0}, \overline T, \mathbf{0}, \mathbf{0}\big)$ for some constants $\overline n>0$ and $\overline T$. Then there exists a unique global solution $\Big(n(t,x), u (t,x), T(t,x), E(t,x), B(t,x)\Big)$ to the one-fluid relativistic Euler-Maxwell system \eqref{rem} and \eqref{rem'} that satisfies \eqref{irrotational} for any $t>0$ and
\begin{align}\label{decay} &\sup_{t\in[0,\infty)}\nm{\left(n(t)-\overline{n}, u (t), T(t)-\overline T, E(t), B(t)\right)}_{H^{\nc}}\\
&+\sup_{t\in[0,\infty)}\sup_{|\rho|\leq \overline N}\left((1+t)^{\beta_0}\nm{\nabla_x^{\rho}\left(n(t)-\overline{n},u (t),T(t)-\overline T,  E(t),B(t)\right)}_{L^{\infty}}\right)\lesssim \overline{\varepsilon}_0,  \no
\end{align}
where $\nc\in\mathbb{N}$ is a sufficiently large constant, $\overline{N}\geq3$ is a constant,  $\beta_0=\frac{101}{100}$ and $\overline{\varepsilon}_0$ is a sufficiently small positive constant.
\end{theorem}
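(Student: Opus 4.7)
The plan is to treat the one-fluid relativistic Euler-Maxwell system as a quasilinear dispersive system with Klein-Gordon structure and close a global bootstrap that combines high-order energy estimates with dispersive decay of Klein-Gordon type.

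First I would symmetrize \eqref{rem}--\eqref{rem'} around the equilibrium $(\overline n,\mathbf 0,\overline T,\mathbf 0,\mathbf 0)$. Writing $(n,u,T,E,B)=(\overline n+\widetilde n,u,\overline T+\widetilde T,E,B)$ and rescaling the conserved variables using the enthalpy $h$ defined by $h'(x)=P(x)/x$, one obtains a symmetric hyperbolic system for the fluid variables coupled to Maxwell. Linearizing at the equilibrium, the Gauss law in \eqref{rem'} produces the term $-4\pi e\overline n\,\widetilde n$, which combined with the pressure term $c^2\nabla_x P$ gives the electron mode the dispersion relation of a Klein-Gordon equation with plasma frequency $\omega_p^2=4\pi e^2\overline n/m$ and sound speed determined by $(\ref{aa 01})$--$(\ref{aa 02})$. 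The Maxwell sector contributes a second Klein-Gordon mode (transverse photon with the same mass $\omega_p$ arising from the current term $4\pi e nu/c$ in Amp\`ere's law). The irrotationality constraint \eqref{irrotational} is preserved by the flow and reduces the vector unknown to a complex scalar profile, so the problem becomes one of long-time analysis for a quasilinear system of 3D Klein-Gordon equations with quadratic nonlinearities.

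Second, I would run a bootstrap with two norms: a high-order Sobolev norm in $H^{N_c}$ and a pointwise decay norm. Standard symmetrizer/commutator estimates give the basic energy inequality; the key point is to upgrade this to uniform-in-time boundedness using the decay of the $L^\infty$ norm $(1+t)^{-\beta_0}$ of low-order derivatives. Conversely, the pointwise bound is obtained from linear dispersive estimates for $e^{it\sqrt{-c_s^2\Delta+\omega_p^2}}$ on data whose profile is controlled in a suitable weighted $Z$-norm, for instance $\|\langle\xi\rangle^{N_1}\widehat f(\xi)\|_{L^\infty_\xi}$, together with the standard Klein-Gordon stationary-phase bound. Because the natural Klein-Gordon decay in 3D is $(1+t)^{-3/2}$ and the claim uses the much weaker rate $\beta_0=101/100$, one has more than enough integrable slack to absorb logarithmic losses, provided quadratic nonlinear interactions can be handled.

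Third, and this is the heart of the argument, I would control the $Z$-norm of the profile by a space-time resonance analysis. Passing to the profile $f=e^{-it\Lambda}u$ with $\Lambda(\xi)=\sqrt{c_s^2|\xi|^2+\omega_p^2}$ (and its photon counterpart), Duhamel's formula expresses $\widehat f(t,\xi)$ as an oscillatory bilinear integral with phase $\Phi(\xi,\eta)=\Lambda_{\iota_0}(\xi)-\Lambda_{\iota_1}(\xi-\eta)-\Lambda_{\iota_2}(\eta)$ summed over choices of signs and modes. One integrates by parts in time away from the time-resonant set $\{\Phi=0\}$ and in frequency away from the space-resonant set $\{\nabla_\eta\Phi=0\}$. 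The strict convexity of the Klein-Gordon hyperboloids implies that space-time resonances occur only at isolated points and carry a favorable null structure, so the bilinear term satisfies a symbol bound that, after paraproduct decomposition and repeated use of the dispersive $L^\infty$ decay, closes the $Z$-norm bootstrap with at most a small polynomial growth that is reabsorbed into the high-order energy estimate.

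Finally, I would combine the three estimates via a standard continuity/bootstrap argument to propagate the small initial data assumption $\overline\varepsilon_0\ll 1$ for all $t\geq 0$. The main obstacle is undoubtedly Step~3: one must verify that the cross-mode resonances between the electron Euler mode and the photon Maxwell mode admit the same null structure that the single-mode Klein-Gordon problem enjoys, and one must track constants carefully enough that the slow pointwise decay $\beta_0=101/100$ still beats the logarithmic losses from modified scattering. Everything else (irrotationality preservation, energy inequality, linear dispersive estimate) is relatively standard once the system is cast in the correct symmetric-Klein-Gordon form.
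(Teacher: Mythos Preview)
The paper does not prove this theorem: it is quoted verbatim as Theorem~2.2 of \cite{Guo.Ionescu.Pausader2014} and used as a black box input to the Hilbert-expansion argument, so there is no ``paper's own proof'' to compare against. Your outline is a reasonable high-level sketch of the Guo--Ionescu--Pausader space-time resonance method, and nothing you wrote is obviously wrong, but be aware that the actual proof in \cite{Guo.Ionescu.Pausader2014} is substantially more delicate than your sketch suggests (the precise $Z$-norm, the handling of the coupled Klein--Gordon modes with possibly different group velocities, and the modified-scattering corrections all require careful bookkeeping). For the purposes of the present paper, no proof is needed here at all.
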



In this article, we rigorously prove that solutions of the r-VML system \eqref{main1}–\eqref{main2} converge to solutions of the relativistic
Euler–Maxwell system \eqref{rem}-\eqref{rem'} globally in time, as the Knudsen number $\e$ tends to zero.

\begin{theorem}\label{main} 
Assume that $\Big(n (t,x),u (t,x), T (t,x), E (t,x)$, $B (t,x)\Big)$ is the global solution constructed in Theorem \ref{Euler-Maxwell} and $\m (t,x,p)= \frac{n }{4 \pi  m^2
c k_BT K_2(\gamma )} \exp\left\{\frac{{u }^{\mu}p_{\mu} }{k_BT }\right\}$ is the corresponding local Maxwellian. 
Then there exists an $\e_0 > 0$ such
that for any $0 \leq \e\leq\e_0$, $k\geq3$,  and $0<t\leq\overline{t}$ with $\overline{t}=\e^{-1/3}$, the Hilbert expansion \eqref{expan2} holds. Moreover, if $\f(0,x,p)\geq0$, and
\begin{align}\label{mainlim0}
    \nm{\m^{-\frac12}\left(\f-\m\right)(0)}_{H^2_xL^2_{v}}+\nm{\big(E^\e-E \big)(0)}_{H^2}+\nm{\big(B^{\e}-B \big)(0)}_{H^2}=O(\e),
\end{align}
then $\f(t,x,p)\geq0$ and \begin{align}\label{mainlim}
    \lim_{\e\rt0}\sup_{0\leq t\leq \overline{t}}\left\{\nm{\m^{-\frac12}\left(\f-\m\right)(t)}_{H^2_xL^2_{v}}+\nm{\big(E^{\e}-E \big)(t)}_{H^2}+\nm{\big(B^{\e}-B \big)(t)}_{H^2}\right\}=0.
\end{align}
\end{theorem}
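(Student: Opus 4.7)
\medskip

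\noindent\textbf{Proof Proposal.}

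The plan is to carry out a truncated Hilbert expansion around the local Maxwellian $\m$ and then control the remainder by a weighted energy estimate adapted to the Landau dissipation. Writing
\begin{align*}
\f \;=\; \m + \sum_{j=1}^{2k-1}\e^{j}\m^{\frac12}f_j + \e^{k}\m^{\frac12}\fe,\qquad E^{\e} = E + \sum_{j=1}^{2k-1}\e^{j}E_j + \e^{k}E_R^{\e},
\end{align*}
(and analogously for $B^{\e}$), I substitute into \eqref{main1}--\eqref{main2} and match orders of $\e$. The leading-order constraint forces $\m$ to lie in the kernel of the linearized Landau operator $\li$ and returns, through the moment identities satisfied by $\c$ in \eqref{Phi}, exactly the relativistic Euler--Maxwell system \eqref{rem}--\eqref{rem'} for $(n,u,T,E,B)$. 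The intermediate profiles $f_j$ are then determined recursively: the macroscopic part is fixed by solvability (compatibility) conditions, while the microscopic part is obtained by inverting $\li$ on its orthogonal complement.

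The next step is to show that each Hilbert coefficient $f_j$ and each field correction $(E_j,B_j)$ exists globally in time with sufficient $x$-regularity and time decay. This is where Theorem \ref{Euler-Maxwell} enters decisively: the $H^{\nc}$ smallness together with the $(1+t)^{-101/100}$ pointwise decay of $(n-\overline n,u,T-\overline T,E,B)$ and their spatial derivatives is inherited, through the recursion driven by transport and the inverse of $\li$, by each $f_j$ in a weighted space of the form $\m^{1/2}w^{\ell}$ with polynomial or exponential momentum weight. These bounds hold on the full time interval $[0,\infty)$ and produce an $L^{\infty}$-in-time control of the source terms generated by the expansion, at a cost of a finite loss of weight per order.

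The heart of the argument is the remainder equation
\begin{align*}
\dt\fe + c\hp\cdot\nx\fe - e(E^{\e}+\hp\times B^{\e})\cdot\nv\fe
+ \tfrac{1}{\e^2}\li\fe
\;=\; \tfrac{1}{\e}\Gamma(\fe,\fe) + \e^{k-1}\mathcal{R} + \text{cross terms},
\end{align*}
coupled to the Maxwell system for $(E_R^{\e},B_R^{\e})$, where $\Gamma$ is the nonlinear piece and $\mathcal R$ collects the residual generated by truncating at order $2k-1$. The plan is to run a weighted $H^2_xL^2_p$ energy estimate with a carefully designed $p$-weight $w(p)=\br{p}^{\ell}e^{\alpha\br{p}}$ with $\ell,\alpha$ depending on the derivative order, in order to beat (i) the degenerate dissipation of the relativistic Landau operator at large momentum and (ii) the $e(E^{\e}+\hp\times B^{\e})\cdot\nv$ term, which would otherwise create uncontrolled weight-losing commutators. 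The key ingredients to combine are: the coercivity $\br{\li f,f}\gtrsim \nms{(\ik-\pk)f}^2$ in the anisotropic Landau norm; a sharp nonlinear estimate $|\br{\Gamma(f,f),g}|\lesssim \nm{f}_{\infty,w}\nms f\nms g$ that avoids any loss in $w$; and an electromagnetic--kinetic coupling estimate that exploits the conservation of $\|E^{\e}\|_{L^2}+\|B^{\e}\|_{L^2}$ modulo a charge term controlled through the Poisson constraint.

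Closing the estimate produces a Gronwall inequality of the schematic form $\tfrac{d}{dt}\mathcal E \lesssim \mathcal E + \e^{2k-2}$ with $\mathcal E$ controlling $\|\fe\|_{H^2_xL^2_{p,w}}^2 + \|(E_R^{\e},B_R^{\e})\|_{H^2}^2$; choosing $k\geq 3$ guarantees $\mathcal E(t)\lesssim 1$ on the time window $t\leq \e^{-1/3}$, which gives the convergence \eqref{mainlim}. Positivity of $\f$ is then recovered \emph{a posteriori} from the smallness of $\e^{k}\m^{1/2}\fe$ against $\m$ in $L^{\infty}$ (via Sobolev embedding) provided the initial perturbation is $O(\e)$, as in \eqref{mainlim0}. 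The main obstacle I anticipate is precisely the weighted nonlinear estimate for the Landau $\Gamma$: unlike the Boltzmann case, the purely diffusive structure of the Landau operator forces one to balance momentum weights, velocity derivatives, and the $\e^{-1}$ singular prefactor very sharply, and it is in controlling $\e^{-1}\br{\Gamma(\fe,\fe),\fe}_{w}$ without losing weight that the novel weighted energy method must do its real work.
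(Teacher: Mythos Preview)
Your proposal has the right architecture but misidentifies the central difficulty and, as a result, the Gronwall step does not close. First, the scaling in your remainder equation is off: with $F_R^{\e}=\e^k\m^{1/2}\fe$ and collision $\e^{-1}\c$, the linearized term enters as $\e^{-1}\li[\fe]$ and the quadratic term as $\e^{k-1}\Gamma[\fe,\fe]$, not $\e^{-2}\li$ and $\e^{-1}\Gamma$. Consequently the nonlinear term is \emph{small} for $k\geq3$; the genuine enemy is the \emph{linear} term
\[
\m^{-1/2}\fe\,\big(\partial_t+\hat p\cdot\nabla_x-(E+\hat p\times B)\cdot\nabla_p\big)\m^{1/2},
\]
which carries an unavoidable factor of $p^0$ and cannot be absorbed by the Landau dissipation at fixed weight. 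The paper handles this with a \emph{time-dependent} exponential weight $w^{\ell}=(p^0)^{2(N_c-\ell)}\exp\{p^0/(5\ln(\mathrm e+t)T_c)\}$: differentiating $w$ in $t$ manufactures an extra dissipative term $Y(t)\|\sqrt{p^0}\,w^{\ell}(\ik-\pk)[\fe]\|^2$ that dominates the bad linear term precisely because $\zzz(t)\lesssim\overline\varepsilon_0(1+t)^{-\beta_0}\ll Y(t)$. A fixed weight $e^{\alpha\langle p\rangle}$ produces no such gain and the estimate would not close. Relatedly, your Gronwall $\tfrac{d}{dt}\mathcal E\lesssim\mathcal E+\e^{2k-2}$ yields $\mathcal E(t)\lesssim e^{Ct}$, which explodes on $[0,\e^{-1/3}]$; the paper obtains instead $\tfrac{d}{dt}\mathcal E+\mathcal D\lesssim[(1+t)^{-\beta_0}+\e^{1/3}]\mathcal E+\cdots$, and it is the integrability of $(1+t)^{-\beta_0}$ (with $\beta_0>1$) together with $\e^{1/3}\cdot\overline t=1$ that makes the window $\overline t=\e^{-1/3}$ attainable.

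Two further structural points you are missing. First, the energy cannot be a flat $H^2_xL^2_{p,w}$ norm: the commutator $[\li,\nabla_x]$ acting on the local Maxwellian forces a loss of $\e^{-1}$ per spatial derivative, so the workable functional is the graded one $\|\fe\|^2+\e\|\nabla_x\fe\|^2+\e^2\|\nabla_x^2\fe\|^2$ (plus matching weighted microscopic pieces), and closing it requires a separate macroscopic-dissipation argument for $\|\nabla_x\pk[\fe]\|$ via the local conservation laws. Second, your positivity argument fails: for large $|p|$ the correction $\e^k\m^{1/2}\fe$ is of order $\m^{1/2}$ and dominates $\m$, so $L^\infty$ smallness against $\m$ is false. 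The paper instead constructs well-prepared initial data with $F^{\e}(0)\geq0$ by a domain-splitting argument and then propagates positivity via the maximum principle, exploiting the elliptic-in-$p$ structure of the Landau operator.
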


\subsection{Relativistic Landau Equation}


When the effects of electromagnetic fields are negligible, the relativistic Landau (r-LAN) equation provides a much easier yet still accurate description of the dynamics of a fast moving dilute plasma when the grazing collisions between particles are predominant in the collisions. 

Let $F^{\e}= F^{\e}(t,x,p)$  be the number density function for particles at the phase-space position $(x,p)=(x_1,x_2,x_3,p_1,p_2,p_3) \in \mathbb R^3 \times \mathbb R^3$, at time $t \in \mathbb R_+$. Then $F^{\e}$ satisfies the r-LAN equation
\begin{align} \label{main1=}
& \dt F^{\e} + c\hp \cdot \nx F^{\e} = \frac{1}{\e}\,\c\left[F^{\e},F^{\e}\right],
\end{align}
where $\hp=\frac{p}{p^0}$, $p^0=\sqrt{m^2c^2+|p|^2}$ is the energy of the particle, constants  $c, m$ are the speed of light and the rest mass of a particle, respectively. $0<\e\ll1$ is the Knudsen number.

Similar to \eqref{coll}, 
the collision operator $\c$
yields the conservation laws
\begin{align*}
    &\frac{\ud}{\ud t}\iint_{\r^3\times\r^3}\f(t,x,p)\,\ud p\ud x=
    \frac{\ud}{\ud t}\iint_{\r^3\times\r^3}p^0\f(t,x,p)\,\ud p\ud x=0,\\
    &\frac{\ud}{\ud t}\iint_{\r^3\times\r^3}p\f(t,x,p)\,\ud p\ud x=\mathbf{0}.
\end{align*}


Corresponding to \eqref{main1=}, at the hydrodynamic level, the plasma obeys the relativistic Euler equations for $(n(t,x),u(t,x),T(t,x))\in\r\times\r^3\times\r$:
\begin{align}\label{re=}
\left\{
\begin{array}{l}
\dfrac{1}{c}\dt\big(n u^0\big) + \nabla_x\cdot\big(n u\big) =0,\\\rule{0ex}{2.0em}
 \dfrac{1}{c}\dt\Big\{\big(\mathfrak{e}+P\big)u^0u\Big\} +\nx\cdot\Big\{\big(\mathfrak{e}+P\big)\big(u\otimes u\big)\Big\}+c^2\nx P=\mathbf{0},\\\rule{0ex}{2.0em}
\dfrac{1}{c}\dt\Big\{\big(\mathfrak{e}+P\big)\left(u^0\right)^2-c^2\abs{u}^2P\Big\} +\nx\cdot\Big\{\big(\mathfrak{e}+P\big)u^0u\Big\}=0,
\end{array}
\right.
\end{align}
where $n$ is the particle number density, $u=(u_1, u_2, u_3)$, $u^0=\sqrt{|u|^2+c^2} $. Here, $P$ and
$\mathfrak{e}$ are defined as in \eqref{aa 01} and \eqref{aa 02} (see \cite{Speck.Strain2011}).


\begin{theorem}[Theorem 1 of \cite{Speck.Strain2011}] \label{Eul-exi}
Under proper regularity conditions, if the initial data $\Big(n(0),u(0),T(0)\Big)$ is sufficiently close to an equilibrium state $(\overline n,0,\overline T)$: for $N\geq3$ 
\begin{align}
    \nm{\big(n-\overline n,u,T-\overline T\big)(0)}_{H^N}\leq\delta\ll1,
\end{align}
then there exists a unique solution $(n,u,T)$ to \eqref{re=} for $t\in[0,\tilde{t}]$  with $\tilde{t}\geq \overline{c}\delta^{-1}$ for some constant $\overline{c}>0$ satisfying
\begin{align}
    \sup_{0\leq t\leq \tilde{t},x\in\mathbb R^3,0\leq \ell\leq N-2}\left\{\babs{\nabla_{t,x}^{\ell}(n,u,T)}\right\}\ll1.
\end{align}
\end{theorem}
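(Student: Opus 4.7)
The plan is to recast the relativistic Euler system \eqref{re=} as a quasilinear symmetric hyperbolic system in the perturbation variable $V := (n - \overline n, u, T - \overline T)$. First I would verify that \eqref{re=} can be rewritten in the non-conservative form
\begin{align*}
A_0(V)\, \dt V + \sum_{i=1}^3 A_i(V)\, \p_{x_i} V = 0,
\end{align*}
with matrices $A_\mu(V)$ depending smoothly on $V$ near $V = 0$, all symmetric, and $A_0(0)$ positive definite. The symmetrizability of \eqref{re=} ultimately rests on the existence of a strictly convex entropy for the relativistic Euler flow (essentially the relativistic H-functional), which is guaranteed by the thermodynamic identities \eqref{aa 01}--\eqref{aa 02} expressed through the Bessel functions $K_j(\gamma)$.

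Given this structure, the classical Kato--Majda theory yields a local-in-time solution in $C([0,\tilde t_0]; H^N) \cap C^1([0,\tilde t_0]; H^{N-1})$ for some $\tilde t_0>0$. To upgrade the lifespan to $\tilde t \gs \delta^{-1}$, I would apply $\p^\alpha$ for $|\alpha| \leq N$, pair with $A_0(V)\p^\alpha V$ in $L^2$, and use Moser-type commutator estimates together with the fact that $V \equiv 0$ is a stationary solution, so no linear forcing terms are present. This produces the energy inequality
\begin{align*}
\frac{\ud}{\ud t}\,\nm{V}_{H^N}^2 \ls \nm{V}_{W^{1,\infty}}\nm{V}_{H^N}^2 \ls \nm{V}_{H^N}^3,
\end{align*}
where the second inequality invokes the Sobolev embedding $H^N \hookrightarrow W^{1,\infty}$, valid in $\mathbb{R}^3$ as soon as $N \geq 3 > 5/2$. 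Setting $E(t) := \nm{V(t)}_{H^N}$, the resulting ODI $E'(t) \leq C\, E(t)^2$ integrates to $E(t) \leq E(0)/(1 - CtE(0))$, which stays uniformly bounded on $[0,\overline c\,\delta^{-1}]$ for $\overline c := 1/(2C)$. A standard continuation argument then produces the solution on this full interval, and the low-order smallness $\babs{\nabla^\ell_{t,x}(n,u,T)} \ll 1$ for $\ell \leq N-2$ follows from Sobolev embedding $\nm{\nabla^\ell_x V}_{L^\infty} \ls \nm{V}_{H^N}$, trading time for space derivatives via $\dt V = -A_0(V)^{-1}\sum_{i=1}^{3} A_i(V)\p_{x_i}V$ applied inductively.

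The main obstacle is the symmetrization step itself. Unlike the non-relativistic case, the primitive-to-conservative change of variables here involves the Lorentz factor $u^0 = \sqrt{|u|^2 + c^2}$ and a pressure/energy law given transcendentally through Bessel functions, so explicitly computing the flux Jacobians, verifying positivity of $A_0(0)$, and exhibiting a strictly convex entropy are delicate algebraic and thermodynamic tasks; this is precisely the technical heart of \cite{Speck.Strain2011}. A secondary subtlety is carefully tracking the algebraic constraint $u^0 = \sqrt{|u|^2 + c^2}$ when commuting high-order derivatives through the coefficient matrices $A_\mu(V)$ during the Moser estimates, since differentiation of $u^0$ produces factors of $u/u^0$ that must be controlled uniformly near the equilibrium $u=0$.
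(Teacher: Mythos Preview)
Your outline is correct in spirit, but note that the present paper does not actually prove this theorem: it is quoted verbatim as Theorem~1 of \cite{Speck.Strain2011} and used as a black box to supply a smooth solution $(n,u,T)$ of the relativistic Euler system on which the Hilbert expansion is built. There is therefore no ``paper's own proof'' to compare against. That said, your sketch --- recasting \eqref{re=} as a symmetric hyperbolic system via a convex entropy, invoking Kato--Majda local theory, and bootstrapping the lifespan to $\tilde t \gtrsim \delta^{-1}$ by the ODI $E' \lesssim E^2$ --- is essentially the strategy carried out in \cite{Speck.Strain2011}, and your identification of the symmetrization/thermodynamic step as the main technical content is accurate.
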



In this article, we rigorously prove that solutions of the relativistic Landau equation
\eqref{main1=} converge to solutions of the relativistic Euler equations \eqref{re=} locally in time, as the Knudsen
number $\e$ tends to zero.

\begin{theorem}\label{main=} 
Assume that $\big(n (t,x), u (t,x), T (t,x)\big)$ is the  solution constructed in Theorem \ref{Eul-exi} and $\m(t,x,p)= \frac{n }{4 \pi m^2
c k_BT K_2(\gamma)} \exp\left\{\frac{u^{\mu}p_{\mu} }{k_BT }\right\}$ is the corresponding local Maxwellian. 
Then there exists a $\e_0 > 0$ such
that for any $0 \leq \e\leq\e_0$, $k\geq3$,  and $0<t\leq t_0$ with some $t_0$ depending on $\big(n (t,x),  u  (t,x), T (t,x)\big)$ but independent of $\e$, the Hilbert expansion \eqref{expan2=} holds. Moreover, if $\f(0,x,p)\geq0$, and
\begin{align}\label{mainlim0=}
    \nm{\m^{-\frac12}\left(\f-\m\right)(0)}_{H^2_xL^2_{v}}=O(\e),
\end{align}
then $\f(t,x,p)\geq0$ and 
\begin{align}\label{mainlim=}
    \lim_{\e\rt0} \sup_{0\leq t\leq t_0}\left\{\nm{\m^{-\frac12}\left(\f-\m\right)(t)}_{H^2_xL^2_{v}}\right\}=0.
\end{align}
\end{theorem}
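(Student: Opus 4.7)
The plan is to expand $\f$ around the local Maxwellian $\m$ associated to the relativistic Euler solution from Theorem~\ref{Eul-exi} and control the error via a weighted energy estimate. Specifically, I would set
\begin{align*}
\f = \m + \sum_{n=1}^{k} \e^n F_n + \e^a \fr
\end{align*}
with $a\geq 1$, insert this into \eqref{main1=}, and match powers of $\e$. The $O(\e^{-1})$ balance $\c[\m,\m]=0$ is automatic since $\m$ is a Lorentz-invariant Maxwellian. The $O(1)$ balance, after taking the $\{1,p,p^0\}$-moments and invoking the orthogonality of $\c$, reduces to the relativistic Euler system \eqref{re=} for $(n,u,T)$, which is exactly guaranteed by Theorem~\ref{Eul-exi}. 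Higher-order coefficients $F_n$ are then constructed successively by inverting the linearized Landau operator $\li_{\m}$ at $\m$ against sources built from $\dt F_{n-1}$, $c\hp\cdot\nx F_{n-1}$, and quadratic Landau terms involving earlier $F_j$'s; the Fredholm solvability conditions at each step are met by letting the $\ker\li_{\m}$ component be fixed by a linearized Euler-type system with a prescribed inhomogeneity, in the spirit of Caflisch.

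Subtracting the expansion from \eqref{main1=} then yields a remainder equation of the schematic form
\begin{align*}
\dt \fr + c\hp\cdot\nx \fr + \frac{1}{\e}\li_{\m} \fr = \e^{a-1}\c[\fr,\fr] + \sum_{n=1}^{k}\e^{n-1}\Big(\c[F_n,\fr]+\c[\fr,F_n]\Big) + \e^{k-a}\,R_k,
\end{align*}
where $R_k$ is the truncation error built from the $F_n$. The heart of the argument is a weighted $H^2_x L^2_p$ energy estimate for $g:=\m^{-1/2}\fr$: pairing with $g$ and carefully handling the $(t,x)$-derivatives of $\m$ converts the $\e^{-1}\li_{\m}$ contribution into the coercive Landau dissipation $\e^{-1}\nms{g}^2$, which must absorb every other term. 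The linear couplings with $F_n$ are controlled by the dissipation using the uniform smallness of $(n-\overline n, u, T-\overline T)$ from Theorem~\ref{Eul-exi}; the quadratic term closes via the embedding $H^2_x \hookrightarrow L^\infty_x$ together with the $\e^{a-1}$ prefactor, which forces $a\geq 1$; the streaming commutator $\big(\dt+ c\hp\cdot\nx\big)\m/\m$ produces polynomial-in-$\br{p}$ weights that must be tamed; and the truncation error is made harmless by choosing $k\geq 3$ so that $\e^{k-a}$ is negligible on the time scale $[0,t_0]$. Gronwall then yields $\nm{g(t)}_{H^2_x L^2_p}=O(1)$ uniformly in $\e$ on $[0,t_0]$, whence \eqref{mainlim=} follows from the $\e^a$ prefactor and the initial assumption \eqref{mainlim0=}; positivity of $\f$ is inherited from $\m>0$ via Sobolev smallness of the remainder.

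The principal obstacle, and the reason the Landau-type hydrodynamic limit was long open, is the interplay between the degeneracy of the Landau collision operator and the $(t,x)$-dependence of the local Maxwellian $\m$. The dissipation norm $\nms{\cdot}$ for $\li_{\m}$ is anisotropic and only polynomial in $\br{p}$, and $\ker\li_{\m}$ depends on $(t,x)$, so the global-Maxwellian spectral decomposition underlying most Boltzmann-limit proofs is unavailable; worse, naive polynomial $p$-weights inserted into an $L^2(\m^{-1})$-energy are incompatible with the commutator losses coming from $\nabla_{t,x}\m/\m$, which grow in $p$ faster than the Coulombic dissipation can absorb. The novel weighted energy method announced in the abstract is designed precisely for this: a carefully tuned combination of the exponential weight $\m^{-1/2}$, a polynomial weight $\br{p}^{\ell}$, and the anisotropic $\sigma$-norm balances these losses against the $\e^{-1}$-dissipation (at the price of the mild constraints $a\geq 1$ and $k\geq 3$), yielding the closed Gronwall-type inequality that drives the scheme.
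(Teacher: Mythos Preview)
Your diagnosis of the obstruction is exactly right: the streaming commutator $\m^{-1/2}(\partial_t+\hp\cdot\nx)\m^{1/2}$ contributes a term of size $\zz\,\tnm{\sqrt{p^0}\,w\fe}^2$ in any weighted energy identity, while the Coulombic Landau dissipation $\e^{-1}\abss{\cdot}^2$ carries no extra $p^0$ gain (cf.\ \eqref{sgm}). The gap is in your proposed cure. A polynomial weight $w=\br{p}^{\ell}$ is static in time, so the energy identity for $\tnm{w\fe}^2$ produces no additional good term; the bad contribution $\zz\,\tnm{\br{p}^{\ell+1/2}\fe}^2$ always sits one half-power of $p^0$ above what $\e^{-1}\tnm{\br{p}^{\ell}\fe}_{\sigma}^2$ can absorb, and the hierarchy never closes. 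The paper's mechanism is different and is the genuine new idea: the weight is a \emph{time-decreasing} exponential
\[
w^{\ell}(t,p)=(p^0)^{2(\nc-\ell)}\exp\Big\{\tfrac{p^0}{5\ln(\ue+t)\tc}\Big\},
\]
so that $\partial_t\big((w^{\ell})^2\big)=-2\yy\,p^0(w^{\ell})^2$ with $\yy(t)=\big(5\tc(\ue+t)\ln^2(\ue+t)\big)^{-1}$. This manufactures an \emph{extra} dissipative term $+\yy\,\tnmw{\sqrt{p^0}(\ik-\pk)[\fe]}^2$ on the left of the energy inequality (see \eqref{exmp0}--\eqref{exmp1}), and the commutator loss is then absorbed provided $\zz\leq\tfrac12\yy$, which is precisely the structural assumption \eqref{assump=} determining $t_0$. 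Without this device the estimate does not close; the polynomial-weight scheme you sketch is the one that had failed previously.

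Two secondary points. First, your positivity argument (``inherited from $\m>0$ via Sobolev smallness of the remainder'') does not work: the remainder $\fe$ is only $O(1)$ in the energy norm, not $o(1)$, so $\e^{a}\fr$ need not be pointwise dominated by $\m$. The paper instead constructs well-prepared nonnegative initial data (Section~\ref{mr}) and then invokes the parabolic/elliptic structure of the relativistic Landau operator to run a maximum-principle argument \`a la \cite{Strain.Guo2004}. Second, the coercivity $\brv{\li f,f}\gtrsim\abss{(\ik-\pk)f}^2$ gives no control of $\pk[\fe]$, so a separate macroscopic-dissipation estimate for $\nabla_x\pk[\fe]$ (Section~\ref{Sec:macro-dissipation}) is required before Gronwall can be applied; this interacts nontrivially with the $\e$-hierarchy because the commutators $\jump{\li,\nabla_x}$ and $\jump{\pk,\hp\cdot\nabla_x}$ force the derivative norms in $\ee$ to carry extra powers of $\e$ (see \eqref{example1}--\eqref{semp 4}).
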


\subsection{Background and Literature} 

As a key ingredient to attack the well-known Hilbert's sixth problem, the rigorous derivation of fluid equations (Euler equations or Navier-Stokes equations, etc.) from the kinetic equations (Boltzmann equation, Landau equation, etc.) has attracted a lot of attentions since the early twentieth century. The fundamental problem is to justify the asymptotic limits of kinetic solutions as the Knudsen number (which measures the relative mean free path) or the Strouhal number (which measures the relative time-varying speed) shrinks to zero. 

There are mainly two genres to study hydrodynamic limits: kinetic-based approach or fluid-based approach. We refer to \cite{Jiang.Luo2022, Jiang.Xu.Zhao2018} for more details.

Kinetic-based approach purely relies on the solution theory (well-posedness, regularity, etc.) of the kinetic equations and does NOT assume any a priori properties of the fluid limits. 
On one hand, in the context of the renormalized solution and entropy method, 
there are successful applications of this approach (usually referred as BGL Project) to the incompressible Euler/Navier-Stokes limit. We refer to Bardos-Golse \cite{Bardos.Golse1984},  Golse-Saint-Raymond \cite{Golse.Saint-Raymond2001, Golse.Saint-Raymond2004}, Saint-Raymond \cite{Saint-Raymond2003}, Masmoudi-Saint-Raymond \cite{Masmoudi.Saint-Raymond2003}, Arsenio-Saint-Raymond \cite{Arsenio.Saint-Raymond2019}, Bardos-Golse-Levermore \cite{Bardos.Golse.Levermore1991, Bardos.Golse.Levermore1993, Bardos.Golse.Levermore1998},  Lions-Masmoudi \cite{Lions.Masmoudi2001} and Masmoudi \cite{Masmoudi2002}. Interested readers may refer to the books by Saint-Raymond \cite{Saint-Raymond2009} and by Golse \cite{Golse2014}, and the references therein provide a nice summary of the progress. On the other hand, in the context of classical solutions, we refer to Nishida \cite{Nishida1978}, Bardos-Ukai \cite{Bardos.Ukai1991}, Briant \cite{Briant2015(=)}, Briant-Merino-Aceituno-Mouhot \cite{Briant.Merino-Aceituno.Mouhot2019}.

The fluid-based approach DOES assume a priori that we have a well-prepared fluid system, which has a unique smooth solution, and then justify the kinetic solution converging to this fluid solution. In some sense, this is essentially ``fluid-to-kinetic'' limit and we avoid the complications of possible fluid ill-posedness, like blow up or shock wave. This approach typically provides hydrodynamic limits in the stronger sense and utilizes the so-called Hilbert expansion techniques.
In this paper, we will focus on the fluid-based approach and discuss the progress in detail.

The Hilbert expansion dates back to 1912 by Hilbert \cite{Hilbert1916}, who proposed an asymptotic expansion of the distribution function solving the Boltzmann equation with respect to the Knudsen number and formally derived the limiting compressible Euler equations. The similar formal expansion can be naturally extended to treat the Landau equation, and Vlasov systems.

The first rigorous justification of the compressible Euler limit of the Boltzmann equation was due to Caflisch \cite{Caflisch1980}. Later, with the $L^2-L^{\infty}$ framework introduced in Guo \cite{Guo2010}, Guo-Jang-Jiang \cite{Guo.Jang.Jiang2009} improved Caflisch's result and removed the assumption on the initial data $F^{\e}_R(0,x,v)=0$. This framework was extended to treat the Vlasov-Poisson-Boltzmann (VPB) system in Guo-Jang \cite{Guo.Jang2010} and the relativistic Boltzmann (r-BOL) equation in Speck-Strain \cite{Speck.Strain2011}. Recently, this framework was further developed to the investigation of the relativistic Vlasov-Maxwellian-Boltzmann (r-VMB) system in Guo-Xiao \cite{Guo.Xiao2021} and the Boltzmann equation with boundary conditions in half-space in Guo-Huang-Wang \cite{Guo.Huang.Wang2021}, Jiang-Luo-Tang \cite{Jiang.Luo.Tang2021,Jiang.Luo.Tang2021(=)}.
We also refer to Grad \cite{Grad1965}, Ukai-Asano \cite{Ukai.Asano1983} De Masi-Esposito-Lebowitz \cite{Masi.Esposito.Lebowitz1989}, and the recent work Jang-Kim \cite{Jang.Kim2021} and Kim-La \cite{Kim.La2022} for the incompressible Euler limit. 

For the convergence of the Boltzmann equation to the basic waves of the Euler equations: the shock waves, rarefaction waves and contact discontinuity, the interested readers may refer to Huang-Wang-Yang \cite{Huang.Wang.Yang2010, Huang.Wang.Yang2010(=), Huang.Wang.Yang2013}, Xin-Zeng \cite{Xin.Zeng2010} and Yu \cite{Yu2005}.

As for the incompressible Navier-Stokes limit of the Boltzmann equation, there are too many references and we only list some closely related works. The early development tracks back to De Masi-Esposito-Lebowitz \cite{Masi.Esposito.Lebowitz1989} in 2D. Then Guo \cite{Guo2006} justified the diffusive limit in the periodic domain via the nonlinear energy method. This result was extended to the whole space in Liu-Zhao \cite{Liu.Zhao2011}, to more general initial data with initial layer in Jiang-Xiong \cite{Jiang.Xiong2015}, and to the Vlasov-Maxwell-Boltzmann (VMB) system in Jang \cite{Jang2009}. See also the recent work Gallagher-Tristani \cite{Gallagher.Tristani2020}. We also mention the very recent work  Duan-Yang-Yu \cite{Duan.Yang.Yu2022==} for the compressible Euler-Maxwell limit of the one-species VMB system.  

For stationary Boltzmann equation and other settings, we refer to Di-Meo-Esposito \cite{Di.Esposito1996}, Arkeryd-Esposito-Marra-Nouri \cite{Arkeryd.Esposito.Marra.Nouri2010}, Esposito-Lebowitz-Marra \cite{Esposito.Lebowitz.Marra1994}, Esposito-Guo-Marra \cite{Esposito.Guo.Marra2018}, Esposito-Guo-Kim-Marra \cite{Esposito.Guo.Kim.Marra2015}, Wu \cite{AA013}, Wu-Ouyang \cite{BB002, AA017, AA018}.

Despite the fruitful progress in the hydrodynamic limits of the Boltzmann-type equations, there are very limited works in this direction for Landau-type equations. For Landau equation, we refer to Guo \cite{Guo2006} for the incompressible Navier-Stokes limit, Duan-Yang-Yu \cite{Duan.Yang.Yu2021, Duan.Yang.Yu2022=} for the rarefaction wave limit and compressible Euler limit, and the recent work Rachid \cite{Rachid2021}. As far as we are aware of, our paper is the first result to justify the Hilbert expansion for r-LAN equation and r-VML system. 

As for the well-posedness issue for fixed Knudsen number and Strouhal number, there are a huge number of literature. We list some closely related to this article. 
For the r-VML system, we refer to Strain-Guo \cite{Strain.Guo2004}, Yu \cite{Yu2009}, Yang-Yu \cite{Yang.Yu2012}, Liu-Zhao \cite{Liu.Zhao2014} and Xiao \cite{Xiao2015}. 
For the r-LAN equation, we refer to Hsiao-Yu \cite{Hsiao.Yu2006} and Yang-Yu \cite{Yang.Yu2010}. We also mention Guo-Strain \cite{Guo.Strain.2012} and some works in the non-relativistic framework: Villani \cite{Villani1996}, Guo \cite{Guo2002(=), Guo2003, Guo2012}, Strain \cite{Strain2006}, Duan-Strain \cite{Duan.Strain2011(=)}, Duan \cite{Duan2014}, Duan-Lei-Yang-Zhao \cite{Duan.Lei.Yang.Zhao2017}, Guo-Hwang-Jang-Ouyang \cite{Guo.Hwang.Jang.Ouyang2020}, Duan-Liu-Sakamoto-Strain \cite{Duan.Liu.Sakamoto.Strain2020} for Landau equation and Dong-Guo-Ouyang \cite{Dong.Guo.Ouyang2020} for Vlasov-Poisson-Landau (VPL) system.

Finally, we record some significant progress on the compressible fluid system. Sideris \cite{Sideris1985} justified the classical result on the compressible Euler equation that the solution might blow up even if the initial datum is small and irrotational. However, as a key observation, the electric field or the electromagnetic fields might help stabilize the system. Based on the Klein–Gordon effect, Guo \cite{Guo1998} and Germain-Masmoudi \cite{Germain.Masmoudi2014} constructed global classical solutions to the one-fluid Euler-Poisson system and Euler-Maxwell system, respectively.  Using the combination of normal-form method and vector-field method to capture the so-called ``null structure'', Guo-Ionescu-Pausader \cite{Guo.Ionescu.Pausader2016} justified the global well-posedness of 3D two-fluid Euler-Maxwell system, and the similar results were extended to treat 3D Euler-Poisson system, and 3D one-fluid/two-fluid relativistic Euler-Maxwell system in Guo-Ionescu-Pausader \cite{Guo.Ionescu.Pausader2014}. The 2D case was justified in Deng-Ionescu-Pausader \cite{Deng.Ionescu.Pausader2017}. More recently, the one-fluid Euler–Maxwell system in 3D with non-vanishing vorticity was studied in Ionescu-Lie \cite{Ionescu.Lie2018}.


\section{Formulation and Discussion} 

Without loss of generality, from now on, we will take the constants $c=e=m=k_B=1$.

\subsection{Hilbert Expansion for the Relativistic Vlasov-Maxwell-Landau System}

In this subsection, we will provide the Hilbert expansion of the r-VML system \eqref{main1} and \eqref{main2}, and introduce necessary notations.


We consider the Hilbert expansion with respect to small Knudsen number $\e$ and $k\geq 2$:
\begin{align}\label{expan}
\f=F+\sum_{n=1}^{2k-1}\e^nF_{n}+\e^kF^{\e}_{R},\quad E^{\e}=E+\sum_{n=1}^{2k-1}\e^n E_n+\e^kE^{\e}_R,\quad B^{\e}=B+\sum_{n=1}^{2k-1}\e^n B_n+\e^kB^{\e}_R.
\end{align}
To determine the coefficients $F_{n}(t, x, p)$, $E_n(t, x)$, $B_n(t, x)$ for $0\leq n\leq 2k-1$, we plug the formal expansions \eqref{expan} into equations \eqref{main1}-\eqref{main2} 
and equate the coefficients on both sides
in front of different powers of
the parameter $\e$ to obtain:\\
\ \\
\textbf{$\e^{-1}$-order:}
\begin{align}
    \c\big[F ,F \big]=0.
\end{align}
\ \\
\textbf{$\e^{0}$-order:}
\begin{align}
    &\dt F +\hat{p}\cdot\nabla_x F - \big(E +\hat{p} \times B  \big)\cdot\nabla_pF 
    =\;\mathcal {C}\big[F_{1},F \big]+\mathcal {C}\big[F ,F_{1}\big],\label{expan2}
\end{align} 
and
\begin{align}\label{expan2'}
    \left\{
    \begin{array}{l}
    \ds\dt E -  \nabla_x \times B  =4\pi  \int_{\mathbb R^3}\hat{p}F \ud p, \\\rule{0ex}{1.0em}
    \ds\dt B + \nabla_x \times E =0,\\\rule{0ex}{1.5em}
    \ds\nabla_x\cdot E =4\pi \Big( \overline{n}-\int_{\mathbb R^3} F  \ud p\Big), \\\rule{0ex}{1.0em}
    \ds\nabla_x\cdot B =0.
    \end{array}
    \right.
\end{align}
\ \\
\textbf{$\e^{n}$-order ($1\leq n<2k-1$):}
\begin{align}
    &\dt F_{n}+\hat{p}\cdot \nabla_xF_{n}- \big(E_n+\hat{p} \times B_n \big)\cdot\nabla_pF - \big(E +\hat{p} \times B  \big)\cdot\nabla_pF_{n}\\
    =&\sum_{\substack{i+j=n+1\\i,j\geq0}}\mathcal {C}\big[F_{i},F_{j}\big]+ \sum_{\substack{i+j=n\\i,j\geq1}}\big(E_i+\hat{p} \times B_i \big)\cdot\nabla_pF_{j}, \no
\end{align} 
and
\begin{align}
    \left\{
    \begin{array}{l}
    \ds\dt E_n-\nabla_x \times B_n=4\pi \int_{\mathbb R^3}\hat{p}F_{n}\ud p, \\\rule{0ex}{1em}
    \ds\dt  B_n+ \nabla_x \times E_n=0,\\\rule{0ex}{1.5em}
    \ds\nabla_x\cdot E_n= -4\pi\int_{\mathbb R^3} F_{n}\ud p,\\\rule{0ex}{1em}
    \ds\nabla_x\cdot B_n=0.
    \end{array}
    \right.
\end{align}
\ \\
\textbf{$\e^{2k-1}$-order:}
\begin{align}
\\
    &\dt F_{2k-1}+\hat{p}\cdot\nabla_x F_{2k-1}-  \big(E_{2k-1}+\hat{p} \times B_{2k-1} \big)\cdot\nabla_pF 
     -\big(E +\hat{p} \times B  \big)\cdot\nabla_pF_{2k-1}\no\\
    =&\sum_{\substack{i+j=2k\\i,j\geq1}}\mathcal {C}\big[F_{i},F_{j}\big]+ \sum_{\substack{i+j=2k-1\\i,j\geq1}}\big(E_i+\hat{p} \times B_i \big)\cdot\nabla_pF_{j}, \no
\end{align} 
and
\begin{align} 
    \left\{
    \begin{array}{l}
    \ds\dt E_{2k-1}-\nabla_x \times B_{2k-1}=4\pi \int_{\mathbb R^3}\hat{p} F_{2k-1}\ud p, \\\rule{0ex}{1em}
    \ds\dt  B_{2k-1}+ \nabla_x \times E_{2k-1}=0,\\\rule{0ex}{1.5em}
    \ds\nabla_x\cdot E_{2k-1}=-4\pi \int_{\mathbb R^3} F_{2k-1}\ud p, \\\rule{0ex}{1em}
    \ds\nabla_x\cdot B_{2k-1}=0.
    \end{array}
    \right.
\end{align}
\ \\
\textbf{Remainder equation:} The remainder $\big(F_R^{\e}, E_R^{\e}, B_R^{\e}\big)$ satisfies
\begin{align}
    & \dt F_{R}^{\e}+\hat{p}\cdot\nabla_x F_{R}^{\e}- \big(E_R^{\e}+\hat{p} \times B_R^{\e} \big)\cdot\nabla_pF  -\big(E +\hat{p} \times B  \big)\cdot\nabla_pF_{R}^{\e}\label{Remain}\\
    &-\frac{1}{\e}\left\{\mathcal{C}\Big[F_{R}^{\e},F \Big]
    +\mathcal{C}\Big[F ,F_{R}^{\e}\Big]\right\}\no\\
    =\;&\e^{k-1}\mathcal{C}\Big[F_{R}^{\e},F^{\e}_{R}\Big]+\sum_{i=1}^{2k-1}\e^{i-1}\left\{\mathcal{C}\Big[F_{R}^{\e},F_{i}\Big]+\mathcal{C}\Big[F_{i},F_{R}^{\e}\Big]\right\}+\e^k \Big(E_R^{\e}+\hat{p} \times B_R^{\e} \Big)\cdot\nabla_pF_{R}^{\e}\no\\
    &+\sum_{i=1}^{2k-1}\e^i \left\{\Big(E_i+\hat{p} \times B_i \Big)\cdot\nabla_pF_{R}^{\e}+\Big(E_R^{\e}+\hat{p} \times B_R^{\e} \Big)\cdot\nabla_pF_{i}\right\}+\e^{k}\ss,
\no
\end{align}
and
\begin{align} \label{remain}
    \left\{
    \begin{array}{l}
    \ds\dt E_R^{\e}-\nabla_x \times B_R^{\e}=4\pi \int_{\mathbb R^3} \hat{p} F^{\e}_{R}\ud p, \\\rule{0ex}{1em}
    \ds\dt  B_R^{\e}+ \nabla_x \times E_R^{\e}=0,\\\rule{0ex}{1.5em}
    \ds\nabla_x\cdot E_R^{\e}=-4\pi \int_{\mathbb R^3}F^{\e}_{R}\ud p,\\\rule{0ex}{1em}
    \ds\nabla_x\cdot  B_R^{\e}=0,
    \end{array}
    \right.
\end{align}
where
\begin{align}
\ss=&\sum_{\substack{i+j\geq 2k+1\\2\leq i,j\leq2k-1}}\e^{i+j-2k-1}\left\{\mathcal{C}\Big[F_{i},F_{j}\Big]+\mathcal{C}\Big[F_{i},F_{j}\Big]\right\}+\sum_{\substack{i+j\geq 2k\\1\leq i,j\leq2k-1}}\e^{i+j-2k}\Big(E_i+\hat{p} \times B_i \Big)\cdot\nabla_pF_{j}.
\end{align}

From \eqref{expan2}, we conclude that $F $ should be local Maxwellians:
\begin{equation}\label{maxwell}
F (t,x,p)=\m = \frac{n }{4 \pi T K_2(\gamma )} \exp\left\{\frac{u^{\mu}p_{\mu} }{T }\right\},
\end{equation} 
where $(n , u , T )$ is part of the solution to the relativistic Euler-Maxwell system \eqref{rem}.
The other coefficients $F_{n}(t, x, p)$, $E_n(t, x)$, $B_n(t, x)$ for $0\leq n\leq 2k-1$ can be derived in an inductive way (see Appendix). 

To prove Theorem \ref{main}, our main task is to solve \eqref{Remain}--\eqref{remain}.
Define $\fe $ as
\begin{equation}\label{L2}
\f_{R}:= \mh \fe .
\end{equation}
\eqref{Remain} and \eqref{remain} can be rewritten as
\begin{align}
\label{L20}
    &\Big\{\dt +\hat{p}\cdot\nabla_x - \big(E +\hat{p} \times B  \big)\cdot\nabla_p \Big\}\fe 
    +  \frac{u^0}{T }\hat{p}\mh \cdot E_R^{\e}\\
    &- \frac{u   }{ T }\mh\cdot\Big(E_R^{\e}+\hat{p} \times B_R^{\e} \Big)+\frac{1}{\e}\li \left[\fe \right]\no\\
    =&-\fe \m ^{-\frac{1}{2}}\Big\{\dt +\hat{p}\cdot\nabla_x - \big(E +\hat{p} \times B  \big)\cdot\nabla_p \Big\}\mh +\e^{k-1}\Gamma \left[\fe,\fe\right]\no\\
    &+\sum_{i=1}^{2k-1}\e^{i-1}\left\{\Gamma \left[\m ^{-\frac{1}{2}}F_i, \fe\right]+\Gamma \left[\fe, \m ^{-\frac{1}{2}}F_i\right]\right\}+\e^k\Big(E_R^{\e}+\hat{p} \times B_R^{\e} \Big)\cdot\nabla_p\fe \no\\
    &-  \e^k\frac{1 }{2 T }\Big(u^0\hat{p}-u \Big)\cdot\Big(E_R^{\e}+\hat{p} \times B_R^{\e} \Big)\fe \no\\
    &+ \sum_{i=1}^{2k-1}\e^i\left\{\Big(E_i+\hat{p} \times B_i \Big)\cdot\nabla_p\fe +\Big(E_R^{\e}+\hat{p} \times B_R^{\e} \Big)\cdot \m ^{-\frac{1}{2}}\nabla_pF_{i}\right\}\no\\
    &- \sum_{i=1}^{2k-1}\e^i\left\{\Big(E_i+\hat{p} \times B_i \Big)\cdot\frac{1}{2 T }\Big(u^0\hat{p}-u \Big)\fe \right\}+\e^{k}\sb,\no
\end{align}
and
\begin{align}\label{L201}
\left\{
\begin{array}{l}
\ds\dt E_R^{\e}-\nabla_x \times B_R^{\e}=4\pi \int_{\mathbb R^3}\hat{p} \mh \fe\ud p, \\\rule{0ex}{1.5em}
\dt  B_R^{\e}+ \nabla_x \times E_R^{\e}=0,\\\rule{0ex}{2em}
\ds\nabla_x\cdot E_R^{\e}=-4\pi\int_{\mathbb R^3}\mh  \fe\ud p, \\\rule{0ex}{1.5em} 
\nabla_x\cdot  B_R^{\e}=0.
\end{array}
\right.
\end{align}
Here $\sb=\m ^{-\frac{1}{2}}\ss$. 
The linearized collision operator $\li[f]$ and nonlinear collision operator ${ \Gamma}[f, g]$ are defined as follows:
\begin{align}\label{LAK}
\li[f]  :=&\m ^{-\frac{1}{2}}\left\{\mathcal{C}\Big[\mh f,\m \Big]
 +\mathcal{C}\Big[\m ,\mh f\Big]\right\}=:-\aa[f]-\kk[f],
\end{align}
and
\begin{align}\label{Gamma=}
\Gamma[f, g] &:=\m ^{-\frac{1}{2}}\mathcal{C}\Big[\mh f,\mh g\Big].
\end{align}
Note that the null space of the linearized operator $\li$ is given by 
\begin{align}
    \mathcal {N}=\mbox{span}\left\{\mh ,   p_i\mh  (1\leq i\leq3), p^{0}\mh \right\}.
\end{align}
Denote $\pk$ as the orthogonal projection from $L^2_p$ onto $\mathcal{N}$:
\begin{align}\label{macfe}
\pk [f]=\Big(a_{f}-\frac{\rho_{2}}{\rho_{1}}c_{f}\Big)\m^{\frac{1}{2}}+  b_{f}\cdot p \m ^{\frac{1}{2}}+ c_{f}p^{0}\m ^{\frac{1}{2}},
\end{align}
where $a_{f}$, $b_f$ and $c_f$ are coefficients which will  be written as $a,b, c$ when there is no confusion, and
\begin{align}\label{rho12}
   \rho_{1}:=&\int_{{\mathbb R}^3}\m \,\ud p=n u^0 ,\qquad
   \rho_{2}:=\int_{{\mathbb R}^3}p^0\m \,\ud p =\mathfrak{e} (u^0)^2+P|u|^2.
\end{align}

\subsubsection{Notation and Convention}\label{sec:notation} 

Throughout the paper, $C$ denotes a generic positive constant which may change line by line. The notation $A \lesssim B$ implies that there exists a positive constant $C$  such
that $A \leq CB$ holds uniformly over the range of parameters. The
notation $ A \approx B$ means $\frac{1}{\overline{C}}A\leq B\leq \overline{C}A$ for some constant $\overline{C}>1$. 

Let $\brv{\cdot,\cdot}$ denote the $L^2$ inner product in $p\in\r^3$ and $\br{\cdot,\cdot}$ the $L^2$ inner product in $(x,p)\in\r^3\times\r^3$:
\begin{align}
    \brv{f,g}&=\int_{\r^3_p}fg\ud p,\\
    \br{f,g}&=\iint_{\r^3_x\times\r^3_p}fg\ud p\ud x.
\end{align}
Let $\tbs{\,\cdot\,}$ denote the $L^2$ norm in $p\in\r^3$ and $\tnm{\,\cdot\,}$ the $L^2$ norm in $(x,p)\in\r^3\times\r^3$ :
\begin{align}
    \tbs{f}^2=\brv{f,f},\qquad
    \tnm{f}^2=\br{f,f}.
\end{align}
Note that for quantities related to $E$ or $B$ which do not depend on $p$, we also use $\tnm{\,\cdot\,}$ to denote the $L^2$ norm in $x\in\r^3$. Similarly, for $s=0, 1, 2,$ we define the Sobolev norms
\begin{align}
&\|f\|^2_{H^s}
=\sum_{|\alpha|=0}^s\iint_{\mathbb R^3_x\times\mathbb R^3_p}|\partial^{\alpha}_xf|^2\ud p\ud x,
\end{align}
where $\partial^{\alpha}_x=\partial^{\alpha_1}_{x_1}\partial^{\alpha_2}_{x_2}\partial^{\alpha_3}_{x_3}$ with $\alpha=(\alpha_1,\alpha_2,\alpha_3)$ and $\abs{\alpha}=\alpha_1+\alpha_2+\alpha_3$.
For $\m $ given in \eqref{maxwell}, we denote $\big(n , u , T \big)(t,x)$ as part of a solution to the relativistic Euler-Maxwell system \eqref{rem}-\eqref{rem'}, which was constructed in \cite{Guo.Ionescu.Pausader2014}
, and define the following $3\times 3$ matrix-type  collision frequency:
$$\sigma^{ij}(p)=\int_{{\mathbb R}^3} \Phi^{ij} \m (q)\,\ud q. $$
 To measure the dissipation of the linearized relativistic Landau collision, we define the inner product:
 \begin{align}
 (f,g)_{\sigma}=&\sum_{i,j=1}^3\int_{{\mathbb R}^3}\sigma^{ij}\partial_{p_i}f\partial_{p_j}g
 \,\ud p\label{norm}
 +\sum_{i,j=1}^3\frac{1}{4T ^2}\int_{{\mathbb R}^3}\sigma^{ij}\frac{p_i}{p^0}\frac{p_j}{p^0}fg\,\ud p .
  \end{align}
Denote the corresponding $\sigma$ norms:
\begin{align}
    \abss{f}^2=(f,f)_{\sigma}, \qquad
    \nms{f}^2=\int_{\r^3}\abss{f(x)}^2\ud x.
\end{align}
Similarly, for $s=0, 1, 2,$ we define the Sobolev $\sigma$ norms
\begin{align}
&\|f\|^2_{H^s_{\sigma}}=\sum_{|\alpha|=0}^s\int_{\mathbb R^3_x}|\partial^{\alpha}_xf|_{\sigma}^2\ud x.
\end{align}

\begin{remark}
According to \cite{Lemou2000}, all eigenvalues of $\sigma^{ij}(p)$ are positive and depend on $p$. Moreover, the eigenvalues converge to positive constants as $|p|\rightarrow\infty$. Then, for the $|\cdot|_{\sigma}$ norm defined above, we have 
\begin{align}\label{sgm}
   \frac{1}{T } \tbs{f}+\tbs{\nabla_pf}\ls\abss{f}\ls  \frac{1}{T } \tbs{f}+\tbs{\nabla_pf}
\end{align}
\end{remark}

Define the weight functions
\begin{align}\label{wpm}
   w^{\ell}=(p^0)^{2(\nc-\ell)}\exp\Big\{\frac{p^0}{5\ln(\ue+t)\tc}\Big\},\quad 0\leq \ell\leq 2,
\end{align}
where $\nc$ and $\tc$ are constants satisfying $\nc\geq3$ and
\begin{equation}\label{assump}
\tc\geq \sup_{t\in[0, \e^{-1/3}],x\in \mathbb R^3} T (t,x).
\end{equation}
It should be pointed out that the weight functions in \eqref{wpm} are  designed to make sure that
\begin{align}\label{smallw}
(w^{\ell})^2\m ^{\frac{1}{2}}\lesssim e^{-c_0p^0}, \qquad
(w^{\ell})^2(p^0)^{2\ell}\leq \frac{1}{2}\Big((w^{\ell})^2+(w^{0})^2\Big)  
\end{align}
for some small constant $c_0>0$.

Correspondingly, define the weighted norms
\begin{align}
    \tnm{f}_{w^{\ell}}&:=\btnm{w^{\ell}f},\qquad\nm{f}_{H^s_w}:=\sum_{\abs{\alpha}=0}^s\btnm{w^{\abs{\alpha}}\p^{\alpha}_x f} ,\label{weiH}\\
    \nm{f}_{w^{\ell},\sigma}&:=\bnm{w^{\ell}f}_{\sigma},\qquad \nm{f}_{H^s_{w,\sigma}}:=\sum_{\abs{\alpha}=0}^s\bnms{w^{\abs{\alpha}}\p^{\alpha}_x f}.
\end{align}

Denote
\begin{align}
    \ww(t)&:=\exp\left(\frac{1}{5\ln(e+t)\tc}\right),   \quad
    \yy(t):=-\frac{\ww'}{\ww}=\frac{1}{5[\ln(e+t)]^2(e+t)\tc},\label{aa 03}\\
    \zzz(t)&:=\sup_{x\in\mathbb R^3,0\leq \ell\leq 2}\left\{\babs{\nabla_{t,x}^{1+\ell}(n ,{u},T )}+\babs{\nabla_{t,x}^{\ell}(E ,B )}+\babs{\nabla_{t,x}^{\ell}{u}}\right\}.\no
\end{align}

\subsubsection{Key Proposition} 

Theorem \ref{main} follows naturally from the following proposition.

\begin{proposition}\label{result} Let $\f(0,x,p)\geq0$. Assume that $\big(n (t,x), u (t,x), T (t,x), E (t,x)$, $B (t,x)\big)$ is the global solution constructed in Theorem \ref{Euler-Maxwell}. 
Then for $k\geq3$ in the Hilbert expansion \eqref{expan} and $\fe =\m ^{-\frac{1}{2}}\f_{R}$ defined in \eqref{L2}, there exists a  $\e_0 > 0$  such
that for $0 \leq \e\leq\e_0$  and $0<t\leq\overline{t}$ with $\overline{t}=\e^{-1/3}$, if
\begin{align}
    \ee(0)\ls 1,
\end{align}
\eqref{L20} and \eqref{L201} admit a unique solution 
$\big(\fe,E^{\e}_R, B^{\e}_R\big)$ satisfying $\f(t,x,p)\geq0$ and
\begin{align}\label{thm2}
\sup_{0\leq  t\leq \overline{t}}\mathcal{E}(t)+\int_0^{\overline{t}}\mathcal{D}(s)\ud s\lesssim\mathcal{E}(0)+1,
\end{align}
where
\begin{align}\label{eed}
    \ee\simeq&\,\Big(\tnm{\fe}^2+\tnm{E^{\e}_ R}^2+\tnm{B^{\e}_ R}^2+\nmw{(\ik-\pk)[\fe]}^2\Big)\\
    &\,+\e\Big(\tnm{\nx\fe}^2+\tnm{\nx E^{\e}_ R}^2+\tnm{\nx B^{\e}_ R}^2+\tnmww{\nabla_x(\ik-\pk)[\fe]}^2\Big)\no\\
    &\,+\e^2\Big(\tnm{\nabla_x^2\fe}^2+\tnm{\nabla_x^2 E^{\e}_ R}^2+\tnm{\nabla_x^2B^{\e}_ R}^2+\e\tnmwww{ \nabla_x^2\fe}^2\Big), \no
\end{align}
and
\begin{align}\label{ddd}
    \dd\simeq&\,\Big(\e^{-1}\bnms{(\ik-\pk)[\fe]}^2+\e^{-1}\nmsw{(\ik-\pk)[\fe]}^2
    +\yy\tnmw{\sqrt{p^0}(\ik-\pk)[\fe]}^2\Big)\\
    &\,+\Big(\e\bnm{\nabla_x\pk[\fe]}^2+\bnms{(\ik-\pk)[\nabla_x\fe]}^2+\nmsww{\nabla_x(\ik-\pk)[\fe]}^2+\e\yy\tnmww{\sqrt{p^0}\nabla_x(\ik-\pk)[\fe]}^2\Big)\no\\
    &\,+\Big(\e^2\nm{\nabla_x^2\pk[\fe]}^2+\e\nms{(\ik-\pk)[\nabla_x^2\fe]}^2+\e^{2}\nmswww{\nabla_x^2(\ik-\pk)[\fe]}^2+\e^3\yy\tnmwww{\sqrt{p^0}\nabla_x^2(\ik-\pk)[\fe]}^2\Big).\no
\end{align}
\end{proposition}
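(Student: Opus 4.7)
The plan is a hierarchical weighted energy method closed by a bootstrap argument on $[0,\overline{t}]$ with $\overline{t}=\e^{-1/3}$. Under the a priori assumption $\sup_{[0,t]}\ee(s)\leq M$ for some $M$ to be fixed, the target is a differential inequality
\begin{align*}
\frac{d}{dt}\ee(t) + \dd(t) \,\lesssim\, \bigl(1+\zzz(t)\bigr)\ee(t) + C + C\,\e^{k-1}\sqrt{\ee(t)}\,\dd(t).
\end{align*}
Since $\zzz \in L^1_t([0,\infty))$ by the decay rate in Theorem \ref{Euler-Maxwell}, and since $\e^{k-1}\sqrt{M}\ll 1$ when $k\geq 3$ and $\e$ is small, the last term is absorbed into $\dd$; Gr\"onwall then yields $\ee(t)\lesssim \ee(0)+t$, which remains $O(\e^{-1/3})$ on $[0,\overline{t}]$, ensuring $\e^{k-1}\sqrt{\ee}\lesssim \e^{k-7/6}\ll 1$ and closing the bootstrap.

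To derive the target inequality I proceed level by level. At the base level, pair \eqref{L20} with $\fe$ in $L^2_{x,p}$ and pair \eqref{L201} with $(E_R^\e,B_R^\e)$ in $L^2_x$. The linear source $(u^0/T)\hat p\mh\cdot E_R^\e-(u/T)\mh\cdot(E_R^\e+\hat p\times B_R^\e)$, when tested against $\fe$, reproduces exactly the current terms appearing on the right-hand sides of \eqref{L201}, yielding an exact cancellation of the Lorentz coupling between the Vlasov and Maxwell estimates. The collision contribution produces the microscopic coercivity $\e^{-1}\br{\li[\fe],\fe}\gtrsim\e^{-1}\nms{(\ik-\pk)[\fe]}^2$. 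To recover macroscopic dissipation on $\pk[\fe]=(a_\fe-\rho_2 c_\fe/\rho_1)\mh+b_\fe\cdot p\mh+c_\fe p^0\mh$ I would then apply Guo's macroscopic test-function technique: extract the evolution equations for $(a_\fe,b_\fe,c_\fe)$ from the first three velocity moments of \eqref{L20}, test against tailored auxiliary functions built from $\mh$ times velocity polynomials, and recover the $\e\tnm{\nx\pk[\fe]}^2$ piece of $\dd$. Repeating with $\p_x^\alpha$, $|\alpha|=1,2$, paired against $\e^{|\alpha|}\p_x^\alpha\fe$ produces the higher-order dissipative companions; commutators arising from $\p_x^\alpha$ landing on coefficients depending on $(n,u,T)$ and $(E,B)$ generate the $\zzz(t)\ee(t)$ contributions, while $\e^k\sb$ is a bounded source controlled by the smoothness of the Hilbert coefficients $F_i$ produced inductively in the Appendix.

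The weighted estimates are the decisive parallel ingredient: pair $\p_x^\alpha$\eqref{L20} against $(w^{|\alpha|})^2\p_x^\alpha\fe$ with weights \eqref{wpm}. Using $\p_t(w^\ell)^2=-2\yy(t)p^0(w^\ell)^2$, one obtains the crucial dissipation $\yy\tnmw{\sqrt{p^0}(\ik-\pk)[\fe]}^2$ together with its $w^1,w^2$ analogues. This $\yy$-dissipation is what absorbs the $p^0$-growth generated by the streaming term $(E+\hat p\times B)\cdot\nabla_p\fe$ and by the force couplings $\e^i(E_i+\hat p\times B_i)\cdot\nabla_p\fe$ after integration by parts in $p$. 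The weak Coulomb coercivity \eqref{sgm} gives $\tbs{\nabla_p\fe}\lesssim\abss{\fe}$, so the momentum gradients created by the force terms are controllable by the $\sigma$-dissipation already sitting inside $\dd$.

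The principal obstacle is the cubic $\e^{k-1}\Gamma[\fe,\fe]$ together with the semilinear terms $\e^{i-1}\Gamma[\mhh F_i,\fe]$: the Landau kernel only provides the weak $|\cdot|_\sigma$-dissipation of \eqref{sgm}, so closing these terms requires a trilinear estimate of the form $|\br{\Gamma[f,g],h}|\lesssim\nm{f}_{H^2_xL^2_{w,\sigma}}\nms{g}\nms{h}$ adapted to the relativistic kernel, combined with the Sobolev embedding $H^2_x\hookrightarrow L^\infty_x$. This is what produces the absorbable term $\e^{k-1}\sqrt{\ee}\,\dd$ in the target inequality, while the semilinear auxiliaries contribute only $\zzz\cdot\ee$ after extracting the smooth, exponentially decaying factors $\mhh F_i$. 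Finally, positivity of $\f=\m\bigl(1+\sum_n\e^n\mhh F_n+\e^k\mhh\fe\bigr)$ is inherited from the initial positivity and \eqref{mainlim0} once $\e\ll 1$: Sobolev embedding through $\ee$ yields $\nm{\mhh\fe}_{L^\infty_{x,p}}\lesssim \sqrt{\ee}$ (the weights $w^\ell$ taming the $p$-variable), so $\e^k\mhh\fe$ is pointwise smaller than $1/2$ and the bracket remains positive, giving $\f\geq 0$ and the convergence \eqref{mainlim}.
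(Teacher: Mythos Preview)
Your energy-estimate scaffold is broadly right, but two load-bearing steps fail as written.

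\textbf{Weighted estimates need a microscopic projection first.} You propose pairing $\partial_x^\alpha$\eqref{L20} directly against $(w^{|\alpha|})^2\partial_x^\alpha\fe$. At levels $|\alpha|=0,1$ this does not close: the weighted coercivity of $\li$ (Lemma~\ref{ss 04}) produces
\[
\e^{-1}\br{\li[\fe],(w^0)^2\fe}\geq \delta\e^{-1}\nmsw{(\ik-\pk)[\fe]}^2-C\e^{-1}\nms{(\ik-\pk)[\fe]}^2-C\e^{-1}\tnm{\pk[\fe]}^2,
\]
and the final term $C\e^{-1}\tnm{\pk[\fe]}^2$ is not in $\dd$ and cannot be absorbed (the macroscopic dissipation you extract is only $\e\tnm{\nabla_x\pk[\fe]}^2$, two powers of $\e$ short). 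The paper's remedy is to first apply $(\ik-\pk)$ to \eqref{L20} and then do the weighted estimate on $(\ik-\pk)[\fe]$ directly; the macroscopic trouble term never appears, at the price of a commutator $\jump{\pk,\hat p\cdot\nabla_x}$ that costs one extra derivative and forces the weighted second-derivative estimate to be done \emph{without} the projection (and with an extra $\e$). Related: the dominant source of $p^0$-growth that the $\yy$-dissipation must absorb is the Maxwellian-transport term $\mhh\fe\{\dt+\hat p\cdot\nabla_x\}\mh$ in \eqref{L20}, not the force terms you single out. One more detail: the exact Lorentz cancellation you invoke only occurs if you test against $\tfrac{4\pi T}{u^0}\fe$ rather than $\fe$.

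\textbf{Positivity.} Your argument that $\e^k\mhh\fe$ is pointwise small via Sobolev does not go through: $\ee$ controls at best $H^2_xL^2_p$-type norms of $w^\ell\fe$, which gives $L^\infty_xL^2_p$ but never $L^\infty_p$, and in any case $w^\ell\sim\exp\bigl(p^0/(5\ln(\ue+t)\tc)\bigr)$ does \emph{not} dominate $\mhh\sim\exp(u^0p^0/(2T))$ since $5\ln(\ue+t)\tc>2T$. The paper instead (i) constructs $F_R^\e(0)$ explicitly so that $\f(0)\geq 0$ via a two-region argument balancing $\m$ against $\m^\kappa$, and (ii) propagates positivity forward in time by rewriting \eqref{main1} as a parabolic equation in $p$ (the collision kernel $\Phi$ is nonnegative-definite) and invoking the maximum principle.
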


\begin{remark}
By \eqref{expan}, the estimates \eqref{growth0} of the coefficients $F_{n}, E_n, B_n $ with $ (1\leq n\leq 2k-1)$ in Proposition \ref{fn}, and   \eqref{thm2}, we can obtain \eqref{mainlim}.
\end{remark}


\begin{remark}\label{remark 1}
In this paper, we will focus on deriving the a priori estimate \eqref{thm2} in Proposition \ref{result}. 
Then Theorem \ref{main} naturally follows from a standard iteration/fixed-point argument.
Based on the continuity argument (see \cite{Tao2006}), from now on, we will assume that
\begin{align}\label{rr 01}
     \sup_{0\leq t\leq \overline{t}}\ee(t)\ls \e^{-\frac{1}{2}},
\end{align}
and try to derive \eqref{thm2}.
\end{remark}



\begin{remark}
The irrotational assumption \eqref{irrotational} is necessary in the global well-posedness of Euler-Maxwell equation in \cite{Guo.Ionescu.Pausader2016}. Our proof does not rely on the irrotational assumption. Actually, as long as the fluid equation is well-posed and the solution enjoys proper time decay, our method should be able to justify the convergence.
\end{remark}


\subsection{Hilbert Expansion for the Relativistic Landau Equation}

In this subsection, we will derive the Hilbert expansion of the r-LAN equation \eqref{main1=}, and introduce necessary notations.


We consider the Hilbert expansion for small Knudsen number $\e$,
\begin{align}\label{expan=}
\f(t, x, p):=F+\sum_{n=1}^{2k-1}\e^nF_n(t, x, p)+\e^k \fr(t, x, p),
\end{align}
for some $k\geq2$. To determine the coefficients $F_n(t, x, p)$, we plug  \eqref{expan=} into \eqref{main1=} and
equate the coefficients on both sides of equation in front of different powers of
the parameter $\e$ to obtain:
\begin{align}
\e^{-1}:&\quad \c[F ,F ]=0,\nonumber\\
\e^0:&\quad\dt F +\hp\cdot\nx F =\c[F_1,F ]+\c[F ,F_1],\nonumber\\
 &\ldots\ldots\label{expan2=}\\
\e^n:&\quad \dt F_n+\hp\cdot \nx F_n=\sum_{\substack{i+j=n+1\\i,j\geq0}}\c[F_i,F_j], \nonumber\\
&\ldots\ldots\nonumber\\
\e^{2k-1}:&\quad \dt F_{2k-1}+\hp\cdot\nabla_x F_{2k-1}=\sum_{\substack{i+j=2k\\i,j\geq2}}\c[F_i,F_j].\nonumber
\end{align}
The remainder term $\fr$ satisfies the following equation:
\begin{align}\label{remain=}
&\;\dt\fr+\hp\cdot\nx \fr -\frac{1}{\e}\Big\{\c[\fr ,F ]+\c[F ,\fr ]\Big\}\\
 =&\;\e^{k-1}\c[\fr ,\fr ]+\sum_{i=1}^{2k-1}\e^{i-1}\Big\{\c[F_i, \fr]+\c[\fr , F_i]\Big\}+\ss,\no
\end{align}
where 
\begin{align}
    \ss:=\ds\sum_{\substack{i+j\geq 2k+1\\2\leq i,j\leq2k-1}}\e^{i+j-k}\c[F_i,F_j].
\end{align}
From the first equation in \eqref{expan2=}, we can obtain that $F $ should be a local Maxwellian:
\begin{equation}\label{r-maxwell=}
F (t,x,p)=\m(t,x,p):= \frac{n }{4 \pi T K_2(\gamma)} \exp\left\{\frac{u^{\mu}p_{\mu} }{T }\right\},
\end{equation}
where  
$(n , u, T )(t,x)$ is a solution to the relativistic Euler equations \eqref{re=}. 



We define $\fe$ as
\begin{equation}\label{decom}
\fr(t,x,p) :=\mh(t,x,p) \fe(t,x,p) .
\end{equation}
Then the remainder equation \eqref{remain=} can be rewritten as
\begin{align}\label{re-f=}
    &\;\dt\fe+\hp\cdot\nabla_x\fe 
    +\frac{1}{\e}\li[\fe] \\
    =&\;\e^{k-1}\Gamma[\fe,\fe] +\sum_{i=1}^{2k-1}\e^{i-1} \Big\{\Gamma\big[\mhh F_i,\fe\big]
    +\Gamma\big[\fe,\mhh F_i\big]\Big\}-\mhh\big(\dt\mh+\hp\cdot\nx\mh\big)\fe+\sb,\nonumber
\end{align}
where $\sb:=\mhh\ss$.

\subsubsection{Notation and Convention}

The notation here is mostly similar to those in Section \ref{sec:notation}. 

Define the weights
\begin{align}\label{tt 01=}
    w^{\ell}:=(p^0)^{2(\nc-\ell)}\exp\left(\frac{p^0}{5\ln(\ue+t)\tc}\right),\quad 0\leq \ell\leq 2,
\end{align}
where $\nc\geq3$ is a constant and $\tc$ is a constant satisfying
\begin{align}\label{tt 01'''=}
    \tc\geq \sup_{t\in[0, t_0], x\in \mathbb R^3} T (t,x),
\end{align}
where $t_0$ satisfies \eqref{assump=}. 

For the classical solution $\big(n (t,x), u(t,x), T (t,x)\big)$ to the relativistic Euler equations \eqref{re=}, denote $\ww$ and $\yy$ as in \eqref{aa 03} and
\begin{align}
    \zz&:=\sup_{0\leq t\leq t_0,x\in\mathbb R^3}\left\{\babs{\nabla_{t,x}(n ,u,T )}\frac{(1+T )u^0}{T ^2}\right\},\no\\
    \zzz&:=\sup_{0\leq t\leq t_0,x\in\mathbb R^3,1\leq \ell\leq 3}\left\{\babs{\nabla_{t,x}^{\ell}(n ,u,T )}\right\}.\no
\end{align}

\subsubsection{Key Proposition}

Theorem \ref{main=} follows naturally from the following proposition.

\begin{proposition}\label{result 2=} 
Let $F^{\e}(0,x,p)\geq0$, and  $F  =\mathbf{M}$ as in \eqref{r-maxwell=}. Assume $\big(n (t,x), u(t,x), T (t,x)\big)$ is a sufficiently small solution to the relativistic Euler equations \eqref{re=} satisfying 
\begin{align}\label{semp 3'}
    \sup_{0\leq t\leq t_0,x\in\mathbb R^3,1\leq \ell\leq 3}\left\{\babs{\nabla_{t,x}^{\ell}(n ,u,T )}\right\}\ll1,
\end{align}
and
\begin{align}\label{semp 3}
    \sup_{0\leq t\leq t_0,x\in\mathbb R^3}\left\{\babs{\nabla_{t,x}(n ,u,T )}\frac{(1+T )u^0}{T ^2}\right\}<\infty,
\end{align}
where $t_0>0$ fulfills
\begin{align}\label{assump=}
\frac{1}{10\tc(\ue+t_0)\big(\ln(\ue+t_0)\big)^2}\geq \sup_{0\leq t\leq t_0,x\in\mathbb R^3}\left\{\babs{\nabla_{t,x}(n ,u,T )}\frac{(1+T )u^0}{T ^2}\right\}
\end{align}
for $\tc$ defined in \eqref{tt 01'''=}.
Then the Hilbert expansion \eqref{expan=} with $F_n, 1\leq n\leq 2k-1,$ defined in \eqref{decom=}  holds for $k\geq3$, and for the remainder $\fe=\m^{-\frac{1}{2}}\fr$
satisfying \eqref{re-f=} , there exists a constant $\e_0 > 0$ such
that for $0<\e\leq\e_0$ and $0\leq t\leq t_0$, if
\begin{align}\label{semp 2=}
    \ee(0)\ls 1,
\end{align}
then there exists a solution $F^{\e}(t,x,p)\geq0$ to \eqref{main1=} satisfying
\begin{align}\label{thm2=}
&\sup_{0\leq  t\leq t_0}\ee(t)+\int_0^t\mathcal{D}(s)\ud s\leq \ee(0)+\e^{2k+3},
\end{align}
where
\begin{align}\label{eed=}
    \ee\sim&\,\Big(\tnm{\fe}^2+\tnmw{(\ik-\pk)[\fe]}^2\Big)\\
    &\,+\e\Big(\tnm{\nabla_x\fe}^2+\tnmww{\nabla_x(\ik-\pk)[\fe]}^2\Big)\no\\
    &\,+\e^2\Big(\tnm{\nabla_x^2\fe}^2+\e\tnmwww{ \nabla_x^2\fe}^2\Big),\no
\end{align}
and
\begin{align}\label{ddd=}
    \dd\sim
    &\,\Big(\e^{-1}\nms{(\ik-\pk)[\fe]}^2+\e^{-1}\nmsw{(\ik-\pk)[\fe]}^2
    +\yy\tnmw{\sqrt{p^0}(\ik-\pk)[\fe]}^2\Big)&\\
    &\,+\Big(\e\nm{\nabla_x\pk[\fe]}
    ^2+\nms{\nabla_x(\ik-\pk)[\fe]}^2+\nmsww{\nabla_x(\ik-\pk)[\fe]}^2+\e\yy\tnmww{\sqrt{p^0}\nabla_x(\ik-\pk)[\fe]}^2\Big)\no\\
    &\,+\Big(\e^2\nm{\nabla_x^2\pk[\fe]}
    ^2+\e\nms{\nabla_x^2(\ik-\pk)[\fe]}^2+\e^{2}\nmswww{\nabla_x^2(\ik-\pk)[\fe]}^2+\e^{3}\yy\tnmwww{\sqrt{p^0}\nabla_x^2\fe}^2\Big).\no
\end{align}
\end{proposition}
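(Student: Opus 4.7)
The plan is to establish the a priori estimate \eqref{thm2=} under the bootstrap hypothesis $\sup_{[0,t_0]} \ee(t) \ls \e^{-1/2}$ (in the spirit of Remark \ref{remark 1}); existence of a nonnegative solution $F^{\e} \geq 0$ then follows by a standard linear iteration on \eqref{re-f=}, with positivity propagated via a truncation/maximum-principle argument applied to $F^{\e} = \m + \e^k \mh \fe$. The core technical work is a weighted energy method applied to \eqref{re-f=} at the three derivative levels $\abs{\alpha} = 0, 1, 2$, with the weight $w^{\abs{\alpha}}$ attached to the microscopic part $(\ik - \pk)[\p^{\alpha}_x \fe]$, in accordance with how $\ee$ and $\dd$ are structured in \eqref{eed=} and \eqref{ddd=}.

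For each $\abs{\alpha} \leq 2$, I first apply $\p^{\alpha}_x$ to \eqref{re-f=} and pair against $\e^{2\abs{\alpha}} \p^{\alpha}_x \fe$ in $L^2_{x,p}$: the transport $\hp \cdot \nx$ is skew, and the coercivity $\br{\li[g], g} \gs \abss{(\ik-\pk)[g]}^2$ supplies the dissipation $\e^{2\abs{\alpha}-1}\nms{(\ik-\pk)[\p^{\alpha}_x\fe]}^2$. The background drift $-\mhh(\dt\mh + \hp\cdot\nx\mh)\fe$ and all commutators $[\p^{\alpha}_x, -\mhh(\dt\mh+\hp\cdot\nx\mh) + \e^{-1}\li + \Gamma]$ generate factors of $\nabla_{t,x}^{j}(n,u,T)$ that are absorbed using \eqref{semp 3'}. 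I then repeat the pairing against $\e^{2\abs{\alpha}}(w^{\abs{\alpha}})^2 (\ik-\pk)[\p^{\alpha}_x \fe]$: the weighted coercivity $\br{\li[g], (w^{\ell})^2 g} \gs \nmsw{(\ik-\pk)[g]}^2 + \text{l.o.t.}$ gives the second family of $\sigma$-dissipations, and the crucial gain $\yy \tnmw{\sqrt{p^0}(\ik-\pk)[\fe]}^2$ emerges from $\p_t w^{\ell} = -w^{\ell} \cdot p^0 \yy(t)$. This positive gain is precisely what absorbs the otherwise uncontrolled exponentially-weighted bad term $\mhh \dt \mh \cdot (w^{\ell})^2 \fe \sim \abs{\nabla_{t,x}(n,u,T)}(1+T)u^0 T^{-2} \cdot p^0 (w^{\ell})^2 \fe$, where assumption \eqref{assump=} is what guarantees $\yy(t) \geq 2 \zz$ uniformly on $[0, t_0]$.

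The macro component $\pk[\fe] = (a - \tfrac{\rho_2}{\rho_1}c)\mh + b\cdot p\,\mh + c\, p^0 \mh$ is recovered by a macro-micro decomposition: projecting \eqref{re-f=} onto the moments $1, p_i, p^0$ yields a first-order system for $(a_{\fe}, b_{\fe}, c_{\fe})$ whose source involves $\nx(\ik-\pk)[\fe]$ and nonlinear remainders; testing against the Speck--Strain-type combinations of $\mh, p_i \mh, p^0 \mh$ (adapted from \cite{Speck.Strain2011}) converts micro dissipation into the macro dissipation $\e^{2\abs{\alpha}+1}\nm{\nabla_x^{\abs{\alpha}}\pk[\fe]}^2$, with the characteristic extra $\e$ factor reflecting the weaker macro control.

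The main obstacle is closing the nonlinear and source terms at the top derivative level $\abs{\alpha} = 2$ in the weighted $\sigma$-norm. For $\e^{k-1}\Gamma[\fe, \fe]$ one uses the standard bilinear estimate $\abs{\br{\Gamma[f, g], h}} \ls \nm{f}_{H^2_x L^2_p} \nms{g}\, \nms{h}$ together with Sobolev embedding $H^2_x \hookrightarrow L^{\infty}_x$; the bootstrap hypothesis then yields $\e^{k-1}\sqrt{\ee}\cdot \dd \ls \e^{k-\frac{5}{4}}\dd$, which is absorbable into $\dd$ whenever $k \geq 3$. The cross terms $\e^{i-1}\Gamma[\mhh F_i, \fe]$ are handled using the Maxwellian-weighted pointwise bounds on the Hilbert coefficients $F_i$, and the truncated source $\sb$ contributes only at order $\e^{k}$, producing the $\e^{2k+3}$ slack on the right of \eqref{thm2=}. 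Summing over $\abs{\alpha} \leq 2$ with the $\e$-weights matching \eqref{eed=}--\eqref{ddd=} gives a differential inequality of the form $\tfrac{\ud}{\ud t}\ee(t) + \dd(t) \ls \zzz(t)\,\ee(t) + \e^{2k+3}$, and a Gr\"onwall argument, using \eqref{semp 3'} to keep the Gr\"onwall factor bounded on $[0, t_0]$, closes \eqref{thm2=} and hence also the bootstrap.
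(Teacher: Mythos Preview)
Your overall architecture matches the paper's: bootstrap hypothesis, no-weight and weighted $L^2$ estimates at levels $|\alpha|=0,1,2$, the $\yy$-gain from $\p_t w^\ell$ absorbing the background drift via \eqref{assump=}, macroscopic dissipation from local conservation laws, and Gr\"onwall. The $\e$-scalings you quote are off (the paper uses $\e^{|\alpha|}$ for the no-weight estimates and $1,\e,\e^3$ for the weighted ones, not $\e^{2|\alpha|}$), and the Gr\"onwall coefficient in the final inequality is $\zz$ rather than $\zzz$, with boundedness of the exponential factor coming from $\zz\, t_0 \ls 1$ (a consequence of \eqref{assump=}, since $\zz \leq \tfrac12 \yy(t_0) \ls (\ue+t_0)^{-1}$) rather than from \eqref{semp 3'} alone; but these are bookkeeping.

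There is, however, a structural gap at the top level $|\alpha|=2$. Your plan pairs against $(w^{|\alpha|})^2(\ik-\pk)[\p^\alpha_x\fe]$ uniformly in $|\alpha|$, i.e., applies $(\ik-\pk)$ to the differentiated remainder equation before weighting. This works at $|\alpha|=0,1$ but fails at $|\alpha|=2$: the projection generates the commutator $\jump{\pk,\hp\cdot\nx}$, which at level $|\alpha|$ contributes a term of size $\e\nm{\nabla_x^{|\alpha|+1}\fe}^2$ on the right (cf.\ \eqref{semp 4}). For $|\alpha|=0,1$ this is absorbed by the next level of $\ee$, but at $|\alpha|=2$ it would require control of $\nabla_x^3\fe$, which is not available. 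The paper's resolution, already visible in the last line of \eqref{eed=}, is to \emph{not} project at the top level: one pairs directly against $\e^3(w^2)^2\nabla_x^2\fe$ (hence the extra $\e$ and the full $\nabla_x^2\fe$ there). Via Lemma~\ref{ss 04} this creates a bad term $\e^2\nm{\nabla_x^2\fe}^2$ on the right, in particular $\e^2\nm{\nabla_x^2\pk[\fe]}^2$, which is precisely what the macroscopic dissipation of Proposition~\ref{md00=} supplies. Without this asymmetric treatment of the highest level, the weighted estimate does not close.
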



\begin{remark}
Theorem \ref{Eul-exi} and the additional assumption \eqref{assump=} actually dictate that for $0\leq t\leq t_0$
\begin{align}
    \zz\leq \frac{1}{2}\yy\ \ \text{and}\ \ \zzz\ll1.
\end{align}
This will play a key role in the energy estimates, since the solution to Euler equations does not have time decay.
\end{remark}

\begin{remark}\label{remark 1=}
In this paper, we will focus on deriving the a priori estimate \eqref{thm2=}. 
Then Proposition \ref{result 2=} naturally follows from a standard iteration/fixed-point argument. 
Based on the continuity argument (see \cite{Tao2006}), for the energy estimates in Section 8, we will assume that 
\begin{align}\label{rr 01=}
    \sup_{0\leq t\leq t_0}\ee(t)\ls \e^{-\frac{1}{2}},
\end{align}
and try to derive \eqref{thm2=}. Then in Section \ref{Sec:main-thm-pf}, we will in turn verify the validity of \eqref{rr 01=} with the help of \eqref{thm2=}.
\end{remark}

\subsection{Technical Overview}


In this paper, we will develop a new time-dependent energy method to study the Hilbert expansion of the Landau-type equation in the relativistic framework. This is inspired by Caflisch's pioneering work \cite{Caflisch1980}. It is well known that in the study of the Hilbert expansion of the Boltzmann/Landau-type equation, the main task is to solve the remainder term $\f_{R}=\m^{\frac{1}{2}}\fe$, and one of the most challenging  difficulty is from the linear term with one power moment growth
\begin{align}\label{rdiff}
    \m ^{-\frac{1}{2}}(t,x,v)\fe(t,x,p)\Big\{\dt +\hat{p}\cdot\nabla_x\Big\}{\mh(t,x,p)  }
\end{align}
in the relativistic frame, or the linear term with cubic velocity growth
\begin{align}\label{nordiff}
   \m ^{-\frac{1}{2}}(t,x,v)\fe(t,x,v) \Big\{\dt +v\cdot\nabla_x\Big\}{\mh (t,x,v)}
\end{align}
in the non-relativistic one. 






To tame the velocity growth, Caflisch decomposed the remainder $\f_{R}(t,x,v)$ into low- and high-velocity parts, which satisfy a coupled system and can be separately estimated via a weighted energy method.


This approach motivates us to design a time-dependent weight function 
\begin{align}\label{aa 04}
    w(t,x)=\exp\Big\{\frac{p^0}{5\ln(\ue+t)\tc}\Big\}.
\end{align}
Then this exponential momentum function generates an additional dissipation term to 
control the moment growth terms in \eqref{rdiff}
\begin{align}\label{exmp0}
  w^2\fe \partial_t\fe=\frac{1}{2}\frac{\ud}{\ud t}\big(w\fe\big)^2+\frac{1}{5(\ue+t)[\ln(\ue+t)]^2\tc}
  p^0(w\fe)^2.
\end{align}   
Correspondingly, the troublesome term in \eqref{rdiff} is roughly
\begin{align}\label{exmp1}
  w^2\fe   \m ^{-\frac{1}{2}}\fe\Big\{\dt +\hat{p}\cdot\nabla_x\Big\}\mh \leq \left\{\abs{\nabla_{t,x}(n ,u,T )}\frac{(1+T )u^0}{T ^2}\right\}p^0(w\fe)^2.
\end{align}
As long as
\begin{align}\label{exassump}
    \left\{\abs{\nabla_{t,x}(n ,u,T )}\frac{(1+T )u^0}{T ^2}\right\}< \frac{1}{5(\ue+t)[\ln(\ue+t)]^2\tc}
\end{align}
holds for $t,x$ under consideration, we can suppress the momentum growth in \eqref{rdiff}. Therefore, if 
$\abs{\nabla_{t,x}(n ,u,T )}$ is sufficiently small for all $x\in \mathbb{R}^3$, \eqref{exassump} holds locally in time;  if 
$\abs{\nabla_{t,x}(n ,u,T )}$ further enjoys suitably fast time decay,  \eqref{exassump} holds globally in time.


Since 1980s, time-dependent exponential weight functions have been widely used in the study of the collisional kinetic equations. In 1986, Ukai \cite{Ukai1986} introduced a weight function $w(t,v)\approx \exp\big\{(\alpha-\kappa t)\big(1+|v|^2\big)\big\}$ with $\alpha, \kappa>0, t\in \left[0,\frac{1}{2}\alpha\kappa^{-1}\right]$ to study the local well-posedness of the cutoff Boltzmann equation. Later, this technique was extended by AMUXY \cite{Alexandre.Morimoto.Ukai.Xu.Yang2010, Alexandre.Morimoto.Ukai.Xu.Yang2011ARMA,  Alexandre.Morimoto.Ukai.Xu.Yang2011KRM} for constructing local solutions to the non-cutoff Boltzmann equation in Soblev spaces. In these works, the weight function provides an extra gain of velocity weight at the expense of the loss of the decay
in the time-dependent Maxwellian. 

In the exploration of global classical solutions to the one-species VPB system for cutoff hard potentials and moderately soft potentials, to control the large velocity growth in the nonlinear term due to the Coulomb force,  Duan-Yang-Zhao \cite{Duan.Yang.Zhao2012,Duan.Yang.Zhao2013} introduced another type of weight function $w(t,v)\approx\exp\Big\{\frac{\lambda(1+|v|^2)}{(1+t)^{\vartheta}}\Big\}$,
where $\lambda, \vartheta$ are small positive constants.
By introducing a new time weighted energy framework, Xiao-Xiong-Zhao \cite{Xiao.Xiong.Zhao2013, Xiao.Xiong.Zhao2017} removed the so-called neutral condition assumption on the initial datum in previous work \cite{Duan.Yang.Zhao2013}, and extended this well-posedness result to the very soft potentials case.
 We point out that the nonlinear energy method and macro-micro decomposition technique employed in \cite{Xiao.Xiong.Zhao2013, Xiao.Xiong.Zhao2017}
play an essential role in the proof of the main results of this paper.
Recently, such techniques were further applied in constructing global classical solutions to the cutoff  VMB system, non-cutoff VMB system, and VML system \cite{Duan2014, Duan.Lei.Yang.Zhao2017, Fan.Lei.Zhao2018, Lei.Zhao2014, Wang2015}. 

More recently, a new weight function $w(t,v)\approx\exp\Big\{\big(q_1-q_2\int_0^t q_3(s)\, \ud s\big) (1+|v|)^2\Big\}$, with constants $q_1, q_2>0$ and $q_3$ being a dissipation energy functional, was used in Duan-Yang-Yu \cite{Duan.Yang.Yu2022} to justify the asymptotic convergence in Landau equation.


Technically, the introduction of weight function $w$ in \eqref{aa 04} brings multi-level complications. In order to handle the nonlinear term $\Gamma$, we have to control $L^{\infty}$ norm of $\fe$, which in turn requires spatial regularity up to $H^2$. The more derivatives hit $\m$, the more $p^0$ will be generated. Hence, we have to carefully design a hierarchy of weighted functions $w^{\ell}$ to control all kinds of interactions and nonlinear terms in the energy-dissipation structure. In particular, $\tc$ satisfies \eqref{assump} and \eqref{tt 01'''=} so that 
\begin{align*}
\frac{2p^0}{5\ln(\ue+t)\tc}+\frac{p^{\mu}u_{\mu}}{2T }<0,
\end{align*}
due to the smallness of $u$. This yields that for some constant $c_0>0$
\begin{align*}
    w^{2\ell}\mh&\ls \ue^{-c_0p^0},
\end{align*}
which helps to control the cross terms with both $w^{\ell}\mh$ and polynomial growth in $p^0$. 

Nevertheless, this hierarchy of weighted energy method produces new difficulties, especially from the linear collision operator term $\e^{-1}\li[\fe]$ and the macroscopic part $\pk[\fe]$. 

On the one hand, the linear collision operator term $\e^{-1}\li[\fe]$ and $\pk[\fe]$ do not commute with the spatial derivative operator $\nx$, and thus we have to bound the commutator $\jump{\li,\nx}$. This difficulty was also present in the third author's previous work with Guo \cite{Guo.Xiao2021}.
In the derivative estimate, we have
\begin{align} \label{example1}
  \br{\frac{1}{\e}\nx\li[\fe],\nx\fe}&=\br{ \frac{1}{\e}\li\Big[\nx(\ik-\pk)[\fe]\Big],(\ik-\pk)\nx[\fe]}-\br{\frac{1}{\e}\jump{\li,\nx}(\ik-\pk)[\fe], \nx\fe}\\
    &\geq \frac{\delta}{\e} \bnms{\nx(\ik-\pk)[\fe]}^2-\frac{C\zzz}{\e^2}\bnms{(\ik-\pk)[\fe]}^2-C\zzz\nm{\fe}^2_{H^1}.\no
\end{align}
Noting that only $\e^{-1}\bnms{(\ik-\pk)[\fe]}^2$ is included in the no-weight energy estimate,
this implies that we have to pay the cost of $\frac{1}{\e}$ for the derivative estimate $\bnm{\nx\fe}^2$. This is the very reason to include $\e\bnm{\nx\fe}^2$ and $\e^2\nm{\nx^2\fe}^2$ in $\mathcal{E}(t)$. 

On the other hand, in the weighted energy estimate
\begin{align}\label{example2}
    \br{\frac{1}{\e}\li[\fe],(w^0)^2\fe}&=\br{ \frac{1}{\e}\li\Big[(\ik-\pk)[\fe]\Big],(w^0)^2(\ik-\pk)[\fe]+(w^0)^2\pk[\fe]}\\
    &\geq \frac{\delta}{\e} \nms{w^0(\ik-\pk)[\fe]}^2-\frac{C}{\e}\nms{(\ik-\pk)[\fe]}^2-\frac{C}{\e}\nms{w^0\pk[\fe]}^2\no\\
    &\geq\frac{\delta}{\e} \nms{w^0(\ik-\pk)[\fe]}^2-\frac{C}{\e}\nms{(\ik-\pk)[\fe]}^2-\frac{C}{\e}\nm{\pk[\fe]}^2,\no
\end{align}
while $\nm{\pk[\fe]}^2$ cannot be controlled by the dissipation terms in $\dd$. Noting that due to the Maxwellian in the macroscopic part  $\pk[\fe]$, we naturally have $\tnm{w^0\pk[\fe]}\ls\tnm{\fe}$, and thus the weight function only takes effect for the microscopic part $(\ik-\pk)[\fe]$. Therefore, we may first apply the microscopic projection $(\ik-\pk)$ onto the $\fe$ equation \eqref{re-f=},
and directly estimate $w^0(\ik-\pk)[\fe]$ and $w^1\nx(\ik-\pk)[\fe]$. Then the  trouble term $C\e^{-1}\nm{\pk[\fe]}^2$  in \eqref{example2} would not appear.

However, this $(\ik-\pk)$ projection in turn generates another commutator $\jump{\pk,\hat{p}\cdot\nx}$, which 
may be controlled as
\begin{align}\label{semp 4}
  \Bbr{\jump{\pk,\hat{p}\cdot\nx}[\fe], w^0(\ik-\pk)[\fe]}\ls\frac{1}{\e} \nms{(\ik-\pk)[\fe]}^2+\e \nm{\nx\fe}^2.
\end{align}
The term $\e \nm{\nx\fe}^2$ cannot be controlled by the dissipation term $\e\bnms{(\ik-\pk)[\nx\fe]}^2$ in $\dd$. Similar to \eqref{semp 4}, in the weighted first-order derivative estimate, linear term $\e^2 \nm{\nx^2\fe}^2$ arises. This
reveals that the $(\ik-\pk)$ projection argument requires estimate of one more derivative (e.g. in order to bound $w^0(\ik-\pk)[\fe]$, we need the control of $\tnm{\nx\fe}$), and thus the microscopic projection cannot be applied to the highest-order derivatives. Hence, we have to directly perform the weighted energy estimate for $\nx^2\fe$, which in turn calls for $\e^{3}\nm{(w^2)^2\nx^2\fe}^2$ in $\mathcal{E}(t)$ and  leads to the trouble term $\e^{2}\nm{\nx^2\pk[\fe]}$ again.

To control the worrisome linear terms  $\e\bnm{\nabla_x\pk[\fe]}$ and $\e^2\nm{\nabla_x^2\pk[\fe]}$, we need to capture the macroscopic structure of the remainder equations \eqref{L20} and \eqref{re-f=}. The macroscopic dissipation estimates for $\e\bnm{\nabla_x\pk[\fe]}$ and $\e^2\nm{\nabla_x^2\pk[\fe]}$ are given in Section~\ref{sec: macro} and Section~\ref{Sec:macro-dissipation}. Motivated by \cite{Guo2006}, we give the proof combining the local conservation laws and the macroscopic equations. We write the macroscopic quantities \eqref{macfe},
and obtain the conservation law equations for $a^{\e}, b^{\e}, c^{\e}$.
Although these equations are very complicated, we only need to focus on main terms corresponding to the global Maxwellian case as in \cite{Liu.Zhao2014} and \cite{Strain.Guo2004} without the electromagnetic field. 



For the r-VML system, we discovered a new phenomenon related to the dissipation of the electromagnetic field. Through an intricate analysis of the macroscopic variables $\nm{\nabla_x\pk[\fe]}^2$, we conclude that $a^{\e}$, $E^{\e}_R$ and $\nx a^{\e}$, $\nabla_x E^{\e}_R$, $\nabla_x B^{\e}_R$ belongs to the dissipation $\mathcal{D}$. In the near-global-Maxwellian case (see \cite{Duan2014, Yang.Yu2012}), these dissipation terms are stronger than the energy $\mathcal{E}$. However, in our near-local-Maxwellian case, due to the Hilbert expansion, these dissipation terms are much weaker. Actually, we show that
\begin{align}
    \e\Big(\nm{a^{\e}}^2+\nm{E^{\e}_R}^2 \Big)
    +\e^2\Big(\nm{\nabla_xa^{\e}}^2
    +\nm{\nabla_x E^{\e}_R}^2
    +\nm{\nabla_x B^{\e}_R}^2\Big)\leq \e\ee,
\end{align}
which can be absorbed by $\ee$ after integration w.r.t. time $t$ for $t\in [0, \e^{-1/3}]$.

In addition, motivated by \cite{Guo2002} and \cite{Guo.Hwang.Jang.Ouyang2020}, we justify the positivity of the solution $F^{\e}$. We first perform a careful analysis of the construction of the initial data and prove that $F^{\e}(0)\geq0$. Then by analyzing the elliptic structure of the relativistic Landau operator, we show the validity of maximum principle and conclude that $F^{\e}(t)\geq0$ for all $t\geq 0$.


Compared with the $L^2$--$L^{\infty}$ framework as in \cite{Guo.Jang.Jiang2009,Guo.Jang.Jiang2010, Guo.Xiao2021}, our new method has several advantages.
Firstly, we don't require an explicit lower bound of the temperature $T $:
\begin{align}
    T_M<\max_{t,x}T (t,x)<2T_M
\end{align} 
for some constant $T_M>0$. This extra restriction on $T $ is a technical requirement in the $L^2$--$L^{\infty}$ method, and seems artificial from the physical viewpoint. Secondly, our method works for more general settings, including both the Landau-type and cutoff/non-cutoff Boltzmann-type equations in the relativistic frame. The $L^2$--$L^{\infty}$ framework heavily relies on the analysis of the characteristic, which fails for the presence of the  diffusion effect.


Finally, we briefly discuss the possible applications of our new method. 
First, it is hopeful to apply this method to the relativistic non-cutoff Boltzmann equation.
Due to the absence of momentum derivative estimate and weak dissipation of $\li$, the estimation of nonlinear terms related to the electromagnetic field would be critical. Then we can make use of our time-dependent exponential weight function and ideas in \cite{Xiao.Xiong.Zhao2013, Xiao.Xiong.Zhao2017}. 
Second, it is also very interesting to apply our method to deal with the bounded domain problem. Recently, Duan-Liu-Sakamoto-Strain \cite{Duan.Liu.Sakamoto.Strain2020} proved the global existence of mild solutions to the non-relativistic Landau equation and non-cutoff Boltzmann equation in solution spaces $X_t:=L^1_kL^{\infty}_{t} L^2_{v}$
for $x\in\mathbb{T}^3$, and $X_t:=L^1_{\overline{k}} L^{\infty}_{t} L^2_{x_1, v}$
for $x\in (-1,1)\times \mathbb{T}^2$, where $k$ and $\overline{k}$ are the corresponding variables after Fourier transformation w.r.t. $x$ and $\overline{x}=(x_2,x_3)$, respectively.
For the case $x\in\mathbb{T}^3$, replacing the present $H^2_x$ space  by $L^1_k$ space, the extension of our method can be expected. For the case $x\in (-1,1)\times \mathbb{T}^2$, while we are optimistic that the boundary layer analysis in \cite{Sone2002, Sone2007} can handle the specular-reflection boundary, the in-flow boundary with possible singularity might be more challenging.

This paper is organized as follows: in Section 3, we will present some preliminary lemmas regarding the linear and nonlinear relativistic Landau operators; in Section 4-6, we will prove the a priori estimate for the no-weight and weighted estimates as well as the macroscopic estimates of the r-VML system; in Section 7, we justify Proposition \ref{result}; finally, in Section 8, we consider the r-LAN equation and prove Proposition \ref{result 2=}.


\section{Preliminaries} \label{Sec:prelim}
In this part, we will write down explicit forms of  the operators $\aa, \kk$ and $\Gamma$ and further prove the coercive estimate of the linear collision operator $\li$ and trilinear estimates about  $\Gamma$ for both the r-LAN equation and the r-VML system. Weighted energy estimates for these collision operators will also be established. 

\begin{lemma}\label{ss 06}
For the local Maxwellian $\m$, it holds that
\begin{align}
    \abs{\mhh\dt\mh}+\abs{\mhh\nabla_x\mh}&\leq p^0\zz,\\
    \abs{\nabla_x^{\ell}\left(\mhh\dt\mh\right)}+\abs{\nabla_x^{\ell}\left(\mhh\nabla_x\mh\right)}&\ls \frac{1}{T ^2}\left(p^0\right)^{\ell+1}\zzz,\qquad \ell\geq1.
\end{align}
\end{lemma}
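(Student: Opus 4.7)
The starting point is the identity
\begin{align*}
  \mathbf{M}^{-1/2}\partial_t\mathbf{M}^{1/2}=\tfrac12\,\partial_t\log\mathbf{M},\qquad
  \mathbf{M}^{-1/2}\nabla_x\mathbf{M}^{1/2}=\tfrac12\,\nabla_x\log\mathbf{M},
\end{align*}
together with the explicit formula (using $c=m=k_B=1$, so $\gamma=1/T$)
\begin{align*}
  \log\mathbf{M}(t,x,p)=\log n(t,x)-\log\!\bigl(4\pi T(t,x)K_2(1/T(t,x))\bigr)+\frac{u^{\mu}(t,x)\,p_{\mu}}{T(t,x)}.
\end{align*}
Thus $\partial_{t,x}\log\mathbf{M}$ is a linear combination, with coefficients depending only on $(n,u,T)$ and its first derivatives, of a ``normalization piece'' (bounded by $|\nabla_{t,x}(n,T)|/T$, after using the standard estimate $|K_2'/K_2|\lesssim 1$) and a ``momentum piece'' coming from $\partial_{t,x}(u^{\mu}p_{\mu}/T)$, which splits as
\begin{align*}
  \frac{\partial_{t,x}u^{\mu}\,p_{\mu}}{T}-\frac{u^{\mu}p_{\mu}}{T^2}\,\partial_{t,x}T.
\end{align*}

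For the first inequality I will bound each of these pieces by $p^0Z$. Because $|u^{\mu}p_{\mu}|\le u^0p^0+|u||p|\le 2u^0p^0$ and $|\partial_t u^0|\le|\partial_t u|$ (from $u^0=\sqrt{1+|u|^2}$), the momentum piece is dominated by $\bigl(|\partial_{t,x}u|\,p^0/T\bigr)+\bigl(u^0p^0|\partial_{t,x}T|/T^2\bigr)$. Using the crude inequalities $u^0\ge1$, $p^0\ge1$ and $1\le(1+T)/T$ when $T\le1$ (and otherwise $1/T\le(1+T)/T^2$ trivially), each term is at most $p^0\cdot|\nabla_{t,x}(n,u,T)|\,(1+T)u^0/T^2=p^0Z$, which is exactly the bound claimed.

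For $\ell\ge1$ I will apply $\nabla_x^{\ell}$ to these expressions and expand via the Leibniz and Fa\`a di Bruno rules. Every spatial derivative of $1/T$ produces an additional factor of $|\nabla_xT|/T^2$; every derivative hitting $K_2'/K_2$ is harmless using standard identities for Bessel functions; every derivative of $u^{\mu}p_{\mu}$ produces at most one factor of $p^0$. Collecting these, a term with $\ell$ spatial derivatives carries at most $\ell+1$ powers of $p^0$ (one from the $u^{\mu}p_{\mu}$ factor already present plus $\ell$ absorbed into the factor $1/T$), a denominator of at least $T^2$, and a product of at most $\ell+1$ factors of $\nabla_{t,x}^{\le\ell+1}(n,u,T)$. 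Using $\ell+1\le 3$ for $\ell\le 2$ together with the smallness assumption \eqref{semp 3'} to absorb all but one of these $\nabla_{t,x}(n,u,T)$ factors, the bound collapses to $\frac{1}{T^2}(p^0)^{\ell+1}\mathcal{Z}$ as stated.

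The only slightly delicate point is the careful bookkeeping of how powers of $1/T$ and $p^0$ accumulate in the Leibniz expansion; every other step is an elementary algebraic manipulation. I would handle this by a short induction on $\ell$, treating the coefficients $\partial_{t,x}(n,u,T)/T^j$ and $\partial_{t,x}(u^{\mu}p_{\mu})/T^j$ as the inductive building blocks.
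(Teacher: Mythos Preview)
Your approach is correct and matches the paper's: the paper simply writes ``Direct computation and the assumption \eqref{exassump} can justify this,'' and your log-identity calculation is exactly that direct computation made explicit. One small remark on your bookkeeping for the second inequality: once you pass to $\tfrac12\partial_{t,x}\log\mathbf{M}$, this expression is of the form $g(t,x)+h_1(t,x)\,p^0+h_2(t,x)\cdot p$, so \emph{every} spatial derivative $\nabla_x^{\ell}$ leaves it of the same form and hence bounded by a multiple of $p^0$ --- the factor $(p^0)^{\ell+1}$ in the statement is therefore generous, and your parenthetical about extra powers of $p^0$ being ``absorbed into the factor $1/T$'' is unnecessary (and a little confusing), but harmless for the stated bound.
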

\begin{proof}
Direct computation and the assumption \eqref{exassump}  can justify this.
\end{proof}

\begin{lemma}\label{ss 07}
For the operators $\aa, \kk$ and $\Gamma$
, 
we have
\begin{align}\label{mathA}
\aa[f]=\partial_{p_i}\big(\sigma^{ij}\partial_{p_j}f\big)
-\frac{\sigma^{ij}}{4T ^2}\big(u^0\hat{p}_i-u_{i}\big)\big(u^0\hat{p}_j-u_{j}\big)f+\frac{1}{2 T }\partial_{p_i}\Big(\sigma^{ij}\big(u^0\hat{p}_j-u_{j}\big)\Big)f,
\end{align}
\begin{align}\label{mathK}
\kk[f]=&\left(\partial_{p_i}-\frac{u^0\hat{p}_i-u_i}{2T }\right)\int_{{\mathbb R}^3} \Phi^{ij} (p,q) \mh(p)\mh(q)\left(\frac{-u^0\hat{q}_j+u_j}{2T }f(q)-\partial_{q_j}f(q)\right)\ud q ,
\end{align}
and
\begin{align}\label{Gamma}
\Gamma [f,g]=&\left(\partial_{p_i}-\frac{u^0\hat{p}_i-u_i}{2T }\right)\int_{{\mathbb R}^3} \Phi^{ij} (p,q) \mh(q)\Big(\partial_{p_j}f(p)g(q)-f(p)\partial_{q_j}g(q)\Big)\ud q .
\end{align}
\end{lemma}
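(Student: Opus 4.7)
The plan is to verify the three formulas by direct substitution into \eqref{coll}, with systematic use of the orthogonality relation \eqref{Phi}. Since $\ln\m$ is affine in $p$, one has once and for all that
\begin{align*}
\partial_{p_j}\mh(p) = -\frac{u^0\hat p_j - u_j}{2T}\mh(p),\qquad \partial_{p_j}\m(p) = -\frac{u^0\hat p_j - u_j}{T}\m(p),
\end{align*}
from which the conjugation identity
\begin{align*}
\mhh(p)\,\partial_{p_i}\!\bigl(\mh(p)\,X\bigr) = \Bigl(\partial_{p_i} - \frac{u^0\hat p_i - u_i}{2T}\Bigr) X
\end{align*}
follows for any smooth $X=X(p)$. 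These are the two facts that drive everything.

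For \eqref{Gamma}, I would plug $g = \mh f$, $h = \mh g$ into \eqref{coll} and apply the product rule to both derivatives. The integrand of $\c[\mh f,\mh g]$ then organizes into
\begin{align*}
\Phi^{ij}\mh(p)\mh(q)\Bigl[\partial_{p_j}f(p)\,g(q) - f(p)\,\partial_{q_j}g(q) + \frac{u^0}{2T}(\hat q_j - \hat p_j)\,f(p)g(q)\Bigr],
\end{align*}
whose last summand vanishes after contraction over $j$ by \eqref{Phi}. Pulling one factor of $\mh(p)$ out of $\partial_{p_i}$ and cancelling it against the outside $\mhh(p)$ through the conjugation identity delivers \eqref{Gamma} immediately.

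For \eqref{mathA}, I would take $g = \mh f$, $h = \m$. Expanding $\partial_{p_j}(\mh f)\,\m(q) - \mh f\,\partial_{q_j}\m(q)$ and invoking \eqref{Phi} (which converts $u^0(2\hat q_j - \hat p_j)$ into $u^0\hat p_j$ after contraction with $\Phi^{ij}$) produces the integrand $\Phi^{ij}\mh(p)\m(q)\bigl[\partial_{p_j}f + \frac{u^0\hat p_j - u_j}{2T}f\bigr]$. Its bracket is independent of $q$, so the $q$-integration collapses to the collision frequency $\sigma^{ij}(p) = \int\Phi^{ij}\m(q)\,\ud q$, leaving $\c[\mh f,\m] = \partial_{p_i}\bigl\{\mh(p)\,\sigma^{ij}[\partial_{p_j}f + \frac{u^0\hat p_j - u_j}{2T}f]\bigr\}$. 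Applying the conjugation identity and expanding all four resulting terms, the two first-order cross pieces $\sigma^{ij}\tfrac{u^0\hat p_j - u_j}{2T}\partial_{p_i}f$ and $\tfrac{u^0\hat p_i - u_i}{2T}\sigma^{ij}\partial_{p_j}f$ cancel by the symmetry $\sigma^{ij} = \sigma^{ji}$, leaving exactly the three terms in \eqref{mathA}.

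For \eqref{mathK}, I would take $g = \m$, $h = \mh f$. The parallel expansion, using $\partial_{p_j}\m(p)$ on the first slot and once more invoking \eqref{Phi} (to trade $-u^0\hat p_j + u_j$ for $-u^0\hat q_j + u_j$ after $j$-contraction), yields the integrand $\Phi^{ij}\m(p)\mh(q)\bigl[\tfrac{-u^0\hat q_j + u_j}{2T}f(q) - \partial_{q_j}f(q)\bigr]$. Writing $\m(p) = \mh(p)\cdot\mh(p)$ and applying the conjugation identity to the outer $\partial_{p_i}$ then gives \eqref{mathK}. Nothing in any of the three derivations is conceptually hard; the only recurring obstacle is the careful bookkeeping needed to invoke \eqref{Phi} at precisely the right moments in order to collapse the mixed $\hat p$/$\hat q$ combinations into purely $\hat p$- or purely $\hat q$-dependent expressions, and the one-time use of $\sigma^{ij}=\sigma^{ji}$ to cancel the two cross pieces in the $\aa$ computation.
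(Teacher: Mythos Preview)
Your proposal is correct and follows essentially the same route as the paper: expand the Maxwellian derivatives, invoke the null-space identity \eqref{Phi} to collapse the mixed $\hat p/\hat q$ terms, and pull $\mh(p)$ through $\partial_{p_i}$ via what you call the conjugation identity; your explicit appeal to $\sigma^{ij}=\sigma^{ji}$ for the cancellation of the first-order cross pieces in $\aa$ is a detail the paper leaves implicit. One cosmetic slip: $\ln\m$ is \emph{not} affine in $p\in\R^3$ (since $p^0=\sqrt{1+|p|^2}$ is nonlinear), though the derivative formulas $\partial_{p_j}\mh = -\tfrac{u^0\hat p_j-u_j}{2T}\mh$ you write down are nonetheless correct and are all that the argument uses.
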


\begin{proof} 
This corresponds to \cite[Lemma 6]{Strain.Guo2004}. We first prove \eqref{mathA}. From the definition of the operator $\aa$ and \eqref{Phi}, we have
\begin{align}
\aa[f]&=\mhh (p)\partial_{p_i}\int_{{\mathbb R}^3} \Phi^{ij} (p,q) \left(\partial_{p_j}\big[\mh f\big](p)\mathbf{M}(q)-\big[\mh f\big](p)
\partial_{q_j}\mathbf{M}(q)\big]\right)\ud q\\
&=\mhh (p)\partial_{p_i}\int_{{\mathbb R}^3} \Phi^{ij} (p,q) \mh (p)\mathbf{M}(q)\left(\left(-\frac{u^0\hat{p}_j-u_j}{2T }+\frac{u^0\hat{q}_j-u_j}{T }\right)f(p)+\partial_{p_j}f(p)\right)\ud q\nonumber\\
&=\mhh (p)\partial_{p_i}\int_{{\mathbb R}^3} \Phi^{ij} (p,q) \mh (p)\mathbf{M}(q)\left(\frac{u^0\hat{p}_j-u_j}{2T }f(p)+\partial_{p_j}f(p)\right)\ud q\nonumber\\
&=\partial_{p_i}\bigg(\sigma^{ij}(p)\left(\frac{u^0\hat{p}_j-u_j}{2T }f(p)+\partial_{p_j}f(p)\right)\bigg)
-\sigma^{ij}(p)\frac{u^0\hat{p}_i-u_i}{2T }\left(\partial_{p_j}f(p)+\frac{u^0\hat{p}_j-u_j}{2T }\right) \nonumber\\
&=\partial_{p_i}\Big(\sigma^{ij}\partial_{p_j}f\Big)
-\frac{\sigma^{ij}}{4T ^2}\big(u^0\hat{p}_i-u_{i}\big)
\big(u^0\hat{p}_j-u_{j}\big)f+\frac{1}{2T }\partial_{p_i}\Big(\sigma^{ij}\big(u^0\hat{p}_j-u_{j}\big)\Big)f\nonumber.
\end{align}
Similarly, for \eqref{mathK}, we can obtain that
\begin{align}
\kk [f]&=\mhh (p)\partial_{p_i}\int_{{\mathbb R}^3} \Phi^{ij} (p,q) \left(\partial_{p_j}\mathbf{M}(p)\big[\mh f\big](q)-\mathbf{M}(p)
\partial_{q_j}\big[\mh f\big](q)\right)\ud q\\
&=\mhh (p)\partial_{p_i}\int_{{\mathbb R}^3} \Phi^{ij} (p,q) \mh (q)\mathbf{M}(p)\left(\left(-\frac{u^0\hat{p}_j-u_j}{T }+\frac{u^0\hat{q}_j-u_j}{2T }\right)f(q)-\partial_{q_j}f(q)\right)\ud q\nonumber\\
&=\mhh (p)\partial_{p_i}\int_{{\mathbb R}^3} \Phi^{ij} (p,q) \mh (q)\mathbf{M}(p)\left(\frac{-u^0\hat{p}_j+u_j}{2T }f(q)-\partial_{q_j}f(q)\right)\ud q\nonumber\\
&=\left(\partial_{p_i}-\frac{u^0\hat{p}_i-u_i}{2T }\right)\int_{{\mathbb R}^3} \Phi^{ij} (p,q)\mh (p) \mh (q)\left(\frac{-u^0\hat{q}_j+u_j}{2T }f(q)-\partial_{q_j}f(q)\right)\ud q .\nonumber
\end{align}
For \eqref{Gamma}, we use  \eqref{Phi} again to have
\begin{align}
\Gamma [f,g]&=\mhh (p)\partial_{p_i}\int_{{\mathbb R}^3} \Phi^{ij} (p,q) \left(\partial_{p_j}\big[\mh f\big](p)\big[\mh g\big](q)-\big[\mh f\big](p)
\partial_{q_j}\big[\mh f\big](q)\big]\right)\ud q\\
&=\mhh (p)\partial_{p_i}\int_{{\mathbb R}^3} \Phi^{ij} (p,q)\mh (p)\mh (q)\Big(
\partial_{p_j}f(p)g(q)-f(p)\partial_{q_j}g(q)\Big)\ud q\nonumber\\
&\quad+\mhh (p)\partial_{p_i}\int_{{\mathbb R}^3} \Phi^{ij} (p,q) \mh (p)\mh (q)f(p)g(q)\left(-\frac{u^0\hat{p}_j-u_j}{2T }+\frac{u^0\hat{q}_j-u_j}{2T }\right)\ud q\nonumber\\
&=\mhh (p)\partial_{p_i}\int_{{\mathbb R}^3} \Phi^{ij} (p,q)\mh (p)\mh (q)\Big(
\partial_{p_j}f(p)g(q)-f(p)\partial_{q_j}g(q)\Big)\ud q\nonumber\\
&=\left(\partial_{p_i}-\frac{u^0\hat{p}_i-u_i}{2T }\right)\int_{{\mathbb R}^3} \Phi^{ij} (p,q) \mh (q)\Big(\partial_{p_j}f(p)g(q)-f(p)\partial_{q_j}g(q)\Big)\ud q .\nonumber
\end{align}
\end{proof}

\begin{remark}
From Lemma \ref{ss 07}, we know that when taking $x_i$ derivatives on $\li$ and $\Gamma$, although there will be $p$ or $q$  popped out from $\m$ and $\mh$, they can be absorbed by $\m$ or $\mh$.
\end{remark}

\begin{lemma}\label{ss 01}
The linearized collision operator $\li$ is self-adjoint in $L^2$. It satisfies
\begin{align}\label{coerc}
    \brv{\li[f],f}\gs \abss{(\ik-\pk)[f]}^2.
\end{align}
\end{lemma}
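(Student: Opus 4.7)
The proof naturally decomposes into self-adjointness, non-negativity with null space $\mathcal{N}$, and a quantitative lower bound in the $\sigma$-norm; the strategy follows the classical Hilbert-type spectral gap approach, adapted to the relativistic Landau kernel.

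First, for self-adjointness, I start from the definition $\li[f] = \mhh\{\c[\mh f, \m] + \c[\m, \mh f]\}$, write $\brv{\li[f], g}$ as a double integral over $(p,q)$ using \eqref{coll}, integrate by parts in $p$ to remove the outer $\nabla_p$, and symmetrize by exchanging $p \leftrightarrow q$. The symmetries $\Phi^{ij}(p,q) = \Phi^{ji}(p,q) = \Phi^{ij}(q,p)$ (evident from \eqref{cker} and the symmetric form of $\mathcal{S}$) yield the manifestly symmetric representation
\begin{align*}
\brv{\li[f], g} = \frac{1}{2}\iint \Phi^{ij}(p,q)\,\m(p)\m(q)\,\Theta_i[f](p,q)\,\Theta_j[g](p,q)\,\ud p\,\ud q,
\end{align*}
with $\Theta_i[f](p,q) := \mhh(p)\partial_{p_i}[\mh f](p) - \mhh(q)\partial_{q_i}[\mh f](q)$. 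Self-adjointness is immediate from the symmetry in $(f,g)$. Setting $g = f$ and using the non-negativity of the matrix $\Phi$ gives $\brv{\li[f], f} \geq 0$, with vanishing forcing $\Theta_i[f] \equiv 0$, which characterizes $f/\mh$ as a collisional invariant, hence $f \in \mathcal{N}$. Consequently $\brv{\li[f], f} = \brv{\li[(\ik-\pk)[f]], (\ik-\pk)[f]}$.

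To upgrade non-negativity to the quantitative bound, I use the splitting $\li = -\aa - \kk$ from \eqref{LAK} together with the explicit formulas of Lemma \ref{ss 07}. Integration by parts and completing the square in $\brv{-\aa[f], f}$ yield
\begin{align*}
\brv{-\aa[f], f} = \int_{\r^3} \sigma^{ij}(p)\Bigl[\partial_{p_i}f + \tfrac{1}{2T}(u^0\hat{p}_i - u_i)f\Bigr]\Bigl[\partial_{p_j}f + \tfrac{1}{2T}(u^0\hat{p}_j - u_j)f\Bigr]\,\ud p.
\end{align*}
Using the ellipticity of $\sigma^{ij}$ together with the equivalence \eqref{sgm}, this quadratic form controls $\tbs{\nabla_p f}^2 + \tbs{f/T}^2 \approx \abss{f}^2$ modulo a relatively compact remainder. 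The operator $\kk$ in \eqref{mathK} carries the Gaussian factor $\mh(p)\mh(q)$ in its kernel, and is therefore compact with respect to $\abss{\cdot}$. A standard contradiction/weak-compactness argument then closes the gap: if the inequality failed, there would exist $f_n \in \mathcal{N}^\perp$ with $\abss{f_n} = 1$ and $\brv{\li[f_n], f_n} \to 0$; after passing to a subsequence, the non-compact $-\aa$-part gives weak convergence while the compact $\kk$-part gives strong convergence, producing a nonzero limit in $\mathcal{N} \cap \mathcal{N}^\perp = \{0\}$, a contradiction.

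The main obstacle will be the reconciliation of the $u$-dependent square arising from $\brv{-\aa[f],f}$ with the precise $p_ip_j/(p^0)^2$ structure appearing in the definition \eqref{norm} of the $\sigma$-inner product, which requires careful use of the null identity \eqref{Phi}, valid only after integration against $\m(q)\,\ud q$. In practice this is bypassed by working directly with the equivalent formulation \eqref{sgm}. The compactness of $\kk$ in the relativistic framework, while routine, also demands verifying Gaussian decay and regularity of its integral kernel, following the approach of Strain-Guo \cite{Strain.Guo2004}.
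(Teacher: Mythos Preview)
Your outline is essentially correct and would work, but it takes a longer route than the paper. The paper does not redo the spectral-gap argument from scratch; instead it invokes \cite[Lemma~6]{Strain.Guo2004} (the global-Maxwellian case, transplanted to the local Maxwellian $\m$) to obtain directly
\[
\brv{\li[f],f}\gtrsim |(\ik-\pk)[f]|_{\tilde\sigma}^2,
\]
where $|\cdot|_{\tilde\sigma}$ is the natural norm carrying the weights $(u^0\hat p_i-u_i)(u^0\hat p_j-u_j)$ that fall out of the $\aa$-computation you wrote. The remaining work is then purely a norm-equivalence: showing $|\cdot|_{\tilde\sigma}\approx|\cdot|_\sigma$ via the elementary inequality $(A-B)^2\ge\tfrac12 A^2-B^2$ applied to $u^0\hat p_i-u_i$, together with the smallness of $\|u\|_{L^\infty_{t,x}}$. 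So the paper trades your compactness/contradiction step for a citation plus a two-line algebraic estimate.

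The point you flag as the ``main obstacle'' --- reconciling the $u$-dependent square with the $\sigma$-norm --- is exactly where the paper's argument lives, and it is not bypassed by \eqref{sgm} alone: the smallness of $u$ is genuinely needed to absorb the cross term $\sigma^{ij}u_iu_j|f|^2$. Your proposal would hit the same issue inside the compactness argument (when comparing $\brv{-\aa[f],f}$ to $\abss{f}^2$), and you should make the role of small $u$ explicit there rather than deferring it to \eqref{sgm}.
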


\begin{proof}
Using Lemma \ref{ss 07}, compared with \cite[Lemma 6]{Strain.Guo2004}, for any large constant $R$, it holds that
\begin{align}\label{ppt0}
\brv{\li[f],f}&\gtrsim  |(\ik-\pk)[f]|^2_{\tilde{\sigma}},
\end{align}
where the norm $|\cdot|_{\tilde{\sigma}}$ is defined as
\begin{align}\label{norm-tilde}
 |f|^2_{\tilde{\sigma}}=&\sum_{i,j=1}^3\int_{{\mathbb R}^3}\sigma^{ij}\partial_{p_i}f\partial_{p_j}f\,\ud p
 +\sum_{i,j=1}^3\frac{1}{4T ^2}\int_{{\mathbb R}^3}\sigma^{ij}\big(u^0\hat{p}_i-u_i\big)\big(u^0\hat{p}_j-u_j\big)|f|^2\,\ud p.
\end{align}
Now we show the equivalence of the norm $|\cdot|_{\sigma}$ and $|\cdot|_{\tilde{\sigma}}$ under the smallness assumption of $u$. By the simple inequality
\begin{align}
    (A-B)^2\geq \frac12 A^2-B^2,
\end{align}
we have
\begin{align}\label{sigma0}
&\sum_{i,j=1}^3 \sigma^{ij}\big(u^0\hat{p}_i-u_i\big)\big(u^0\hat{p}_j-u_j\big)\geq \sum_{i,j=1}^3\frac12\big(u^0\big)^2\sigma^{ij}\hat{p}_i\hat{p}_j-\sum_{i,j=1}^3\sigma^{ij}u_iu_j.
  \end{align}
Combining \eqref{norm-tilde} and \eqref{sigma0}, we use the the smallness assumption of $u$ to obtain
\begin{align*}
|f|^2_{\tilde{\sigma}}&\geq\sum_{i,j=1}^3\int_{{\mathbb R}^3}\sigma^{ij}\partial_{p_i}f\partial_{p_j}f\,dp
 +\sum_{i,j=1}^3\frac{(u^0\big)^2}{8T ^2}\int_{{\mathbb R}^3}\sigma^{ij}\hat{p}_i\hat{p}_j|f|^2\,\ud p-\sum_{i,j=1}^3\frac{1}{4T ^2}\int_{{\mathbb R}^3}\sigma^{ij}u_iu_j|f|^2\,\ud p\\
 &\geq \frac18|f|_{\sigma}^2-\frac{C}{T ^2}\|u\|_{L^{\infty}_{t,x}}^2 \tbs{f}^2
 \gtrsim \tbs{\nabla_pf}^2+\tbs{f}^2\gtrsim|f|_{\sigma}^2.\nonumber
\end{align*}
On the other hand, from \eqref{sgm}, we have $|f|_{\sigma}\lesssim |f|_{\tilde{\sigma}}$. \eqref{coerc} follows from \eqref{ppt0} and the above inequality.
\end{proof}

\begin{lemma}\label{ss 02}
The nonlinear collision operator $\Gamma$ satisfies
\begin{align*}
    \babs{\brv{\Gamma[f,g]+\Gamma[g,f],h}}\ls \big(\tbs{f}\abss{g}+\tbs{g}\abss{f}\big)\abss{(\ik-\pk)[h]}.
\end{align*}
\end{lemma}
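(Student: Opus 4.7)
My plan is to combine the conservation-law orthogonality of $\Gamma$ with an integration by parts using the explicit representation from Lemma~\ref{ss 07}, reducing the trilinear estimate to a pointwise Cauchy-Schwarz against the positive-semidefinite Landau kernel. For any $\psi\in\{1,p_1,p_2,p_3,p^0\}$ and each fixed $x$, the orthogonality relation $\int_{\R^3}\c[g,h]\psi\,\ud p=\mathbf 0$ stated in the introduction yields
\begin{align*}
\brv{\Gamma[f,g],\psi\mh}=\int_{\R^3}\c\big[\mh f,\mh g\big]\psi\,\ud p=0,
\end{align*}
so that $\Gamma[f,g]\perp\mathcal{N}$ pointwise in $x$, hence $\brv{\Gamma[f,g],h}=\brv{\Gamma[f,g],(\ik-\pk)[h]}$, and the same for $\Gamma[g,f]$. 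This is exactly why $(\ik-\pk)[h]$ is the object appearing on the right-hand side of the claim.

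Next, I use Lemma~\ref{ss 07} to write $\Gamma[f,g]=\mhh\,\partial_{p_i}\big(\mh\,G^{ij}[f,g]\big)$ with
\begin{align*}
G^{ij}[f,g](p):=\int_{\R^3}\Phi^{ij}(p,q)\,\mh(q)\Big(\partial_{p_j}f(p)g(q)-f(p)\partial_{q_j}g(q)\Big)\ud q,
\end{align*}
and integrate by parts in $p$ against $(\ik-\pk)[h]$. The boundary terms vanish thanks to the Gaussian decay of $\mh$, and the derivative is absorbed into the combination $\partial_{p_i}(\ik-\pk)[h]+\tfrac{u^0\hat p_i-u_i}{2T}(\ik-\pk)[h]$, which when contracted with $\sigma^{ij}$ is pointwise controlled by $\abss{(\ik-\pk)[h]}$ via \eqref{sgm}.

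It then remains to bound $G^{ij}[f,g]$ in the $\sigma$-inner product. I split $G^{ij}$ into its $\partial_{p_j}f(p)g(q)$ and $-f(p)\partial_{q_j}g(q)$ pieces. For the first, $\partial_{p_j}f$ factors out of the $q$-integral, and the positive-semidefiniteness of $\Phi^{ij}(p,q)$ together with Cauchy-Schwarz in $q$ and the identity $\int\Phi^{ij}\m(q)\,\ud q=\sigma^{ij}(p)$ yields a contribution bounded by $\abss{f}\,\tbs{g}\,\abss{(\ik-\pk)[h]}$. For the second piece I integrate by parts in $q$ onto $\Phi^{ij}(p,q)\mh(q)$; the $q$-derivatives fall on Gaussian-decaying factors, and the analogous argument produces $\tbs{f}\,\abss{g}\,\abss{(\ik-\pk)[h]}$. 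Summing with the symmetric contribution from $\Gamma[g,f]$ delivers the claim.

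The main technical obstacle is the weighted-kernel bookkeeping in the last step: beyond the diagonal identity $\int\Phi^{ij}\m\,\ud q=\sigma^{ij}$ and the eigenvalue bounds recorded near \eqref{sgm}, one needs uniform control of the off-diagonal integrals $\int\Phi^{ij}(p,q)\mh(q)\cdots\ud q$ and their $q$-derivatives in a form that cleanly isolates the $\sigma$-seminorm on one factor and the $L^2$ norm on the other. Because $\m$ is a local rather than global Maxwellian, one must track the $x$-dependence through $(n,u,T)$ and exploit the smallness of $u$ assumed in the paper, following the analogous decomposition of $\Phi^{ij}$ carried out for the homogeneous case in \cite{Strain.Guo2004}.
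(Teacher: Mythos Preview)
Your proposal is correct and follows essentially the same route the paper indicates: reduce to $(\ik-\pk)[h]$ via the collision invariants, integrate by parts using the representation of $\Gamma$ from Lemma~\ref{ss 07}, and then defer the kernel estimates to \cite[Theorem 4]{Strain.Guo2004} while invoking the smallness of $u$ to absorb the extra $\frac{u^0\hat p_i-u_i}{2T}$ term. In fact your sketch is considerably more detailed than the paper's own proof, which simply cites \cite{Strain.Guo2004} and notes the $u$-smallness; one minor point is that for the $-f(p)\partial_{q_j}g(q)$ piece you do not actually need to integrate by parts in $q$, since $\abs{\nabla_q g}_{L^2}$ is already controlled by $\abss{g}$ via \eqref{sgm}.
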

\begin{proof}
The proof is similar to \cite[Theorem 4]{Strain.Guo2004}, so we omit it here. Compared with \cite[Theorem 4]{Strain.Guo2004}, we need the smallness of $\nm{u}_{L^{\infty}_{t,x}}$ to handle the term 
$-\frac{u \hat{p}_i-u_i}{2T }$ in \eqref{Gamma}.
\end{proof}

\begin{lemma}\label{ss 04}
For the weight functions defined in \eqref{wpm}, it holds that
\begin{align*}
    \brv{(w^{\ell})^2\li[f],f}\gs \abss{(w^{\ell})f}^2-C\abss{f}^2.
\end{align*}
\end{lemma}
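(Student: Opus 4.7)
The plan is to use the decomposition $\li=-\aa-\kk$ from Lemma~\ref{ss 07} together with the decay properties \eqref{smallw} of the weight, reducing the weighted coercivity to the unweighted one in Lemma~\ref{ss 01}.

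First I would treat the $\aa$-contribution. Integration by parts in the second-order term of \eqref{mathA} gives
\begin{align*}
-\brv{(w^{\ell})^2\aa[f],f}
=&\int(w^{\ell})^2\sigma^{ij}\partial_{p_i}f\,\partial_{p_j}f\,\ud p
+2\int (w^{\ell})(\partial_{p_i}w^{\ell})\,\sigma^{ij}f\,\partial_{p_j}f\,\ud p\\
&+\int\frac{\sigma^{ij}}{4T^2}(u^0\hat p_i-u_i)(u^0\hat p_j-u_j)(w^{\ell})^2|f|^2\,\ud p
+\text{l.o.t.}
\end{align*}
The first two terms together with $\int(\partial_{p_i}w^{\ell})(\partial_{p_j}w^{\ell})\sigma^{ij}|f|^2$ reconstruct $\int\sigma^{ij}\partial_{p_i}\big[(w^{\ell})f\big]\partial_{p_j}\big[(w^{\ell})f\big]\,\ud p$, so if we add and subtract $\int(\partial_{p_i}w^{\ell})(\partial_{p_j}w^{\ell})\sigma^{ij}|f|^2$ we recover the gradient part of $\abss{(w^\ell)f}^2$ plus an error controlled by $\nabla_p w^{\ell}$. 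Using \eqref{wpm} one has $|\nabla_p w^{\ell}|\ls\big(1/(\ln(\ue+t)\tc)+(p^0)^{-1}\big)w^{\ell}$, and together with the smallness of $u$ this error is absorbed into the multiplicative $\frac{1}{4T^2}\sigma^{ij}\frac{p_i}{p^0}\frac{p_j}{p^0}(w^\ell)^2|f|^2$ piece of $\abss{(w^\ell)f}^2$, modulo a remainder $\ls\abss{f}^2$ from the $u$-dependent cross terms and the zeroth-order piece $\frac{1}{2T}\partial_{p_i}(\sigma^{ij}(u^0\hat p_j-u_j))(w^\ell)^2|f|^2$.

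Second, I would treat the $\kk$-contribution. From \eqref{mathK}, the kernel of $\kk$ contains the factor $\mh(p)\mh(q)$. After pairing with $(w^{\ell})^2f$ and applying Cauchy--Schwarz in $(p,q)$, every factor of $(w^{\ell})^2\mh$ that survives is pointwise $\ls\ue^{-c_0 p^0}$ by \eqref{smallw}. A standard Schur-type argument (exactly as in the unweighted coercivity in Lemma~\ref{ss 01}) then yields
\[
\babs{\brv{(w^{\ell})^2\kk[f],f}}\leq \delta\,\nms{(w^{\ell})f}^2+C_{\delta}\abss{f}^2
\]
for arbitrarily small $\delta>0$. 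Choosing $\delta$ small enough, this is absorbed by the principal term from the $\aa$-part. Combining the two steps gives
\[
\brv{(w^{\ell})^2\li[f],f}\gs\nms{(w^{\ell})(\ik-\pk)[f]}^2-C\abss{f}^2,
\]
where the $\pk[f]$-part of $(w^\ell)f$ has been moved to the right-hand side using $|(w^{\ell})\pk[f]|_{\sigma}\ls\tbs{f}\ls\abss{f}$, which holds because $\pk[f]$ is $\mh$ times a quadratic polynomial in $p$ and $(w^{\ell})^2\mh\ls\ue^{-c_0 p^0}$. Finally, the triangle inequality $\nms{(w^{\ell})f}^2\leq 2\nms{(w^{\ell})(\ik-\pk)[f]}^2+2\nms{(w^{\ell})\pk[f]}^2$ together with the same bound on $|(w^{\ell})\pk[f]|_{\sigma}$ converts the microscopic bound into the full $\nms{(w^{\ell})f}^2$, concluding the proof.

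The main obstacle is the bookkeeping in the first step: the cross integral $\int(w^{\ell})(\nabla_p w^{\ell})\cdot(\sigma\nabla_p f)f\,\ud p$ cannot be absorbed by the gradient part of $\abss{(w^\ell)f}^2$ alone without losing control, so one must carefully exploit the inequality $|\nabla_p w^{\ell}|^2\sigma^{ij}\ls (w^{\ell})^2\sigma^{ij}\hat p_i\hat p_j/T^2+(\text{lower order})$ and the smallness of $u$ to match it against the multiplicative piece of $\abss{(w^\ell)f}^2$.
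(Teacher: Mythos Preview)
Your overall strategy matches the paper's: split $\li=-\aa-\kk$, integrate by parts in the $\aa$ part, and exploit the factor $\mh(p)$ in the $\kk$ kernel together with \eqref{smallw} to control the $\kk$-contribution by $\abss{f}^2$. A few corrections are needed. First, the final detour through $(\ik-\pk)[f]$ is both unnecessary and incorrectly stated: your $\aa$ and $\kk$ steps already yield $\brv{(w^\ell)^2\li[f],f}\gtrsim\abss{(w^\ell)f}^2-C\abss{f}^2$ directly, since the integration by parts acts on $f$ itself and nothing in the argument produces a microscopic projection; you should simply stop there. Second, the absorption of the leading cross term is not driven by smallness of $u$ but by the condition $\tc\geq T$ in \eqref{assump}: the exponential part of $\partial_{p_i}(w^\ell)^2$ contributes $\frac{2}{25\tc^2\ln^2(\ue+t)}\int(w^\ell)^2\sigma^{ij}\hat p_i\hat p_j|f|^2$ after Cauchy, and this is strictly dominated by the multiplicative piece $\frac{(u^0)^2}{8T^2}\int(w^\ell)^2\sigma^{ij}\hat p_i\hat p_j|f|^2$ precisely because $T\leq\tc$ (smallness of $u$ enters only in passing from $(u^0\hat p-u)$ to $\hat p$). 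Third, the polynomial part of $\nabla_p w^\ell$ and the zeroth-order term $\frac{1}{2T}\partial_{p_i}\big(\sigma^{ij}(u^0\hat p_j-u_j)\big)(w^\ell)^2|f|^2$ carry an extra $(p^0)^{-1}$ and are \emph{not} bounded by $\abss{f}^2$ outright; the paper handles them by splitting $\{p^0\leq R\}\cup\{p^0>R\}$, which gives $C_R\abss{f}^2+\frac{C}{R}\abss{(w^\ell)f}^2$, and then taking $R$ large. Finally, the norm throughout should be $\abss{\cdot}$ (pointwise in $x$), not $\nms{\cdot}$.
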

\begin{proof}
We split $\li$ as $-\aa$ and $-\kk$ and use the expression of $\aa$ in \eqref{mathA} to integrate by parts w.r.t. $p$ to have
\begin{align}\label{wL}
\brv{\li[f],(w^{\ell})^2 f}=&-\brv{\partial_{i}\big(\sigma^{ij}\partial_jf\big), (w^{\ell})^2f}
+\brv{\frac{\sigma^{ij}}{4T ^2}\big(u^0\hat{p}_i-u_{i}\big)\big(u^0\hat{p}_j-u_{j}\big)f, (w^{\ell})^2f}\\
&-\frac{1}{2T }\brv{\partial_{i}\Big(\sigma^{ij}\big(u^0\hat{p}_j-u_{j}\big)\Big)f, (w^{\ell})^2f}-\brv{ \kk[f], (w^{\ell})^2f}\nonumber\\
=&\brv{\sigma^{ij}\partial_jf, (w^{\ell})^2\partial_{i}f}
+\brv{\frac{\sigma^{ij}}{4T ^2}\big(u^0\hat{p}_i-u_{i}\big)\big(u^0\hat{p}_j-u_{j}\big)f, (w^{\ell})^2f}\nonumber\\
&-\frac{1}{2T }\brv{\partial_{i}\Big(\sigma^{ij}\big(u^0\hat{p}_j-u_{j}\big)\Big)f, (w^{\ell})^2f}-\brv{\sigma^{ij}\partial_jf, \partial_{i}((w^{\ell})^2)f}-\brv{\kk[f],(w^{\ell})^2 f}.\nonumber
\end{align}
Now we estimate the terms in the R.H.S. of the second equal sign in \eqref{wL}.
From \cite[Lemma 5]{Strain.Guo2004}, we know
\begin{equation}
\big|\nabla_{p}^k\sigma^{ij}(p)\big|\lesssim (p^0)^{-k},
\end{equation}
for any integer $k\geq0$. Then for any large constant $R$, we have
\begin{align}
&\,\frac{1}{2 T }\abs{\brv{\partial_{i}\Big(\sigma^{ij}\big(u^0\hat{p}_j-u_{j}\big)\Big)f, (w^{\ell})^2f}}
\lesssim \frac{1}{2 T } \int_{{\mathbb R}^3}\frac{(w^{\ell})^2}{p^0}|f|^2\,\ud p\\
=&\,\frac{1}{2 T } \int_{p^0\leq R}\frac{(w^{\ell})^2}{p^0}|f|^2\,\ud p+\frac{1}{2 T } \int_{p^0> R}\frac{(w^{\ell})^2}{p^0}|f|^2\,\ud p
\lesssim \frac{C_R}{T }\tbs{f}^2+\frac{1}{RT }\tbs{(w^{\ell})f}^2. \no
\end{align}
Noting that 
\begin{align}\label{diff}
\partial_{i}((w^{\ell})^2)=\frac{4(\nc-\ell)}{p^0}\hat{p}_i(w^{\ell})^2+\frac{2\hat{p}_i}{5T_c\ln(\ue+t)}(w^{\ell})^2,
\end{align}
we use Cauchy's inequality to have
\begin{align}
&\;\abs{\brv{\sigma^{ij}\partial_jf, \partial_{i}[(w^{\ell})^2]f}}\\
\leq&\;  C\int_{{\mathbb R}^3}\frac{(w^{\ell})^2}{p^0}\Big(|\nabla_pf|^2+|f|^2\Big)\,\ud p+\abs{\int_{{\mathbb R}^3}\frac{2(w^{\ell})^2}{5\tc\ln(\ue+t)}\Big(\sigma^{ij}\hat{p}_i\hat{p}_j|f|^2\Big)^{\frac{1}{2}}\Big(\sigma^{ij}\partial_if\partial_jf\Big)^{\frac{1}{2}}\,\ud p}\no\\
\leq &\;C_R|f|^2_{\sigma}+\frac{C}{R}|(w^{\ell})f|^2_{\sigma}+\frac{1}{2}\int_{{\mathbb R}^3}(w^{\ell})^2\sigma^{ij}\partial_if\partial_jf\,\ud p+\frac{2}{25\tc^2\ln^2(\ue+t)}\int_{{\mathbb R}^3}(w^{\ell})^2\sigma^{ij}\hat{p}_i\hat{p}_j|f|^2\,\ud p.\no
\end{align}
For the term $\brv{\kk[f],(w^{\ell})^2 f}$, we integrate with respect to $p$ and use \eqref{mathK} and \eqref{diff} to have
\begin{align}
\\
\abs{\brv{\kk[f],(w^{\ell})^2 f}}\lesssim &\int_{{\mathbb R}^3} \int_{{\mathbb R}^3} |\Phi (p,q)| \mh(p)\mh(q)(w^{\ell})^2\Big(|\nabla_pf(p)|+\frac{1}{T_c }|f(p)|\Big)\no\\
&\times\Big(|\nabla_qf(q)|+\frac{1}{T }|f(q)|\Big)\,\ud p\ud q\no\\
\lesssim&\, \Big(\int_{{\mathbb R}^3}|\Phi (p,q)|^2\mathbf{M}(q)\,\ud q\Big)^{\frac{1}{2}}  \Big(\int_{{\mathbb R}^3}\Big(|\nabla_qf(q)|^2+\frac{1}{T ^2}|f(q)|^2\Big)\,\ud q\Big)^{\frac{1}{2}}\no \\
&\times \Big(\int_{{\mathbb R}^3}(w^{\ell})^4\mathbf{M}(p)\,\ud p\Big)^{\frac{1}{2}} \Big(\int_{{\mathbb R}^3}\Big(|\nabla_pf(p)|^2+\frac{1}{T_c ^2}|f(p)|^2\Big)\,\ud q\Big)^{\frac{1}{2}} \no\\
\lesssim &\,\Big(\int_{{\mathbb R}^3}|\Phi (p,q)|^2\mathbf{M}(q)\,\ud p\Big)^{\frac{1}{2}}\abss{f}^2.\no
\end{align}
Here we used \eqref{smallw} to deduce that
\begin{align}
    \int_{{\mathbb R}^3}(w^{\ell})^4\mathbf{M}(p)\,\ud p\lesssim 1.
\end{align}
Then, as in \cite{Strain.Guo2004}, we can split ${\mathbb R}^3_q$ into two regions $A$ and $B$. From \cite[Lemma 2]{Strain.Guo2004}, we have
\begin{align}\label{PhiM}
\int_{{\mathbb R}^3}|\Phi (p,q)|^2\mathbf{M}(q)\,\ud q&=\int_{q\in A}|\Phi (p,q)|^2\mathbf{M}(q)\,\ud q+\int_{q\in B}|\Phi (p,q)|^2\mathbf{M}(q)\,\ud q\\
&\lesssim \int_{{\mathbb R}^3}\big(1+|p-q|^{-2}\big)\mathbf{M}^{1/2}(q)\,\ud q\lesssim 1.\nonumber
\end{align}
Then we can further bound $\abs{\brv{\kk[f],(w^{\ell})^2 f}}$ by $|f|_{\sigma}^2$.

Collecting the above estimates in \eqref{wL}, we use \eqref{assump}, \eqref{tt 01'''=} and \eqref{sigma0} to get
\begin{align}
&\;\brv{\li[f],(w^{\ell})^2 f}\\
\geq&\;\frac12\brv{(w^{\ell})^2\sigma^{ij}\partial_jf, \partial_{i}f}-C_R|f|^2_{\sigma}-C\left(\frac{1}{R}+\nm{u}_{L^{\infty}_{t,x}}^2\right)|(w^{\ell})f|^2_{\sigma}\no\\
&+\left(\frac{\big(u^0\big)^2}{8T ^2}-\frac{2}{25\tc^2\ln^2(\ue+t)}\right)
\int_{{\mathbb R}^3}(w^{\ell})^2\sigma^{ij}\hat{p}_i\hat{p}_j|f|^2\,\ud p\no\\
\geq&\;\frac12\brv{(w^{\ell})^2\sigma^{ij}\partial_jf, \partial_{i}f}-C_R|f|^2_{\sigma}-C\left(\frac{1}{R}+\nm{u}_{L^{\infty}_{t,x}}^2\right)|(w^{\ell})f|^2_{\sigma}+\frac{9\big(u^0\big)^2}{200T ^2}
\int_{{\mathbb R}^3}(w^{\ell})^2\sigma^{ij}\hat{p}_i\hat{p}_j|f|^2\,\ud p\no\\ 
\gtrsim&\; |(w^{\ell})f|^2_{\sigma} -C_R|f|^2_{\sigma}\no \nonumber
\end{align}
by choosing $R$ large enough. 
\end{proof}

\begin{lemma}\label{ss 05}
For the weight functions defined in \eqref{wpm}, it holds that
\begin{align}\label{Weight1} 
    \brv{\Gamma[f,g],(w^{\ell})^2h}\ls \Big(\tbs{(w^{\ell})f}\abss{g}+\tbs{g}\abss{(w^{\ell})f}\Big)\abss{(w^{\ell})h}.
\end{align}
\end{lemma}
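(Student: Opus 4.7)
The plan is to mirror the proof of the unweighted trilinear estimate in Lemma \ref{ss 02}, carefully tracking how the weight $(w^\ell)^2$ is distributed. Starting from the explicit expression for $\Gamma$ in Lemma \ref{ss 07}, I first integrate by parts in $p$ to transfer the outer first-order operator $\partial_{p_i}-\frac{u^0\hat p_i-u_i}{2T}$ off $\Gamma[f,g]$ and onto the test factor $(w^\ell)^2 h$. This produces three kinds of $h$-contributions inside the double integral: (a) a genuine derivative $(w^\ell)^2\partial_{p_i}h$, (b) a weight-derivative $\partial_{p_i}\!\big[(w^\ell)^2\big]\,h$, which by \eqref{diff} equals $\big(\tfrac{4(\nc-\ell)}{p^0}+\tfrac{2}{5T_c\ln(\ue+t)}\big)\hat p_i(w^\ell)^2 h$ with bounded coefficient (by the assumptions on $T_c$), and (c) the shift term $\tfrac{u^0\hat p_i-u_i}{2T}(w^\ell)^2 h$. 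The key observation is that all three, once paired with $\Phi^{ij}(p,q)$, fit into the weighted dissipation norm $\abss{(w^\ell)h}$, exactly because the $\tilde\sigma$-norm from \eqref{norm-tilde} contains both $\sigma^{ij}\partial_{p_i}\partial_{p_j}$ and $\sigma^{ij}(u^0\hat p_i-u_i)(u^0\hat p_j-u_j)$ pieces, and the $\hat p_i \hat p_j\sigma^{ij}$ form is controlled by these after the equivalence shown in Lemma \ref{ss 01}.

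Next I split the resulting double integral into $I+I\!I$ according to the two factors $\partial_{p_j}f(p)g(q)$ and $-f(p)\partial_{q_j}g(q)$ inside $\Gamma$. For $I$, I apply Cauchy--Schwarz in $(p,q)$, splitting $(w^\ell)^2=(w^\ell)\cdot(w^\ell)$ so that one copy decorates $\partial_{p_j}f(p)$ and the other decorates the $h$-factor; the $g(q)$ side is left bare, and the integration in $q$ is absorbed by $\mh(q)$ together with the bound $\int|\Phi(p,q)|^2\mh(q)\,\ud q\lesssim 1$ (shown in \eqref{PhiM}). Pairing the resulting $p$-integrals via $\sigma^{ij}$ yields the product $\abss{(w^\ell)f}\,\tbs{g}\,\abss{(w^\ell)h}$. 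For $I\!I$, I either keep $\partial_{q_j}g$ and Cauchy--Schwarz directly against the bare $g$-derivative (producing $\abss{g}$) paired with $\tbs{(w^\ell)f}$ and $\abss{(w^\ell)h}$, or equivalently integrate by parts in $q$ so that $\partial_{q_j}$ falls on $\mh(q)$ (giving a $\tfrac{u^0\hat q_j-u_j}{2T}\mh(q)$ factor that is still Gaussian and bounded) and on $\Phi^{ij}(p,q)$; again Cauchy--Schwarz with $\int|\Phi|^2\mh\,\ud q\lesssim1$ and its derivative analog yields the product $\tbs{(w^\ell)f}\,\abss{g}\,\abss{(w^\ell)h}$.

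Summing $I$ and $I\!I$ delivers the bound stated in \eqref{Weight1}. The main technical obstacle is item (b) above: one must confirm that the polynomial growth potentially introduced by differentiating the weight $(w^\ell)^2=(p^0)^{4(\nc-\ell)}\exp\!\big(\tfrac{2p^0}{5\ln(\ue+t)T_c}\big)$ does not escape the $|\cdot|_\sigma$ norm. This is exactly the same mechanism that makes the weighted coercivity in Lemma \ref{ss 04} work: the $1/p^0$ factor plus the smallness of $1/(5T_c\ln(\ue+t))$ (bounded uniformly in $t\geq 0$) ensure that the excess growth produces only $\hat p_i\hat p_j\sigma^{ij}(w^\ell)^2$, which is an admissible piece of $\abss{(w^\ell)h}^2$ (and symmetrically of $\abss{(w^\ell)f}^2$ after Cauchy--Schwarz). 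Once this bookkeeping is in place, the estimate reduces to a Cauchy--Schwarz argument in complete analogy with the unweighted Lemma \ref{ss 02}.
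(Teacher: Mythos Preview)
Your proposal is correct and follows essentially the same route as the paper: integrate by parts in $p$ using the explicit formula for $\Gamma$ from Lemma~\ref{ss 07}, use \eqref{diff} to control the contribution from $\partial_{p_i}\big[(w^\ell)^2\big]$, and then apply Cauchy--Schwarz together with the kernel bound \eqref{PhiM}. The paper's write-up is terser (it collapses your items (a)--(c) into the single factor $(w^\ell)^2\big(\tfrac{|h|}{T_c}+|\partial_{p_i}h|\big)$ and does not discuss the alternative $q$-integration by parts for the second piece), but the argument is the same.
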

\begin{proof}
From \eqref{Gamma}, we integrate by parts with respect to $p$ and use \eqref{diff} to get
\begin{align}\label{WGamma}
&\abs{\brv{\Gamma[f,g],(w^{\ell})^2 h}}\\
=&\abs{\brv{\int_{{\mathbb R}^3} \Phi^{ij} (p,q) \mh(q)\Big(\partial_{p_j}f(p)g(q)-f(p)\partial_{q_j}g(q)\Big)\ud q,\left(\partial_{p_i}-\frac{u \hat{p}_i-u_i}{2 T }\right)((w^{\ell})^2 h)}}\nonumber\\
\lesssim& \iint_{\r^3\times\r^3} \Phi^{ij} (p,q) \mh(q)\abs{\partial_{p_j}f(p)g(q)-f(p)\partial_{q_j}g(q)}(w^{\ell})^2 \Big(\frac{|h|}{T_c}+|\partial_{p_i}h|\Big)\,\ud p\ud q.\nonumber
\end{align}
By H\"{o}lder's inequality, we can use \eqref{PhiM} to further estimate \eqref{WGamma} as
\begin{align}
\abs{\brv{\Gamma[f,g],(w^{\ell})^2 h}}
&\lesssim | \Phi(p,q) \mh|_{L^2_q}\Big(|(w^{\ell})\partial_{p_j}f|_{L^2}|g|_{L^2}+|(w^{\ell})f|_{L^2}|\partial_{q_j}g|_{L^2}\Big) |(w^{\ell})h|_{\sigma}\\
&\lesssim \Big(|(w^{\ell})f|_{\sigma}\tbs{g}+\tbs{(w^{\ell})f}|g|_{\sigma}\Big)|(w^{\ell})h|_{\sigma}.\nonumber
\end{align}
\end{proof}

\section{No-Weight Energy Estimates} \label{l2}

In this section, we derive the $L^2$ energy estimates for the remainders $\big(\fe, E_R^{\e}, B_R^{\e}\big)$.

\subsection{Basic \texorpdfstring{$L^2$}{} Estimates} \label{l21}

We first perform the simplest $L^2$ energy estimate for the remainders. 

\begin{proposition}\label{L2ener}
For the remainders $\big(\fe, E^{\e}_ R, B^{\e}_ R\big)$, it holds that
\begin{align}\label{L2ener01}
&\,\frac{\ud}{\ud t}\bigg(\tnm{\sqrt{\frac{4\pi T }{u^0}}\fe}^2+\tnm{E^{\e}_R}^2+\tnm{B^{\e}_ R}^2\bigg)+\frac{\delta}{\e}\nms{(\ik-\pk)[\fe]}^2\\
\lesssim &\, \left[(1+t)^{-\beta_0}+\e^{\frac{1}{3}}\right]\mathcal{E}+\e^2\mathcal{D} +\e^{2k+1}(1+t)^{4k+2}
+\e^{k}(1+t)^{2k}\sqrt{\mathcal{E}}.\no
\end{align}
\end{proposition}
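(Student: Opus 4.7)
The plan is to form the $L^2_{x,p}$ inner product of the remainder equation \eqref{L20} against the test function $\frac{4\pi T }{u^0}\fe$, and combine with the Maxwell system \eqref{L201} tested against $(E_R^{\e}, B_R^{\e})$ in $L^2_x$. The weight $\frac{4\pi T }{u^0}$ is engineered precisely so that the linear electromagnetic coupling $\frac{u^0}{T }\hat{p}\mh\cdot E_R^{\e}$ in \eqref{L20} contributes $4\pi\br{E_R^{\e}, \int\hat{p}\mh\fe\,\ud p}$, which is exactly the source produced by testing the first Maxwell equation with $E_R^{\e}$. Using the antisymmetry $\int E_R^{\e}\cdot(\nabla_x\times B_R^{\e})\,\ud x = \int B_R^{\e}\cdot(\nabla_x\times E_R^{\e})\,\ud x$, the Maxwell test yields the identity $\tfrac{1}{2}\tfrac{\ud}{\ud t}(\tnm{E_R^{\e}}^2+\tnm{B_R^{\e}}^2) = 4\pi\br{E_R^{\e}, \int\hat{p}\mh\fe\,\ud p}$, and substituting this into the $\fe$-identity produces the combined time derivative $\tfrac{1}{2}\tfrac{\ud}{\ud t}\big[\tnm{\sqrt{4\pi T /u^0}\fe}^2+\tnm{E_R^{\e}}^2+\tnm{B_R^{\e}}^2\big]$ appearing on the left-hand side of \eqref{L2ener01}.

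For the remaining structural terms, the time commutator $\tfrac{1}{2}\partial_t(\tfrac{4\pi T }{u^0})|\fe|^2$, the streaming commutator $\tfrac{1}{2}\hat p\cdot\nabla_x(\tfrac{4\pi T }{u^0})|\fe|^2$ arising after integration by parts in $x$, and the Lorentz term $-(E +\hat p\times B )\cdot\nabla_p\fe$ after integration by parts in $p$ are all bounded by $\zzz\tnm{\fe}^2 \ls (1+t)^{-\beta_0}\ee$ via the decay estimate \eqref{decay} in Theorem~\ref{Euler-Maxwell}. The residual coupling $-\tfrac{u }{T }\mh\cdot(E_R^{\e}+\hat p\times B_R^{\e})\fe$ is controlled by Cauchy-Schwarz and smallness of $u $, yielding an $(1+t)^{-\beta_0}\ee$ contribution. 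Lemma~\ref{ss 01} then provides the dissipation $\tfrac{\delta}{\e}\nms{(\ik-\pk)[\fe]}^2$.

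For the right-hand side of \eqref{L20}, the background-derivative source $-\fe\mhh\{\partial_t+\hat p\cdot\nabla_x-(E +\hat p\times B )\cdot\nabla_p\}\mh$ is controlled by Lemma~\ref{ss 06} and absorbed in $(1+t)^{-\beta_0}\ee$. For the nonlinear term $\e^{k-1}\Gamma[\fe,\fe]$, I would use the trilinear estimate Lemma~\ref{ss 02}, extract one $L^\infty_x$ norm of $\fe$ via Sobolev embedding $H^2_x\hookrightarrow L^\infty_x$ bounded by $\sqrt{\ee}$, and absorb the result into $\e^2\dd$ using $k\geq 3$ together with the bootstrap $\ee\ls\e^{-1/2}$. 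The mixed terms $\e^{i-1}(\Gamma[\mhh F_i,\fe]+\Gamma[\fe,\mhh F_i])$ are similar, using the polynomial-in-$t$ growth bounds on $F_i$ established in Proposition~\ref{fn}. The $\e^k$-order Lorentz coupling $\e^k(E_R^{\e}+\hat p\times B_R^{\e})\cdot\nabla_p\fe$, after integration by parts in $p$, becomes $\e^k\|(E_R^{\e},B_R^{\e})\|_{L^\infty_x}\tnm{\fe}^2$; applying Sobolev embedding with $\tnm{\nabla_x^2(E_R^{\e},B_R^{\e})}\ls\sqrt{\ee/\e^2}$ and using $t\leq\overline t=\e^{-1/3}$ together with the bootstrap \eqref{rr 01} produces the $\e^{1/3}\ee$ contribution. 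Lastly, $\e^k\sb$ has polynomial-in-$t$ size of order $\e^k(1+t)^{2k}$, yielding $\e^{2k+1}(1+t)^{4k+2}$ after Cauchy-Schwarz.

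The main subtlety is the delicate balance governing the electromagnetic terms: the weight $\tfrac{4\pi T }{u^0}$ is forced by the cancellation against Maxwell, yet depends on the Euler-Maxwell background so that all commutators must be absorbed via $\zzz$ and its $(1+t)^{-\beta_0}$ decay. The $\e^k$-order Lorentz coupling is the other delicate point, since it cannot be absorbed into $\e^2\dd$ alone and must instead exploit the bootstrap $\ee\ls\e^{-1/2}$ together with the time bound $\overline t=\e^{-1/3}$ in a careful combination to yield the $\e^{1/3}\ee$ factor on the RHS.
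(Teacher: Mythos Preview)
Your overall strategy matches the paper's exactly: test \eqref{L20} against $\frac{4\pi T}{u^0}\fe$, combine with the Maxwell energy identity \eqref{L2max}, and invoke Lemma~\ref{ss 01} for the coercivity. Two inaccuracies should be corrected. First, $\nm{\fe}_{H^2}$ is \emph{not} bounded by $\sqrt{\ee}$: the energy \eqref{eed} carries weights $\e,\e^2$ on the first and second spatial derivatives, so only $\nm{\fe}_{H^2}\ls\e^{-1}\sqrt{\ee}$ holds; the nonlinear estimate still closes because $k\geq 3$ absorbs the extra $\e^{-1}$. Second, the term $\e^k(E_R^{\e}+\hat p\times B_R^{\e})\cdot\nabla_p\fe$, tested against $\frac{4\pi T}{u^0}\fe$, \emph{vanishes identically} after integration by parts in $p$ (the weight is $p$-independent and $\nabla_p\cdot(\hat p\times B_R^{\e})=0$), so it is not the source of the $\e^{1/3}\ee$ contribution; that factor actually arises from the mixed collision terms $\e^{i-1}\Gamma[\mhh F_i,\fe]$ via $\e(1+t)^2\ls\e^{1/3}$ on $t\leq\e^{-1/3}$, together with the growth bounds in Proposition~\ref{fn}.

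One genuine difference worth noting: for the background-derivative source $\fe\mhh\{\dt+\hat p\cdot\nabla_x\}\mh$, which carries a factor of $p^0$, the paper does \emph{not} simply absorb it into $(1+t)^{-\beta_0}\ee$. Instead it splits the microscopic part over $\{p^0\leq\e^{-1}\kappa\}$ and $\{p^0>\e^{-1}\kappa\}$, routing the high-momentum piece into $\e\nmsw{(\ik-\pk)[\fe]}^2\ls\e^2\dd$ (see \eqref{L200}); this is in fact where the $\e^2\dd$ on the right of \eqref{L2ener01} originates. Your alternative---bounding $\zzz\nm{\sqrt{p^0}(\ik-\pk)[\fe]}^2$ directly by $(1+t)^{-\beta_0}\nmw{(\ik-\pk)[\fe]}^2\ls(1+t)^{-\beta_0}\ee$ using $p^0\ls(w^0)^2$---is also valid and slightly simpler, but you should state explicitly that the weighted energy $\nmw{(\ik-\pk)[\fe]}^2$ in $\ee$ is what absorbs the momentum growth, since Lemma~\ref{ss 06} alone does not furnish a bound by unweighted norms.
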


\begin{proof}
From \eqref{L201}, we have
\begin{align}\label{L2max}
\frac{1}{2}\frac{\ud}{\ud t}\Big(\tnm{E_R^{\e}}^2+\tnm{B_R^{\e}}^2\Big)=4\pi\br{\hat{p}\mh \fe, E_R^{\e}}.
\end{align}
We take the $L^2$ inner product of \eqref{L20} with $\frac{4\pi T }{u^0}\fe$ and use \eqref{coerc}, \eqref{L2max} to have
\begin{align}\label{L202}
    & \frac{1}{2}\frac{\ud}{\ud t}\bigg(\tnm{\sqrt{\frac{4\pi T }{u^0}}\fe}^2+\tnm{E_R^{\e}}^2+\tnm{B_R^{\e}}^2\bigg)+\frac{\delta}{\e}\nms{(\ik-\pk)[\fe]}^2\\
    \leq&\abs{\br{ \m ^{-\frac{1}{2}}\fe\big[\dt +\hat{p}\cdot\nabla_x- \big(E +\hat{p} \times B  \big)\cdot\nabla_p \big]\mh ,\frac{4\pi T }{u^0}\fe}}\no\\
    &+\abs{\frac{1}{2}\br{\left\{\big(\dt +\hat{p}\cdot\nabla_x\big)\left[\frac{T }{u^0}\right]\right\}\fe, 4\pi \fe}}\no\\
    &+\abs{\br{ -u \mh \cdot\big(E_R^{\e}+\hat{p} \times B_R^{\e} \big), \frac{4\pi}{u^0} \fe}}+\abs{\e^{k-1}\br{\Gamma\left[\fe,\fe\right],\frac{4\pi T }{u^0}\fe}}\no\\
    &+\abs{\sum_{i=1}^{2k-1}\e^{i-1}\br{\Gamma\left[ \m ^{-\frac{1}{2}}F_i, \fe\right]+\Gamma\left[\fe,  \m ^{-\frac{1}{2}}F_i\right],\frac{4\pi T }{u^0}\fe}}\no\\
    &+\abs{\e^k\br{\big(u^0\hat{p}-u\big)\cdot\big(E_R^{\e}+\hat{p} \times B_R^{\e} \big)\fe,\frac{2\pi \fe}{u^0}}}\no\\
    &+\abs{\sum_{i=1}^{2k-1}\e^i\br{\big(E_i+\hat{p} \times B_i \big)\cdot\nabla_p\fe +\big(E_R^{\e}+\hat{p} \times B_R^{\e} \big)\cdot \m ^{-\frac{1}{2}}\nabla_pF_{i},\frac{4\pi T }{u^0}\fe }}\no\\
    &+\abs{\sum_{i=1}^{2k-1}\e^i\br{\big(E_i+\hat{p} \times B_i \big)\cdot\big(u^0\hat{p}-u \big)\fe ,\frac{2\pi}{u^0}\fe }}+
    \abs{\e^{k}\br{\sb ,\frac{4\pi T }{u^0}\fe}}.\no
\end{align}
Now we estimate each term on the R.H.S. of \eqref{L202}.

\textbf{First Term on the R.H.S. of \eqref{L202}:}
Note that 
\begin{align*}
\m^{-\frac12}\left\{\dt +\hat{p}\cdot\nabla_x- \big(E +\hat{p} \times B  \big)\cdot\nabla_p \right\}\mh 
\end{align*}
are the first-order  polynomials of $p$.
Then, for a given sufficiently small positive constant $\kappa$, we  have
\begin{align}\label{L200}
&\abs{\br{ \m ^{-\frac{1}{2}}\fe \big[\dt +\hat{p}\cdot\nabla_x- \big(E +\hat{p} \times B  \big)\cdot\nabla_p \big]\mh ,\frac{4\pi T }{u^0}\fe }}\\
\lesssim& \Big(\nm{E }_{L^{\infty}}+\nm{B }_{L^{\infty}}\Big)
\nm{\fe}^2+\|\nabla_x(n ,u,T )\|_{L^{\infty}}\nm{\sqrt{p^0}\fe}^2\no\\
\ls&\zzz\bigg(\nm{\sqrt{p^0}\pk[\fe]}^2+\int_{\r^3}\int_{p^0\leq\e^{-1}\kappa}p^0\abs{(\ik-\pk)[\fe]}^2+\int_{\r^3}\int_{p^0\geq\e^{-1}\kappa}p^0\abs{(\ik-\pk)[\fe]}^2\bigg)\no\\
        \ls&(1+t)^{-\beta_0
        }\tnm{\fe}^2+o(1)\e^{-1}\nms{(\ik-\pk)[\fe]}^2+\e\nmsw{(\ik-\pk)[\fe]}^2.\no
\end{align}
Here we have used \eqref{decay} and $\e (w^0)^2\gs\e (p^0)^2\gs p^0$ for $p^0\gs\e^{-1}$.

\begin{remark}
The decay estimate $\zzz\ls (1+t)^{-\beta_0}$ is crucial here.
\end{remark}

\textbf{Second and Third Terms on the R.H.S. of \eqref{L202}:}
From \eqref{decay} again, we have
\begin{align*}
&\abs{\frac{1}{2}\br{\left\{\big(\dt +\hat{p}\cdot\nabla_x\big)\left[\frac{T }{u^0}\right]\right\}\fe , 4\pi \fe }} +\abs{\br{ -u  \mh \cdot\big(E_R^{\e}+\hat{p} \times B_R^{\e} \big), \frac{4\pi}{u^0} \fe }}\\
\lesssim&\|\nabla_x(n ,u,T )\|_{L^{\infty}}
\nm{\fe}^2+\|u\|_{L^{\infty}}\nm{\fe}\Big(\|E^{\e}_R\|+\|B^{\e}_R\|\Big)\no\\
\lesssim&o(1)\e^{-1}\nms{(\ik-\pk)[\fe]}^2+(1+t)^{-\beta_0}\Big(\nm{\fe}^2+\nm{E_R^{\e}}^2+\nm{B_R^{\e}}^2\Big).\no
\end{align*}

\textbf{Fourth Term on the R.H.S. of \eqref{L202}:}
We use Lemma \ref{ss 02} and \eqref{rr 01} to estimate it as
\begin{align}
&\e^{k-1}\abs{\br{\Gamma\left[\fe,\fe\right],\frac{4\pi T }{u^0}\fe}}
=\e^{k-1}\abs{\br{\Gamma\left[\fe,\fe\right],\frac{4\pi T }{u^0}(\ik-\pk)[\fe]}}\\
\ls&\e^{k-1}\int_{x\in\r^3}\tbs{\fe}\abss{\fe}\abss{(\ik-\pk)[\fe]}
    \ls\e^{k-1}\nm{\fe}_{H^2}\Big(\nms{(\ik-\pk)[\fe]}+\nms{\pk[\fe]}\Big)\nms{(\ik-\pk)[\fe]}\no\\
    \ls &\nms{(\ik-\pk)[\fe]}^2+\e\nms{\pk[\fe]}^2\ls \nms{(\ik-\pk)[\fe]}^2+\e\tnm{\fe}^2 .\no
\end{align}

\textbf{Fifth Term on the R.H.S. of \eqref{L202}:}
Similarly, considering that $F_i$ decay fast in $p$ by \eqref{growth0} in Proposition \ref{fn} and $t\leq \overline{t}=\e^{-1/3}$, we have
\begin{align*}
&\abs{\sum_{i=1}^{2k-1}\e^{i-1}\br{\Gamma\left[ \m ^{-\frac{1}{2}}F_i, \fe\right]+\Gamma\left[\fe,  \m ^{-\frac{1}{2}}F_i\right],\frac{4\pi T }{u^0}\fe}}\\
\ls& \sum_{i=1}^{2k-1}\e^{i-1}\Big(\nm{\m ^{-\frac{1}{2}} F_i}_{L^{\infty}_x L^2_p}\nms{\fe}+\Big\|\big|\m ^{-\frac{1}{2}}F_i\big|_{\sigma}\Big\|_{L^{\infty}_x}\nm{\fe}\Big)\nms{(\ik-\pk)[\fe]}\\
\ls& \sum_{i=1}^{2k-1}\e^{i-1}(1+t)^i\nms{\fe}\nms{(\ik-\pk)[\fe]}
\ls o(1)\e^{-1}\nms{(\ik-\pk)[\fe]}^2+\e(1+t)^2\nms{\fe}^2\no\\
\ls& o(1)\e^{-1}\nms{(\ik-\pk)[\fe]}^2+\e(1+t)^2\nms{\pk[\fe]}^2
    \ls o(1)\e^{-1}\nms{(\ik-\pk)[\fe]}^2+\e^{\frac{1}{3}}\tnm{\fe}^2.\no
\end{align*}

\textbf{Sixth Term on the R.H.S. of \eqref{L202}:}
According to the assumption \eqref{rr 01}, its  upper bound is
\begin{align*}
    \e^k\Big(\nm{E_R^{\e}}_{H^2}+\nm{B_R^{\e}}_{H^2}\Big) \nm{\fe}^2\ls \e\nm{\fe}^2.
\end{align*}

\textbf{Seventh Term on the R.H.S. of \eqref{L202}:}
We use \eqref{growth0} in Proposition \ref{fn} to obtain
\begin{align*}
&\abs{\sum_{i=1}^{2k-1}\e^i\br{\big(E_i+\hat{p} \times B_i \big)\cdot\nabla_p\fe +\big(E_R^{\e}+\hat{p} \times B_R^{\e} \big)\cdot \m ^{-\frac{1}{2}}\nabla_pF_{i},\frac{4\pi T }{u^0}\fe }}\\
=&\sum_{i=1}^{2k-1}\e^i\abs{\br{ \big(E_R^{\e}+\hat{p} \times B_R^{\e} \big)\cdot \m ^{-\frac{1}{2}}\nabla_pF_{i} ,\frac{4\pi T }{u^0}\fe }}\no\\
\lesssim&\sum_{i=1}^{2k-1}\e^{i}\nm{ \m ^{-\frac{1}{2}}\nabla_p F_i}_{L^{\infty}_x L^2_p}\Big(\nm{E_R^{\e}}+\nm{B_R^{\e}}\Big)
\nm{\fe}\lesssim\sum_{i=1}^{2k-1}\e^{i}(1+t)^i\big(\nm{\fe}^2+\nm{E_R^{\e}}^2+\nm{B_R^{\e}}^2\big)\no\\
\lesssim&\e(1+t)\big(\nm{\fe}^2+\nm{E_R^{\e}}^2+\nm{B_R^{\e}}^2\big)\lesssim \e^{\frac{2}{3}}\big(\nm{\fe}^2+\nm{E_R^{\e}}^2+\nm{B_R^{\e}}^2\big).\no
\end{align*}

\textbf{Eighth and Ninth Terms on the R.H.S. of \eqref{L202}:}
Similarly, we estimate the last two terms as
\begin{align*}
&\abs{\sum_{i=1}^{2k-1}\e^i\br{\big(E_i+\hat{p} \times B_i \big)\cdot\big(u^0\hat{p}-u \big)\fe ,\frac{2\pi}{u^0}\fe }}\\
\lesssim&\sum_{i=1}^{2k-1}\e^{i}\Big(\nm{E_i}_{L^{\infty}}+\nm{B_i}_{L^{\infty}}\Big)
\nm{\fe}^2\lesssim\sum_{i=1}^{2k-1}\e^{i}(1+t)^i\nm{\fe}^2\ls \e^{\frac{2}{3}}\nm{\fe}^2\no
\end{align*}
for $t\leq \overline{t}=\e^{-1/3}$, and
\begin{align}\label{s0}
\e^{k}\abs{\br{\sb ,\frac{4\pi T }{u^0}\fe}}
\leq& o(1)\e^{-1}\nms{(\ik-\pk)[\fe]}^2+C\e^{2k+1}\sum_{\substack{i+j\geq 2k+1\\2\leq i,j\leq2k-1}}\e^{2(i+j-2k-1)}(1+t)^{2(i+j)}\\
&+C\sum_{\substack{i+j\geq 2k\\1\leq i,j\leq2k-1}}\e^{i+j-k}(1+t)^{i+j}\nm{\fe}\no\\
\leq& o(1)\e^{-1}\nms{(\ik-\pk)[\fe]}^2+C\e^{2k+1}(1+t)^{4k+2}
+C\e^{k}(1+t)^{2k}\nm{\fe}.\no
\end{align}

\textbf{Summary:}
We collect these estimates in \eqref{L202} to obtain \eqref{L2ener01}.
\end{proof}

\subsection{First-Order Derivatives Estimates}\label{h1f}

In this part, we continue to perform the $L^2$ energy estimates for the first-order derivatives of the remainders  $(\fe, E^{\e}_ R, B^{\e}_ R)$. To this end, we first
apply $\partial^{\alpha}_x (1\leq |\alpha|\leq 2)$ to \eqref{L20} to have
\begin{align}\label{alpha}
    & \partial^{\alpha}_x\bigg(\Big\{\dt +\hat{p}\cdot\nabla_x- \big(E +\hat{p} \times B  \big)\cdot\nabla_p \Big\}\left[\fe \right]\bigg)
    +\partial^{\alpha}_x\bigg(\frac{u^0}{T }\hat{p}\mh \cdot E_R^{\e}\bigg)\\
    &-\partial^{\alpha}_x\bigg(\frac{u }{ T } \mh \cdot
    \big(E_R^{\e}+\hat{p} \times B_R^{\e} \big)\bigg)+\frac{\partial^{\alpha}_x\li [\fe ]}{\e}\no\\
    =&-\partial^{\alpha}_x\bigg( \m ^{-\frac{1}{2}}\fe 
    \Big\{\dt +\hat{p}\cdot\nabla_x-\big(E +\hat{p} \times B  \big)\cdot\nabla_p \Big\}
    \mh \bigg)+\e^{k-1}\partial^{\alpha}_x\Gamma \left[\fe,\fe\right]\no\\
    &+\sum_{i=1}^{2k-1}\e^{i-1}\partial^{\alpha}_x\bigg(\Gamma 
    \left[ \m ^{-\frac{1}{2}}F_i, \fe\right]+\Gamma \left[\fe,  \m ^{-\frac{1}{2}}F_i\right]\bigg)
    + \e^k\partial^{\alpha}_x\bigg(\big(E_R^{\e}+\hat{p} \times B_R^{\e} \big)\cdot\nabla_p\fe \bigg)\no\\
    &- \e^k\partial^{\alpha}_x\bigg(\frac{1 }{2 T }\big(u^0\hat{p}-u \big)\cdot\big(E_R^{\e}+\hat{p} \times B_R^{\e} \big)\fe \bigg)\no\\
    &+ \sum_{i=1}^{2k-1}\e^i\partial^{\alpha}_x\bigg(\big(E_i+\hat{p} \times B_i \big)\cdot\nabla_p\fe +\big(E_R^{\e}+\hat{p} \times B_R^{\e} \big)\cdot \m ^{-\frac{1}{2}}\nabla_pF_{i}\bigg)\no\\
    &- \sum_{i=1}^{2k-1}\e^i\partial^{\alpha}_x\bigg(\big(E_i+\hat{p} \times B_i \big)\cdot\frac{1}{2 T }\big(u^0\hat{p}-u \big)\fe \bigg)+\e^{k}\partial^{\alpha}_x\sb .
\no
\end{align}


\begin{proposition}\label{H1x}
For the remainders $\Big(\fe, E^{\e}_ R, B^{\e}_ R\Big)$, it holds that
\begin{align}\label{H1ener01}
&\e\bigg[\frac{\ud}{\ud t}\bigg(\nm{\sqrt{\frac{4\pi T }{u^0}}\nabla_x\fe}^2+\nm{\nabla_xE^{\e}_ R}^2+\nm{\nabla_xB^{\e}_ R]}^2\bigg)+\frac{\delta}{\e}\nms{\nabla_x(\ik-\pk)[\fe]}^2\bigg]\\
\lesssim &\Big[(1+t)^{-\beta_0}+\e^{\frac{1}{3}}\Big]\mathcal{E}+ \overline{\e}_0\e^{-1}\nms{(\ik-\pk)[\fe]}^2+\e\mathcal{D} +\e^{2k+2}(1+t)^{4k+2}
+\e^{k+1}(1+t)^{2k}\sqrt{\mathcal{E}}.\no
\end{align}
\end{proposition}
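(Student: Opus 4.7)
My plan is to adapt the argument of Proposition \ref{L2ener} to the $x$-differentiated equation \eqref{alpha}, with an overall multiplicative $\e$ absorbing a new loss coming from the commutator between $\nabla_x$ and the linearized Landau operator $\li$ (which depends on $x$ through the local Maxwellian $\m$). Concretely, for each multi-index $\alpha$ with $|\alpha|=1$, I would pair \eqref{alpha} in $L^2_{x,p}$ against $\e\cdot \frac{4\pi T}{u^0}\partial^\alpha_x\fe$, and simultaneously pair the $\partial^\alpha_x$-differentiated Maxwell system \eqref{L201} against $\e\partial^\alpha_x E^\e_R$ and $\e\partial^\alpha_x B^\e_R$. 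The two linear field-remainder couplings $\partial^\alpha_x\big(\frac{u^0}{T}\hat{p}\mh\cdot E^\e_R\big)$ and $\partial^\alpha_x\big(\frac{u}{T}\mh\cdot(E^\e_R+\hat{p}\times B^\e_R)\big)$ then cancel against the $\hat{p}\mh\fe$ source of Ampère's law, exactly as in \eqref{L2max}, producing the $\e$-weighted time derivative on the left of \eqref{H1ener01}.

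The key new ingredient is the commutator estimate for the coercive term. Splitting $\partial^\alpha_x\li[\fe]=\li[\partial^\alpha_x\fe]+[\partial^\alpha_x,\li]\fe$, the first summand combined with the $1/\e$ prefactor and Lemma \ref{ss 01} yields, after the outer $\e$ cancels, the target dissipation $\delta\nms{\nabla_x(\ik-\pk)[\fe]}^2$. The commutator, by inspection of \eqref{mathA}--\eqref{mathK}, is an operator whose coefficients are polynomials in $p^0$ times $\nabla_x(n,u,T)$, hence pointwise bounded by $\zzz(t)\lesssim(1+t)^{-\beta_0}$ via \eqref{decay}. The important observation is that its action on $\pk[\fe]$ is absorbed by the Maxwellian weight of $\pk$, while its action on $(\ik-\pk)[\fe]$ controls itself via $\nms{\cdot}$; after Cauchy--Schwarz I expect the bound
\begin{equation*}
\big|\br{[\partial^\alpha_x,\li]\fe,\; \tfrac{4\pi T}{u^0}\partial^\alpha_x\fe}\big|\lesssim \overline{\e}_0\e^{-1}\nms{(\ik-\pk)[\fe]}^2+o(1)\nms{\nabla_x(\ik-\pk)[\fe]}^2+(1+t)^{-\beta_0}\ee,
\end{equation*}
with the middle piece absorbed into the coercive dissipation. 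This is the structural origin of the loss $\overline{\e}_0\e^{-1}\nms{(\ik-\pk)[\fe]}^2$ on the right of \eqref{H1ener01}, and it forces the overall $\e$ prefactor on the left.

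The remaining error contributions fall into three standard families. First, terms where $\partial^\alpha_x$ lands on the $x$-dependent coefficients $\mh,u,T,E,B,1/u^0$ are handled exactly as in \eqref{L200}, splitting the momentum domain at $p^0\sim\e^{-1}$ and using \eqref{decay}, producing bounds of type $(1+t)^{-\beta_0}\ee+o(1)\e^{-1}\nms{(\ik-\pk)[\fe]}^2+\e\dd$. Second, the cubic $\e^{k-1}\partial^\alpha_x\Gamma[\fe,\fe]$ and semilinear $\e^{i-1}\partial^\alpha_x\Gamma[\mhh F_i,\fe]$ pieces, after Leibniz, are estimated via Lemma \ref{ss 02}, the Sobolev embedding $H^2_x\hookrightarrow L^\infty_x$, the a priori bound \eqref{rr 01}, the growth estimate \eqref{growth0}, and the time restriction $t\le\overline{t}=\e^{-1/3}$, yielding $\e^{1/3}\ee+\e\dd$ after the outer $\e$. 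Third, the remaining field-gradient quadratic terms and the source $\e^k\partial^\alpha_x\sb$ contribute $\e^{2k+2}(1+t)^{4k+2}+\e^{k+1}(1+t)^{2k}\sqrt{\ee}$, exactly analogous to \eqref{s0}.

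The main obstacle will be the commutator step: one must track carefully how many powers of $p^0$ are generated by $[\partial^\alpha_x,\li]$ and verify that its action on $\pk[\fe]$ costs only the no-weight $L^2$ norm (because of the Maxwellian decay of $\pk$), so that the only loss carrying the $\e^{-1}$ factor is $\nms{(\ik-\pk)[\fe]}^2$. This mechanism is what makes the $\e$-weighted hierarchy in $\ee$ and $\dd$ close consistently at first-derivative order, and it is the reason why $\e\bnm{\nabla_x\fe}^2$ (rather than $\bnm{\nabla_x\fe}^2$) is the natural energy at this level, as anticipated by the discussion around \eqref{example1}.
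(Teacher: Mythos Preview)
Your plan is correct and matches the paper's approach closely. The paper likewise pairs the $|\alpha|=1$ equation against $\frac{4\pi T}{u^0}\partial_x\fe$, combines with the differentiated Maxwell system to produce the time-derivative of the energy, and identifies the commutator $\jump{\li,\partial_x}$ as the new ingredient responsible for the $\overline{\e}_0\e^{-1}\nms{(\ik-\pk)[\fe]}^2$ loss. Two small points of calibration: first, only the $\frac{u^0}{T}\hat p\mh\cdot E^\e_R$ term genuinely cancels against Amp\`ere's law---the $\frac{u}{T}\mh\cdot(E^\e_R+\hat p\times B^\e_R)$ term does not cancel but is instead estimated directly using the smallness of $u$ from \eqref{decay}; second, the paper organizes the commutator via the algebraic identity
\[
\jump{\li,\partial_x}[\fe]=-\li\big[\jump{\pk,\partial_x}[\fe]\big]+\jump{\li,\partial_x}\big[(\ik-\pk)[\fe]\big],
\]
which makes transparent that the first piece, being in the range of $\li$, pairs only against $(\ik-\pk)[\partial_x\fe]$, while the second piece already carries the microscopic projection. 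This is exactly the mechanism you describe (``action on $\pk[\fe]$ absorbed by the Maxwellian weight''), just written so that the self-adjointness of $\li$ does the work explicitly. The remaining term-by-term estimates you outline (momentum splitting for the $\mhh\partial\mh$ terms, Lemma~\ref{ss 02} plus Sobolev and \eqref{rr 01} for $\Gamma$, and the $\sb$ tail) are handled in the paper precisely as you anticipate.
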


\begin{proof} 
From \eqref{L201}, we have
\begin{align}\label{H1max}
    \frac{1}{2}\frac{\ud}{\ud t}\Big(\nm{\partial_xE_R^{\e}}^2+\nm{\partial_x B_R^{\e}}^2\Big)=4\pi\br{\partial_x\Big( \hat{p}\mh \fe\Big), \partial_xE_R^{\e}}.
\end{align}
Here and below we use $\partial_x$ to denote $\partial_{x_i}, 1\leq i \leq 3,$ for simplicity. Note that \begin{align*}
    \jump{\li,\partial_x}[\fe]&=\li[\partial_x\fe]-\partial_x\Big(\li[\fe]\Big)=\li\Big[(\ik-\pk)[\partial_x\fe]\Big]-\partial_x\Big(\li\Big[(\ik-\pk)[\fe]\Big]\Big)\\
    &=-\li\Big[\jump{\pk,\partial_x}[\fe]\Big]+\jump{\li,\partial_x}\big[(\ik-\pk)[\fe]\big].\no
\end{align*}
Hence, naturally we have
\begin{align*}
    \e^{-1}\abs{\br{\jump{\li,\partial_x}[\fe],\frac{4\pi T }{u^0 }\partial_x\fe}}\ls& \e^{-1}\abs{\br{\li\Big[\jump{\pk,\partial_x}[\fe]\Big],\frac{4\pi T }{u^0 }\partial_x\fe}}\\
    &+\e^{-1}\abs{\br{\jump{\li,\partial_x}\big[(\ik-\pk)[\fe]\big],\frac{4\pi T }{u^0 }\partial_x\fe}}.\no
\end{align*}
For the first term, we have
\begin{align*}
    \e^{-1}\abs{\br{\li\Big[\jump{\pk,\partial_x}[\fe]\Big],\frac{4\pi T }{u^0 }\partial_x\fe}}=&\e^{-1}\abs{\br{\li\Big[\jump{\pk,\partial_x}[\fe]\Big],\frac{4\pi T }{u^0 }(\ik-\pk)[\partial_x\fe]}}.
\end{align*}
Note that $\jump{\pk,\partial_x}$ only contains terms that $\partial_x$ hits the Maxwellian but not $\fe$. Hence, we have
\begin{align*}
   \e^{-1}\abs{\br{\li\Big[\jump{\pk,\partial_x}[\fe]\Big],\frac{4\pi T }{u^0 }\partial_x\fe}}
   \ls& o(1)\e^{-1}\nms{(\ik-\pk)[\partial_x\fe]}^2+\e^{-1}\zzz\tnm{\fe}^2 \no\\
    \ls& o(1)\e^{-1}\nms{\partial_x(\ik-\pk)[\fe]}^2+\overline{\e}_0\e^{-1}(1+t)^{-\beta_0}\tnm{\fe}^2\no.
\end{align*}
For the second term, since $\jump{\li,\partial_x}$ indicates that $\partial_x$ only hits the Maxwellian in $\li$ but not on $(\ik-\pk)[\fe]$, we directly bound
\begin{align*}
    &\e^{-1}\abs{\br{\jump{\li,\partial_x}\big[(\ik-\pk)[\fe]\big],\frac{4\pi T }{u^0 }\partial_x\fe}}
    \ls \e^{-1}\zzz \nms{(\ik-\pk)[\fe]}\nms{\partial_x\fe}
   \\
    \ls&(1+t)^{-\beta_0}\overline{\e}_0\tnm{\nabla_x\fe}^2+o(1)\e^{-1}\nms{(\ik-\pk)[\partial_x\fe]}^2+\overline{\e}_0\e^{-2}\nms{(\ik-\pk)[\fe]}^2.\no
\end{align*}
Noting 
\begin{align}\label{xp}
    \nms{(\ik-\pk)[\partial_x\fe]}\geq \nms{\partial_x(\ik-\pk)[\fe]}-\nms{\jump{\pk,\partial_x}[\fe]}\geq \nms{\partial_x(\ik-\pk)[\fe]}-\zzz\tnm{\fe},
\end{align}
In total, we have 
\begin{align}\label{H1L}
  & \e^{-1}\br{ \partial_x\li\left[\fe\right],\frac{4\pi  T }{u^0 }\partial_x\fe}\\
   \geq& \e^{-1}\br{ \li\left[\partial_x\fe\right],\frac{4\pi T }{u^0 }\partial_x\fe}-\e^{-1} \abs{\br{\jump{\li,\partial_x}[\fe],\frac{4\pi T }{u^0 }\partial_x\fe}}\no\\
   \geq
    &\delta \e^{-1}\nms{(\ik-\pk)[\partial_x\fe]}^2-C\overline{\e}_0\e^{-2}\zzz\nms{(\ik-\pk)[\fe]}^2-C\zzz\tnm{\nabla_x\fe}^2-\e^{-1}\zzz\tnm{\fe}^2.\no\\
     \geq
    &\delta\e^{-1}\nms{\partial_x(\ik-\pk)[\fe]}^2-C\overline{\e}_0\e^{-2}\nms{(\ik-\pk)[\fe]}^2-C(1+t)^{-\beta_0}\overline{\e}_0\big(\tnm{\nabla_x\fe}^2+\e^{-1}\tnm{\fe}^2\big).\no
\end{align}
Taking $|\alpha|=1$ in \eqref{alpha} and denoting $\partial_x$ as $\partial_{x_i}, i=1,2,3$ for convenience, we multiply the equation by
$\frac{4\pi T }{u^0}\partial_x\fe $, and use \eqref{H1max}, \eqref{H1L} to have
\begin{align}\label{H10}
    & \frac{1}{2}\frac{\ud}{\ud t}\bigg(\nm{\sqrt{\frac{4\pi T }{u^0}}\partial_x\fe}^2+\nm{\partial_xE^{\e}_ R}^2+\nm{ \partial_xB^{\e}_ R]}^2\bigg)+\frac{\delta}{\e}\nms{\partial_x(\ik-\pk)[\fe]}^2\\
    \leq&\abs{\br{\partial_x\bigg(\frac{u^0}{T }\hat{p}\mh \bigg)\cdot E_R^{\e}, \frac{4\pi T }{u^0 }\partial_x\fe }}+4\pi\abs{\br{\Big( \hat{p}\big(\partial_x\mh \big)\fe\Big), \partial_xE_R^{\e}}}\no\\
    &+\abs{\br{\partial_x\bigg(\Big\{\big(E +\hat{p} \times B  \big)\cdot\nabla_p\Big\}\left[\fe \right]\bigg),\frac{4\pi T }{u^0 }\partial_x\fe }}\no\\
    &+\abs{\br{\partial_x\bigg( \m ^{-\frac{1}{2}}\fe \Big\{\dt +\hat{p}\cdot\nabla_x -\big(E +\hat{p} \times B  \big)\cdot\nabla_p \Big\}\left[\mh \right]\bigg),\frac{4\pi T }{u^0 }\partial_x\fe  }}\no\\
    &+\abs{\frac{1}{2}\br{(\dt +\hat{p}\cdot\nabla_x)\left[\frac{T }{u^0}\right],4\pi \abs{\partial_x\fe }^2}}\no\\
    &+ \abs{\br{\partial_x \bigg(u  \mh \cdot\big(E_R^{\e}+\hat{p} \times B_R^{\e} \big)\bigg), \frac{4\pi T }{u^0 }\partial_x \fe }}+\e^{k-1}\abs{\br{\partial_x\Gamma\left[\fe,\fe\right],\frac{4\pi T }{u^0}\partial_x\fe}}\no\\
    &+\abs{\sum_{i=1}^{2k-1}\e^{i-1}\br{\partial_x\Gamma\left[ \m ^{-\frac{1}{2}}F_i, \fe\right]+\partial_x\Gamma\left[\fe,  \m ^{-\frac{1}{2}}F_i\right],\frac{4\pi T }{u^0}\partial_x\fe}}\no\\
    &+\e^k\abs{\br{\partial_x\bigg(\frac{\big(u^0\hat{p}-u \big)}{2T }\cdot\big(E_R^{\e}+\hat{p} \times B_R^{\e} \big)\fe \bigg),\frac{4\pi T }{u^0 }\partial_x\fe  }}\no\\
    &+\e^k\abs{\br{\partial_x\bigg(\big(E_R^{\e}+\hat{p} \times B_R^{\e} \big)\cdot\nabla_p\fe \bigg),\frac{4\pi T }{u^0 }\partial_x\fe  }}\no\\
    &+\abs{\sum_{i=1}^{2k-1}\e^i\br{\partial_x\bigg(\big(E_i+\hat{p} \times B_i \big)\cdot\nabla_p\fe +\big(E_R^{\e}+\hat{p} \times B_R^{\e} \big)\cdot \m ^{-\frac{1}{2}}\nabla_pF_{i}\bigg),\frac{4\pi T }{u^0 }\partial_x\fe}}\no\\
    &+\abs{\sum_{i=1}^{2k-1}\e^i\br{\partial_x\bigg(\big(E_i+\hat{p} \times B_i \big)\cdot\frac{\big(u^0\hat{p}-u \big)}{2T }\fe \bigg),\frac{4\pi T }{u^0 }\partial_x\fe  }}\no\\
    &+ \e^{k}\abs{\br{ \partial_x\sb ,\frac{4\pi T }{u^0}\partial_x\fe}}+C\overline{\e}_0\Big[\e^{-2}\nms{(\ik-\pk)[\fe]}^2+(1+t)^{-\beta_0}\big(\tnm{\nabla_x\fe}^2+\e^{-1}\tnm{\fe}^2\big)\Big].\no
\end{align}
Now we estimate each term on the R.H.S. of \eqref{H10}. 

\textbf{First Two Terms on the R.H.S. of \eqref{H10}:}
The two terms  can be bounded by
\begin{align*}
    C\|\nabla_x(n ,u,T )\|_{L^{\infty}}\Big(\nm{\fe}_{H^1}^2+\nm{E_R^{\e}}_{H^1}^2\Big)\lesssim (1+t)^{-\beta_0}\Big(\nm{\fe}_{H^1}^2+\nm{E_R^{\e}}_{H^1}^2\Big).
\end{align*}

\textbf{Third Term on the R.H.S. of \eqref{H10}:}
For this term, its upper bound is
\begin{align*}
\Big(\nm{\nabla_xE }_{L^{\infty}}+\nm{\nabla_xB ]}_{L^{\infty}}\Big)\nm{\partial_x\fe}\nms{\fe}\lesssim (1+t)^{-\beta_0}\overline{\e}_0\Big(\nm{\fe}^2_{H^1}+\nms{(\ik-\pk)[\fe]}^2\Big).
\end{align*}

\textbf{Fourth Term on the R.H.S. of \eqref{H10}:}
Using  $w^0\gs (p^0)^3$, and noticing that for $p^0\gs\e^{-1}$, we have $\e (w^1)^2\gs \e(p^0)^2\gs p^0$. Then,  for $\kappa$ sufficiently small, we have
\begin{align*}
&\abs{\br{\partial_x\bigg( \m ^{-\frac{1}{2}}\fe\Big\{\dt +\hat{p}\cdot\nabla_x- \big(E +\hat{p} \times B  \big)\cdot\nabla_p \Big\}\left[\mh \right]\bigg),\frac{4\pi T }{u^0 }\partial_x\fe  }}\\
 \lesssim& \zzz\br{p^0\partial_x\fe,\partial_x\fe}+\zzz\big|\br{(p^0)^2\fe,\partial_x\fe}\big|\lesssim \zzz\tnm{\sqrt{p^0}\partial_x\fe}^2+\zzz\tnm{\sqrt{p^0}p^0\fe}^2\no\\
\ls&\zzz\bigg(\nm{\sqrt{p^0}\partial_x\pk[\fe]}^2+\int_{\r^3}\int_{p^0\leq\e^{-1}\kappa}p^0\abs{\partial_x(\ik-\pk)[\fe]}^2+\int_{\r^3}\int_{p^0\geq\e^{-1}\kappa}p^0\abs{\partial_x(\ik-\pk)[\fe]}^2\bigg)\no\\
    &+\zzz\bigg(\nm{\sqrt{p^0}p^0\pk[\fe]}^2+\int_{\r^3}\int_{\r^3}(p^0)^3\abs{(\ik-\pk)[\fe]}^2\bigg)\no\\
    \ls&\zzz\tnm{\partial_x\fe}^2+o(1)\e^{-1}\nms{\partial_x(\ik-\pk)[\fe]}^2+\e\tnmww{\partial_x(\ik-\pk)[\fe]}^2+\zzz\tnm{\fe}^2+\zzz\nmsw{(\ik-\pk)[\fe]}^2\no\\
    \ls&o(1)\e^{-1}\nms{\partial_x(\ik-\pk)[\fe]}^2+\e\tnmww{\nabla_x(\ik-\pk)[\fe]}^2+\overline{\e}_0\nmsw{(\ik-\pk)[\fe]}^2+(1+t)^{-\beta_0}\tnm{\fe}^2_{H^1}.\no
\end{align*}

\textbf{Fifth and Sixth Terms on the R.H.S. of \eqref{H10}:}
They can be bounded by
\begin{align*}
 C\zzz\bigg(\nm{\partial_x\fe}^2+\Big(\nm{E_R^{\e}}_{H^1}+\nm{B_R^{\e}}_{H^1}\Big)\nm{\partial_x\fe}\bigg)   \leq C(1+t)^{-\beta_0}\big(\nm{\partial_x\fe}^2+\nm{E_R^{\e}}_{H^1}^2+\nm{B_R^{\e}}^2_{H^1}\big).
\end{align*}

\textbf{Seventh Term on the R.H.S. of \eqref{H10}:} Using Lemma \ref{ss 02}  for $p$ integral, $(\infty,2,2)$ or $(4,4,2)$ for $x$ integral, and Sobolev embedding, we have 
\begin{align*}
    &\abs{\e^{k-1}\br{\partial_x\Gamma[\fe,\fe],\frac{4\pi T }{u^0 }\partial_x\fe}}\\
    \ls&\e^{k-1}\int_{x\in\r^3}\Big[\Big(\tbs{\fe}\abss{\nabla_x\fe}+\abss{\fe}\tbs{\nabla_x\fe}\Big)\abss{(\ik-\pk)[\nabla_x\fe]}+\zzz\tbs{\fe}\abss{\fe}\abss{\nx\fe}\Big]\no\\
    \ls&\e^{k-1}\nm{\fe}_{H^2}\nm{\fe}_{H^1_{\sigma}}\nms{(\ik-\pk)[\nabla_x\fe]}+\zzz\e^{k-1}\nm{\fe}_{H^2}\nms{\fe}\nms{\nx\fe}\no\\
    \ls&\e^{\frac{1}{2}}\nm{\fe}_{H^1_{\sigma}}\nms{(\ik-\pk)[\nabla_x\fe]}+\e^{\frac{1}{2}}\zzz\nms{\fe}\nms{\nx\fe}.\no
\end{align*}
Noting 
\begin{align*}
    \nm{\fe}_{H^1_{\sigma}}\leq& \nm{(\ik-\pk)[\fe]}_{H^1_{\sigma}}+\nm{\pk[\fe]}_{H^1_{\sigma}}\ls \nm{(\ik-\pk)[\fe]}_{H^1_{\sigma}}+\nm{\fe}_{H^1},
\end{align*}
 we use \eqref{xp} to have
\begin{align*}
    &\abs{\e^{k-1}\br{\partial_x\Gamma[\fe,\fe],\frac{4\pi T }{u^0 }\partial_x\fe}}\\
    \ls&\e^{\frac{1}{2}}\Big(\nm{(\ik-\pk)[\fe]}_{H^1_{\sigma}}+\nm{\fe}_{H^1}\Big)\Big(\nms{\partial_x(\ik-\pk)[\fe]}+\zzz\tnm{\fe}\Big)\no\\
    &+\e^{\frac{1}{2}}\zzz\Big(\nms{(\ik-\pk)[\fe]}+\tnm{\fe}\Big)\Big(\nms{(\ik-\pk)[\nx\fe]}+\tnm{\nx\fe}\Big)\no\\
    \ls&\nm{(\ik-\pk)[\fe]}^2_{H^1_{\sigma}}+(1+t)^{-\beta_0}\overline{\e}_0\Big(\nm{\fe}^2+\e\nm{\nabla_x\fe}^2\Big).\no
\end{align*}

\textbf{Eighth Term on the R.H.S. of \eqref{H10}:}
 Similarly, considering that $F_i$ decay fast in $p$, we know
\begin{align*}
  &\abs{\sum_{i=1}^{2k-1}\e^{i-1} \br{\partial_x\Gamma\left[\mhh F_i,\fe\right]
    +\partial_x\Gamma\left[\fe,\mhh F_i\right],\frac{4\pi T }{u^0 }\partial_x\fe}}\\
    \ls& o(1)\e^{-1}\nms{(\ik-\pk)[\partial_x\fe]}^2+\e (1+t)^2\nm{\fe}_{H^1_{\sigma}}^2\no\\
    &+\zzz(1+t)\Big(\nm{\fe}+\nms{(\ik-\pk)[\fe]}\Big)\Big(\nm{\fe}_{H^1}+\nms{\partial_x(\ik-\pk)[\fe]}\Big)\no\\
    \ls&o(1)\e^{-1}\nms{\partial_x(\ik-\pk)[\fe]}^2+\overline{\e}_0\Big[\e^{-1}\nms{(\ik-\pk)[\fe]}^2+\e^{\frac{1}{3}}\nm{\fe}_{H^1}^2+\e^{-\frac{1}{3}}\nm{\fe}^2\Big].\no
\end{align*}

\textbf{Other Terms on the R.H.S. of \eqref{H10}:} Similar to the corresponding estimates in Proposition \ref{l21},
these terms can be conrolled by
\begin{align*}
&o(1)\e^{-1}\nm{(\ik-\pk)[\fe]}^2_{H^1_{\sigma}}+C\e^{-1}(1+t)^{-\beta_0}\nm{\fe}^2+\e^{\frac{2}{3}}\Big(\nm{E_R^{\e}}_{H^1}^2+\nm{B_R^{\e}}_{H^1}^2+\nm{\nabla_x\fe}^2\Big)\\
&+\e^{2k+1}(1+t)^{4k+2}
+\e^{k}(1+t)^{2k}\nm{\nabla_x\fe}.\no
\end{align*}
\textbf{Summary:}
By collecting all the above estimates in \eqref{H10}, we multiply the resulting inequality by $\e$ to derive \eqref{H1ener01}.
\end{proof}

\subsection{Second-Order Derivatives Estimates}\label{41}

In this subsection, we proceed to the $L^2$ estimate of $\nabla_x^2\Big(\fe, E^{\e}_ R, B^{\e}_R\Big)$.

\begin{proposition}\label{H2x}
For the remainders $(\fe, E^{\e}_ R, B^{\e}_ R)$, it holds that
\begin{align}\label{H2ener01}
&\e^2\frac{\ud}{\ud t}\bigg(\nm{\sqrt{\frac{4\pi T }{u^0}}\nabla_x^2\fe}^2+\nm{\nabla_x^2E^{\e}_ R}^2+\nm{\nabla_x^2B^{\e}_ R]}^2\bigg)+\frac{\delta}{\e}\nms{\nabla_x^2(\ik-\pk)[\fe]}^2\Big]\\
\lesssim & \Big[(1+t)^{-\beta_0}+\e^{\frac{1}{3}}\Big]\mathcal{E}+\overline{\e}_0\nm{(\ik-\pk)[\fe]}^2_{H^1_{\sigma}}+\e\mathcal{D} +\e^{2k+3}(1+t)^{4k+2}
+\e^{k+2}(1+t)^{2k}\sqrt{\mathcal{E}}.\no
\end{align}
\end{proposition}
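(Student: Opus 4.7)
The plan is to apply $\partial_x^\alpha$ with $|\alpha|=2$ to the already-differentiated equation \eqref{alpha}, pair the result with $\frac{4\pi T}{u^0}\partial_x^2\fe$ in $L^2_{x,p}$, and combine with the twice-differentiated Maxwell system from \eqref{L201}. This produces $\tfrac12\tfrac{\ud}{\ud t}\bigl(\nm{\sqrt{4\pi T /u^0}\,\nabla_x^2\fe}^2+\nm{\nabla_x^2E_R^\e}^2+\nm{\nabla_x^2B_R^\e}^2\bigr)$ on the left and, via Lemma \ref{ss 01} applied to $\e^{-1}\li[\partial_x^2\fe]$, the principal dissipation $\delta\e^{-1}\nms{\nabla_x^2(\ik-\pk)[\fe]}^2$. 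All remaining terms must then be distributed among the four groups on the right of \eqref{H2ener01}: $(1+t)^{-\beta_0}\ee$, $\overline{\e}_0\|(\ik-\pk)[\fe]\|_{H^1_\sigma}^2$, $\e\dd$, and the polynomially-growing forcing pieces.

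For the linear collision term I would use the identity $\e^{-1}\partial_x^2\li[\fe]=\e^{-1}\li[\partial_x^2\fe]-\e^{-1}\jump{\li,\partial_x^2}[\fe]$, mimicking \eqref{H1L}. Decomposing $\jump{\li,\partial_x^2}$ into the piece where both derivatives fall on the Maxwellian coefficients of $\li$ and the piece where only one does, and writing $\partial_x^2\fe=(\ik-\pk)[\partial_x^2\fe]+\pk[\partial_x^2\fe]$, the commutator contributions are bounded using $\zzz\ls (1+t)^{-\beta_0}$ from Theorem \ref{Euler-Maxwell}. They produce $o(1)\e^{-1}\nms{\partial_x^2(\ik-\pk)[\fe]}^2$, absorbed by the main dissipation, together with $\overline{\e}_0\e^{-2}\nms{(\ik-\pk)[\fe]}^2+\overline{\e}_0\e^{-2}\nms{\nx(\ik-\pk)[\fe]}^2$; after multiplication by the outer $\e^2$ these contribute exactly $\overline{\e}_0\|(\ik-\pk)[\fe]\|_{H^1_\sigma}^2$. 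An analogue of \eqref{xp} is used freely to exchange $\nms{(\ik-\pk)[\partial_x^2\fe]}$ for $\nms{\partial_x^2(\ik-\pk)[\fe]}$ modulo a $\zzz\nm{\fe}_{H^1}$ error.

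The streaming contribution $\partial_x^2\bigl(\mhh\fe\{\partial_t+\hp\cdot\nx-(E+\hp\times B)\cdot\nabla_p\}\mh\bigr)$, expanded by Leibniz, carries up to three $p^0$ factors. I would use the same high/low momentum cutoff as in Proposition \ref{H1x}: on $\{p^0\leq\kappa\e^{-1}\}$ the surplus $p^0$ powers are paid for by a small multiple of $\e^{-1}\nms{\partial_x^2(\ik-\pk)[\fe]}^2$, while on $\{p^0\gtrsim\kappa\e^{-1}\}$ the estimate $\e(w^2)^2\gtrsim (p^0)^3$ allows the high-momentum piece to be absorbed into the weighted dissipation $\e^3\tnmwww{\sqrt{p^0}\nabla_x^2(\ik-\pk)[\fe]}^2$ already present in $\dd$. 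The macroscopic residue $\nm{\sqrt{p^0}\nabla_x^2\pk[\fe]}^2$ is controlled by $\zzz\nm{\fe}_{H^2}^2\ls (1+t)^{-\beta_0}\ee$. For the nonlinear term $\e^{k-1}\partial_x^2\Gamma[\fe,\fe]$, Leibniz produces three pieces; the worst is $\Gamma[\nx\fe,\nx\fe]$, bounded pointwise in $x$ by Lemma \ref{ss 02} and then in $x$ by a $(4,4,2)$ Hölder split using the Sobolev embedding $H^1_x\hookrightarrow L^4_x$, giving an estimate $\e^{k-1}\nm{\fe}_{H^2}\nm{\fe}_{H^2_\sigma}\nms{\partial_x^2(\ik-\pk)[\fe]}$; with $k\geq3$ and the a priori bound \eqref{rr 01} $\nm{\fe}_{H^2}\ls\e^{-1/4}$, the $\e^{k-1}$ prefactor supplies enough room to absorb everything into $o(1)\e^{-1}\nms{\partial_x^2(\ik-\pk)[\fe]}^2+\e\dd+\e^{1/3}\ee$. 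The remaining lower-order interactions with $F_i,E_i,B_i$, the $\e^k$-level piece involving $E_R^\e,B_R^\e$, and the source $\e^k\partial_x^2\sb$ are treated verbatim as in Propositions \ref{L2ener}--\ref{H1x} using \eqref{growth0} in Proposition \ref{fn} and the restriction $t\leq\overline{t}=\e^{-1/3}$.

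The main obstacle I anticipate is that at the second-derivative level we cannot first microscopically project as is done at lower orders: the commutator $\jump{\pk,\nabla_x^2}$ would generate macroscopic linear terms of size $\e^2\nm{\nabla_x^2\pk[\fe]}^2$ that are \emph{not} controlled by the dissipation $\dd$ (this is exactly the pathology flagged around \eqref{semp 4}). Hence the entire $\partial_x^2\fe$, not just its microscopic part, has to be estimated directly, and the macroscopic remainders produced by the commutator $\jump{\li,\partial_x^2}$ and the streaming term must be absorbed purely through the global time decay $(1+t)^{-\beta_0}$ coming from Theorem \ref{Euler-Maxwell}. This is the reason the pre-factor in front of $\|(\ik-\pk)[\fe]\|_{H^1_\sigma}^2$ on the right of \eqref{H2ener01} is only $\overline{\e}_0$ (and not an extra power of $\e$). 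Multiplying the combined inequality by $\e^2$ and collecting terms then yields \eqref{H2ener01}.
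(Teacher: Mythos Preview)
Your proposal is correct and follows essentially the same route as the paper's proof: differentiate \eqref{alpha} with $|\alpha|=2$, pair with $\frac{4\pi T}{u^0}\partial_x^2\fe$, couple with the twice-differentiated Maxwell system, treat the commutator $\jump{\li,\partial_x^2}$ (the paper decomposes it as $-\li\big[\jump{\pk,\partial_x^2}[\fe]\big]+\jump{\li,\partial_x^2}\big[(\ik-\pk)[\fe]\big]$ rather than a straight Leibniz split, but both work), and handle the streaming, nonlinear, and source terms exactly as you outline. Two small slips to correct: the a priori bound \eqref{rr 01} only gives $\nm{\fe}_{H^2}\lesssim\e^{-5/4}$ (not $\e^{-1/4}$), though with $k\geq3$ the prefactor $\e^{k-1}$ still yields enough smallness; and the high-momentum streaming piece is absorbed by the paper into the weighted $\sigma$-dissipation $\e^2\nmswww{\nabla_x^2(\ik-\pk)[\fe]}^2$ in $\dd$, not into the $\yy$-weighted term $\e^3\yy\tnmwww{\sqrt{p^0}\,\cdot}^2$ (the latter carries the decaying factor $\yy$, so your absorption would need $\zzz\lesssim\e\yy$, which is not available).
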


\begin{proof} 
From \eqref{L201}, we have
\begin{align}\label{H2max}
\frac{1}{2}\frac{\ud}{\ud t}\Big(\nm{\partial_x^2E_R^{\e}}^2+\nm{\partial_x^2 B_R^{\e}}^2\Big)=4\pi\br{\partial_x^2\Big( \hat{p}\mh \fe\Big), \partial_x^2E_R^{\e}}.
\end{align}
Here and below we  denote $\partial_x^2$ as $\partial_{x_i}\partial_{x_j}$ with $i,j=1,2,3$ for convenience.
Using a similar argument as in Proposition \ref{H1x}, we have
\begin{align}\label{xx1}
  \e^{-1}  \abs{\br{\jump{\li,\partial_x^2}[\fe],\frac{4\pi T }{u^0 }\partial_x^2\fe}}\leq& \e^{-1}\abs{\br{\li\Big[\jump{\pk,\partial_x^2}[\fe]\Big],\frac{4\pi T }{u^0 }\partial_x^2\fe}}\\
    &+\e^{-1}\abs{\br{\jump{\li,\partial_x^2}\big[(\ik-\pk)[\fe]\big],\frac{4\pi T }{u^0 }\partial_x^2\fe}}.\no
\end{align}
For the first term in \eqref{xx1}, we have
\begin{align*}
    \e^{-1}\abs{\br{\li\Big[\jump{\pk,\partial_x^2}[\fe]\Big],\frac{4\pi T }{u^0 }\partial_x^2\fe}}=\e^{-1}\abs{\br{\li\Big[\jump{\pk,\partial_x^2}[\fe]\Big],(\ik-\pk)[\partial_x^2\fe]}}.
\end{align*}
Note that $\jump{\pk,\partial_x^2}$ only contains terms that $\partial_x$ hits $\fe$ at most once. Hence, we have
\begin{align*}
    \e^{-1}\abs{\br{\li\Big[\jump{\pk,\partial_x^2}[\fe]\Big],\frac{4\pi T }{u^0 }\partial_x^2\fe}}&\ls o(1)\e^{-1}\nms{(\ik-\pk)[\partial_x^2\fe]}^2+\e^{-1}\zzz\nm{\fe}_{H^1}^2\\
    &\ls o(1)\e^{-1}\nms{\partial_x^2(\ik-\pk)[\fe]}^2+\e^{-1}\overline{\e}_0(1+t)^{-\beta_0}\nm{\fe}_{H^1}^2.\no
\end{align*}
For the second term in \eqref{xx1}, since $\jump{\li,\partial_x^2}$ indicates that $\partial_x$ at most hits $(\ik-\pk)[\fe]$ once, we directly bound it as follows
\begin{align*}
    &\e^{-1}\abs{\br{\jump{\li,\partial_x^2}\big[(\ik-\pk)[\fe]\big],\partial_x^2\fe}}\\
    \ls& \e^{-1}\zzz\big(\abss{(\ik-\pk)[\fe]}\abss{\partial_x^2\fe}+\abss{\partial_x(\ik-\pk)[\fe]}\abss{\partial_x^2\fe}\big)\no\\
    \ls&o(1)\e^{-1}\nms{\partial_x^2(\ik-\pk)[\fe]}^2+(1+t)^{-\beta_0}\tnm{\nabla_x^2\fe}^2+\e^{-2}\overline{\e}_0\nm{(\ik-\pk)[\fe]}^2_{H^1_{\sigma}}.\no
\end{align*}
In total, we have 
\begin{align}\label{xx11}
   &    \e^{-1}\br{ \partial_x^2\li\left[\fe\right],\frac{4\pi T }{u^0 }\partial_x^2\fe}\\
   \geq&    \e^{-1}\br{ \li\left[\partial_x^2\fe\right],\frac{4\pi T }{u^0 }\partial_x^2\fe}-     \e^{-1}\abs{\br{\jump{\li,\partial_x^2}[\fe],\frac{4\pi T }{u^0 }\partial_x^2\fe}}\no\\
    \geq&\delta\e^{-1}\nms{\partial_x^2(\ik-\pk)[\fe]}^2-\e^{-2}\overline{\e}_0\nm{(\ik-\pk)[\fe]}^2_{H^1_{\sigma}}-(1+t)^{-\beta_0}\big(\tnm{\nabla_x^2\fe}^2+\e^{-1}\nm{\fe}_{H^1}^2\big).\no
\end{align}

Taking $|\alpha|=2$ in \eqref{alpha}, we multiply the equation by
$\frac{4\pi T }{u^0}\partial_x^2\fe $, and further use \eqref{H2max}, \eqref{xx11} to  obtain
\begin{align}\label{2x0}
    & \frac{1}{2}\frac{\ud}{\ud t}\bigg(\nm{\sqrt{\frac{4\pi T }{u^0}}\partial_x^2\fe}^2+\nm{\partial_x^2E_R^{\e}}^2+\nm{\partial_x^2B_R^{\e}}^2\bigg)
    +\frac{\delta}{\e}\nms{\partial_x^2(\ik-\pk)[\fe]}^2\\
    \leq& \abs{\sum_{|\alpha|<2}\br{\partial_x^{2-|\alpha|}\bigg(\frac{u^0}{T }\hat{p}\mh \bigg)\cdot \partial_x^{\alpha}E_R^{\e}, \frac{4\pi T }{u^0 }\partial_x^2\fe }}\no\\
    &+4\pi\abs{\sum_{|\alpha|<2}\br{  \hat{p}\big(\partial_x^{2-|\alpha|}\mh \big)\partial_x^{\alpha}\fe, \partial_x^2E_R^{\e}}}\no\\
    &+\abs{\br{\partial_x^2\bigg(\big(E +\hat{p} \times B  \big)\cdot\nabla_p\fe \bigg),\frac{4\pi T }{u^0 }\partial_x^2\fe }}\no\\
    &+\abs{\br{\partial_x^2\bigg( \m ^{-\frac{1}{2}}\fe \Big\{\dt +\hat{p}\cdot\nabla_x- \big(E +\hat{p} \times B  \big)\cdot\nabla_p \Big\}\left[\mh \right]\bigg),\frac{4\pi T }{u^0 }\partial_x^2\fe  }}\no\\
    &+\frac{1}{2}\abs{\br{\big(\dt +\hat{p}\cdot\nabla_x\big)\left[\frac{T }{u^0}\right],4\pi \abs{\partial_x^2\fe }^2}}\no\\
    &+ \abs{\br{\partial_x^2 \bigg(u  \mh \cdot\big(\hat{p} \times B_R^{\e} \big)\bigg), \frac{4\pi T }{u^0 }\partial_x^2 \fe }}+\e^{k-1}\abs{\br{\partial_x^2\Gamma\left[\fe,\fe\right],\frac{4\pi T }{u^0}\partial_x^2\fe}}\no\\
    &+\abs{\sum_{i=1}^{2k-1}\e^{i-1}\br{\partial_x^2\Gamma\left[ \m ^{-\frac{1}{2}}F_i, \fe\right]+\partial_x^2\Gamma\left[\fe,  \m ^{-\frac{1}{2}}F_i\right],\frac{4\pi T }{u^0}\partial_x^2\fe}}\no\\
    &+\e^k\abs{\br{\partial_x^2\bigg(\frac{u^0\hat{p}-u }{2 T }\cdot\big(E_R^{\e}+\hat{p} \times B_R^{\e} \big)\fe \bigg),\frac{4\pi T }{u^0 }\partial_x^2\fe  }}\no\\
    &+\e^k\abs{\br{\partial_x^2\bigg(\big(E_R^{\e}+\hat{p} \times B_R^{\e} \big)\cdot\nabla_p\fe \bigg),\frac{4\pi T }{u^0 }\partial_x^2\fe  }}\no\\
    &+\abs{\sum_{i=1}^{2k-1}\e^i\br{\partial_x^2\bigg(\big(E_i+\hat{p} \times B_i \big)\cdot\nabla_p\fe +\big(E_R^{\e}+\hat{p} \times B_R^{\e} \big)\cdot \m ^{-\frac{1}{2}}\nabla_pF_{i}\bigg),\frac{4\pi T }{u^0 }\partial_x^2\fe}}\no\\
    &+\abs{\sum_{i=1}^{2k-1}\e^i\br{\partial_x^2\bigg(\big(E_i+\hat{p} \times B_i \big)\cdot\frac{u^0\hat{p}-u }{2 T }\fe \bigg),\frac{4\pi T }{u^0 }\partial_x^2\fe  }}\no\\
    &+ \e^{k}\abs{\br{ \partial_x^2\sb ,\frac{4\pi T }{u^0}\partial_x^2\fe}}+C\e^{-2}\overline{\e}_0\nm{(\ik-\pk)[\fe]}^2_{H^1_{\sigma}}\no\\
    &+C(1+t)^{-\beta_0}\big(\tnm{\nabla_x^2\fe}^2+\e^{-1}\nm{\fe}_{H^1}^2\big).
\no
\end{align}
Now we estimate each term on the R.H.S. of \eqref{2x0}. 

\textbf{First Two Terms on the R.H.S. of \eqref{2x0}:}
The two terms can be bounded by
\begin{align*}
    C\zzz\Big(\nm{\fe}_{H^2}^2+\nm{E_R^{\e}}_{H^2}^2\Big)\lesssim (1+t)^{-\beta_0}\Big(\nm{\fe}_{H^2}^2+\nm{E_R^{\e}}_{H^2}^2\Big).
\end{align*}

\textbf{Third Term on the R.H.S. of \eqref{2x0}:}
We can obtain
\begin{align*}
&\abs{\br{\partial_x^2\bigg(\big(E +\hat{p} \times B  \big)\cdot\nabla_p\fe \bigg),\frac{4\pi T }{u^0 }\partial_x^2\fe }}\\
\lesssim& \zzz\nm{\partial_x^2\fe}\nm{\nabla_x\fe}_{H^1_{\sigma}}\lesssim (1+t)^{-\beta_0}\overline{\e}_0\Big(\nm{\partial_x^2\fe}^2+\nm{\nabla_x(\ik-\pk)[\fe]}^2_{H^1_{\sigma}}+\nm{\fe}^2_{H^2}\Big).\no
\end{align*}

\textbf{Fourth Term on the R.H.S. of \eqref{2x0}:} Noting that $\e (w^2)^2\gs\e (p^0)^2\gs p^0$ for $p^0\gs\e^{-1}$, we use 
similar arguments in \eqref{L200} to have
\begin{align*}
&\abs{\br{\partial_x^2\bigg( \m ^{-\frac{1}{2}}\fe \Big\{\dt +\hat{p}\cdot\nabla_x- \big(E +\hat{p} \times B  \big)\cdot\nabla_p \Big\}\left[\mh \right]\bigg),\frac{4\pi T }{u^0 }\partial_x^2\fe  }}\\
\ls&\zzz \br{(p^0)^3|\fe|+(p^0)^2|\partial_x\fe|+ p^0|\partial_x^2\fe|,|\partial_x^2\fe|}\no\\
    \ls& \zzz\Big(\tnm{\sqrt{p^0}\partial_x^2(\ik-\pk)[\fe]}^2+\tnm{\sqrt{p^0}\partial_x^2\pk[\fe]}^2\Big)+\zzz\nmsw{\fe}^2+\zzz\nmsww{\partial_x\fe}^2\no\\
    \ls&o(1)\e^{-1}\nms{\partial_x^2(\ik-\pk)[\fe]}^2+\e\zzz\nmwww{\partial_x^2(\ik-\pk)[\fe]}^2+\zzz\tnm{\partial_x^2\fe}^2\no\\
    &+\zzz\nms{w_0(\ik-\pk)[\fe]}^2+\zzz\nms{w_1\partial_x(\ik-\pk)[\fe]}^2+\zzz\tnm{\fe}^2_{H^1}\no\\
    \ls&o(1)\e^{-1}\nms{\partial_x^2(\ik-\pk)[\fe]}^2+\e\overline{\e}_0\nmwww{\nabla_x^2(\ik-\pk)[\fe]}^2+(1+t)^{-\beta_0}\tnm{\nabla_x^2\fe}^2\no\\
    &+\overline{\e}_0\Big(\nms{w_0(\ik-\pk)[\fe]}^2+\nms{w_1\nabla_x(\ik-\pk)[\fe]}^2\Big)+(1+t)^{-\beta_0}\tnm{\fe}^2_{H^1}.\no
\end{align*}

\textbf{Fifth and Sixth Terms on the R.H.S. of \eqref{2x0}:}
The upper bound of the two terms is
\begin{align*}
  C\zzz \Big(\nm{\partial_x^2\fe}^2+\nm{E_R^{\e}}_{H^2}^2+\nm{B_R^{\e}}_{H^2}^2\Big) \ls (1+t)^{-\beta_0}\Big(\nm{\partial_x^2\fe}^2+\nm{E_R^{\e}}^2_{H^2}+\nm{B_R^{\e}}_{H^2}^2\Big).
\end{align*}

\textbf{Seventh Term on the R.H.S. of \eqref{2x0}:}
Using Lemma \ref{ss 02} for $p$ integral, $(\infty,2,2), (2,\infty,2)$ or $(4,4,2)$ for $x$ integral, and Sobolev embedding, we have 
\begin{align*}
    &\abs{\e^{k-1}\br{\partial_x^2\Gamma[\fe,\fe],\frac{4\pi T }{u^0}\partial_x^2\fe}}\\
    \ls&\abs{\e^{k-1}\br{\Gamma[\partial_x^2\fe,\fe]+\Gamma[\fe,\partial_x^2\fe]+\Gamma[\partial_x\fe,\partial_x\fe],\frac{4\pi T }{u^0}(\ik-\pk)[\partial_x^2\fe]}}\no\\
    &+\zzz\abs{\e^{k-1}\br{\Gamma[\partial_x\fe,\fe]+\Gamma[\fe,\partial_x\fe],\frac{4\pi T }{u^0}\partial_x^2\fe}}+\zzz\abs{\e^{k-1}\br{\Gamma[\fe,\fe],\frac{4\pi T }{u^0}\partial_x^2\fe}}\no\\
    \ls&\e^{k-1}\int_{x\in\r^3}\Big[\Big(\abss{\partial_x^2\fe}\tbs{\fe}+\tbs{\partial_x^2\fe}\abss{\fe}+\abss{\partial_x\fe}\tbs{\partial_x\fe}\Big)\abss{(\ik-\pk)[\partial_x^2\fe]}\no\\
    &+\zzz\Big(\abss{\partial_x\fe}\tbs{\fe}+\tbs{\partial_x\fe}\abss{\fe}+\abss{\fe}\tbs{\fe}\Big)\abss{\partial_x^2\fe}\Big]\no\\
    \ls&\e^{k-1}\nm{\fe}_{H^2}\nm{\fe}_{H^2_{\sigma}}\nms{(\ik-\pk)[\nabla_x^2\fe]}+\e^{k-1}\zzz\nm{\fe}_{H^2}\nm{\fe}_{H^1_{\sigma}}\nms{\partial_x^2\fe}\no\\
    \ls&\e^{\frac{1}{2}}\nm{\fe}_{H^2_{\sigma}}\Big(\nms{\partial_x^2(\ik-\pk)[\fe]}+\zzz\nm{\fe}_{H^1}\Big)+\e^{\frac{1}{2}}\zzz\nm{\fe}_{H^1_{\sigma}}\nms{\partial_x^2\fe}\no\\
    \ls& \overline{\e}_0\nm{(\ik-\pk)[\fe]}^2_{H^2_{\sigma}}+\Big[(1+t)^{-\beta_0}+\e\Big]\nm{\fe}^2_{H^2}.\no
\end{align*}

\textbf{Eighth Term on the R.H.S. of \eqref{2x0}:}
Similar to the seventh term,  it can be bounded by 
\begin{align*}
     & o(1)\e^{-1}\nms{(\ik-\pk)[\partial_x^2\fe]}^2+\e (1+t)^2\nm{\fe}_{H^2_{\sigma}}^2\no\\
    &+\zzz(1+t)\Big(\nm{\fe}_{H^1}+\nm{(\ik-\pk)[\fe]}_{H^1_{\sigma}}\Big)\Big(\nm{\fe}_{H^2}+\nms{\partial_x^2(\ik-\pk)[\fe]}\Big)\no\\
    \ls&o(1)\e^{-1}\nms{\partial_x^2(\ik-\pk)[\fe]}^2+\overline{\e}_0\e^{-1}\nm{(\ik-\pk)[\fe]}_{H^1_{\sigma}}^2+\e^{\frac{1}{3}}\nm{\fe}_{H^2}^2+\overline{\e}_0\e^{-\frac{1}{3}}\nm{\fe}_{H^1}^2.\no
\end{align*}

\textbf{Other Terms on the R.H.S. of \eqref{2x0}:} As in Proposition \ref{H1x}, we have the following upper bound of
these terms:
\begin{align*}
&o(1)\e^{-1}\nm{\partial_x(\ik-\pk)[\fe]}^2_{H^1_{\sigma}}+C\e^{-1}(1+t)^{-\beta_0}\nm{\fe}^2_{H^1}+C\e^{\frac{2}{3}}\Big(\nm{\nabla_x^2\fe}^2+\nm{E_R^{\e}}^2_{H^2}+\nm{B_R^{\e}}^2_{H^2}\Big)\\
&+C\e^{2k+1}(1+t)^{4k+2}
+C\e^{k}(1+t)^{2k}\nm{\nabla_x^2\fe}.
\end{align*}

\textbf{Summary:}
We collect the above estimates in \eqref{2x0}, and multiply the resulting inequality by $\e^2$ to derive \eqref{H2ener01}.
\end{proof}


\section{Weighted Energy Estimates} \label{l22}
In  this section, we are devoted to the weighted energy estimates of the remainder term $\fe$.

\subsection{Weighted Estimate}


Apply microscopic projection $(\ik-\pk )$ onto \eqref{L20} to have
\begin{align}\label{wL20}
    &\left\{\dt +\hat{p}\cdot\nabla_x -\big(E +\hat{p} \times B  \big)\cdot\nabla_p \right\}(\ik-\pk )\left[\fe \right] \\
    &- (\ik-\pk )\Big[\big(E_R^{\e}+\hat{p} \times B_R^{\e} \big)\cdot\frac{-u^0\hat{p}+u   }{ T }\mh\Big]+\frac{\li [\fe ]}{\e}\no\\
    =&- \m ^{-\frac{1}{2}}(\ik-\pk )\left[\fe \right]\left\{\dt +\hat{p}\cdot\nabla_x- \big(E +\hat{p} \times B  \big)\cdot\nabla_p\right\}\mh +\e^{k-1}\Gamma \left[\fe,\fe\right]\no\\
    &+\sum_{i=1}^{2k-1}\e^{i-1}\left(\Gamma \left[ \m ^{-\frac{1}{2}}F_i, \fe\right]+\Gamma \left[\fe,  \m ^{-\frac{1}{2}}F_i\right]\right)+ \e^k\big(E_R^{\e}+\hat{p} \times B_R^{\e} \big)\cdot\nabla_p(\ik-\pk )\left[\fe \right]\no\\
    &- \e^k\frac{1 }{2 T }\big(u^0\hat{p}-u \big)\cdot\big(E_R^{\e}+\hat{p} \times B_R^{\e} \big)(\ik-\pk )\left[\fe \right]+ \sum_{i=1}^{2k-1}\e^i\big(E_i+\hat{p} \times B_i \big)\cdot\nabla_p(\ik-\pk )\left[\fe \right]\no\\
    &+ \sum_{i=1}^{2k-1}\e^i(\ik-\pk )\Big(\big(E_R^{\e}+\hat{p} \times B_R^{\e} \big)\cdot \m ^{-\frac{1}{2}}\nabla_pF_{i}\Big)\Big]\no\\
    &- \sum_{i=1}^{2k-1}\e^i\Big[\big(E_i+\hat{p} \times B_i \big)\cdot\frac{1}{2 T }\big(u^0\hat{p}-u \big)(\ik-\pk )\left[\fe \right]\Big]+\e^{k}(\ik-\pk)[\sb]+\jump{{\bf P},\tau_{B}}\fe ,\no
\end{align}
where $\jump{{\bf P},\tau_{B}}={\bf P}\tau_{B}-\tau_{B}{\bf P}$ denotes the commutator of two operators ${\bf P}$ and $\tau_{B}$:
\begin{align*}
\tau_{B}:=&\dt +\hat{p}\cdot\nabla_x- \big(E +\hat{p} \times B  \big)\cdot\nabla_p\\
&+ \m ^{-\frac{1}{2}}\left\{\dt +\hat{p}\cdot\nabla_x- \big(E +\hat{p} \times B  \big)\cdot\nabla_p \right\}\mh \no\\
&+ \e^k\frac{1 }{2 T }\big(u^0\hat{p}-u \big)\cdot\big(E_R^{\e}+\hat{p} \times B_R^{\e} \big)-\big(E_R^{\e}+\hat{p} \times B_R^{\e} \big)\cdot\nabla_p\no\\
&-\sum_{i=1}^{2k-1}\e^i\left\{\big(E_i+\hat{p} \times B_i \big)\cdot\Big[\nabla_p-\frac{1}{2 T }\big(u^0\hat{p}-u \big)\right\}.\no
\end{align*}

\begin{proposition}\label{wL2ener}
For the remainders $\Big(\fe, E^{\e}_ R, B^{\e}_ R\Big)$, it holds that
\begin{align}\label{wL2f01}
&\frac{\ud}{\ud t}\nmw{(\ik-\pk)[\fe]}^2+\frac{\delta}{\e}\nmsw{(\ik-\pk)[\fe]}^2+Y\nmw{\sqrt{p^0}(\ik-\pk)[\fe]}^2\\
\lesssim&\frac{1}{\e}\nms{(\ik-\pk)[\fe]}^2+\e\nm{\nabla_x\fe}^2+\e^{\frac{1}{3}}\tnm{\fe}^2 +\e\big(\mathcal{E}+\mathcal{D}\big)+\e^{2k+1}(1+t)^{4k+2}.
\no
\end{align}
\end{proposition}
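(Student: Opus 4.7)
The plan is to test equation \eqref{wL20} against $(w^0)^2(\ik-\pk)[\fe]$ and integrate over $(x,p)\in\r^3\times\r^3$, paralleling the structure of Propositions \ref{L2ener}--\ref{H2x} but with the time-dependent weight $(w^0)^2$ replacing $4\pi T/u^0$. The reason for first applying the microscopic projection to \eqref{L20} is to avoid the bad term $\e^{-1}\nm{\pk[\fe]}^2$ described after \eqref{example2}: pairing $(w^0)^2$ directly with $\pk[\fe]$ would produce a macroscopic contribution the dissipation cannot control.

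For the left-hand side, since $w^0=(p^0)^{2\nc}\exp(p^0/(5\ln(\ue+t)\tc))$ is independent of $x$, we have $\partial_t(w^0)^2=-2\yy p^0(w^0)^2$, so the time derivative generates $\tfrac{1}{2}\tfrac{\ud}{\ud t}\nmw{(\ik-\pk)[\fe]}^2+\yy\nmw{\sqrt{p^0}(\ik-\pk)[\fe]}^2$, while the transport term $\hat p\cdot\nx(\ik-\pk)[\fe]$ is a pure divergence in $x$. The momentum-force term is handled by integration by parts in $p$, using \eqref{diff} to bound $\nabla_p(w^0)^2$; the resulting factor $\zzz$ is absorbed by the $\yy$-dissipation since $\zzz\ls(1+t)^{-\beta_0}\ll\yy$ on $t\in[0,\e^{-1/3}]$. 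For the collision operator, $\brv{\li[\fe],(w^0)^2(\ik-\pk)[\fe]}=\brv{\li[(\ik-\pk)[\fe]],(w^0)^2(\ik-\pk)[\fe]}$, and Lemma \ref{ss 04} yields the announced $\frac{\delta}{\e}\nmsw{(\ik-\pk)[\fe]}^2$ at the cost of $\frac{C}{\e}\nms{(\ik-\pk)[\fe]}^2$ on the right-hand side.

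For the source and nonlinear terms on the right-hand side of \eqref{wL20}, the growth term $\m^{-1/2}(\ik-\pk)[\fe]\{\dt+\hat p\cdot\nx-(E+\hat p\times B)\cdot\nv\}\mh$ is at most linear in $p^0$ by Lemma \ref{ss 06}, so its contribution is controlled by $\zzz\nmw{\sqrt{p^0}(\ik-\pk)[\fe]}^2$ and absorbed by the $\yy$-dissipation. The nonlinear term $\e^{k-1}\Gamma[\fe,\fe]$ is estimated via Lemma \ref{ss 05}, Sobolev embedding, and the a priori bound \eqref{rr 01}, producing $o(\e^{-1})\nmsw{(\ik-\pk)[\fe]}^2+\e\dd$. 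The $\Gamma$-terms with $F_i$ and the $F_i$-coupled electromagnetic terms exploit the fast decay and polynomial-in-$t$ bounds from Proposition \ref{fn}, contributing at worst $\e^{1/3}\tnm{\fe}^2$. The $\e^k(E_R^{\e}+\hat p\times B_R^{\e})$-contributions use \eqref{rr 01} to gain smallness and are absorbed into $\e\ee$, while $\e^k(\ik-\pk)[\sb]$ yields the harmless $\e^{2k+1}(1+t)^{4k+2}$.

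The main obstacle is the commutator $\jump{\pk,\tau_B}\fe$ generated by projecting the original equation. Because $\mathcal{N}$ is spanned by $\mh,p_i\mh,p^0\mh$ whose coefficients carry the local Maxwellian $\m$ (via \eqref{macfe}), the projection $\pk$ does not commute with $\hat p\cdot\nx$ nor $\dt$. Schematically, $\jump{\pk,\hat p\cdot\nx}[\fe]$ couples a polynomial-$\mh$ factor against moments of $\nx\fe$ and against $\zzz$ times $\fe$. Pairing with $(w^0)^2(\ik-\pk)[\fe]$ and applying Cauchy--Schwarz yields
\[
\babs{\brv{\jump{\pk,\hat p\cdot\nx}[\fe],(w^0)^2(\ik-\pk)[\fe]}}\ls \frac{1}{2\e}\nmsw{(\ik-\pk)[\fe]}^2+C\e\tnm{\nx\fe}^2,
\]
which is exactly the origin of the $\e\tnm{\nx\fe}^2$ term, as anticipated in \eqref{semp 4}. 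The commutator contributions from $\dt$ and the electromagnetic force terms are handled identically, contributing at worst $\e^{-1}\nms{(\ik-\pk)[\fe]}^2$ and $\e^{1/3}\tnm{\fe}^2$. Collecting all the above and absorbing the $o(\e^{-1})$ contributions into the main dissipation yields \eqref{wL2f01}.
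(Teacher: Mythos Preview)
Your proposal is correct and follows essentially the same route as the paper: test the microscopically projected equation \eqref{wL20} against $(w^0)^2(\ik-\pk)[\fe]$, extract the extra $\yy\nmw{\sqrt{p^0}(\ik-\pk)[\fe]}^2$ from $\partial_t(w^0)^2$, invoke Lemma~\ref{ss 04} for the weighted coercivity of $\li$, control the growth term via $\zzz\ll\yy$, and handle the commutator $\jump{\pk,\tau_B}$ by exploiting the Maxwellian decay of $\pk$-outputs to produce exactly the $\e\tnm{\nabla_x\fe}^2$ contribution. One small refinement: in your Cauchy--Schwarz split on the commutator you place the $(w^0)^2$ weight on $(\ik-\pk)[\fe]$, whereas the paper places it on $\jump{\pk,\hat p\cdot\nx}[\fe]$ (where it is eaten by the Maxwellian), yielding $\e^{-1}\nms{(\ik-\pk)[\fe]}^2$ with no weight; this lands the term directly on the right-hand side of \eqref{wL2f01} rather than requiring absorption into the weighted dissipation, which avoids any constraint on the size of $\delta$.
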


\begin{proof}
Noting
\begin{align}\label{w0f}
(w^0)^2(\ik-\pk )[\fe ]\cdot \partial_t\{ (\ik-\pk )[\fe ]\}&=\frac{1}{2}\partial_t|w^0(\ik-\pk )[\fe ]|^2+p^0Y(w^0)^2|(\ik-\pk )[\fe ]|^2,
\end{align}
we take the $L^2$ inner product of \eqref{wL20} with  $(w^0)^2(\ik-\pk )[\fe ]$ and use Lemma \ref{ss 04}
 to have
\begin{align}\label{wL2f1}
    & \frac{1}{2}\frac{\ud}{\ud t}\nmw{(\ik-\pk)[\fe]}^2+\frac{\delta}{\e}\nmsw{(\ik-\pk)[\fe]}^2+Y\nmw{\sqrt{p^0}(\ik-\pk)[\fe]}^2\\
    \leq&\frac{C}{\e}\nms{(\ik-\pk)[\fe]}^2+\abs{\br{\Big(E +\hat{p} \times B  \Big)\cdot\nabla_p(\ik-\pk )\left[\fe \right],(w^0)^2(\ik-\pk )\left[\fe \right]}}\no\\
    &+\abs{\br{(\ik-\pk )\Big[\Big(E_R^{\e}+\hat{p} \times B_R^{\e} \Big)\cdot\frac{-u^0\hat{p}+u  }{ T }\mh \Big],(w^0)^2(\ik-\pk )\left[\fe \right]}}\no\\
    &+\abs{\br{ \m ^{-\frac{1}{2}}\big\{\dt +\hat{p}\cdot\nabla_x- \big(E +\hat{p} \times B  \big)\cdot\nabla_p \big\}\left[\mh \right],\abs{w^0(\ik-\pk )\left[\fe \right]}^2}}\no\\
    &+\abs{\e^{k-1}\br{\Gamma\left[\fe,\fe\right],(w^0)^2(\ik-\pk)[\fe]}}\no\\
    &+\abs{\sum_{i=1}^{2k-1}\e^{i-1}\br{\Gamma\left[ \m ^{-\frac{1}{2}}F_i, \fe\right]+\Gamma\left[\fe,  \m ^{-\frac{1}{2}}F_i\right],(w^0)^2(\ik-\pk)[\fe]}}\no\\
    &+\abs{\e^k\br{\frac{\big(u^0\hat{p}-u \big)}{2T }\cdot\Big(E_R^{\e}+\hat{p} \times B_R^{\e} \Big),\abs{w^0(\ik-\pk )\left[\fe \right]}^2}}\no\\
    &+\abs{\br{\Big(E_R^{\e}+\hat{p} \times B_R^{\e} \Big)\cdot\nabla_p\fe ,(w^0)^2(\ik-\pk )\left[\fe \right]}}\no\\
    &+\abs{\sum_{i=1}^{2k-1}\e^i\br{\big(E_i+\hat{p} \times B_i \big)\cdot\nabla_p(\ik-\pk )\left[\fe \right],(w^0)^2(\ik-\pk )\left[\fe \right]}}\no\\
    &+\abs{\sum_{i=1}^{2k-1}\e^i\br{(\ik-\pk )\left[\big(E_R^{\e}+\hat{p} \times B_R^{\e} \big)\cdot \m ^{-\frac{1}{2}}\nabla_pF_{i}\right],(w^0)^2(\ik-\pk )\left[\fe \right]}}\no\\
    &+\abs{\sum_{i=1}^{2k-1}\e^i\br{ \left(\big(E_i+\hat{p} \times B_i \big)\cdot\frac{u^0\hat{p}-u }{2 T }\right),\abs{w^0(\ik-\pk )\left[\fe \right]}^2}}\no\\
    &+\abs{\e^{k}\br{(\ik-\pk)[\sb] ,(w^0)^2(\ik-\pk)[\fe]}}+
    \abs{\br{\jump{{\bf P},\tau_{B}}\left[\fe \right],(w^0)^2(\ik-\pk)\left[\fe \right]}}.
\no
\end{align}

\textbf{Second Term on the R.H.S. of \eqref{wL2f1}:}
It is bounded by
\begin{align*}
    C\Big(\nm{E }_{L^{\infty}}+\nm{B }_{L^{\infty}}\Big)\nmw{(\ik-\pk)[\fe]}^2\lesssim (1+t)^{-\beta_0}\nmw{(\ik-\pk)[\fe]}^2.
\end{align*}

\textbf{Third Term on the R.H.S. of \eqref{wL2f1}:}
Noting \eqref{smallw}, we bound it by
\begin{align*}
C\Big(\nm{E_R^{\e}}+\nm{B_R^{\e}}\Big)\|(\ik-\pk)[\fe]\|\lesssim \e^{-1}\|(\ik-\pk)[\fe]\|^2+\e\big(\nm{E_R^{\e}}^2+\nm{B_R^{\e}}^2\big).
\end{align*}

\textbf{Fourth Term on the R.H.S. of \eqref{wL2f1}:}
By the smallness of $\overline{\e}_0$, it holds that
\begin{align}\label{zzyy}
    \zzz\ls \overline{\e}_0(1+t)^{-\beta_0}\ll \yy.
\end{align}
Then, \eqref{exassump} holds true uniformly and we have
\begin{align}\label{e1}
&\abs{\br{ \m ^{-\frac{1}{2}}\big\{\dt +\hat{p}\cdot\nabla_x -\big(E +\hat{p} \times B  \big)\cdot\nabla_p \big\}\left[\mh \right],\abs{w^0(\ik-\pk )\left[\fe \right]}^2}}\\
\ls& \zzz\Big(\nmw{\sqrt{p^0}(\ik-\pk)[\fe]}^2+\nmsw{(\ik-\pk)[\fe]}^2\Big)\no\\
\leq& \frac{Y}{2}\nmw{\sqrt{p^0}(\ik-\pk)[\fe]}^2+C\overline{\e}_0\nmsw{(\ik-\pk)[\fe]}^2\no.
\end{align}

\textbf{Fifth Term on the R.H.S. of \eqref{wL2f1}:}
Using \eqref{Weight1} in Lemma \ref{ss 05} and Sobolev embedding, it can be controlled by
\begin{align*}
    &\e^{k-1}\Big(\nm{\fe}_{H^2}\nmsw{\fe}+\nm{\fe}_{H^2_{\sigma}}\nm{\fe}_{w^0}\Big)\nmsw{(\ik-\pk)[\fe]}\\
    \ls&  \Big\{\e^{\frac{1}{2}}\Big(\nmsw{(\ik-\pk)[\fe]}+\nmsw{\pk[\fe]}\Big)+\e\Big(\nm{(\ik-\pk)[\fe]}_{H^2_{\sigma}}+\nm{\pk[\fe]}_{H^2_{\sigma}}\Big)\Big\}\nmsw{(\ik-\pk)[\fe]}\no\\
    \ls&o(1)\e^{-1}\nmsw{(\ik-\pk)[\fe]}^2+\e^3\nm{(\ik-\pk)[\fe]}_{H^2_{\sigma}}^2+\e^3\nm{\fe}_{H^2}^2+\e^2\nm{\fe}^2.\no
\end{align*}

\textbf{Sixth Term on the R.H.S. of \eqref{wL2f1}:}
Similarly,  we use \eqref{smallw}, \eqref{Weight1} and \eqref{growth0} in Proposition \ref{fn} to obtain that for $t\leq \overline{t}=\e^{-1/3}$,
\begin{align*}
&\sum_{i=1}^{2k-1}\e^{i-1}\abs{\br{\Gamma\left[ \m ^{-\frac{1}{2}}F_i, \fe\right]+\Gamma\left[\fe,  \m ^{-\frac{1}{2}}F_i\right],(w^0)^2(\ik-\pk)[\fe]}}\\
\lesssim& \sum_{i=1}^{2k-1}\e^{i-1}\Big\|\big|w^0 \m ^{-\frac{1}{2}}F_i\big|_{\sigma}\Big\|_{L^{\infty}_x}\nmsw{\fe}\nmsw{(\ik-\pk)[\fe]}^2\no\\
\ls& o(1)\e^{-1}\nmsw{(\ik-\pk)[\fe]}^2+\e(1+t)^2\nmsw{\fe}^2\no\\
    \ls& o(1)\e^{-1}\nmsw{(\ik-\pk)[\fe]}^2+\e^{\frac{1}{3}}\tnm{\fe}^2.\no
\end{align*}

\textbf{Seventh and eighth Terms on the R.H.S. of \eqref{wL2f1}:}
Its upper bound is
\begin{align*}
    C\e^k\Big(\nm{E_R^{\e}}+\nm{B_R^{\e}}\Big)_{H^2}\nmsw{(\ik-\pk)[\fe]}^2\ls\e\nmsw{(\ik-\pk)[\fe]}^2.
\end{align*}

\textbf{Last Term on the R.H.S. of \eqref{wL2f1}:} For this term, we have
\begin{align*}
&\abs{\br{\jump{{\bf P},\tau_{B}}\left[\fe \right],(w^0)^2(\ik-\pk)\left[\fe \right]}}
\lesssim\e^{-1}\nms{(\ik-\pk)[\fe]}^2+ \e\Big(
\nm{E_R^{\e}}^2+\nm{B_R^{\e}}^2+\nm{\fe}^2_{H^1}\Big).
\end{align*}

\textbf{Other Terms on the R.H.S. of \eqref{wL2f1}:}
Similarly, these terms can be bounded by
\begin{align*}
& o(1)\e^{-1}\nmsw{(\ik-\pk)[\fe]}^2+C\e \Big(\nm{\fe}^2+\nm{E_R^{\e}}^2+\nm{B_R^{\e}}^2\Big)+ C\e^{2k+1}(1+t)^{4k+2}.
\end{align*}

\textbf{Summary:}
We collect the above estimates in \eqref{wL2f1} to derive \eqref{wL2f01}.
\end{proof}


\subsection{Weighted First-Order Derivatives Estimates}\label{31}

In this subsection, we proceed the weighted  $L^2$ estimate of $\nabla_x\fe$.

\begin{proposition}\label{H1p}
For the remainders $\Big(\fe, E^{\e}_ R, B^{\e}_R\Big)$, it holds that
\begin{align}\label{wH1f12}
& \e\left(\frac{\ud}{\ud t}\nmww{\nabla_x(\ik-\pk)[\fe]}^2+\frac{\delta}{\e}\nmsww{\nabla_x(\ik-\pk)[\fe]}^2+Y\nmww{\sqrt{p^0}\nabla_x(\ik-\pk)[\fe]}^2\right)\\
\ls&\Big((1+t)^{-\beta_0}+\e^{\frac{1}{3}}\Big)\mathcal{E}+\e^2\nm{\nabla_x^2\fe}^2+\nms{\nabla_x(\ik-\pk)[\fe]}^2+\e\mathcal{D}+C\e^{2k+2}(1+t)^{4k+2}.\no
\end{align}
\end{proposition}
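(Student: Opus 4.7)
The plan is to mimic the proof of Proposition~\ref{wL2ener} at one higher derivative and with the weight $w^1$ in place of $w^0$. I would apply $\partial_x$ (any $\partial_{x_i}$, $i=1,2,3$) to the microscopic-projected equation \eqref{wL20} to get an evolution equation for $\partial_x(\ik-\pk)[\fe]$, then take the $L^2_{x,p}$ inner product against $(w^1)^2\partial_x(\ik-\pk)[\fe]$. As in \eqref{w0f}, the time derivative produces both the desired $\tfrac12\tfrac{\ud}{\ud t}\nmww{\partial_x(\ik-\pk)[\fe]}^2$ and the weighted moment dissipation $Y\nmww{\sqrt{p^0}\partial_x(\ik-\pk)[\fe]}^2$ on the left-hand side.

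The linear collision contribution $\e^{-1}\partial_x\li[\fe]$ I would decompose as
\begin{align*}
    \partial_x\li[\fe] = \li\bigl[\partial_x(\ik-\pk)[\fe]\bigr] - \li\bigl[\jump{\pk,\partial_x}[\fe]\bigr] + \jump{\li,\partial_x}\bigl[(\ik-\pk)[\fe]\bigr],
\end{align*}
and invoke Lemma~\ref{ss 04} on the first piece to obtain the coercive bound $\delta\e^{-1}\nmsww{\partial_x(\ik-\pk)[\fe]}^2$ modulo an admissible loss $C\e^{-1}\nms{\partial_x(\ik-\pk)[\fe]}^2$; after the final multiplication by $\e$, this loss matches exactly the $\nms{\nabla_x(\ik-\pk)[\fe]}^2$ on the right-hand side of \eqref{wH1f12}. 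The two commutator terms in the decomposition land $\partial_x$ on Maxwellian factors, so each carries a factor of $\zzz\lesssim\overline{\e}_0(1+t)^{-\beta_0}$ and decays rapidly in $p$; Cauchy--Schwarz then absorbs them into $o(1)\e^{-1}\nmsww{\partial_x(\ik-\pk)[\fe]}^2$ plus $(1+t)^{-\beta_0}\mathcal{E}$ and $\e\mathcal{D}$, as in Proposition~\ref{H1x}.

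The linear term from differentiating $\m^{-1/2}\bigl(\dt+\hat p\cdot\nx-(E+\hat p\times B)\cdot\nabla_p\bigr)\mh$ produces the worst polynomial growth in $p^0$; invoking \eqref{zzyy} I would absorb it into half of $Y\nmww{\sqrt{p^0}\partial_x(\ik-\pk)[\fe]}^2$, exactly as in \eqref{e1}. The Lorentz-force contribution $(E+\hat p\times B)\cdot\nabla_p$ and the couplings to $E_R^{\e},B_R^{\e}$ are bounded via the decay \eqref{decay} and fit into $(1+t)^{-\beta_0}\mathcal{E}+\e\mathcal{D}$. For the nonlinear term $\e^{k-1}\partial_x\Gamma[\fe,\fe]$ I would distribute $\partial_x$ by Leibniz, apply Lemma~\ref{ss 05}, place one factor in $L^{\infty}_x$ via $H^2_x\hookrightarrow L^{\infty}_x$, and use the bootstrap \eqref{rr 01}; since $k\geq3$, the resulting contributions are absorbed into $o(1)\e^{-1}\nmsww{\nabla_x(\ik-\pk)[\fe]}^2+\e\mathcal{D}+\e^{1/3}\mathcal{E}$. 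The mixed cross-terms involving $F_i$ use \eqref{growth0} together with $t\leq\e^{-1/3}$, and $\sb$ contributes the expected $\e^{2k+2}(1+t)^{4k+2}$ after multiplication by $\e$.

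The main structural obstacle, responsible for the $\e^2\nm{\nabla_x^2\fe}^2$ term on the right-hand side of \eqref{wH1f12}, is the commutator $\jump{\mathbf{P},\tau_B}$ appearing in \eqref{wL20} once we differentiate it in $x$: the $\nabla_x$ derivative produces $\zzz\,\nabla_x\fe$-type coefficients (or, after one Leibniz distribution, $\nabla_x^2\fe$ factors) which when paired with $(w^1)^2\partial_x(\ik-\pk)[\fe]$ and split via Cauchy--Schwarz yield $o(1)\e^{-1}\nmsww{\partial_x(\ik-\pk)[\fe]}^2 + C\e\nm{\nabla_x^2\fe}^2$. After the final multiplication by $\e$, this produces precisely $\e^2\nm{\nabla_x^2\fe}^2$. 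As explained in the technical overview, this is the intrinsic cost of running the $(\ik-\pk)$-projection argument at one derivative without closing on the highest-order derivative, and is compensated at the level of Proposition~\ref{result} by including $\e^2\nm{\nabla_x^2\fe}^2$ in $\mathcal{E}$. Collecting all estimates and multiplying through by $\e$ yields \eqref{wH1f12}.
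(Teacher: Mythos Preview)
Your proposal is correct and follows essentially the same approach as the paper: differentiate the microscopic-projected equation \eqref{wL20}, multiply by $(w^1)^2\partial_x(\ik-\pk)[\fe]$, extract coercivity from Lemma~\ref{ss 04}, absorb the Maxwellian-growth term into the $Y$-dissipation via \eqref{zzyy}, and identify the commutator $\partial_x\jump{\pk,\tau_B}[\fe]$ as the source of the $\e^2\nm{\nabla_x^2\fe}^2$ loss. One minor point: since the starting equation \eqref{wL20} already has $\li[\fe]=\li[(\ik-\pk)[\fe]]$, the paper's decomposition of $\partial_x\li$ is simply $\li[\partial_x(\ik-\pk)[\fe]]-\jump{\li,\partial_x}[(\ik-\pk)[\fe]]$, so your middle piece $-\li[\jump{\pk,\partial_x}[\fe]]$ is redundant (and the sign on your last piece should be negative), but this does not affect the estimates.
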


\begin{proof}
Noting that $\jump{\li,\nabla_x}$ only contains terms that hit $\li$, it holds that
\begin{align*}
    &\e^{-1}\abs{\br{\jump{\li,\partial_x}\Big[(\ik-\pk)[\fe]\Big],w_1^2\partial_x(\ik-\pk)[\fe]}}\\
    \ls&\e^{-1}\zzz\nmsw{(\ik-\pk)[\fe]}\nmsww{\partial_x(\ik-\pk)[\fe]}\no\\
    \ls&o(1)\e^{-1}\nmsww{\partial_x(\ik-\pk)[\fe]}^2+\overline{\e}_0\e^{-1}\nmsw{(\ik-\pk)[\fe]}^2.\no
\end{align*}
Then, by Lemma \ref{ss 04} and \eqref{xp}, we have
\begin{align}\label{wxf}
  &\e^{-1}\br{ \partial_x\li\left[(\ik-\pk)[\fe]\right],w_1^2\partial_x(\ik-\pk)[\fe]}\\
   \geq&\e^{-1}\br{ \li\left[\partial_x(\ik-\pk)[\fe]\right],w_1^2\partial_x(\ik-\pk)[\fe]}-\e^{-1} \abs{\br{\jump{\li,\partial_x}(\ik-\pk)[\fe],w_1^2 \partial_x(\ik-\pk)[\fe]}}\no\\
   \geq
    &\delta \e^{-1}\nmsww{\partial_x(\ik-\pk)[\fe]}^2-C\e^{-1}\nms{\partial_x(\ik-\pk)[\fe]}^2-C\overline{\e}_0\e^{-1}\nmsw{(\ik-\pk)[\fe]}^2\no.
\end{align}
Apply $\partial_x$  to \eqref{wL20} and take the $L^2$ inner product of the resulting equation with $(w^1)^2\partial_x(\ik-\pk )[\fe ]$. Then, by similar arguments as in \eqref{w0f}, we use \eqref{wxf}  to have
\begin{align}\label{wH1f1}
    & \frac{1}{2}\frac{\ud}{\ud t}\nmww{\partial_x(\ik-\pk)[\fe]}^2+\frac{\delta}{\e}\nmsww{\partial_x(\ik-\pk)[\fe]}^2+Y\nmww{\sqrt{p^0}\partial_x(\ik-\pk)[\fe]}^2\\
    \leq&C\e^{-1}\Big(\nms{\partial_x(\ik-\pk)[\fe]}^2+\overline{\e}_0\nmsw{(\ik-\pk)[\fe]}^2\Big)\no\\
    &+\abs{\br{\partial_x\bigg(\big(E +\hat{p} \times B  \big)\cdot\nabla_p(\ik-\pk )\left[\fe \right]\bigg),(w^1)^2\partial_x(\ik-\pk )\left[\fe \right]}}\no\\
    &+\abs{\br{\partial_x(\ik-\pk)\Big[\bigg(\big(E_R^{\e}+\hat{p} \times B_R^{\e} \big)\cdot\frac{-u^0\hat{p}+u  }{ T }\mh \bigg)\Big],(w^1)^2\partial_x(\ik-\pk )\left[\fe \right]}}\no\\
    &+\abs{\br{\partial_x\bigg((\ik-\pk )\left[\fe \right] \m ^{-\frac{1}{2}}\big\{\dt +\hat{p}\cdot\nabla_x -\big(E +\hat{p} \times B  \big)\cdot\nabla_p\big\}\left[\mh \right]\bigg),(w^1)^2\partial_x(\ik-\pk )\left[\fe \right]}}\no\\
    &+\e^{k-1}\abs{\br{\partial_x\Gamma\left[\fe,\fe\right],(w^1)^2\partial_x(\ik-\pk)[\fe]}}\no\\
    &+\abs{\sum_{i=1}^{2k-1}\e^{i-1}\br{\partial_x\Gamma\left[ \m ^{-\frac{1}{2}}F_i, \fe\right]+\partial_x\Gamma\left[\fe,  \m ^{-\frac{1}{2}}F_i\right],(w^1)^2\partial_x(\ik-\pk)[\fe]}}\no\\
    &+\e^k\abs{\br{\partial_x\bigg(\frac{\big(u^0\hat{p}-u \big)}{2T }\cdot
    \big(E_R^{\e}+\hat{p} \times B_R^{\e} \big)(\ik-\pk )\left[\fe \right]\bigg),(w^1)^2\partial_x(\ik-\pk )\left[\fe \right]}}\no\\
    &+\e^k\abs{\br{\partial_x\bigg(
    \big(E_R^{\e}+\hat{p} \times B_R^{\e} \big)\cdot\nabla_p(\ik-\pk )\left[\fe \right]\bigg),(w^1)^2\partial_x(\ik-\pk )\left[\fe \right]}}\no\\
    &+\abs{\sum_{i=1}^{2k-1}\e^i\br{ \partial_x\bigg(\big(E_i+\hat{p} \times B_i \big)\cdot\nabla_p(\ik-\pk )\left[\fe \right]\bigg),(w^1)^2\partial_x(\ik-\pk )\left[\fe \right]}}\no\\
    &+\abs{\sum_{i=1}^{2k-1}\e^i\br{ \partial_x\bigg((\ik-\pk )\Big[\big(E_R^{\e}+\hat{p} \times B_R^{\e} \big)\cdot \m ^{-\frac{1}{2}}\nabla_pF_{i}\Big]\bigg),(w^1)^2\partial_x(\ik-\pk )\left[\fe \right]}}\no\\
    &+\abs{\sum_{i=1}^{2k-1}\e^i\br{ \partial_x\bigg(\big(E_i+\hat{p} \times B_i \big)\cdot\frac{u^0\hat{p}-u }{2 T }(\ik-\pk )\left[\fe \right]\bigg),(w^1)^2\partial_x(\ik-\pk )\left[\fe \right]}}\no\\
    &+ \e^{k}\abs{\br{\partial_x(\ik-\pk)[\sb] ,(w^1)^2\partial_x(\ik-\pk)[\fe]}}+ \abs{\br{\partial_x\jump{{\bf P},\tau_{B}}\fe,(w^1)^2\partial_x(\ik-\pk)[\fe]}}.
\no
\end{align}
Now we estimate each term on the R.H.S. of \eqref{wH1f1}.

\textbf{Second Term on the R.H.S. of \eqref{wH1f1}:}
It can be bounded by
\begin{align*}
&C \Big(\nm{\nabla_xE }_{W^{1,\infty}}+\nm{\nabla_xB }_{W^{1,\infty}}\Big)\Big(\nmww{\partial_x(\ik-\pk)[\fe]}^2+\nmsw{(\ik-\pk)[\fe]}^2\Big)\\
\lesssim& (1+t)^{-\beta_0}\overline{\e}_0\Big(\nmww{\partial_x(\ik-\pk)[\fe]}^2+\nmsw{(\ik-\pk)[\fe]}^2\Big).\no
\end{align*}

\textbf{Third Term on the R.H.S. of \eqref{wH1f1}:}
It can be controlled by
\begin{align*}
   C\e^{-1}\nms{\partial_x(\ik-\pk)[\fe]}^2+C\e
    \Big(\nm{E_R^{\e}}^2_{H^1}+\nm{B_R^{\e}}^2_{H^1}\Big).
\end{align*}

\textbf{Fourth Term on the R.H.S. of \eqref{wH1f1}:}
Similar to \eqref{e1}, we bound it by
\begin{align*}
&C\zzz\nmww{\sqrt{p^0}\partial_x(\ik-\pk)[\fe]}^2+C \zzz\nmw{(\ik-\pk)[\fe]}^2\\
\leq &\frac{Y}{2}\nmww{\sqrt{p^0}\partial_x(\ik-\pk)[\fe]}^2+(1+t)^{-\beta_0}\overline{\e}_0\nmsw{(\ik-\pk)[\fe]}^2.\no
\end{align*}

\textbf{Fifth Term on the R.H.S. of \eqref{wH1f1}:}
Using Lemma \ref{ss 05} for $p$ integral, $(\infty,2,2)$, $(2,\infty,2)$ or $(4,4,2)$ for $x$ integral, Sobolev embedding and \eqref{smallw}, we have 
\begin{align*}
    &\abs{\br{\e^{k-1}\partial_x\Gamma[\fe,\fe],(w^1)^2\partial_x(\ik-\pk)[\fe]}}\\
    \ls&\e^{k-1}\int_{x\in\r^3}\Big[\tbs{w^1\partial_x\fe}\abss{\fe}+\abss{w^1\partial_x\fe}\tbs{\fe}+\tbs{w_1\fe}\abss{\partial_x\fe}+\abss{w^1\fe}\tbs{\partial_x\fe}\no\\
    &+\zzz\big(\tbs{w^1\fe}\abss{\fe}+\abss{w^1\fe}\tbs{\fe}\big)\Big]\abss{w^1\partial_x(\ik-\pk)[\fe]}\no\\
    \ls&\e^{k-1}\Big(\nm{\fe}_{H^2}\nm{w^1\fe}_{H^1_{\sigma}}+\nm{\fe}_{H^2_{\sigma}}\nm{w^1\fe}_{H^1}\Big)\nmsww{\partial_x(\ik-\pk)[\fe]}\no\\
    \ls& \Big(\e^{\frac{1}{2}}\nm{w^1\fe}_{H^1_{\sigma}}+\e\nm{\fe}_{H^2_{\sigma}}\Big)\nmsww{\partial_x(\ik-\pk)[\fe]}\no\\
    \ls&o(1)\e^{-1}\nmsww{\partial_x(\ik-\pk)[\fe]}^2+\e^2\nm{w^1\fe}_{H^1_{\sigma}}^2+\e^3\nm{\fe}_{H^2_{\sigma}}^2\no\\
    \ls&o(1)\e^{-1}\nmsww{\partial_x(\ik-\pk)[\fe]}^2+\e^2\nmsw{(\ik-\pk)[\fe]}^2\\
    &+\e^3\nm{(\ik-\pk)[\fe]}_{H^2_{\sigma}}^2+\e^3\nm{\fe}_{H^2}^2+\e^2\nm{\fe}_{H^1}^2.\no
\end{align*}

\textbf{Sixth Term on the R.H.S. of \eqref{wH1f1}:}
Similarly, for $t\in[0, \e^{-1/3}]$, we use \eqref{Weight1} and \eqref{growth0} in Proposition \ref{fn} to bound it by
\begin{align}
&\sum_{i=1}^{2k-1}\e^{i-1}(1+t)^{i}\nmsww{\partial_x(\ik-\pk)[\fe]}\nm{\fe}_{H^1_{w,\sigma}}\\
\ls &o(1)\e^{-1}\nm{\partial_x(\ik-\pk)[\fe]}_{w^1,\sigma}^2+C\sum_{i=1}^{2k-1}\e^{2i-1}(1+t)^{2i}\nm{\fe}_{H^1_{w,\sigma}}^2\no\\
\ls& o(1)\e^{-1}\nm{(\ik-\pk)[\fe]}_{H^1_{w,\sigma}}^2+\e^{\frac{1}{3}}\nm{\fe}^2_{H^1}.\no
\end{align}


\textbf{Other Terms on the R.H.S. of \eqref{wH1f1}:}
Similarly, these terms can be controlled by
\begin{align*}
o(1)\e^{-1}\nm{(\ik-\pk)[\fe]}_{H^1_{w,\sigma}}^2+ C\e\Big(\nm{E_R^{\e}}_{H^1}^2+\nm{B_R^{\e}}^2_{H^1}+\nm{\fe}^2_{H^2}\Big)+C\e^{2k+1}(1+t)^{4k+2}.
\end{align*}

\textbf{Summary:}
We collect the above estimates in \eqref{wH1f1} and multiply it by $\e$  to obtain \eqref{wH1f12}.

\end{proof}

\subsection{Weighted Second-Order Derivatives Estimates}

In this subsection, we proceed the weighted  $L^2$ estimate of  $\nabla_x^2\fe$.

\begin{proposition}\label{H2p}
For the remainders $\Big(\fe, E^{\e}_ R, B^{\e}_ R\Big)$, it holds that
\begin{align}\label{w2x001}
& \e^{3}\left(\frac{\ud}{\ud t}\nmwww{\nabla_x^2\fe}^2
+Y\nmwww{\sqrt{p^0}\nabla_x^2\fe}^2
+\frac{\delta}{\e}\nmswww{\nabla_x^2(\ik-\pk)[\fe]}^2\right)\\
\lesssim & \e^2\nm{\nabla_x^2\fe}^2+\e\big(\ee+\dd\big)+\e^{2k+4}(1+t)^{4k+2}.
\no
\end{align}
\end{proposition}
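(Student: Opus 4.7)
The plan is to perform the weighted second-order energy estimate directly on $\nabla_x^2 \fe$ (without first applying the microscopic projection $(\ik-\pk)$), in parallel to the structure of Propositions~\ref{H1x} and~\ref{H2x} but now with the weight $(w^2)^2$. The reason we cannot pass to the projected equation \eqref{wL20} at this level is the one spelled out in the Technical Overview: applying $(\ik-\pk)$ generates a commutator $\jump{\pk,\hat p\cdot\nx}$ that costs one more spatial derivative, but $\nx^2$ is already the top order. So I will apply $\partial_x^2$ directly to \eqref{L20}, test against $(w^2)^2\partial_x^2\fe$, and use the identity
\begin{align*}
\brv{(w^2)^2\partial_x^2\fe,\partial_t\partial_x^2\fe}=\tfrac12\partial_t\tnmwww{\partial_x^2\fe}^2+p^0 Y(w^2)^2|\partial_x^2\fe|^2
\end{align*}
to produce both the time-derivative of the weighted energy and the crucial $\yy$-dissipation term.

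For the linear collision term I will use Lemma~\ref{ss 04} combined with the commutator bound
\begin{align*}
\e^{-1}\babs{\br{\jump{\li,\partial_x^2}[\fe],(w^2)^2\partial_x^2\fe}}\ls o(1)\e^{-1}\nmswww{\partial_x^2(\ik-\pk)[\fe]}^2+\overline{\e}_0\e^{-1}\bigl(\nmsww{\nx(\ik-\pk)[\fe]}^2+\nmsw{(\ik-\pk)[\fe]}^2\bigr),
\end{align*}
since $\jump{\li,\partial_x^2}$ only acts via derivatives falling on the Maxwellian inside $\li$, and these can be absorbed by $\zzz\ls\overline{\e}_0(1+t)^{-\beta_0}$ together with the lower-order dissipation already captured by $\dd$. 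This produces the main dissipative term $\frac{\delta}{\e}\nmswww{\nx^2(\ik-\pk)[\fe]}^2$ on the left.

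The main obstacle is the linear streaming/Maxwellian term $\partial_x^2\bigl(\mhh\{\dt+\hat p\cdot\nx-(E+\hat p\times B)\cdot\nabla_p\}\mh\cdot\fe\bigr)$: two $x$-derivatives landing on $\mh$ generate polynomial growth up to $(p^0)^3$ against $\nx^2\fe$. I will split into $p^0\ls\e^{-1}\kappa$ and $p^0\gs\e^{-1}\kappa$, use $\e(w^2)^2\gs p^0$ on the large-$p^0$ region to convert half of it into $o(1)\e^{-1}\nmswww{\nx^2(\ik-\pk)[\fe]}^2$, and absorb the remaining $\zzz\,\tnmwww{\sqrt{p^0}\nx^2\fe}^2$ into $\tfrac12 Y\tnmwww{\sqrt{p^0}\nx^2\fe}^2$ via the key smallness $\zzz\ll \yy$ established in \eqref{zzyy}. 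Lower-order weighted terms generated by the product rule fall into $\nms{w^0(\ik-\pk)[\fe]}^2+\nms{w^1\nx(\ik-\pk)[\fe]}^2$, already in $\dd$.

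The remaining terms are handled as in Propositions~\ref{H1p} and~\ref{H2x}: the electromagnetic cross terms by Cauchy–Schwarz plus the a~priori bound \eqref{rr 01} on $(E_R^{\e},B_R^{\e})$ in $H^2$; the $\Gamma$-terms by Lemma~\ref{ss 05} distributing two derivatives as $(\infty,2,2),(2,\infty,2),(4,4,2)$ in $x$ with Sobolev embedding, picking up the factor $\e^{k-1}$ for the quadratic $\Gamma[\fe,\fe]$ piece; the linear-in-$F_i$ collision terms by the growth bound \eqref{growth0} combined with $t\leq \e^{-1/3}$; and the source term $\e^k\partial_x^2\sb$ by Cauchy–Schwarz, producing the $\e^{2k+4}(1+t)^{4k+2}$ contribution. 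Finally, multiplying the resulting inequality by $\e^3$ balances the $\frac{1}{\e}$ coming from the linear collision operator so that the dissipation appears at scale $\e^2\nmswww{\nx^2(\ik-\pk)[\fe]}^2$ consistent with $\dd$, and all remainder terms fit into the RHS of \eqref{w2x001}; in particular, any residual $\e^2\nm{\nabla_x^2\fe}^2$ piece (from the nonprojected nature of this estimate, corresponding to $\e^2\nm{\nx^2\pk[\fe]}^2$) is kept explicit on the right, to be absorbed later via the macroscopic dissipation estimates of Section~\ref{Sec:macro-dissipation}.
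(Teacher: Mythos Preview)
Your proposal is correct and follows essentially the same approach as the paper: apply $\partial_x^2$ directly to \eqref{L20} (without projecting), test against $(w^2)^2\partial_x^2\fe$, use Lemma~\ref{ss 04} plus the commutator $\jump{\li,\partial_x^2}$ for the collision term, absorb the Maxwellian streaming via $\zzz\ll\yy$, treat the $\Gamma$-terms by Lemma~\ref{ss 05} with the $(\infty,2,2)/(2,\infty,2)/(4,4,2)$ H\"older split, and multiply by $\e^3$.

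Two minor remarks. First, your commutator bound for $\jump{\li,\partial_x^2}$ omits a contribution of the form $(1+t)^{-\beta_0}\e^{-1}\nm{\fe}_{H^2}^2$ (coming from the macroscopic pieces when one $\partial_x$ still hits $\fe$); the paper keeps this term, and after the $\e^3$ multiplication it becomes part of the explicit $\e^2\nm{\nabla_x^2\fe}^2$ on the right together with $\e\ee$. Second, for the Maxwellian streaming term the $p^0$-splitting you describe is the device from the \emph{unweighted} estimates; in the weighted case the paper simply bounds the whole thing by $\zzz\,\tnmwww{\sqrt{p^0}\partial_x^2\fe}^2$ and absorbs directly into $\tfrac12\yy\,\tnmwww{\sqrt{p^0}\partial_x^2\fe}^2$ via \eqref{zzyy}, which is cleaner since the $\yy$-dissipation is already available on the left. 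Your hybrid argument still works, but the extra split is unnecessary here.
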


\begin{proof}
Noting that $\jump{\li,\partial_x^2}[\fe]$ contains terms that $\partial_x$ hits $\fe$ at most once, we have
\begin{align*}
    &\e^{-1}\abs{\br{\jump{\li,\partial_x^2}[\fe],(w^2)^2\partial_x^2\fe}}\\
    \ls&o(1)\e^{-1}\nmswww{\partial_x^2(\ik-\pk)[\fe]}^2+\overline{\e}_0\e^{-1}\nm{(\ik-\pk)[\fe]}^2_{H^1_{w,\sigma}}+(1+t)^{-\beta_0}\e^{-1}\nm{\fe}_{H^2}^2.\no
\end{align*}
Then we use Lemma \ref{ss 04} to further obtain
\begin{align}
  &\e^{-1}\br{\partial_x^2 \li[\fe],(w^2)^2\partial_x^2\fe}\geq \e^{-1}\br{ \li[\partial_x^2\fe],(w^2)^2\partial_x^2\fe} -\e^{-1}\abs{\br{\jump{\li,\partial_x^2}[\fe],(w^2)^2\partial_x^2\fe}}\\
    \geq&\delta\e^{-1}\nmswww{\partial_x^2(\ik-\pk)[\fe]}^2-C\overline{\e}_0\e^{-1}\nm{(\ik-\pk)[\fe]}^2_{H^1_{w,\sigma}}-C\e^{-1}\nm{\fe}_{H^2}^2.\no
\end{align}

Now we take $|\alpha|=2$ in \eqref{alpha} and multiply the equation by
$(w^2)^2\partial_x^2\fe $ to get
\begin{align}\label{w2x0}
    & \frac{1}{2}\frac{\ud}{\ud t}\nmwww{\partial_x^2\fe}^2
    +Y\nmwww{\sqrt{p^0}\partial_x^2\fe}^2
    +\frac{\delta}{\e}\nmswww{\partial_x^2(\ik-\pk)[\fe]}^2\\
    \leq& \abs{\br{\partial_x^2\bigg(\mh  \big(E_R^{\e}+\hat{p} \times B_R^{\e} \big)\cdot \frac{u^0\hat{p}-u }{T }\bigg), (w^2)^2\partial_x^2\fe }}\no\\
    &+\abs{\br{\partial_x^2\bigg(\big(E +\hat{p} \times B  \big)\cdot\nabla_p\fe \bigg),(w^2)^2\partial_x^2\fe }}\no\\
    &+\abs{\br{\partial_x^2\bigg( \m ^{-\frac{1}{2}}\fe\Big\{\dt +\hat{p}\cdot\nabla_x-\big(E +\hat{p} \times B  \big)\cdot\nabla_p \Big\}\left[\mh \right]\bigg),(w^2)^2\partial_x^2\fe  }}\no\\
    &+\e^{k-1}\abs{\br{\partial_x^2\Gamma\left[\fe,\fe\right],(w^2)^2\partial_x^2\fe}}\no\\
    &+\abs{\sum_{i=1}^{2k-1}\e^{i-1}\br{\partial_x^2\Gamma\left[ \m ^{-\frac{1}{2}}F_i, \fe\right]+\partial_x^2\Gamma\left[\fe,  \m ^{-\frac{1}{2}}F_i\right],(w^2)^2\partial_x^2\fe}}\no\\
    &+\e^k\abs{\br{\partial_x^2\bigg(\frac{\big(u^0\hat{p}-u \big)}{2T }\cdot\big(E_R^{\e}+\hat{p} \times B_R^{\e} \big)\fe \bigg),(w^2)^2\partial_x^2\fe  }}\no\\
    &+\e^k\abs{\br{ \partial_x^2\bigg(\big(E_R^{\e}+\hat{p} \times B_R^{\e} \big)\cdot\nabla_p\fe \bigg),(w^2)^2\partial_x^2\fe  }}\no\\
    &+\abs{\sum_{i=1}^{2k-1}\e^i\br{ \partial_x^2\bigg(\big(E_i+\hat{p} \times B_i \big)\cdot\nabla_p\fe \bigg),(w^2)^2\partial_x^2\fe }}\no\\
    &+\abs{\sum_{i=1}^{2k-1}\e^i\br{ \partial_x^2\bigg(\big(E_R^{\e}+\hat{p} \times B_R^{\e} \big)\cdot \m ^{-\frac{1}{2}}\nabla_pF_{i}\bigg),(w^2)^2\partial_x^2\fe }}\no\\
    &+\abs{\sum_{i=1}^{2k-1}\e^i\Big|\br{ \partial_x^2\bigg(\big(E_i+\hat{p} \times B_i \big)\cdot\frac{\big(u^0\hat{p}-u \big)}{2T }\fe \bigg),(w^2)^2\partial_x^2\fe  }}\no\\
    &+ \e^{k}\abs{\br{ \partial_x^2\sb ,(w^2)^2\partial_x^2\fe}}+C\overline{\e}_0\e^{-1}\nm{(\ik-\pk)[\fe]}^2_{H^1_{w,\sigma}}+C\e^{-1}\nm{\fe}_{H^2}^2.
\no
\end{align}
Now we treat the terms in the R.H.S. of \eqref{w2x0}. 

\textbf{First Term on the R.H.S. of \eqref{w2x0}:}
By \eqref{smallw}, it can be bounded by
\begin{align*}
&C\Big(\nm{\fe}_{H^2}^2+\nm{E_R^{\e}}_{H^2}^2+\nm{B_R^{\e}}_{H^2}^2\Big).
\end{align*}

\textbf{Second Term on the R.H.S. of \eqref{w2x0}:}
Its upper bound is
\begin{align*}
 \zzz\nm{\fe}^2_{H^2_{w,\sigma}}\lesssim (1+t)^{-\beta_0}\overline{\e}_0\Big(\nm{(\ik-\pk)[\fe]}^2_{H^2_{w,\sigma}}+\nm{\fe}^2_{H^2}\Big).
\end{align*}

\textbf{Third Term on the R.H.S. of \eqref{w2x0}:}
Similar to \eqref{L200}, it can be bounded by 
\begin{align*}
& \zzz \nmwww{\sqrt{p^0}\partial_x^2\fe}^2+ \zzz
\nm{\fe}^2_{H^2_{w,\sigma}}\\
\leq& \frac{\yy}{2} \nmwww{\sqrt{p^0}\partial_x^2\fe}^2+C(1+t)^{-\beta_0}\overline{\e}_0\Big(\nm{(\ik-\pk)[\fe]}^2_{H^2_{w,\sigma}}+\nm{\fe}^2_{H^2}\Big).\no
\end{align*}

\textbf{Fourth Term on the R.H.S. of \eqref{w2x0}:}
We use \eqref{smallw}, \eqref{Weight1} in Lemma \ref{ss 05} and Sobolev's inequalities to have
\begin{align*}
    &\abs{\br{\e^{k-1}\partial_x^2\Gamma[\fe,\fe],(w^2)^2\partial_x^2\fe}}\\
    \ls&\abs{\e^{k-1}\br{\Gamma[\partial_x^2\fe,\fe]+\Gamma[\fe,\partial_x^2\fe]+\Gamma[\partial_x\fe,\partial_x\fe],(w^2)^2\partial_x^2\fe}}\no\\
    &+\zzz\abs{\e^{k-1}\br{\Gamma[\partial_x\fe,\fe]+\Gamma[\fe,\partial_{x}\fe],(w^2)^2\partial_x^2\fe}}+\zzz\abs{\e^{k-1}\br{\Gamma[\fe,\fe],(w^2)^2\partial_x^2\fe}}\no\\
    \ls&\e^{k-1}\int_{x\in\r^3}\Big[\abss{w^2\partial_x^2\fe}\tbs{\fe}+\tbs{w^2\partial_x^2\fe}\abss{\fe}+\abss{w^2\fe}\tbs{\partial_x^2\fe}+\tbs{w^2\fe}\abss{\partial_x^2\fe}\no\\
    &+\abss{w^2\partial_x\fe}\tbs{\partial_x\fe}+\tbs{w^2\partial_x\fe}\abss{\partial_x\fe}+\zzz\Big(\abss{w^2\partial_x\fe}\tbs{\fe}+\tbs{w^2\partial_x\fe}\abss{\fe}\no\\
    &+\abss{w^2\fe}\tbs{\partial_x\fe}+\tbs{w^2\fe}\abss{\partial_x\fe}+\abss{w^2\fe}\tbs{\fe}+\tbs{w^2\fe}\abss{\fe}\Big)\Big]\abss{w^2\partial_x^2\fe}\no\\
    \ls&\e^{k-1}\Big(\nm{\fe}_{H^2}\nm{w^2\fe}_{H^2_{\sigma}}+\nm{\fe}_{H^2_{\sigma}}\nm{w^2\fe}_{H^2}\Big)\nmswww{\partial_x^2\fe}\no\\
    \ls& \Big(\e^{\frac{1}{2}}\nm{w^2\fe}_{H^2_{\sigma}}+\nm{\fe}_{H^2_{\sigma}}\Big)\nmswww{\partial_x^2\fe}\no\\
    \ls&\nm{(\ik-\pk)[\fe]}^2_{H^2_{w,\sigma}}+\nm{\fe}_{H^2}^2.\no
\end{align*}

\textbf{Fifth Term on the R.H.S. of \eqref{w2x0}:}
Similarly,  Similarly,  we use \eqref{smallw}, \eqref{Weight1} and \eqref{growth0} in Proposition \ref{fn} to obtain that for $t\leq \overline{t}=\e^{-1/3}$,
\begin{align*}
&\abs{\sum_{i=1}^{2k-1}\e^{i-1}\br{\partial_x^2\Gamma\left[ \m ^{-\frac{1}{2}}F_i, \fe\right]+\partial_x^2\Gamma\left[\fe, \m ^{-\frac{1}{2}}F_i\right],(w^2)^2\partial_x^2\fe}}\\
\lesssim& \sum_{i=1}^{2k-1}\e^{i-1}\left(\nm{w^2 \m ^{-\frac{1}{2}}F_i}_{W^{2,\infty}_xL^2_p}\nm{\fe}_{H^2_{w,\sigma}}
+\Big\|\big|w^2 \m ^{-\frac{1}{2}}F_i\big|_{\sigma}\Big\|_{W^{2,\infty}_x}\nm{\fe}_{H^2_{w}}\right)\nms{w^2\fe}^2\no\\
\ls& (1+t)\nm{\fe}_{H^2_{w,\sigma}}^2
\ls o(1)\e^{-1}\nm{(\ik-\pk)[\fe]}^2_{H^2_{w,\sigma}}+\e^{-\frac{1}{3}}\nm{\fe}^2_{H^2}\no.
 \end{align*}

\textbf{Other Terms on the R.H.S. of \eqref{w2x0}:}
Similarly, these terms can be controlled by
\begin{align*}
&o(1)\e^{-1}\nm{(\ik-\pk)[\fe]}^2_{H^2_{w,\sigma}}+C\e^{-1}\nm{\fe}^2_{H^2}\\
&+C\e\Big(\nm{E_R^{\e}}^2_{H^2}+\nm{B_R^{\e}}^2_{H^2}\Big)+C\e^{2k+1}(1+t)^{4k+2}.\no
\end{align*}

\textbf{Summary:}
We collect the above estimates in \eqref{w2x0} and multiply the resulting inequality by $\e^3$ to derive \eqref{w2x001}.
\end{proof}


\section{Macroscopic Estimates and Electromagnetic Dissipation}\label{sec: macro}

In this section, we study the macroscopic estimates of $\fe$ and electromagnetic dissipation. With these estimates and the estimates obtained in the previous two Sections, we can finally close the whole energy estimate.

\subsection{Macroscopic Estimates}

To capture the dissipation of the macroscopic part of $\fe$ which can be seen as a perturbation around a local Maxwellian,
as in \eqref{macfe}, we write $\pk [\fe]$ as
\begin{align}
\pk [\fe]=\Big(a^{\e}-\frac{\rho_{2}}{\rho_{1}}c^{\e}\Big)\m ^{\frac{1}{2}}+ b^{\e}\cdot p\m ^{\frac{1}{2}}+ c^{\e}p^{0}\m ^{\frac{1}{2}},
\end{align}
where $\rho_{1}$ and $\rho_{2}$ are defined in \eqref{rho12}.
\begin{proposition}\label{md00}
There are two functionals $\mathcal{E}^{mac}_i$ for $i=1,2$ satisfying
\begin{align}\label{i11}
   \ee^{mac}_i\ls \tnm{\nabla_x^{i-1}\fe}\tnm{\nabla_x^i\fe}, 
\end{align}
such that 
\begin{align}\label{semp 5}
   & -\frac{\ud}{\ud t}\Big(\e\ee^{mac}_1+\e^2\ee^{mac}_2\Big)+\e\Big(\tnm{\nabla_x\pk[\fe]}^2+\tnm{a^{\e}}^2+\tnm{\big(\nabla_x\cdot E^{\e}_R\big)}^2\Big)\\
   &+\e^2\Big(\tnm{\nabla_x^2\pk[\fe]}^2+\tnm{\nabla_xa^{\e}}^2+\tnm{\nabla_x\big(\nabla_x\cdot E^{\e}_R\big)}^2\Big)\no\\
    \ls& \e^{-1}\nms{(\ik-\pk)[\fe]}^2 +\nms{\nabla_x(\ik-\pk)[\fe]}^2+\e\nms{\nabla_x^2(\ik-\pk)[\fe]}^2+\e^{\frac{2}{3}}\big(\ee+\dd)+\e^{k+1}(1+t)^{2k+1}.\no
\end{align}
\end{proposition}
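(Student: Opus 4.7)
The plan is to follow the Guo macroscopic framework of \cite{Guo2006}, adapted to the local-Maxwellian setting and the coupling with Maxwell's equations. First I plug the decomposition $\fe = \pk[\fe] + (\ik-\pk)[\fe]$ with $\pk[\fe]$ as in \eqref{macfe} into \eqref{L20}, and test against the null-space basis $\{\mh, p_i \mh, p^0 \mh\}$ to derive local conservation laws for the moments carried by $a^\e, b^\e, c^\e$. Testing against higher-moment functions such as $p_i p_j \mh$ and $p^0 p_i \mh$ (orthogonalized against $\mathcal{N}$) then yields constitutive relations bounding $\nx(a^\e, b^\e, c^\e)$ by $\e^{-1}$ times linear functionals of $(\ik-\pk)[\fe]$, plus lower-order source terms arising from the $(t,x)$-dependent coefficients in $\m$, the electromagnetic forcing, and the Hilbert-expansion source $\sb$.

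\textbf{Electromagnetic dissipation.} The dissipation of $a^\e$ and $\nx\cdot E^{\e}_R$ comes essentially for free from the Poisson constraint in \eqref{L201}: integrating $\mh \fe$ against $1$ in $p$ and using the expression \eqref{macfe} gives
\begin{equation*}
\nx\cdot E^{\e}_R \;=\; -4\pi \rho_{1} a^{\e} \,-\, 4\pi b^{\e}\!\cdot\!\int\! p\,\m\,\ud p \,-\, 4\pi\!\int\! \mh (\ik-\pk)[\fe]\, \ud p .
\end{equation*}
Given the smallness of $u$ from Theorem \ref{Euler-Maxwell}, the coefficient $\rho_{1}$ is bounded below, so this identity makes $\tnm{a^\e}$ and $\tnm{\nx\cdot E^{\e}_R}$ equivalent modulo a microscopic error and an $\tnm{\pk[\fe]}$ contribution coming from the small $b^\e$ moment. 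Differentiating once in $x$ yields the analogous control of $\tnm{\nx a^\e}$ and $\tnm{\nx(\nx\cdot E^{\e}_R)}$, accounting for the second line of dissipation in \eqref{semp 5}.

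\textbf{Interaction functionals and closure.} Following Guo, I would construct $\ee^{mac}_i$ ($i=1,2$) as interaction functionals of the form $\br{\nx^{i-1}\fe,\,\Psi_i[\nx^{i-1}\fe]}$, where $\Psi_i$ combines velocity-moment multipliers (selecting $a^\e,b^\e,c^\e$) with gradients of solutions $\phi_i$ to auxiliary Poisson problems whose data are the components of $\nx^{i-1}(a^\e,b^\e,c^\e)$. With this choice, $-\frac{\ud}{\ud t}\ee^{mac}_i$, after substituting the macroscopic equations for $\dt(a^\e,b^\e,c^\e)$ and the constitutive relations for their spatial derivatives, produces the dissipation $\tnm{\nx^i\pk[\fe]}^2$ at leading order. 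The error terms split into four classes: (a) microscopic contributions absorbable by $\e^{-1}\nms{(\ik-\pk)[\fe]}^2$, $\nms{\nx(\ik-\pk)[\fe]}^2$, and $\e\nms{\nx^2(\ik-\pk)[\fe]}^2$; (b) quadratic $\Gamma$-terms controlled by Lemma \ref{ss 02}, Sobolev embedding, and \eqref{rr 01}; (c) linear terms carrying $\zzz$ coefficients with time decay $(1+t)^{-\beta_0}$; and (d) Hilbert-expansion source terms producing the $\e^{k+1}(1+t)^{2k+1}$ tail. The bound \eqref{i11} follows from Cauchy-Schwarz and elliptic regularity on the auxiliary Poisson problems. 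Multiplying the $i=1$ estimate by $\e$ and the $i=2$ estimate by $\e^2$ then delivers \eqref{semp 5}.

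\textbf{Main obstacle.} The principal technical difficulty is the bookkeeping caused by the local Maxwellian: because $\mathcal{N}$ depends on $(t,x)$ through $\m$, every commutator $\jump{\pk,\dt}$ or $\jump{\pk,\nx}$ generates spurious lower-order terms with $\zzz$ coefficients, which must be absorbed either via $\zzz\ls\overline{\e}_0(1+t)^{-\beta_0}$ (as in \eqref{zzyy}) or via the $\e^{2/3}$ budget coming from $\overline t=\e^{-1/3}$. A second conceptual subtlety, specific to the Hilbert expansion as opposed to near-global-Maxwellian analyses such as \cite{Duan2014,Yang.Yu2012}, is that the dissipation of $a^\e$ and $\nx\cdot E^{\e}_R$ obtained here is much weaker than the full energy $\ee$; the weights $\e$ and $\e^2$ in front of $\ee^{mac}_1$ and $\ee^{mac}_2$ in \eqref{semp 5} are precisely what is needed to keep the corresponding error contribution $\e\tnm{a^\e}^2\ls\e\ee$ absorbable after time integration on $[0,\overline t]$.
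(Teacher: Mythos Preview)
Your overall framework is correct and matches the paper: project \eqref{L20} onto $\mathcal N$ to obtain local conservation laws for $(a^\e,b^\e,c^\e)$, extract macroscopic equations for their first spatial derivatives by comparing coefficients against the extended basis $\{\mh,\,p_i\mh,\,p^0\mh,\,\hat p_i\mh,\,\hat p_ip_j\mh\}$, and exploit Maxwell's Poisson constraint $\nabla_x\!\cdot E^\e_R=-4\pi\int\mh\fe\,\ud p$ to couple $\nabla_x\!\cdot E^\e_R$ with $a^\e$. Your discussion of the main obstacle (commutators generated by the $(t,x)$-dependence of $\m$, controlled via $\zzz\lesssim\overline\e_0(1+t)^{-\beta_0}$) is also on target.

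The genuine discrepancy is your construction of $\ee^{mac}_i$ via \emph{auxiliary} Poisson problems $-\Delta\phi_i=\nabla_x^{i-1}(a^\e,b^\e,c^\e)$. The paper does not use this device. Instead it takes the macroscopic equation \eqref{macaci}, namely $\partial_i a^\e-\tfrac{\rho_2}{\rho_1}\partial_i c^\e+\tfrac{u^0}{T}E^\e_{R,i}=\ell^\e_{ai}+h^\e_{ai}+\cdots$, applies $\partial_i$ and sums, and then substitutes the \emph{physical} Poisson constraint \eqref{Ediv} to obtain the damped elliptic identity $-\Delta a^\e+\tfrac{\en}{n}\Delta c^\e+\tfrac{4\pi\overline n}{T}a^\e=-\sum_i\partial_i(\ell^\e_{ai}+h^\e_{ai})+\Xi_5$. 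Multiplying by $\overline n\,a^\e$ and integrating gives both $\tnm{\nabla_x a^\e}^2$ and $\tnm{a^\e}^2$ on the left; on the right, the piece $-\partial_t(\ik-\pk)[\fe]$ inside $\ell^\e_{ai}$ is integrated by parts in $t$, and the functional $\ee^{mac}_1$ is simply the direct pairing $\sum_i\br{((\ik-\pk)[\fe],\zeta_{ai}),\partial_i a^\e}$ (together with the analogous pairings for $b^\e,c^\e$). This pairing satisfies \eqref{i11} immediately by Cauchy--Schwarz, since $\abs{\br{((\ik-\pk)[\fe],\zeta),\partial_i a^\e}}\lesssim\tnm{\fe}\tnm{\nabla_x\fe}$. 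By contrast, your auxiliary-Poisson functional would be bounded by $\tnm{\nabla_x^{i-1}\fe}\tnm{\nabla\phi_i}\lesssim\tnm{\nabla_x^{i-1}\fe}\tnm{\nabla_x^{i-1}(a^\e,b^\e,c^\e)}_{\dot H^{-1}}$, which is \emph{not} the bound \eqref{i11}; and, more to the point, the auxiliary-Poisson mechanism is tailored to recovering $L^2$ dissipation of $(a^\e,b^\e,c^\e)$ themselves, not of their gradients, so it is not clear how differentiating your functional in time would ever produce the term $\tnm{\nabla_x\pk[\fe]}^2$ that the proposition demands. The paper's direct route (no auxiliary inverse Laplacian) is both simpler and what actually delivers \eqref{i11} and the gradient dissipation.
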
 

\begin{proof} 
 Motivated by \cite[Lemma 6.1]{Guo2006}, we will prove this proposition by two key ingredients: local conservation laws and the macroscopic equations of $\fe$.
For convenience, we write \eqref{L20} as 
\begin{align}\label{re-f1}
    &\dt \fe +\hat{p}\cdot\nabla_x\fe 
   +  \frac{u^0}{T }\hat{p}\mh \cdot E_R^{\e}+\frac{1}{\e}\li \left[\fe \right]
     =\overline{h}^{\e},
\end{align}
where
\begin{align}
   \overline{h}^{\e}=&\Big(E +\hat{p} \times B  \Big)\cdot\nabla_p\fe + \frac{u   }{ T }\mh\cdot\Big(E_R^{\e}+\hat{p} \times B_R^{\e} \Big)\\
    &- \m ^{-\frac{1}{2}}\fe \Big\{\dt +\hat{p}\cdot\nabla_x- \big(E +\hat{p} \times B  \big)\cdot\nabla_p \Big\}\mh +\e^{k-1}\Gamma \left[\fe,\fe\right]\no\\
    &+\sum_{i=1}^{2k-1}\e^{i-1}\left\{\Gamma \left[ \m ^{-\frac{1}{2}}F_i, \fe\right]+\Gamma \left[\fe,  \m ^{-\frac{1}{2}}F_i\right]\right\}+ e \e^k\Big(E_R^{\e}+\hat{p} \times B_R^{\e} \Big)\cdot\nabla_p\fe \no\\
    &- \e^k\frac{1 }{2 T }\Big(u^0\hat{p}-u \Big)\cdot\Big(E_R^{\e}+\hat{p} \times B_R^{\e} \Big)\fe \no\\
    &+ \sum_{i=1}^{2k-1}\e^i\left\{\Big(E_i+\hat{p} \times B_i \Big)\cdot\nabla_p\fe +\Big(E_R^{\e}+\hat{p} \times B_R^{\e} \Big)\cdot \m ^{-\frac{1}{2}}\nabla_pF_{i}\right\}\no\\
    &- \sum_{i=1}^{2k-1}\e^i\left\{\Big(E_i+\hat{p} \times B_i \Big)\cdot\frac{1}{2 T }\Big(u^0\hat{p}-u \Big)\fe \right\}+\e^{k}\sb,\no
\end{align}

\textbf{Local conservation laws:}
Firstly, we derive the local conservation laws of $a^{\e}, b^{\e}, c^{\e}$. Note that
\begin{align}
    \frac{\rho_{2}}{\rho_{1}}=\frac{\en (u^0)^2+P|u|^2}{n u^0}.
\end{align}
Projecting \eqref{re-f1} onto the null space $\mathcal{N}$, similar to the derivation of \eqref{number},  \eqref{moment} and \eqref{energy},
we can obtain
\begin{align}
   & n u^0\partial_ta^{\e}+P\nabla_x\cdot b^{\e}\\
   =&\Xi_{1}\big[a^{\e},b^{\e},c^{\e}\big]-\nabla_x\cdot\int_{\mathbb R^3}  \hat{p}\mh (\ik-\pk )[\fe ]\,\ud p+\int_{\mathbb R^3}  \mh \overline{h}^{\e}\,\ud p,\no\\
   &\frac{n u^0K_3(\gamma )}{\gamma  K_2(\gamma )}\partial_t b^{\e}+P\nabla_x\Big(a^{\e}-\frac{\en u^0}{n}c^{\e}\Big)+\frac{nu^0K_3(\gamma )}{\gamma  K_2(\gamma )}\nabla_xc^{\e} + n u^0 E_R^{\e}\\
   =&\Xi_{2}\big[a^{\e},b^{\e},c^{\e}\big]-\nabla_x\cdot\int_{\mathbb R^3}  \hat{p}p\mh (\ik-\pk )[\fe ]\,\ud p +\int_{\mathbb R^3}  p\mh \overline{h}^{\e}\,\ud p,\no\\
    &\en (u^0)^2\partial_t \Big(a^{\e}-\frac{\en u^0}{n}c^{\e}\Big)+\frac{n(u^0)^2\big[3K_3(\gamma )+\gamma K_2(\gamma )\big]}{\gamma  K_2(\gamma )}\partial_tc^{\e}+\frac{nu^0K_3(\gamma )}{\gamma  K_2(\gamma )}\nabla_x\cdot b^{\e}\\
    =& \Xi_{3}\big[a^{\e},b^{\e},c^{\e}\big]+\int_{\mathbb R^3}  p^0\mh \overline{h}^{\e}\,\ud p.\no
   \end{align}
where $\Xi_{j}\big[a^{\e},b^{\e},c^{\e}\big]$ for $j=1,2,3$ denotes a combination of linear terms of $a^{\e}, b^{\e}, c^{\e}$ with coefficients $\nabla_{t,x}(n ,u,T )$,  and derivatives of $a^{\e}, b^{\e}, c^{\e}$ with coefficient $u$. Since they are small perturbations and thus will not affect the estimates, we will ignore the details for clarity.



Noting that
\begin{align}
    P=\frac{n}{\gamma },\qquad \en =n\left(\frac{K_3(\gamma )}{K_2(\gamma )}-\frac{1}{\gamma }\right)=n\left(\frac{K_1(\gamma )}{K_2(\gamma )}+\frac{3}{\gamma }\right),
\end{align}
we can further write the above system as 
\begin{align}
    &n u^0\partial_ta^{\e}+\frac{n}{\gamma }\nabla_x\cdot b^{\e}\label{lcl0}\\
    &\qquad=\Xi_{1}\big[a^{\e},b^{\e},c^{\e}\big]-\nabla_x\cdot\int_{\mathbb R^3} \hat{p}\mh (\ik-\pk )[\fe ]\,\ud p+\int_{\mathbb R^3}  \mh \overline{h}^{\e}\,\ud p,\no\\
    &n\left(\frac{K_1(\gamma )}{\gamma  K_2(\gamma )}+\frac{4}{\gamma ^2}\right)\partial_t b^{\e}+\frac{n}{\gamma }\nabla_x a^{\e}
    +\frac{n}{\gamma ^2}\nabla_xc^{\e}+ n u^0 E_R^{\e} \label{lcl00} \\
    &\qquad=\Xi_{2}\big[a^{\e},b^{\e},c^{\e}\big]-\nabla_x\cdot\int_{\mathbb R^3}  \hat{p}p\mh (\ik-\pk )[\fe ]\,\ud p +\int_{\mathbb R^3}  p\mh \overline{h}^{\e}\,\ud p,\no\\
    &n\left(-\frac{K_1^2(\gamma )}{K_2^2(\gamma )}-\frac{3}{\gamma }\frac{K_1(\gamma )}{K_2(\gamma )}+1+\frac{3}{\gamma^2}\right)\partial_tc^{\e}+\frac{n}{\gamma  ^2}\nabla_x\cdot b^{\e} \label{lcl000}\\
    & \qquad=\Xi_{3}\big[a^{\e},b^{\e},c^{\e}\big]+\int_{\mathbb R^3}  p^0\mh \overline{h}^{\e}\,\ud p\no\\
    &\qquad-u^0\left(\frac{K_1(\gamma )}{K_2(\gamma )}+\frac{3}{\gamma }\right)\left(-\nabla_x\cdot\int_{\mathbb R^3}  \hat{p}\mh (\ik-\pk )[\fe ]\,\ud p+\int_{\mathbb R^3}  \mh \overline{h}^{\e}\,\ud p\right).\no
\end{align}
This system fully describes the evolution of $a^{\e}$, $b^{\e}$ and $c^{\e}$.

\textbf{Macroscopic equations:}
Secondly, we turn to the macroscopic equations of $\fe$. Splitting $\fe$ as the macroscopic part $\pk[\fe]$ and the microscopic $(\ik-\pk)[\fe]$ part in \eqref{re-f1}, we have
\begin{align}\label{semp 6}
    \bigg\{\partial_t\Big(a^{\e}-\frac{\rho_{2}}{\rho_{1}}c^{\e}\Big)+p\cdot \partial_tb^{\e}+p^0 \partial_t c^{\e}\bigg\}\mh \\+\hat{p}\cdot\bigg\{\nabla_x\Big(a^{\e}-\frac{\rho_{2}}{\rho_{1}}c^{\e}\Big)+ \nabla_x b^{\e}\cdot p+p^0 \nabla_x c^{\e}\bigg\}\mh + \frac{u^0}{T }\hat{p}\mh \cdot E_R^{\e}&=\ell^{\e}+h^{\e},\no
  \end{align}
where
\begin{align*}
   \ell^{\e}:=& -\big(\partial_t+\hp \cdot\nabla_x\big)\Big[(\ik-\pk )[\fe ]\Big]-\frac{1}{\e}\li[\fe ],\\
   h^{\e}:=&- \left\{\Big(a^{\e}-\frac{\rho_{2}}{\rho_{1}}c^{\e}\Big)+p\cdot b^{\e}+p^0 c^{\e}\right\}\big(\partial_t+\hp\cdot\nabla_x\big)\mh +\overline{h}^{\e}. 
\end{align*}
For fixed $t,x$, we compare the coefficients in front of 
\begin{align}\label{exbasis}
    \left\{\m ^{\frac{1}{2}}, p_i\m ^{\frac{1}{2}}, p^{0}\m ^{\frac{1}{2}},\frac{p_i}{p^0}\m^{\frac{1}{2}}, \frac{p_ip_j}{p^0}\m ^{\frac{1}{2}}\right\},\quad 1\leq i,j\leq 3,
\end{align}
on both sides of \eqref{semp 6} and get the following macroscopic equations:
\begin{align}
    \partial_t a^{\e}- \frac{\rho_{2}}{\rho_{1}}\partial_t c^{\e}&= \ell_{a}^{\e}+h^{\e}_{a}+\partial_t\Big(\frac{\rho_{2}}{\rho_{1}}\Big) c^{\e},\label{macabc}\\
    \partial_t b^{\e}_i+\partial_i c^{\e}&=\ell_{bi}^{\e}+h^{\e}_{bi},\no\\
    \partial_t c^{\e}&=\ell_{c}^{\e}+h^{\e}_{c},\label{macat}\\
    \partial_i a^{\e}- \frac{\rho_{2}}{\rho_{1}}\partial_i c^{\e}+ \frac{u^0}{T } E_{R,i}^{\e}&= \ell_{ai}^{\e}+h^{\e}_{ai}+\partial_i\Big(\frac{\rho_{2}}{\rho_{1}}\Big) c^{\e},\label{macaci}\\
    \partial_i b^{\e}_i&= \ell_{ii}^{\e}+h^{\e}_{ii},\no\\
    \partial_i b_j^{\e}+\partial_j b_i^{\e}&= \ell_{ij}^{\e}+h^{\e}_{ij}.\qquad i\neq j.\no
\end{align}
Here $\ell_a^{\e}, h_a^{\e}$, $\ell_{bi}^{\e}, h^{\e}_{bi}$,  $\ell_{c}^{\e}, h^{\e}_{c}$,  $\ell_{ai}^{\e}, h^{\e}_{ai}$,  $\ell_{ii}^{\e}, h^{\e}_{ii}$ and  $\ell_{ij}^{\e}, h^{\e}_{ij}$ take the form
\begin{align*}
    (\ell^{\e}, \zeta)\ \ \text{and}\ \  (h^{\e}, \zeta),
\end{align*}
where $\zeta$ is linear combinations of vectors in \eqref{exbasis}.
Combing \eqref{macabc} and    \eqref{macat} yields
\begin{align}\label{at}
    \partial_t a^{\e}=\ell_{a}^{\e}+h^{\e}_{a}+ \frac{\rho_{2}}{\rho_{1}}\Big( \ell_{c}^{\e}+h^{\e}_{c}\Big)+\partial_t\Big(\frac{\rho_{2}}{\rho_{1}}\Big) c^{\e}.
\end{align}

For $m=0,1$, we have the following estimates:
\begin{align}
    &\nm{\nabla_x^mh_a^{\e}}+\nm{\nabla_x^mh_{bi}^{\e}}+\nm{\nabla_x^mh_{c}^{\e}}+\nm{\nabla_x^mh_{ai}^{\e}}+\nm{\nabla_x^mh_{ii}^{\e}}+\nm{\nabla_x^mh_{ij}^{\e}}\label{h01}\\
    \ls& \zzz\Big(\nm{E^{\e}_{R}}_{H^m}+\nm{B^{\e}_{R}}_{H^m}\Big)+\Big(\zzz+\e^{\frac{1}{2}}\Big)\nm{\fe}_{H^m_{\sigma}}\no\\
    &+\sum_{l=1}^{2k-1}\e^{l-1}\left(\big\|\mhh F_l\big\|_{W^{m,\infty}_xL^2_p}\nm{\fe}_{H^m_{\sigma}}+\Big\||\mhh F_l|_{\sigma}\Big\|_{W^{m,\infty}_x}\nm{\fe}_{H^m}\right)\no\\
    &+\sum_{l=1}^{2k-1}\e^{l}(1+t)^{l}\left(\nm{\fe}_{H^m_{\sigma}}+\nm{E^{\e}_{R}}_{H^m}+\nm{B^{\e}_{R}}_{H^m}\right)+\e^{k}(1+t)^{2k+1}\no\\
    \ls& \e^{-\frac{1}{3}}\big(\nm{(\ik-\pk)[\fe]}_{H^m_{\sigma}}+\nm{\fe}_{H^m}\big)+\Big[(1+t)^{-\beta_0}+\e^{\frac{2}{3}}\Big]\Big(\nm{E^{\e}_{R}}_{H^m}+\nm{ B^{\e}_{R}}_{H^m}\Big)+\e^{k}(1+t)^{2k+1},\no
\end{align}
for $t\in [0, \e^{-1/3}]$.
For brevity, we only give the estimate of $\nm{\nabla_x a^{\e}}$ and $\nm{a^{\e}}$. The estimates w.r.t. $b^{\e}$ and $c^{\e}$ in \eqref{semp 5} can be derived similarly as in \cite[Lemma 6.1]{Guo2006}.
From  \eqref{macaci}, we have
\begin{align}\label{apm}
  &-\Delta a^{\e}+\frac{\rho_{2}}{\rho_{1}}\Delta  c^{\e}- \frac{u^0}{T } \nabla_x\cdot E_{R}^{\e}\\
  =& -\sum_{i=1}^3\partial_i\big(\ell_{ai}^{\e}+h^{\e}_{ai}\big)-\nabla_x\cdot \Big[c^{\e}\nabla_x \Big(\frac{\rho_{2}}{\rho_{1}}\Big)\Big]-\nabla_x\Big(\frac{u^0}{T } \Big) \cdot E_{R}^{\e}.\no
\end{align}
On the other hand, by \eqref{L201}, we have
\begin{align}\label{Ediv}
    \nabla_x\cdot E_R^{\e}&=-4\pi \int_{\mathbb R^3} \m^{\frac{1}{2} }\fe \ud p\\
    &=-4\pi  n a^{\e}-4\pi n a^{\e}(u^0-1)-4\pi(\en+P)u^0 u\cdot b\no\\
    &=-4\pi \overline{n}a^{\e}-4\pi  \big( n -\overline{n}\big)a^{\e}-4\pi  n a^{\e}(u^0-1)-4\pi(\en+P)u^0 u\cdot b. \no
\end{align}
Collecting \eqref{Ediv} in \eqref{apm}, we have
\begin{align}\label{apm0}
  -\Delta  a^{\e}+\frac{\en }{n }\Delta  c^{\e}+ \frac{4\pi \overline{n}}{ T } a^{\e}=& -\sum_{i=1}^3\partial_i\big(\ell_{ai}^{\e}+h^{\e}_{ai}\big)+\Xi_{5}\big[a^{\e},b^{\e},c^{\e}\big],
\end{align}
where 
\begin{align*}
  \Xi_{5}\big[a^{\e},b^{\e},c^{\e}\big]=&-\frac{\en (u^0-1)}{n }\Delta  c^{\e}-4\pi   \big( n -\overline{n}\big)a^{\e}-4\pi  n a^{\e}(u^0-1)\\
  &-4\pi(\en+P)u^0 u\cdot b -\nabla_x\cdot \Big[c^{\e}\nabla_x \Big(\frac{\rho^{\e}_{2}}{\rho^{\e}_{1}}\Big)\Big]+\nabla_x\Big( \frac{u^0}{T } \Big) \cdot E_{R}^{\e}.\no
\end{align*}
Note that
\begin{align*}
   \Big| \br{\frac{\en }{n }\Delta  c^{\e},\overline{n}a^{\e}}\Big|&=\Big| \br{\frac{\en }{n }\nabla_x c^{\e},\nabla_x\big(\overline{n}a^{\e}\big)}\Big|\leq o(1)\nm{\nabla_x a^{\e}}^2+C\nm{\nabla_x c^{\e}}^2
\end{align*}
We multiply \eqref{apm0} by $\overline{n}a^{\e}$ and integrate the resulting equalities to have
\begin{align}\label{pma}
    &\frac{3c}{4}\nm{\sqrt{\overline{n}}\nabla_x a^{\e}}^2
    +\overline{n}^2\nm{\sqrt{\frac{4\pi }{T }}a^{\e}}^2\\
    \leq& \sum_{i=1}^3\br{\ell_{ai}^{\e},\overline{n}\partial_i a^{\e}}+  \sum_{i=1}^3\br{h^{\e}_{ai},\overline{n}\partial_i a^{\e}}+ \Big|\br{\Xi_{5}\big[a^{\e},b^{\e},c^{\e}\big], \overline{n}a^{\e}}\Big|+C\nm{\nabla_x c^{\e}}^2.\no
\end{align}

For the first term in \eqref{pma}, we have
\begin{align}
    &\br{\ell_{ai}^{\e},\overline{n}\partial_i a^{\e}}\\
    =&\br{\Big(-\big(\partial_t+\hp \cdot\nabla_x\big)\Big[(\ik-\pk )[\fe ]\Big]-\frac{1}{\e}\li[\fe ], \zeta_{ai}\Big),\overline{n}\partial_i a^{\e}}\no\\
    =&\br{\Big(-\partial_t\Big[(\ik-\pk )[\fe ]\Big], \zeta_{ai}\Big), \overline{n}\partial_i a^{\e}}\no\\
    &+\br{\Big(-\hp \cdot\nabla_x\Big[(\ik-\pk )[\fe ]\Big]-\frac{1}{\e}\li[\fe ], \zeta_{ ai}\Big), \overline{n}\partial_i a^{\e}}\no\\
    \leq&-\frac{\ud}{\ud t}\br{\Big(\Big[(\ik-\pk )[\fe ]\Big], \zeta_{ai}\Big), \overline{n}\partial_i a^{\e}}+\br{\Big(\Big[(\ik-\pk )[\fe ]\Big], \zeta_{ai}\Big), \overline{n}\partial_i\partial_t a^{\e}}\no\\
    &+o(1) \nm{\nabla_xa^{\e}}^2+C\Big(\nms{\nabla_x(\ik-\pk )[\fe ]}^2+\e^{-2}\nms{(\ik-\pk )[\fe ]}^2\Big).\no
\end{align} 
By \eqref{at} and \eqref{h01}, we have
\begin{align}
   & \left|\br{\Big(\Big[(\ik-\pk )[\fe ]\Big], \zeta_{ai}\Big),\overline{n}\partial_i\partial_t a^{\e}}\right|\\
    \leq&\left|\br{\partial_i\Big(\Big[(\ik-\pk )[\fe ]\Big], \zeta_{ai}\Big),\overline{n}\partial_t a^{\e}}\right|
    \ls \nm{\partial_ta^{\e}}^2+\nm{(\ik-\pk)[\fe]}_{H^1_{\sigma}}^2\no\\
    \ls& \e^{-\frac{1}{3}}\nm{(\ik-\pk)[\fe]}^2_{H^1_{\sigma}}+\e^{-\frac{1}{3}}\nm{\fe}^2_{H^1}+\Big[(1+t)^{-\beta_0}+\e^{\frac{2}{3}}\Big]\Big(\nm{E^{\e}_{R}}_{H^1}^2+\nm{B^{\e}_{R}}^2_{H^1}\Big)+\e^{k}(1+t)^{2k+1}.\no
\end{align}
Therefore,
\begin{align}\label{ai1}
    & \sum_{i=1}^3\br{\ell_{ai}^{\e}, \overline{n}\partial_i a^{\e}}\\
    \leq&-\sum \frac{\ud}{\ud t}\br{\Big(\Big[(\ik-\pk )[\fe ]\Big], \zeta_{ ai}\Big), \overline{n}\partial_i a^{\e}}+o(1) \nm{\nabla_xa^{\e}}^2+C\e^{-\frac{1}{3}}\nms{\nabla_x(\ik-\pk)[\fe]}^2\no\\
    &+C\Big[\e^{-2}\nms{(\ik-\pk)[\fe]}^2+\e^{-\frac{1}{3}}\nm{\fe}^2_{H^1}+\Big[(1+t)^{-\beta_0}+\e^{\frac{2}{3}}\Big]\Big(\nm{E^{\e}_{R}}^2_{H^1}+\nm{B^{\e}_{R}}^2_{H^1}\Big)+\e^{k}(1+t)^{2k+1}\Big].\no
\end{align} 
For the second and third terms in \eqref{pma}, we use \eqref{h01} to have
\begin{align}\label{pma01}
     \sum_{i=1}^3\br{h^{\e}_{ai}, \overline{n}\partial_i a^{\e}}\ls&o(1) \nm{\nabla_xa^{\e}}^2+\e^{-\frac{1}{3}}\nms{(\ik-\pk)[\fe]}^2+\nm{\fe}^2\\
    &+\Big[(1+t)^{-\beta_0}+\e^{\frac{2}{3}}\Big]\Big(\nm{E^{\e}_{R}}^2+\nm{B^{\e}_{R}}^2\Big)+\e^{k}(1+t)^{2k+1}\no.
\end{align}
and
\begin{align}\label{pma02}
    \sum \Big|\br{\Xi_{5}\big[a^{\e},b^{\e},c^{\e}\big], na^{\e}}\Big|\ls o(1)    \nm{\nabla_x\pk[\fe]}^2+C(1+t)^{-\beta_0}\Big(\nm{\fe}^2+\nm{E^{\e}_{R}}^2\Big).
\end{align}
Collecting the estimates \eqref{ai1}, \eqref{pma01} and \eqref{pma02} in \eqref{pma}, we obtain
\begin{align}\label{pma111}
    &\frac{1}{2}\nm{\sqrt{\overline{n}}\nabla_x a^{\e}}^2
    +\overline{n}^2\nm{\sqrt{\frac{4\pi }{T }}a^{\e}}^2\\
    \leq&-\frac{\ud}{\ud t}\br{\Big((\ik-\pk )[\fe ], \zeta_{ai}\Big),\partial_i a^{\e}}+C\e^{-\frac{1}{3}}\nms{\nabla_x(\ik-\pk )[\fe ]}^2\no\\
    &+C\bigg(\e^{-2}\nms{(\ik-\pk)[\fe]}^2+\e^{-\frac{1}{3}}\nm{\fe}^2_{H^1}+\Big[(1+t)^{-\beta_0}+\e^{\frac{2}{3}}\Big]\Big(\nm{E^{\e}_{R}}^2_{H^1}+\nm{B^{\e}_{R}}^2_{H^1}\Big)+\e^{k}(1+t)^{2k+1}\bigg).\no
\end{align}
\end{proof}



\subsection{Electromagnetic Dissipation}

In this subsection, we derive the dissipation of the electromagnetic field $(E^{\e}_R, B^{\e}_R)$.
\begin{proposition}\label{dE01}
It holds that, for 
$i=0,1$
\begin{align}\label{E01}
   & \frac{1}{2}\e^{i+1}\nm{\sqrt{\frac{u^0}{T }}\nabla_x^i E_{R}^{\e}}^2\\
   \leq& - \sum_{j=1}^3\e^{i+1}\frac{\ud}{\ud t}\br{\Big(\nabla_x^{i}(\ik-\pk)[\fe], \zeta_{aj}\Big),\nabla_x^i E_{R,j}^{\e}}+C\e^{i-1}\nms{(\ik-\pk )[\fe ]}^2\no\\
   &+C\Big[\e^{i+1}\nm{\nabla_x^{i+1}\pk[\fe]}^2+\e^{\frac{2}{3}}\big(\ee+\dd\big)+\e^{k+i+1}(1+t)^{2k+1}\Big].\no
\end{align}

\end{proposition}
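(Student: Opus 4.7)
\textbf{Proof plan for Proposition \ref{dE01}.} The goal is to extract dissipation for $\nabla_x^i E_R^{\e}$ $(i=0,1)$ from the macroscopic identity \eqref{macaci}, which already contains the linear term $\tfrac{u^0}{T}E_{R,i}^{\e}$. The plan is to apply $\nabla_x^i$ to \eqref{macaci}, take the $L^2_x$ inner product with $\nabla_x^i E_{R,i}^{\e}$, and sum over $i=1,2,3$. The term $\br{\nabla_x^i(\tfrac{u^0}{T}E_{R,i}^{\e}),\nabla_x^i E_{R,i}^{\e}}$ produces $\nm{\sqrt{u^0/T}\,\nabla_x^i E_R^{\e}}^2$ up to commutators $[\nabla_x^i,u^0/T]$ that are controlled by $\zzz\nm{\nabla_x^{i-1}E_R^{\e}}\nm{\nabla_x^i E_R^{\e}}\ls \overline{\e}_0(1+t)^{-\beta_0}\ee$, which falls within $\e^{2/3}\ee$. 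After multiplication by $\e^{i+1}$ this yields the target LHS.

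On the right-hand side, the term $\br{\nabla_x^i\partial_i a^{\e},\nabla_x^i E_{R,i}^{\e}}$ is integrated by parts in $x$ and summed over $i$ to give $-\br{\nabla_x^i a^{\e},\nabla_x^i(\nabla_x\cdot E_R^{\e})}$; substituting the Poisson relation \eqref{Ediv} turns this into $4\pi\overline{n}\nm{\nabla_x^i a^{\e}}^2$ plus perturbative pieces, all absorbable by $\nm{\nabla_x^{i+1}\pk[\fe]}^2$ (since $a^{\e}$ is a macroscopic coefficient) together with $(1+t)^{-\beta_0}\ee$ contributions coming from $u^0-1$, $n-\overline{n}$, $u\cdot b$. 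The term $\br{\nabla_x^i(\tfrac{\rho_2}{\rho_1}\partial_i c^{\e}),\nabla_x^i E_{R,i}^{\e}}$ is handled by Cauchy--Schwarz, with $\nm{\nabla_x^{i+1} c^{\e}}^2$ absorbed into $\nm{\nabla_x^{i+1}\pk[\fe]}^2$ and $o(1)\nm{\nabla_x^i E_R^{\e}}^2$ hidden in the LHS. The lower-order piece $\br{\nabla_x^i(\partial_i(\tfrac{\rho_2}{\rho_1})c^{\e}),\nabla_x^i E_{R,i}^{\e}}$ is directly controlled by $(1+t)^{-\beta_0}\ee$.

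The most delicate contribution is $\br{\nabla_x^i\ell_{ai}^{\e},\nabla_x^i E_{R,i}^{\e}}$, where $\ell_{ai}^{\e}=(-\partial_t(\ik-\pk)[\fe]-\hat{p}\cdot\nabla_x(\ik-\pk)[\fe]-\e^{-1}\li[\fe],\zeta_{ai})$. I would split this into three pieces. The $\partial_t$ piece is integrated by parts in time to produce exactly $-\tfrac{d}{dt}\br{(\nabla_x^i(\ik-\pk)[\fe],\zeta_{aj}),\nabla_x^i E_{R,j}^{\e}}$ which matches the statement, while the residual term $\br{\nabla_x^i(\ik-\pk)[\fe],\zeta_{aj}\nabla_x^i\partial_t E_{R,j}^{\e}}$ is evaluated by substituting $\partial_t E_R^{\e}=\nabla_x\times B_R^{\e}+4\pi\int\hat{p}\mh\fe\,\ud p$ from \eqref{L201}, producing terms bounded by $\nms{\nabla_x^i(\ik-\pk)[\fe]}^2$ plus $\nm{\nabla_x^{i+1}B_R^{\e}}^2\ls\ee$. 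The transport piece gives $\nms{\nabla_x^{i+1}(\ik-\pk)[\fe]}^2\ls\dd$, and the $\e^{-1}\li[\fe]$ piece produces $\e^{-2}\nms{\nabla_x^i(\ik-\pk)[\fe]}^2$, which after multiplication by $\e^{i+1}$ becomes the explicit $\e^{i-1}\nms{(\ik-\pk)[\fe]}^2$ in the statement. Finally, $\br{\nabla_x^i h_{ai}^{\e},\nabla_x^i E_{R,i}^{\e}}$ is bounded directly from \eqref{h01}, contributing $\e^{-1/3}\nm{(\ik-\pk)[\fe]}_{H^i_{\sigma}}^2$, $(1+t)^{-\beta_0}\nm{(E_R^{\e},B_R^{\e})}_{H^i}^2$ and $\e^{k}(1+t)^{2k+1}$ terms, all fitting into $\e^{2/3}(\ee+\dd)$ and the explicit source term.

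The main obstacle I anticipate is bookkeeping the powers of $\e$: the $\e^{-1}\li[\fe]$ contribution and the collision-type nonlinear pieces each force a specific choice of the prefactor $\e^{i+1}$, and the recovery of $\nm{\nabla_x^{i+1}\pk[\fe]}^2$ from the Poisson identity must be done without losing dissipative structure, so the $o(1)$ absorption into the LHS has to be tracked through the $\nabla_x^i(\tfrac{u^0}{T})$ commutators and the time-derivative residual carefully. Modulo this accounting, all the remaining terms lie in $\e^{2/3}(\ee+\dd)$ as stated.
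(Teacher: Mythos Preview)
Your plan is correct and follows the paper's argument closely: test \eqref{macaci} against $\nabla_x^i E_{R,j}^{\e}$, integrate the $\partial_t(\ik-\pk)[\fe]$ piece by parts in time, and substitute the Maxwell equation \eqref{L201} for the residual $\partial_t E_R^{\e}$. The handling of $\ell_{aj}^{\e}$ and $h_{aj}^{\e}$ matches the paper's.

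One simplification worth noting: the paper does \emph{not} integrate the $\partial_j a^{\e}$ term by parts or invoke the Poisson relation. It simply bounds
\[
\sum_j\Big|\Bbr{\partial_j a^{\e}-\tfrac{\rho_2}{\rho_1}\partial_j c^{\e},\,E_{R,j}^{\e}}\Big|\;\le\; o(1)\nm{E_R^{\e}}^2+C\nm{\nabla_x\pk[\fe]}^2
\]
via Cauchy--Schwarz, which already fits the $\e^{i+1}\nm{\nabla_x^{i+1}\pk[\fe]}^2$ allowance in the statement. Your detour through \eqref{Ediv} is not wrong --- in fact the resulting $4\pi\overline n\,\nm{\nabla_x^i a^{\e}}^2$ lands on the \emph{left} side with a favorable sign, alongside $\nm{\sqrt{u^0/T}\,\nabla_x^i E_R^{\e}}^2$, rather than on the right as a term needing absorption. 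That Poisson substitution is precisely the mechanism used in Proposition~\ref{md00} to extract the $\nm{a^{\e}}^2$ dissipation, but it is unnecessary here. Your claim that $\nm{\nabla_x^i a^{\e}}^2$ is ``absorbable by $\nm{\nabla_x^{i+1}\pk[\fe]}^2$'' would actually fail if the sign were unfavorable (no Poincar\'e inequality is available on $\r^3$), so be careful with that sign bookkeeping even though it does not affect the outcome here.
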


\begin{proof} For brevity, we only prove  \eqref{E01} for $i=0$ since the case $i=1$ can be proved in the same way.
From  \eqref{macaci}, we have
\begin{align}\label{E0}
    \nm{\sqrt{\frac{u^0}{T }}E_{R}^{\e}}^2\leq\sum_{i=1}^3\br{\ell_{ai}^{\e},E_{R,i}^{\e}}+\sum_{i=1}^3\left|\br{\partial_i a^{\e}-\frac{\rho^{\e}_{2}}{\rho^{\e}_{1}}\partial_i c^{\e},E_{R,i}^{\e}}\right|+\sum_{i=1}^3\left|\br{h^{\e}_{ai},E_{R,i}^{\e}}\right|.
\end{align}
For the first term in \eqref{E0}, we use \eqref{L201} to have
\begin{align*}
   &\br{\ell_{ai}^{\e},E_{R,i}^{\e}}\\
    =&\br{\Big(-\partial_t(\ik-\pk)[\fe], \zeta_{ai}\Big),E_{R,i}^{\e}}+\br{\Big(-\hp\cdot\nabla_x(\ik-\pk)[\fe]-\frac{1}{\e}\li[\fe], \zeta_{ai}\Big),E_{R,i}^{\e}}\no\\
    \leq&-\frac{\ud}{\ud t}\br{\Big(\Big[(\ik-\pk)[\fe]\Big], \zeta_{ai}\Big),E_{R,i}^{\e}}+\br{\Big((\ik-\pk)[\fe], \zeta_{ai}\Big),\partial_tE_{R,i}^{\e}}\no\\
    &+o(1) \nm{E^{\e}_{R}}^2+C\Big(\nms{\nabla_x(\ik-\pk)[\fe]}^2+\e^{-2}\nms{(\ik-\pk)[\fe]}^2\Big)\no\\
    \leq&-\frac{\ud}{\ud t}\br{\Big(\Big[(\ik-\pk)[\fe]\Big], \zeta_{ai}\Big),E_{R,i}^{\e}}+\e^2\big(\nm{\nabla_xB^{\e}_R}^2+\nm{\fe}^2\big)\no\\
    &+o(1) \nm{E^{\e}_{R}}^2+C\Big(\nms{\nabla_x(\ik-\pk)[\fe]}^2+\e^{-2}\nms{(\ik-\pk)[\fe]}^2\Big).\no
\end{align*}
For the second and third terms in \eqref{E0}, we bound them as follows:
\begin{align*}
   & \sum_{i=1}^3\left|\br{\partial_i a^{\e}-\frac{\rho_{2}}{\rho_{1}}\partial_i c^{\e},E_{R,i}^{\e}}\right|\ls o(1)\nm{E^{\e}_{R}}^2+C\nm{\nabla_x\pk[\fe]}^2,\\
  &\sum_{i=1}^3\left|\br{h^{\e}_{ai},E_{R,i}^{\e}}\right|\ls o(1) \nm{E^{\e}_{R}}^2+\e^{-\frac{1}{3}}\big(\nms{(\ik-\pk)[\fe]}^2+\nm{\fe}^2\big)\no\\
    &+\Big[(1+t)^{-\beta_0}+\e^{\frac{2}{3}}\Big]\Big(\nm{E^{\e}_{R}}^2+\nm{B^{\e}_{R}}^2\Big)+\e^{k}(1+t)^{2k+1}\no.
\end{align*}
We collect the above estimates in \eqref{E0} to obtain
\begin{align}\label{E1}
  \frac{1}{2} \nm{\sqrt{\frac{u^0}{T }}E_{R}^{\e}}^2
   \leq& - \sum_{i=1}^3\frac{\ud}{\ud t}\br{\Big((\ik-\pk)[\fe], \zeta_{ai}\Big),E_{R,i}^{\e}}\\
   &+C\Big(\nms{\nabla_x(\ik-\pk)[\fe]}^2+\e^{-2}\nms{(\ik-\pk)[\fe]}^2\Big)\no\\
  &+C\Big(\e^2\nm{\nabla_xB^{\e}_R}^2+\e^{-\frac{1}{3}}\nm{\fe}^2+\nm{\nabla_x\pk[\fe]}^2\Big)\no\\
  &+C\Big[(1+t)^{-\beta_0}+\e^{\frac{2}{3}}\Big]\Big(\nm{E^{\e}_{R}}^2+\nm{B^{\e}_{R}}^2\Big)+\e^{k}(1+t)^{2k+1}\Big].\no
\end{align}
This verifies \eqref{E01} for $i=0$.

\end{proof}

\begin{proposition} \label{EBd}
It holds that 
\begin{align}\label{EB}
    \frac{1}{2}\e^2 \nm{\nabla_x B_R^{\e}}^2\leq& \e^{2}\frac{\ud}{\ud t}\br{ E_R^{\e},\nabla_x \times B_R^{\e}}+\e^{2}\nm{\nabla_x E_R^{\e}}^2+C\e^2\ee.
\end{align}
\end{proposition}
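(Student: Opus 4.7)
The plan is to extract the dissipation of $B^{\e}_R$ from Amp\`ere's law in \eqref{L201}, using a time-boundary trick that produces the cross term $\frac{d}{dt}\br{E^{\e}_R,\nabla_x\times B^{\e}_R}$ on the right-hand side. Since $\nabla_x\cdot B^{\e}_R=0$, integration by parts in $x\in\mathbb{R}^3$ gives the identity
\begin{align*}
\nm{\nabla_x B^{\e}_R}^2=\nm{\nabla_x\times B^{\e}_R}^2+\nm{\nabla_x\cdot B^{\e}_R}^2=\nm{\nabla_x\times B^{\e}_R}^2,
\end{align*}
so it suffices to bound $\nm{\nabla_x\times B^{\e}_R}^2$. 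This is the key reduction that turns a derivative estimate into a curl estimate, which is what Maxwell's equations naturally provide.

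First, I would use the Amp\`ere equation $\nabla_x\times B^{\e}_R=\dt E^{\e}_R-4\pi\int_{\mathbb{R}^3}\hat p\,\mh\fe\,\ud p$ and take the $L^2_x$ inner product of both sides with $\nabla_x\times B^{\e}_R$, obtaining
\begin{align*}
\nm{\nabla_x\times B^{\e}_R}^2=\br{\dt E^{\e}_R,\nabla_x\times B^{\e}_R}-4\pi\br{\textstyle\int_{\mathbb{R}^3}\hat p\,\mh\fe\,\ud p,\nabla_x\times B^{\e}_R}.
\end{align*}
For the first term on the right, I would pull out a time derivative via
\begin{align*}
\br{\dt E^{\e}_R,\nabla_x\times B^{\e}_R}=\frac{\ud}{\ud t}\br{E^{\e}_R,\nabla_x\times B^{\e}_R}-\br{E^{\e}_R,\nabla_x\times\dt B^{\e}_R},
\end{align*}
then substitute Faraday's law $\dt B^{\e}_R=-\nabla_x\times E^{\e}_R$ and integrate by parts once in $x$, yielding
\begin{align*}
-\br{E^{\e}_R,\nabla_x\times\dt B^{\e}_R}=\br{\nabla_x\times E^{\e}_R,\nabla_x\times E^{\e}_R}=\nm{\nabla_x\times E^{\e}_R}^2\leq\nm{\nabla_x E^{\e}_R}^2.
\end{align*}

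Next, I would estimate the source-term contribution by Cauchy--Schwarz in $p$ and Young's inequality in $x$:
\begin{align*}
\left|4\pi\br{\textstyle\int_{\mathbb{R}^3}\hat p\,\mh\fe\,\ud p,\nabla_x\times B^{\e}_R}\right|\leq\tfrac{1}{2}\nm{\nabla_x\times B^{\e}_R}^2+C\tnm{\fe}^2,
\end{align*}
where I have used that $\int|\hat p\,\mh|^2\ud p$ is bounded uniformly in $(t,x)$ by the smallness of $u$ and boundedness of $T$. Absorbing the half of $\nm{\nabla_x\times B^{\e}_R}^2$ into the left, using $\nm{\nabla_x\times B^{\e}_R}=\nm{\nabla_x B^{\e}_R}$, and noting $\tnm{\fe}^2\lesssim\ee$ (from the definition \eqref{eed}), then multiplying through by $\e^2$, yields \eqref{EB}.

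There is no real obstacle here; the only points requiring mild care are the integration-by-parts identity for vector fields in $\mathbb{R}^3$ (which needs sufficient decay at infinity, guaranteed by the $H^2$-framework) and the absorption of the quadratic $\frac{1}{2}\nm{\nabla_x\times B^{\e}_R}^2$ term. Note that the conservation-law structure of the free Maxwell part automatically produces the sign-good term $+\nm{\nabla_x\times E^{\e}_R}^2$ on the right, which is harmless since it is already part of the electric-field dissipation controlled elsewhere.
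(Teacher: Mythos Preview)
Your proposal is correct and follows essentially the same approach as the paper: both use Amp\`ere's law to express $\nm{\nabla_x\times B^{\e}_R}^2$, pull out a time derivative to produce $\frac{\ud}{\ud t}\br{E^{\e}_R,\nabla_x\times B^{\e}_R}$, substitute Faraday's law to obtain $\nm{\nabla_x\times E^{\e}_R}^2$, absorb the source term via Young's inequality, and invoke $\nabla_x\cdot B^{\e}_R=0$ to pass from the curl to the full gradient. The only cosmetic difference is that you invoke the curl--divergence identity at the start while the paper does so at the end.
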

\begin{proof}
From \eqref{L201}, we have
\begin{align}\label{curB}
 \nm{\nabla_x \times B_R^{\e}}^2=&  \br{\ds\dt E_R^{\e},\nabla_x \times B_R^{\e}}+\br{-4\pi \int_{\mathbb R^3}\hat{p} \mh  \fe\ud p,\nabla_x \times B_R^{\e}}\\
 \leq& \frac{\ud}{\ud t}\br{ E_R^{\e},\nabla_x \times B_R^{\e}}- \br{ E_R^{\e},\nabla_x \times \ds\dt B_R^{\e}}+C\nm{\fe}\nm{\nabla_x \times B_R^{\e}}\no\\
\leq& \frac{\ud}{\ud t}\br{ E_R^{\e},\nabla_x \times B_R^{\e}}+\br{ E_R^{\e},\nabla_x \times \big(\nabla_x \times E_R^{\e}\big)}+o(1)\nm{\nabla_x \times B_R^{\e}}^2+C\nm{\fe}^2\no.
\end{align}
Noting
\begin{align}\label{curE}
    \br{ E_R^{\e},\nabla_x \times \big(\nabla_x \times E_R^{\e}\big)}=\nm{\nabla_x \times E_R^{\e}}^2,
\end{align}
we have
\begin{align}
      \frac{1}{2}\nm{\nabla_x \times B_R^{\e}}^2\leq \frac{\ud}{\ud t}\br{ E_R^{\e},\nabla_x \times B_R^{\e}}+\nm{\nabla_x \times E_R^{\e}}^2+C\nm{\fe}^2.
\end{align}
Noting $\nabla\cdot B_R^{\e}=0$, this further implies
 \eqref{EB}.
\end{proof}
\begin{remark}\label{rkEB}
By proper linear combination of \eqref{semp 5}, \eqref{E01} and \eqref{EB}, we can obtain the macroscopic dissipation and the electromagnetic filed dissipation together. However, the dissipation of the electromagnetic field are too weak to be necessarily included in $\dd$.
\end{remark}


\section{Proof of Proposition~\ref{result}}\label{mr}

\ \\
\textbf{Proof of Energy Estimates:}
We multiply \eqref{semp 5} by a sufficiently small constant $\kappa_1$ and collect the resulting inequality, 
    \eqref{L2ener01}, \eqref{H1ener01} and \eqref{H2ener01} to obtain that for some small positive constant $\delta_1$,
\begin{align}\label{ud}
    &\frac{\ud}{\ud t}\bigg\{\sum_{i=0}^2\e^i\bigg(\nm{\sqrt{\frac{4\pi T }{u^0}}\nabla_x^i\fe}^2+\nm{\nabla_x^i E_R^{\e}}^2+\nm{\nabla_x^iB_R^{\e}}^2\bigg)-\kappa_1\Big(\e\mathcal{E}^{mac}_1+\e^2\mathcal{E}^{mac}_2\Big)\bigg\}\\
    &+\delta_1\e\tnm{\nabla_x\pk[\fe]}^2+\delta\e^2\tnm{\nabla_x^2\pk[\fe]}^2\no\\
    &+\delta_1\Big(\e^{-1}\nms{(\ik-\pk)[\fe]}^2+\nms{\nabla_x(\ik-\pk)[\fe]}^2+\e\nms{\nabla_x^2(\ik-\pk)[\fe]}^2\Big)\no\\
    \ls&\Big[(1+t)^{-\beta_0}+\e^{\frac{1}{3}}\Big]\mathcal{E}+\e^{\frac{2}{3}}\mathcal{D} +\e^{2k+1}(1+t)^{4k+2}
+\e^{k}(1+t)^{2k}\sqrt{\mathcal{E}}.\no
\end{align}

Multiplying \eqref{ud} by a large constant $C_1$  and adding it to the sum of \eqref{wL2f01}, \eqref{wH1f12},  and \eqref{w2x001}, we have
\begin{align}\label{prede}
   & \frac{\ud}{\ud t}\ee+\frac{3}{2}\dd
    \ls \Big[(1+t)^{-\beta_0}+\e^{\frac{1}{3}}\Big]\ee+\e^{\frac{2}{3}}\dd+\e^{2k+1}(1+t)^{4k+2}
    +\e^{k}(1+t)^{2k}\sqrt{\mathcal{E}},
\end{align}
where $\dd$ is given in \eqref{ddd}, and
\begin{align*}
    \ee=&C_1\bigg[\sum_{i=0}^2\e^i\bigg(\nm{\sqrt{\frac{4\pi  T }{u^0}}\nabla_x^i\fe}^2+\nm{\nabla_x^i E_R^{\e}}^2+\nm{\nabla_x^iB_R^{\e}}^2\bigg)-\kappa_1\Big(\e\mathcal{E}^{mac}_1+\e^2\mathcal{E}^{mac}_2\Big)\bigg]\\
   &+Y\Big(\nmw{(\ik-\pk)[\fe]}^2+\e\nmww{\nabla_x(\ik-\pk)[\fe]}^2+\nmwww{\nabla_x^2(\ik-\pk)[\fe]}^2\Big).\no
\end{align*}
Note that
\begin{align*}
 \e\mathcal{E}^{mac}_1+\e^2\mathcal{E}^{mac}_2&\ls\sum_{i=1}^2\e^i\nm{\nabla_x^{i-1}\fe}\nm{\nabla_x^i\fe}\ls\e^{\frac{1}{2}}\Big(\nm{\fe}^2+\e\nm{\nabla_x\fe}^2+\e^2\nm{\nabla_x^2\fe}^2\Big)\no
  \end{align*}
by \eqref{i11}. This verifies \eqref{eed}. Then for sufficiently small constant $\e>0$, we use \eqref{prede} to have
\begin{align*}
    \frac{\ud}{\ud t}\ee+\dd\ls \Big[(1+t)^{-\beta_0}+\e^{\frac{1}{3}}+\e^{2k+1}(1+t)^{4k+2}\Big]\ee+\e^{2k+1}(1+t)^{4k+2}
+\e^{k}(1+t)^{2k}.
\end{align*}
Then for $k\geq 3$, we apply Gronwall's inequality to the above inequality to have 
\begin{align*}
\sup_{s\in[0,\overline{t}]}\Big[\mathcal{E}(s)+\int_0^t\dd(s)\ud s \Big]\lesssim  \mathcal{E}(0)+1
\end{align*}
for $\overline{t}=\e^{-1/3}$. This verifies  \eqref{thm2} and \eqref{rr 01}.\\
\ \\
\textbf{Proof of Positivity:}
First we show that there exists $F^{\e}_{R}(0,x,p)$ such that $F^{\e}(0,x,p)\geq0$. The procedure is motivated by the analysis in \cite[Lemma A.2]{Guo2006}.
We first estimate the microscopic part of the coefficients $(\ik-\pk )\left[ \m ^{-\frac{1}{2}}F_{i}\right]$, $1\leq i\leq 2k-1$. By \eqref{expan2} and the definition of $\li$ in \eqref{LAK}, we have 
\begin{align*}
\li \left[(\ik-\pk )\left[ \m ^{-\frac{1}{2}}F_{1}\right]\right]=&-\m ^{-\frac{1}{2}}\Big[\partial_t\mathbf{M}
+\hat{p}\cdot \nabla_x\mathbf{M}- \Big(E +\hat{p}\times B \Big)\cdot\nabla_p\mathbf{M}\Big]
\end{align*}
Then, we use Lemma \ref{ss 01} to have
\begin{align}\label{micro11}
 \abs{(\ik-\pk )\left[ \m ^{-\frac{1}{2}}F_{1}\right]}_{\sigma}\lesssim \big|\nabla_x(n ,u ,T )\big|+|E |+|B |.
    \end{align}
By similar arguments in the proof of Lemma \ref{ss 04}, we can obtain that
for any $\kappa<1$, 
\begin{align*}
 &\brv{\mathbf{M}^{-\kappa}\li\left[(\ik-\pk)\left[\m ^{-\frac{1}{2}}F_{1}\right]\right],(\ik-\pk)\left[\m ^{-\frac{1}{2}}F_{1}\right]}\\
 \gtrsim&  \abs{\mathbf{M}^{-\frac{\kappa}{2}}(\ik-\pk)\left[\m ^{-\frac{1}{2}}F_{1}\right]}_{\sigma}^2
-C\Big(\abs{(\ik-\pk)\left[\m ^{-\frac{1}{2}}F_{1}\right]}_{\sigma}^2+|E |^2+|B |^2\Big).\no
\end{align*}
Namely, 
\begin{align}\label{w11}
    &\abs{\mathbf{M}^{-\frac{\kappa}{2}}(\ik-\pk)\left[\m ^{-\frac{1}{2}}F_{1}\right]}_{\sigma}^2
    -C\abs{(\ik-\pk)\left[\m ^{-\frac{1}{2}}F_{1}\right]}_{\sigma}^2\\
    \lesssim& o(1)\abs{\mathbf{M}^{-\frac{\kappa}{2}}(\ik-\pk)\left[\m ^{-\frac{1}{2}}F_{1}\right]}_{\sigma}^2+C\Big(\big|\nabla_x(n ,u,T )\big|^2+|E |^2+|B |^2\Big).\no
\end{align}
Now we combine \eqref{micro11} and \eqref{w11} to have
\begin{align*}
    \abs{\mathbf{M}^{-\frac{\kappa}{2}}(\ik-\pk)\left[\m ^{-\frac{1}{2}}F_{1}\right]}_{\sigma} \lesssim \big|\nabla_x(n ,u,T )\big|+|E |+|B |.
\end{align*}

Similarly, we can obtain that
\begin{align*}
   \sum_{0\leq j\leq 2} \abs{\nabla_p^j\left(\mathbf{M}^{-\frac{\kappa}{2}}(\ik-\pk)\left[\m ^{-\frac{1}{2}}F_{1}\right]\right)}_{\sigma} \lesssim \big|\nabla_x(n ,u,T )\big|+|E |+|B |.
\end{align*}
By the Sobolev imbedding, this implies 
\begin{align}
   (\ik-\pk)\left[\m ^{-\frac{1}{2}}F_{1}\right]\lesssim \mathbf{M}^{\frac{\kappa}{2}}\Big[\big|\nabla_x(n ,u,T )\big|+|E |+|B |\Big].
   \end{align}
By induction, we can use the equations \eqref{number}, \eqref{moment} and \eqref{energy} in the appendix to obtain    \begin{align}\label{micfi}
   (\ik-\pk)\left[\m ^{-\frac{1}{2}}F_{i}\right]\lesssim &\mathbf{M}^{\frac{\kappa}{2}}\Big[(\big|\nabla_x^i(n ,u,T )\big|+|\nabla_x^{i-1}E |+|\nabla_x^{i-1}B \big|\Big)\\
   &+\sum_{1\leq j\leq i-1}\Big(\big|\nabla_x^{i-j}(a_j,b_j,c_j)\big|+\big|\nabla_x^{i-j-1}E_j\big|+\big|\nabla_x^{i-j-1}B_j\big|\Big)\Big],\no
   \end{align} 
  for all $\kappa<1$ and $2\leq i\leq 2k-1$. Note that here $a_j,b_j,c_j$, the coefficients of the macroscopic part of $\pk\Big(\m ^{-\frac{1}{2}}F_{j}\Big)$,  are defined in  \eqref{decomA}.
Then we use \eqref{micfi}  to have
\begin{align}\label{unii}
F_{i}(0,x,p)\lesssim& \mathbf{M}^{\kappa}\Big[(\big|\nabla_x^i(n ,u ,T )\big|+\big|\nabla_x^{i-1}E \big|+\big|\nabla_x^{i-1}B \big|\Big)+\big|(a_{i},b_i,c_i)\big|\\
   &+\sum_{1\leq j< i}\Big(\big|\nabla_x^{i-j}(a_{j},b_j,c_j)\big|+\big|\nabla_x^{i-j-1}E_j\big|+\big|\nabla_x^{i-j-1}B_j\big|\Big)\Big].\no
\end{align}
Now we choose $F^{\e}_{R}(0,x,p)$ in the following form
\begin{align}
 F^{\e}_{R}(0,x,p)=&\mathbf{M}^{\tau}(0,x,p)\Big[\sum_{ j=1}^{2k-1}\Big(\big|\nabla_x^j(n ,u ,T )\big|+\big|\nabla_x^{j-1}E_j\big|+\big|\nabla_x^{j-1}B_j\big|\Big)\\
   &+\sum_{ i=1}^{2k-1}\big|(a_{i},b_i,c_i)\big|+\sum_{ i=1}^{2k-1}\sum_{ j=1}^{2k-1-i}\Big(\big|\nabla_x^{j}(a_{i},b_i,c_i)\big|+\big|\nabla_x^{j-1}E_i\big|+\big|\nabla_x^{j-1}B_i\big|\Big)\Big]\no
\end{align}
with $0<\tau<1$. We choose $\kappa<1$ such that
\begin{align}\label{kt}
   k(1-\kappa)+\tau<\kappa. 
\end{align}
From \eqref{unii}, we have 
 \begin{align}
     \sum_{i=1}^{2k-1}\e^i F_{i}(0, x, p)\leq &C\e \mathbf{M}^{\kappa}(0,x,p)\Big[\sum_{ j=1}^{2k-1}(\big|\nabla_x^j(n ,u ,T )(0,x)\big|+|\nabla_x^{j-1}(E_j,B_j)(0,x)\big|\Big)\\
   &+\sum_{ i=1}^{2k-1}\big|(a_{i},b_i,c_i)\big|+\sum_{ i=1}^{2k-1}\sum_{ j=1}^{2k-1-i}\Big(\big|\nabla_x^{j}(a_{i},b_i,c_i)\big|+\big|\nabla_x^{j-1}(E_i,B_i)\big|\Big)\Big]\no\\
     \leq& C_0 \e \mathbf{M}^{\kappa}(0,x,p)\no
\end{align}
for some uniform constant $C_0\geq1$.
We discuss the positivity of $F^{\e}_{R}(0,x,p)$ in two domains in $\r^3_x\times\r^3_p$: 
\begin{align*}
    A&:=\Big\{(x,p): \mathbf{M}(0,x,p)\geq C_0 \e \mathbf{M}^{\kappa}(0,x,p)\Big\},\\ 
    B&:=\Big\{(x,p): \mathbf{M}(0,x,p)< C_0 \e \mathbf{M}^{\kappa}(0,x,p)\Big\}.
\end{align*}
In the domain $A$, by the expression of the Hilbert expansion \eqref{expan}, we have   $F^{\e}_{R}(0,x,p)\geq0$.
In the domain $B$, for the chosen $\kappa$, we use \eqref{kt} to have
\begin{align*}
    \e^{k} \mathbf{M}^{\tau}(0,x,p)> C_0^{k+1}\e^{k+1} \mathbf{M}^{\tau}(0,x,p) \ge C_0 \e  \mathbf{M}^{k(1-\kappa)}(0,x,p)\mathbf{M}^{\tau}(0,x,p)\ge  C_0 \e \mathbf{M}^{\kappa}(0,x,p).
\end{align*}
This implies that the remainder term is the dominant term in 
\eqref{expan} and $F^{\e}(0,x,p)\geq0$ for $\e$ small enough. Therefore we have  $F^{\e}(0,x,p)\geq0$ for all $(x,p) $.

Based on the proof of \cite[Lemma 9, Page 307--308]{Strain.Guo2004}, 
we may rearrange the equation \eqref{main1} as
\begin{align}\label{semp}
    &\dt F^{\e} + \hp \cdot \nx F^{\e}-\Big(E^{\e}+\hp\times b^{\e}\Big)\nabla_pF^{\e}\\
    =&\;\frac{1}{\e}\left(\int_{\r^3}\Phi^{ij}(p,q)F^{\e}(q)\ud q\right)\p_{p_i}\p_{p_j}F^{\e}\no\\
    &\;+\frac{1}{\e}\left(\int_{\r^3}\p_{p_i}\Phi^{ij}(p,q)F^{\e}(q)\ud q\right)\p_{p_j}F^{\e}-\frac{1}{\e}\left(\int_{\r^3}\Phi^{ij}(p,q)\p_{q_j}F^{\e}(q)\ud q\right)\p_{p_i}F^{\e}\no\\
    &\;-\frac{1}{\e}\p_{p_i}\left(\int_{\r^3}\Phi^{ij}(p,q)\p_{q_j}F^{\e}(q)\ud q\right)F^{\e}.\no
\end{align}
Then clearly, there is an elliptic structure on the R.H.S. of \eqref{semp}. 
Therefore, using the maximum principle (see the proof of \cite[Lemma 9, Page 308]{Strain.Guo2004} and \cite[Theorem 1.1, Page 201]{Kim.Guo.Hwang2020}),
we have 
\begin{align*}
    \min_{t,x,p}\big\{F^{\e}\big\}=\min_{x,p}\big\{F^{\e}_0\big\} \geq 0.
\end{align*}
Then for sufficiently smooth $F^{\e}$, as long as the initial data $F ^{\e}\geq0$, we naturally have $F^{\e}\geq0$. For general $F^{\e}$, a standard mollification and approximation argument leads to the desired result.

\section{Relativistic Landau Equation}

\subsection{No-Weight Energy Estimates} \label{Sec:no-weight-energy}

In this section, we derive the no-weight energy estimates. Since  the estimates  can be derived via arguments  similar to the r-VML case, we omit most of the details of the proof and only point out  main differences.

Corresponding to Proposition \ref{L2ener}, we have

\begin{proposition}\label{basic 0}
For the remainder $\fe$, it holds that
\begin{align}\label{estimate 0}
    \frac{\ud}{\ud t}\tnm{\fe}^2
    +\e^{-1}\delta\bnms{(\ik-\pk)[\fe]}^2
    \ls\big(\e+\zz\big)\ee+\e\dd+\e^{2k+3}.
\end{align}
\end{proposition}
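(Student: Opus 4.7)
The plan is to adapt the argument of Proposition~\ref{L2ener} to the simpler r-LAN setting, where there is no electromagnetic field and hence no Maxwell coupling to track. I would take the $L^2_{x,p}$ inner product of the remainder equation \eqref{re-f=} with $\fe$. The transport term $\hp\cdot\nx\fe$ vanishes by integration by parts in $x$, and Lemma~\ref{ss 01} produces the dissipation $\frac{\delta}{\e}\nms{(\ik-\pk)[\fe]}^2$ on the left-hand side. It remains to bound each term on the right-hand side and absorb back everything that is not of the form $(\e+\zz)\ee+\e\dd+\e^{2k+3}$.

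The main contribution (relative to the perturbation-around-a-global-Maxwellian case) is
\[
\Babs{\br{\mhh\bigl(\dt\mh + \hp\cdot\nx\mh\bigr)\fe,\; \fe}}.
\]
By Lemma~\ref{ss 06} the coefficient $\mhh(\dt\mh+\hp\cdot\nx\mh)$ is pointwise bounded by $p^0\,\zz$, so this term is at most $\zz\,\tnm{\sqrt{p^0}\fe}^2$. Splitting $\fe = \pk[\fe] + (\ik-\pk)[\fe]$ and partitioning the $p$-integral at $p^0 \sim \e^{-1}\kappa$ as in \eqref{L200}, while using $\e(w^0)^2 \gs p^0$ on the high-momentum region, one obtains
\[
\zz\,\tnm{\sqrt{p^0}\fe}^2 \ls \zz\,\tnm{\fe}^2 + o(1)\,\e^{-1}\nms{(\ik-\pk)[\fe]}^2 + \e\,\nmsw{(\ik-\pk)[\fe]}^2,
\]
which lies inside $\zz\,\ee$, the dissipation to be absorbed, and $\e\,\dd$. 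The cubic term $\e^{k-1}\br{\Gamma[\fe,\fe],(\ik-\pk)[\fe]}$ is controlled by Lemma~\ref{ss 02}, Sobolev embedding, and the continuity bound \eqref{rr 01=}; for $k\geq 3$ it yields $o(1)\e^{-1}\nms{(\ik-\pk)[\fe]}^2 + \e\,\ee$. The mixed terms $\sum_i \e^{i-1}\bigl(\Gamma[\mhh F_i,\fe]+\Gamma[\fe,\mhh F_i]\bigr)$ are treated analogously, invoking the Gaussian decay of $\mhh F_i$ from \eqref{growth0} in Proposition~\ref{fn}. Finally, Cauchy-Schwarz applied to $\e^k\br{\sb,\fe}$ together with the pointwise bounds on the $F_i$ supplies the $\e^{2k+3}$ remainder after absorbing one copy of $\nms{(\ik-\pk)[\fe]}$ into the dissipation.

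The main obstacle, as usual, is the first term above: extracting precisely $\zz\,\ee$ rather than a time-decaying bound. In the r-VML proof this step relied on the dispersive decay $(1+t)^{-\beta_0}$ of the background Euler--Maxwell solution, but here the Euler profile carries no such decay, so one must rely solely on the smallness of $\zz$ ensured by \eqref{semp 3} together with the quantitative balance between the momentum cutoff $p^0\sim \e^{-1}\kappa$ and the available microscopic dissipation. Collecting all the above estimates and moving absorbable terms to the left-hand side produces \eqref{estimate 0}.
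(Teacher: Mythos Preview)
Your proposal is correct and follows essentially the same approach as the paper: take the $L^2$ inner product of \eqref{re-f=} with $\fe$, invoke Lemma~\ref{ss 01} for the dissipation, and treat the troublesome term $\mhh(\dt\mh+\hp\cdot\nx\mh)\fe$ by the low/high-momentum split at $p^0\sim\e^{-1}\kappa$ exactly as in \eqref{L200}. Two minor corrections: in the r-LAN remainder equation \eqref{re-f=} the source term appears as $\sb$ without an $\e^k$ prefactor (the $\e$-powers are already built into the definition of $\ss$), and the decay of $\mhh F_i$ should be cited from \eqref{growth0=} in Proposition~\ref{fn=} rather than \eqref{growth0}, since the r-LAN coefficients carry no $(1+t)^n$ growth.
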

\begin{proof} The whole proof can be done in a similar and much simpler way as in Proposition \ref{L2ener}. We only proceed the following two estimates:
\begin{align}
    &\abs{\br{\mhh\Big(\dt\mh+\hp\cdot\nx\mh\Big)\fe,\fe}}\ls \zz\nm{\sqrt{p^0}\fe}^2\\
    \ls&\,\zz\bigg(\bnm{\sqrt{p^0}\pk[\fe]}^2+\int_{\r^3}\int_{p^0\leq\e^{-1}\kappa}p^0\babs{(\ik-\pk)[\fe]}^2+\int_{\r^3}\int_{p^0\geq\e^{-1}\kappa}p^0\babs{(\ik-\pk)[\fe]}^2\bigg)\no\\
    \ls&\,\zz\bigg(\nm{\pk[\fe]}^2+\e^{-1}o(1)\bnms{(\ik-\pk)[\fe]}^2+\int_{\r^3}\int_{p^0\geq\e^{-1}\kappa}p^0\babs{(\ik-\pk)[\fe]}^2\bigg)\no\\
    \ls&\,\zz\tnm{\fe}^2+o(1)\e^{-1}\bnms{(\ik-\pk)[\fe]}^2+\zz\int_{\r^3}\int_{p^0\geq\e^{-1}\kappa}p^0\babs{(\ik-\pk)[\fe]}^2\no\\
    \ls&\,\zz\tnm{\fe}^2+o(1)\e^{-1}\nms{(\ik-\pk)[\fe]}^2+\zz\e\nm{(\ik-\pk)[\fe]}_{w_0}^2.\no
\end{align}
by  Lemma \ref{ss 06}, and
\begin{align*}
    \abs{\br{\sb,\fe}}&=\abs{\br{\sb,(\ik-\pk)[\fe]}}\ls  o(1)\e^{-1}\nms{(\ik-\pk)[\fe]}^2+\e^{2k+3},
\end{align*}
by \eqref{growth0=} in Appendix \ref{App}.
\end{proof}

For derivatives of $\fe$, we can obtain the following estimates.
\begin{proposition}\label{basic 1}
For the remainder $\fe$, it holds that
\begin{align}\label{estimate 1}
    \e\bigg(\frac{\ud}{\ud t}\tnm{\nabla_x\fe}^2
    +\e^{-1}\delta\nms{(\ik-\pk)[\nabla_x\fe]}^2\bigg)\ls \big(\e+\zz\big)\ee+o(1)\e^{-1}\nms{(\ik-\pk)[\fe]}^2+\e\dd+\e^{2k+3},
\end{align}
and
\begin{align}\label{estimate 2}
    \,\e^2\bigg(\frac{\ud}{\ud t}\tnm{\nabla_x^2\fe}^2
    +\e^{-1}\delta\nms{(\ik-\pk)[\nabla_x^2\fe]}^2\bigg)
    \ls\,\big(\e+\zz\big)\ee+o(1)\nm{(\ik-\pk)[\fe]}_{H^1_{\sigma}}^2+\e\dd+\e^{2k+4}.
\end{align}
\end{proposition}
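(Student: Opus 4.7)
\textbf{Proof proposal for Proposition~\ref{basic 1}.} The plan is to mimic the strategy of Propositions~\ref{H1x} and \ref{H2x} in the r-VML case, which is considerably simpler here because the electromagnetic coupling is absent. First I would apply $\p_x^{\alpha}$ to the remainder equation \eqref{re-f=} with $|\alpha|=1$, then take the $L^2$ inner product of the resulting identity with $\p_x^{\alpha}\fe$ (without a weight). The key identity is the commutator splitting
\begin{align*}
  \br{\p_x\li[\fe],\p_x\fe}
  = \br{\li[\p_x\fe],\p_x\fe} - \br{\llbracket \li,\p_x\rrbracket[\fe],\p_x\fe},
\end{align*}
where the first piece, by Lemma~\ref{ss 01} and the inequality
$\nms{(\ik-\pk)[\p_x\fe]}\ge \nms{\p_x(\ik-\pk)[\fe]}-\zzz\tnm{\fe}$, provides the coercive term $\delta\e^{-1}\nms{\p_x(\ik-\pk)[\fe]}^2$. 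The commutator piece only involves $\p_x$ hitting $\m$, so Lemma~\ref{ss 06} bounds it by
$o(1)\e^{-1}\nms{\p_x(\ik-\pk)[\fe]}^2+C\zzz\e^{-2}\nms{(\ik-\pk)[\fe]}^2$, which is absorbed by the dissipation and the $o(1)\e^{-1}\nms{(\ik-\pk)[\fe]}^2$ buffer on the right of \eqref{estimate 1}.

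Next I would estimate each term arising from $\p_x$ hitting the R.H.S.\ of \eqref{re-f=}. For the transport-of-Maxwellian term $\mhh(\dt+\hat p\cdot\nx)\mh \,\fe$ the bound from Lemma~\ref{ss 06} gives factors of $p^0$ or $(p^0)^2$; to absorb the high-momentum portion I would use the threshold split $p^0\le \e^{-1}\kappa$ versus $p^0\ge \e^{-1}\kappa$ exactly as in \eqref{L200}, trading $p^0$ for $\e^{-1}\nms{\cdot}^2$ on the low-momentum side and for $\e\,\bnms{w^1\cdot}^2\subset\dd$ on the high-momentum side, with an overall $\zz$ prefactor. For the nonlinear term $\e^{k-1}\p_x\Gamma[\fe,\fe]$, Lemma~\ref{ss 02} with H\"older and Sobolev embedding in $x$ gives
$\ls \e^{k-1}\nm{\fe}_{H^2}\nm{\fe}_{H^1_{\sigma}}\nms{\p_x(\ik-\pk)[\fe]}$, which by \eqref{rr 01=} is absorbed into $\e^{1/2}$ times dissipation/energy. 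For the cross terms involving $\mhh F_i$ I would use \eqref{growth0=} and Lemma~\ref{ss 02}; they produce factors of $(1+t)^i$ that are bounded on the finite interval $[0,t_0]$ and can therefore be absorbed in $\e\,\ee$. The source $\sb$ contributes only $\e^{2k+3}$ after a Cauchy–Schwarz step against $(\ik-\pk)[\p_x\fe]$. Multiplying the whole inequality by $\e$ produces \eqref{estimate 1}.

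For \eqref{estimate 2} I would repeat the procedure with $|\alpha|=2$. The commutator $\llbracket\li,\p_x^2\rrbracket$ now contains a term where one $\p_x$ falls on the Maxwellian and one on $(\ik-\pk)[\fe]$, producing an error controlled by $\e^{-1}\zzz\nm{(\ik-\pk)[\fe]}_{H^1_{\sigma}}$, which is the origin of the $o(1)\nm{(\ik-\pk)[\fe]}_{H^1_{\sigma}}^2$ term on the R.H.S.\ of \eqref{estimate 2}. The transport-of-Maxwellian term now generates up to three powers of $p^0$ (when two derivatives hit $\mh$ and one $p^0$ comes from $\hat p\cdot\nx\mh$); the same low/high $p^0$ splitting, combined with the weighted $\nmwww{\nabla_x^2\fe}$ control inside $\ee$, absorbs the high-momentum part. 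The nonlinear estimate uses the trilinear Lemma~\ref{ss 02} with Sobolev $(\infty,2,2)$, $(2,\infty,2)$, $(4,4,2)$ splittings as in Proposition~\ref{H2x}. Multiplying by $\e^2$ yields the claimed bound.

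The main obstacle, as in the r-VML argument, is the interplay between the commutator $\llbracket\li,\p_x^{\alpha}\rrbracket$, which costs one factor of $\e^{-1}$, and the fact that the weight $w^{\ell}$ lives only on the microscopic part. This forces a careful bookkeeping that, at each derivative level, the commutator error is absorbed by the dissipation of the \emph{next} lower derivative (hence the appearance of $\e^{-1}\nms{(\ik-\pk)[\fe]}^2$ in \eqref{estimate 1} and $\nm{(\ik-\pk)[\fe]}_{H^1_{\sigma}}^2$ in \eqref{estimate 2}). Once this hierarchy is respected, the remaining estimates are routine adaptations of Propositions~\ref{H1x}–\ref{H2x} with all electromagnetic contributions set to zero and the time-decay factor $(1+t)^{-\beta_0}$ replaced by the (non-decaying but small) constant $\zz$ of \eqref{semp 3}.
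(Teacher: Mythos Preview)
Your proposal is correct and matches the paper's intended approach: the paper omits the proof of Proposition~\ref{basic 1} entirely, stating only that the no-weight derivative estimates ``can be derived via arguments similar to the r-VML case'' (i.e.\ Propositions~\ref{H1x} and~\ref{H2x}) with the electromagnetic terms removed and the decaying factor $(1+t)^{-\beta_0}\overline{\e}_0$ replaced by the small constants $\zz,\zzz$. Your outline reproduces exactly this reduction, including the commutator hierarchy that forces the $o(1)\e^{-1}\nms{(\ik-\pk)[\fe]}^2$ and $o(1)\nm{(\ik-\pk)[\fe]}_{H^1_{\sigma}}^2$ buffers, so there is nothing to add.
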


\subsection{Weighted Energy Estimates} \label{Sec:weighted-energy}

In this subsection, we will derive the weighted energy estimates of $f^{\e}$.

\subsubsection{Weighted Basic Energy Estimate}

We take microscopic projection onto (i.e. apply operator $\ik-\pk$ on both sides of) \eqref{re-f=} to have
\begin{align}\label{re-f'}
    &\, \dt\big((\ik-\pk)[\fe]\big)+\hp\cdot\nx\big((\ik-\pk)[\fe]\big)+\frac{1}{\e}\li\big[(\ik-\pk)[\fe]\big]\\
    =&\,\e^{k-1}\Gamma[\fe,\fe] +\sum_{i=1}^{2k-1}\e^{i-1} \Big\{\Gamma\big[\mhh F_i,\fe\big]
    +\Gamma\big[\fe,\mhh F_i\big]\Big\}\no\\
    &\,-\mhh\Big(\dt\mh+\hp\cdot\nx\mh\Big)\big((\ik-\pk)[\fe]\big)+(\ik-\pk)[\sb]+\jump{{\bf P},\tau}[\fe],\no
\end{align}
where 
\begin{align}
    \jump{{\pk},\tau}={\pk}\tau-\tau{\pk}=(\ik-\pk)\tau-\tau(\ik-\pk)
\end{align}
denotes a commutator of operators ${\pk}$ and $\tau$ which is
given by
\begin{align}
\tau&=\partial_t+\hat{p}\cdot\nabla_x+\mhh\Big(\dt\mh+\hp\cdot\nx\mh\Big).
\end{align}

\begin{proposition}\label{basic 0'}
For the remainder $\fe$, it holds that
\begin{align}\label{estimate 0'}
    &\,\frac{\ud}{\ud t}\tnmw{(\ik-\pk)[\fe]}^2+\e^{-1}\nmsw{(\ik-\pk)[\fe]}^2
    +\yy\tnmw{\sqrt{p^0}(\ik-\pk)[\fe]}^2\\
    \ls&\, \e\tnm{\nabla_x\fe}^2+\e^{-1}\nms{(\ik-\pk)[\fe]}^2+ \e\big(\ee+\dd)+\e^{2k+3},\no
\end{align}
\begin{align}\label{estimate 1'}
    &\,\e\bigg(\frac{\ud}{\ud t}\tnmww{\nabla_x(\ik-\pk)[\fe]}^2+\e^{-1}\delta\nmsww{\nabla_x(\ik-\pk)[\fe]}^2+\yy\tnmww{\sqrt{p^0}\nabla_x(\ik-\pk)[\fe]}^2\bigg)\\
    \ls&\,\e^2\nm{\nabla_x^2\fe}^2+\nms{\nabla_x(\ik-\pk)[\fe]}^2+\e\big(\ee+\dd\big)+\e^{2k+4},\no
\end{align}
and
\begin{align}\label{estimate 2'}
    &\e^{3}\bigg(\frac{\ud}{\ud t}\tnmwww{\nabla_x^2\fe}^2+\e^{-1}\delta\nmswww{\nabla_x^2(\ik-\pk)[\fe]}^2+\yy\tnmwww{\sqrt{p^0}\nabla_x^2\fe}^2\bigg)\\
    \ls&\,\e^2\nm{\nabla_x^2\fe}^2+\e\big(\ee+\dd\big)+\e^{2k+6}.\no
\end{align}
\end{proposition}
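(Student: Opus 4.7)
The plan is to mirror the weighted energy estimates developed for the r-VML system in Propositions~\ref{wL2ener}, \ref{H1p}, and \ref{H2p}, adapting them to the simpler r-LAN setting where the electromagnetic field is absent. I would handle the three estimates \eqref{estimate 0'}--\eqref{estimate 2'} in sequence, since the higher-order estimates need the lower-order ones to absorb error terms.

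For \eqref{estimate 0'}, I would take the $L^2$ inner product of the microscopic equation \eqref{re-f'} with $(w^0)^2(\ik-\pk)[\fe]$. The time derivative naturally produces both $\tfrac{1}{2}\tfrac{\ud}{\ud t}\tnmw{(\ik-\pk)[\fe]}^2$ and the key dissipation $\yy\tnmw{\sqrt{p^0}(\ik-\pk)[\fe]}^2$, since $\partial_t (w^0)^2 = -2\yy p^0 (w^0)^2$. Lemma~\ref{ss 04} applied to the linearized collision term yields $\tfrac{\delta}{\e}\nmsw{(\ik-\pk)[\fe]}^2 - \tfrac{C}{\e}\nms{(\ik-\pk)[\fe]}^2$, the second piece being absorbed by the $\e^{-1}\nms{(\ik-\pk)[\fe]}^2$ on the right of \eqref{estimate 0'}. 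The transport term $\mhh(\dt\mh+\hp\cdot\nx\mh)(\ik-\pk)[\fe]$ is controlled by $\zz\tnmw{\sqrt{p^0}(\ik-\pk)[\fe]}^2$ via Lemma~\ref{ss 06}, and the crucial assumption \eqref{assump=} (namely $\zz \leq \tfrac{1}{2}\yy$) allows this to be absorbed into half of the $\yy$-dissipation. The nonlinear term $\e^{k-1}\Gamma[\fe,\fe]$ is handled by Lemma~\ref{ss 05} with Sobolev embedding $H^2_x \hookrightarrow L^\infty_x$, while the mixed $\Gamma[\mhh F_i,\fe]$ terms are bounded using the polynomial-in-time growth of $F_i$ from the Appendix. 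Finally, the commutator $\jump{\pk,\tau}[\fe]$ is the source of the $\e\tnm{\nabla_x\fe}^2$ term on the right, since $\pk$ has $(t,x)$-dependent coefficients so $\jump{\pk,\hp\cdot\nx}[\fe]$ effectively produces $\nx\fe$ after Cauchy--Schwarz.

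For \eqref{estimate 1'}, I would apply $\nabla_x$ to \eqref{re-f'} and pair with $(w^1)^2\nabla_x(\ik-\pk)[\fe]$. The new feature is the commutator $\jump{\li,\nx}$ coming from $(t,x)$-dependence of $\sigma^{ij}$ through $\m$; controlling it by a Cauchy--Schwarz estimate $\e^{-1}\zzz\,\nmsw{(\ik-\pk)[\fe]}\cdot\nmsww{\nx(\ik-\pk)[\fe]}$ and then absorbing into a small fraction of the coercive dissipation explains the appearance of $\nms{\nabla_x(\ik-\pk)[\fe]}^2$ on the right. The nonlinear $\nx\Gamma[\fe,\fe]$ is handled by distributing the derivative via Leibniz and applying either the $(\infty,2,2)$ or $(4,4,2)$ H\"older split in $x$, where the factor $\e^{k-1}$ with $k\geq 3$ yields the absorbable $\e^2\tnm{\nabla_x^2\fe}^2$ term. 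Multiplying the whole inequality by $\e$ then gives \eqref{estimate 1'}.

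For \eqref{estimate 2'}, since at the top derivative level the microscopic-projection commutator $\jump{\pk,\nx}$ cannot be afforded (it would demand a third derivative of $\fe$, cf.~the discussion in the Technical Overview), I would apply $\nx^2$ directly to \eqref{re-f=} and pair with $(w^2)^2\nx^2\fe$. The commutator $\jump{\li,\nx^2}$ then produces lower-order derivative terms, which by \eqref{smallw} and Lemma~\ref{ss 04} can be absorbed by the already-established lower-order dissipations in $\dd$ at the cost of the prefactor $\e^3$; the nonlinear $\nx^2\Gamma[\fe,\fe]$ is treated analogously via Lemma~\ref{ss 05} and Sobolev embedding. The main obstacle throughout is the balance between the moment growth generated by $\nx^\alpha\m$ and the $\sqrt{p^0}$-dissipation supplied by $\yy$: the smallness condition \eqref{assump=} is precisely what keeps $\zz, \zzz \ll \yy$ so that every transport-type term is suppressed by the weight's time-dependence rather than accumulating. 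No genuinely new ideas beyond those in Section~\ref{l22} are required; the work is in carefully tracking the hierarchy of weights $w^0, w^1, w^2$ and the factors of $\e$ so that the cross terms land inside the structure of \eqref{eed=}--\eqref{ddd=}.
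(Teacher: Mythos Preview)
Your proposal is correct and follows essentially the same approach as the paper: the paper itself omits the details of Proposition~\ref{basic 0'} and simply refers back to the r-VML weighted estimates in Propositions~\ref{wL2ener}, \ref{H1p}, and \ref{H2p}, which is precisely the template you describe. Your identification of the key mechanisms---the $\yy$-dissipation from $\partial_t(w^\ell)^2$, the absorption via $\zz\leq\tfrac12\yy$ from \eqref{assump=}, the commutator $\jump{\pk,\tau}$ generating the $\e\tnm{\nabla_x\fe}^2$ term, and the switch at top order from the microscopic equation \eqref{re-f'} to the full equation \eqref{re-f=} to avoid needing a third derivative---matches the paper's strategy exactly.
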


\subsection{Macroscopic Dissipation} \label{Sec:macro-dissipation}

In this subsection, we study the macroscopic structure of \eqref{re-f=}.

As in \eqref{macfe}, denote
\begin{align*}
\pk[\fe]:=\mh\Big\{\Big[a^{\e}(t,x)-\frac{\rho_{2}(t,x)}{\rho_{1}(t,x)}c^{\e}(t,x)\Big]+p\cdot b^{\e}(t,x)+p^0c^{\e}(t,x)\Big\}\in\mathcal{N}.
\end{align*}
\begin{proposition}\label{md00=}
There are two functionals $\mathcal{E}^{mac}_i$ for $i=1,2$ satisfying
\begin{align}\label{i11=}
   \ee^{mac}_i\ls \tnm{\nabla_x^{i-1}\fe}\tnm{\nabla_x\fe}, 
\end{align}
such that 
\begin{align}\label{semp 5=}
   & -\frac{\ud}{\ud t}\Big(\e\ee^{mac}_1+\e^2\ee^{mac}_2\Big)+\Big(\e\tnm{\nabla_x\pk[\fe]}^2+\e^2\tnm{\nabla_x^2\pk[\fe]}^2\Big)\\
    \ls&\, \e^{-1}\nms{(\ik-\pk)[\fe]}^2 +\nms{\nabla_x(\ik-\pk)[\fe]}^2+\e\big(\ee+\dd)+\e^{2k+3}.\no
\end{align}
\end{proposition}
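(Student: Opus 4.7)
The plan is to mimic the strategy of Proposition \ref{md00}, but without the electromagnetic coupling, which simplifies matters considerably. First, I would rewrite \eqref{re-f=} in the schematic form
\begin{align*}
\dt\fe+\hp\cdot\nx\fe+\frac{1}{\e}\li[\fe]=\overline{h}^{\e},
\end{align*}
where $\overline{h}^{\e}$ collects the streaming correction $-\mhh(\dt+\hp\cdot\nx)\mh\,\fe$, the nonlinear term $\e^{k-1}\Gamma[\fe,\fe]$, the interaction terms $\sum\e^{i-1}\{\Gamma[\mhh F_i,\fe]+\Gamma[\fe,\mhh F_i]\}$, and the source $\e^k\sb$.

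Next, I would derive the local conservation laws for $a^{\e},b^{\e},c^{\e}$ by testing against $\mh$, $p\mh$, $p^0\mh$; these produce a first-order system analogous to \eqref{lcl0}--\eqref{lcl000} but with the $nu^0E_R^{\e}$ coupling absent. Then, by splitting $\fe=\pk[\fe]+(\ik-\pk)[\fe]$ in the equation itself and matching coefficients of the basis $\{\mh,p_i\mh,p^0\mh,p_i p^0{}^{-1}\mh,p_ip_j p^0{}^{-1}\mh\}$, I obtain the macroscopic equations for $\partial_t a^{\e}, \partial_t b^{\e}, \partial_t c^{\e}$ and $\partial_i a^{\e}, \partial_i b^{\e}, \partial_j b^{\e}_i$ in the form $\ell^{\e}_{*}+h^{\e}_{*}$, where $\ell^{\e}_{*}=((\ik-\pk)$-transport$+\e^{-1}\li[\fe], \zeta_*)$ and $h^{\e}_{*}=(\overline{h}^{\e}-(\text{Maxwellian drift})\pk[\fe],\zeta_*)$. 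The absence of $E^{\e}_R$ means that $\partial_i a^{\e}$ (rather than $\partial_i a^{\e}+\frac{u^0}{T}E^{\e}_{R,i}$) appears directly, and I do not need to solve the Poisson-type equation \eqref{apm0}; I only need to multiply $\partial_i a^{\e}=\ell^{\e}_{ai}+h^{\e}_{ai}+(\text{l.o.t.})$ by a suitable test function and integrate.

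For the quantitative step, I would test each macroscopic equation against $\partial_t$-free macroscopic quantities and integrate by parts in $t$ only for the $\partial_t(\ik-\pk)[\fe]$ piece of $\ell^{\e}_{*}$; this yields terms of the form $-\frac{d}{dt}\br{((\ik-\pk)[\fe],\zeta_*),\nabla_x^{i-1}\text{macro}}$ which define $\mathcal{E}^{mac}_i$, automatically satisfying \eqref{i11=} by Cauchy--Schwarz. The remaining contributions from $\ell^{\e}_{*}$ produce $\e^{-1}\nms{(\ik-\pk)[\fe]}^2$ and $\nms{\nx(\ik-\pk)[\fe]}^2$ after the $\e^{-1}\li[\fe]$ piece is handled. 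The $h^{\e}_{*}$ contributions are controlled by the analog of \eqref{h01} (simpler here, since there is no $(E^{\e}_R,B^{\e}_R)$ and only $\zz,\zzz$ appear), which under the assumptions \eqref{semp 3'}--\eqref{semp 3} yields $o(1)\tnm{\nabla_x\pk[\fe]}^2$ plus $\e^{-1}\nms{(\ik-\pk)[\fe]}^2$, $\e(\ee+\dd)$, and the Hilbert remainder term $\e^{2k+3}$. The first-order estimate is repeated verbatim on $\partial_x$-differentiated equations, multiplied by $\e$, to produce the $\e^2\tnm{\nx^2\pk[\fe]}^2$ contribution.

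The main technical obstacle, as in the r-VML case, is to ensure that the time-derivative terms in $\ell^{\e}_{*}$ can be absorbed into a total time derivative defining $\mathcal{E}^{mac}_i$ without leaving residual terms that exceed $\e^{-\frac13}\nm{\fe}_{H^1}^2$; this requires carefully using \eqref{macat} and the analog of \eqref{at} to exchange $\partial_t a^{\e},\partial_t c^{\e}$ for spatial derivatives plus $h^{\e}$-type remainders, exactly as in the passage from \eqref{ai1} to \eqref{pma111}. Once this bookkeeping is done, absorbing the small constant factor in front of $\tnm{\nabla_x\pk[\fe]}^2$ closes the estimate and yields \eqref{semp 5=}.
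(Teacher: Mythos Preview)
Your proposal is correct and follows essentially the same approach as the paper: rewrite \eqref{re-f=} as a transport equation with right-hand side $\overline{h}^{\e}$, derive the local conservation laws and the macroscopic equations by matching coefficients against the basis \eqref{exbasis}, integrate by parts in $t$ on the $\partial_t(\ik-\pk)[\fe]$ piece of $\ell^{\e}_{*}$ to produce the functionals $\mathcal{E}^{mac}_i$, and control the $h^{\e}_{*}$ terms by the analog of \eqref{h01}. The only cosmetic differences are that the paper illustrates the procedure with $\|\nabla_x b^{\e}\|$ rather than $\partial_i a^{\e}$, and that the $\e^{-\frac13}$ bookkeeping you mention from the r-VML case does not appear here (in the r-LAN setting $t\leq t_0$ is independent of $\e$, so \eqref{h01=} carries no such factor).
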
 

\begin{proof} 
As in Proposition \ref{md00}, it can be proved via local conservation laws and the macroscopic equations of $\fe$.
Write \eqref{re-f=} as 
\begin{align}\label{re-f1=}
    &\dt\fe+\hp\cdot\nabla_x\fe 
    +\frac{1}{\e}\li[\fe] =\overline{h}^{\e},
\end{align}
where
\begin{align}
   \overline{h}^{\e}=\e^{k-1}\Gamma[\fe,\fe] +\sum_{i=1}^{2k-1}\e^{i-1} \Big\{\Gamma\big[\mhh F_i,\fe\big]
    +\Gamma\big[\fe,\mhh F_i\big]\Big\}-\mhh\big(\dt\mh+\hp\cdot\nx\mh\big)\fe+\sb.\no
\end{align}

\textbf{Local conservation laws:}
Similar to the derivation of \eqref{lcl0}, \eqref{lcl00} and \eqref{lcl000}, we can obtain
\begin{align}
    &n u^0\partial_ta^{\e}+\frac{n}{\gamma}\nabla_x\cdot b^{\e}
    =\Xi_{1}\big[a^{\e},b^{\e},c^{\e}\big]-\nabla_x\cdot\int_{\mathbb R^3} \hat{p}\mh(\ik-\pk)[\fe]\,\ud p+\int_{\mathbb R^3}  \mh\overline{h}^{\e}\,\ud p,\label{lcl0=}\\
    &n\left(\frac{K_1(\gamma)}{\gamma K_2(\gamma)}+\frac{4}{\gamma^2}\right)\partial_t b^{\e}+\frac{n }{\gamma}\nabla_x a^{\e}
     \label{lcl00=} \\
    &\qquad=\Xi_{2}\big[a^{\e},b^{\e},c^{\e}\big]-\nabla_x\cdot\int_{\mathbb R^3}  \hat{p}p\mh(\ik-\pk)[\fe]\,\ud p +\int_{\mathbb R^3}  p\mh\overline{h}^{\e}\,\ud p,\no\\
    &n\left(-\frac{K_1^2(\gamma)}{K_2^2(\gamma)}-\frac{3}{\gamma}\frac{K_1(\gamma)}{K_2(\gamma)}+1+\frac{3}{\gamma^2}\right)\partial_tc^{\e}+\frac{n }{\gamma ^2}\nabla_x\cdot b^{\e} \label{lcl000=}\\
    & \qquad=\Xi_{3}\big[a^{\e},b^{\e},c^{\e}\big]+\int_{\mathbb R^3}  p^0\mh\overline{h}^{\e}\,\ud p\no\\
    &\qquad\quad-u^0\left(\frac{K_1(\gamma)}{K_2(\gamma)}+\frac{3}{\gamma}\right)\left(-\nabla_x\cdot\int_{\mathbb R^3}  \hat{p}\mh(\ik-\pk)[\fe]\,\ud p+\int_{\mathbb R^3}  \mh\overline{h}^{\e}\,\ud p\right).\no
\end{align}

\textbf{Macroscopic equations:}
Secondly, we turn to the macroscopic equations of $\fe$. Splitting $\fe$ as the macroscopic part $\pk[\fe]$ and the microscopic $(\ik-\pk)[\fe]$ part in \eqref{re-f1=}, we have
\begin{align}\label{semp 6=}
\\
    &\Big(\partial_t\Big(a^{\e}-\frac{\rho_{2}}{\rho_{1}}c^{\e}\Big)+\hp\cdot \partial_tb^{\e}+p^0 \partial_t c^{\e}\Big)\mh+\hat{p}\cdot\Big\{\nabla_x\Big(a^{\e}-\frac{\rho_{2}}{\rho_{1}}c^{\e}\Big)+ \nabla_x b^{\e}\cdot \hp+p^0 \nabla_x c^{\e}\Big\}\mh=\ell^{\e}+h^{\e},\no
  \end{align}
where
\begin{align}
   \ell^{\e}&:= -\big(\partial_t+\hp\cdot\nabla_x\big)\big[(\ik-\pk)[\fe]\big]-\frac{1}{\e}\li[\fe],\\
   h^{\e}:=&- \left\{\Big(a^{\e}-\frac{\rho^{\e}_{2}}{\rho^{\e}_{1}}c^{\e}\Big)+p\cdot b^{\e}+p^0 c^{\e}\right\}\big(\partial_t+\hp\cdot\nabla_x\big)\mh+\overline{h}^{\e}. 
\end{align}
For fixed $t,x$, we compare the coefficients in front of 
\begin{align}
    \left\{\mh, p_i\mh, p^0\mh, \frac{p_i}{p^0}\mh,\frac{p_i}{p^0}\mh,\frac{p_i^2}{p^0}\mh,\frac{p_ip_j}{p^0}\mh\right\},\qquad 1\leq i,j\leq 3
\end{align}
on both sides of \eqref{semp 6=} and get the following macroscopic equations:
\begin{align}\label{macabc=}
    \partial_t a^{\e}-\frac{\rho_{2}}{\rho_{1}}\partial_tc^{\e}&= \ell_a^{\e}+h^{\e}_a+\partial_t\Big(\frac{\rho_{2}}{\rho_{1}}\Big)c^{\e},\\
    \partial_t b^{\e}_i+\partial_i c^{\e}&=\ell_{bi}^{\e}+h^{\e}_{bi},\no\\
    \partial_t c^{\e}&=\ell_{c}^{\e}+h^{\e}_{c},\no\\
    \partial_i a^{\e}-\frac{\rho_{2}}{\rho_{1}}\partial_i c^{\e}&= \ell_{ai}^{\e}+h^{\e}_{ai}+c\partial_i\Big(\frac{\rho_{2}}{\rho_{1}}\Big)c^{\e},\no\\
    c\partial_i b^{\e}_i&= \ell_{ii}^{\e}+h^{\e}_{ii},\no\\
   \partial_i b_j^{\e}+\partial_j b_i^{\e}&= \ell_{ij}^{\e}+h^{\e}_{ij},\qquad i\neq j.\no
\end{align}
Here $\ell_a^{\e}, h_a^{\e}$, $\ell_{bi}^{\e}, h^{\e}_{bi}$,  $\ell_{c}^{\e}, h^{\e}_{c}$,  $\ell_{ai}^{\e}, h^{\e}_{ai}$,  $\ell_{ii}^{\e}, h^{\e}_{ii}$, and  $\ell_{ij}^{\e}, h^{\e}_{ij}$ take the form of
\begin{align*}
    (\ell^{\e}, \zeta)\ \ \text{and}\ \  (h^{\e}, \zeta),
\end{align*}
where $\zeta$ is linear combinations of 
\begin{align}
   \left\{\mh, p_i\mh, p^0\mh, \frac{p_i}{p^0}\mh,\frac{p_i}{p^0}\mh,\frac{p_i^2}{p^0}\mh,\frac{p_ip_j}{p^0}\mh\right\}.
\end{align}
For $m=0,1$, we have the following estimates:
\begin{align}
    &\nm{\nabla_x^mh_a^{\e}}+\nm{\nabla_x^mh_{bi}^{\e}}+\nm{\nabla_x^mh_{c}^{\e}}+\nm{\nabla_x^mh_{ai}^{\e}}+\nm{\nabla_x^mh_{ii}^{\e}}+\nm{\nabla_x^mh_{ij}^{\e}}\label{h01=}\\
    \ls&\, \e^{\frac{1}{2}}\nm{\fe}_{H^m}+\sum_{l=1}^{2k-1}\left(\bnm{\mhh F_l}_{W^{m,\infty}_xL^2_{p}}\Big\||\fe|_{H^m_\sigma}\Big\|+\Big\|\mhh |F_l|_{\sigma}\Big\|_{W^{m,\infty}_x}\nm{\nabla_x^j\fe}\right)+\e^{k+1}\no\\
    \ls& \nm{\nabla_x^j(\ik-\pk)[\fe]}_{H^m_\sigma}+\nm{\fe}_{H^m}+\zz\nm{\fe}_{H^m}+\e^{k+1}.\no
\end{align}
For brevity, we only give the estimate of $\|\nabla_x b^{\e}\|$ in \eqref{semp 5=} since other estimates can be derived similarly.
From the last two equalities in \eqref{macabc}, we have
\begin{align}
  -\Delta  b^{\e}_j-\partial_j\nabla_x\cdot b^{\e}=-\sum_{i=1}^3\partial_i\big(\ell_{ij}^{\e}+h^{\e}_{ij}\big)\big(1+\delta_{ij}\big).
\end{align}
We multiply $b^{\e}_j$ and integrate over $\r^3$ to get
\begin{align}
   &\nm{\nabla_xb^{\e}}^2+ \nm{\nabla_x\cdot b^{\e}}^2=\sum_{i=1}^3\br{\big(\ell_{ij}^{\e}+h^{\e}_{ij}\big)\big(1+\delta_{ij}\big),\partial_i b^{\e}_j}\\
   =&\sum_{i=1}^3\big(1+\delta_{ij}\big)\br{\Big(-\big(\partial_t+\hp\cdot\nabla_x\big)\big[(\ik-\pk)[\fe]\big]-\frac{1}{\e}\li[\fe], \zeta_{ij}\Big),\partial_i b^{\e}_j}+\sum_{i=1}^3\big(1+\delta_{ij}\big)\br{\big(h^{\e}_{ij}, \zeta_{ij}\big),\partial_i b^{\e}_j}.\no
\end{align}
For the first term related to $(\ik-\pk)[\fe]$, we have
\begin{align}
    &\br{\Big(-\big(\partial_t+\hp\cdot\nabla_x\big)\big[(\ik-\pk)[\fe]\big]-\frac{1}{\e}\li[\fe], \zeta_{ij}\Big),\partial_i b^{\e}_j}\\
    =&\br{\Big(-\partial_t\big[(\ik-\pk)[\fe]\big], \zeta_{ij}\Big),\partial_i b^{\e}_j}+\br{\Big(-\hp\cdot\nabla_x\big[(\ik-\pk)[\fe]\big]-\frac{1}{\e}\li[\fe], \zeta_{ij}\Big),\partial_i b^{\e}_j}\no\\
    \leq&-\frac{\ud}{\ud t}\br{\Big(\big[(\ik-\pk)[\fe]\big], \zeta_{ij}\Big),\partial_i b^{\e}_j}+\br{\Big(\big[(\ik-\pk)[\fe]\big], \zeta_{ij}\Big),\partial_i\partial_t b^{\e}_j}\no\\
    &+o(1) \nm{\nabla_xb^{\e}}^2+C\Big(\nms{\nabla_x(\ik-\pk)[\fe]}^2+\e^{-2}\nms{(\ik-\pk )[\fe ]}^2\Big).\no
\end{align} 
By \eqref{lcl00} and \eqref{h01} with terms related to the electromagnetic field be zero, we have
\begin{align}
    \br{\Big(\big[(\ik-\pk)[\fe]\big], \zeta_{ij}\Big),\partial_i\partial_t b^{\e}_j}
    &=-\br{\Big(\partial_i\big[(\ik-\pk)[\fe]\big], \zeta_{ij}\Big),\partial_t b^{\e}_j}\\
    &\ls o(1)\big(\nm{\nabla_xa^{\e}}^2+\nm{\nabla_x c^{\e}}^2\big)+\nm{\nabla_x(\ik-\pk)[\fe]}^2_{H^1}+\nm{\fe}^2+\e^{k+1}.\no
\end{align}
Then we use \eqref{h01=} again to obtain
\begin{align}
  \frac{1}{2} \nm{\nabla_xb^{\e}}^2+ \nm{\nabla_x\cdot b^{\e}}^2\leq&-\frac{\ud}{\ud t}\br{\Big(\big[(\ik-\pk)[\fe]\big], \zeta_{ij}\Big),\partial_i b^{\e}_j}+o(1)\big(\nm{\nabla_xa^{\e}}^2+\nm{\nabla_x c^{\e}}^2\big)\\
   &+\e^{-2}\nms{(\ik-\pk)[\fe]}^2+\nms{\nabla_x(\ik-\pk)[\fe]}^2+\big(1+\zz\big)\nm{\fe}^2+\e^{k+1}.\no
\end{align}
\end{proof}

\bigskip

\subsection{Proof of Propostion~\ref{result 2=}} \label{Sec:main-thm-pf}

\ \\
\textbf{Proof of Energy Estimates:}
Multiplying \eqref{semp 5} by a small constant $\kappa_2$ and adding it to the sum of \eqref{estimate 0}, \eqref{estimate 1} and \eqref{estimate 2}, we obtain that for some small constant $\delta_2>0$,
\begin{align}\label{ud=}
  &\frac{\ud}{\ud t}\bigg(\Big(\nm{\fe}^2+\e\nm{\nabla_x\fe}^2+ \e^2\nm{\nabla_x^2\fe}^2\Big)-\kappa_2\Big(\e\mathcal{E}^{mac}_1+\e^2\mathcal{E}^{mac}_2\Big)\bigg)\\
  &+\delta_2\Big(\e\nm{\nabla_x\pk[\fe]}^2+\e^2\nm{\nabla_x^2\pk[\fe]}\Big)\no\\
  &+\delta_2\Big(\e^{-1}\nms{(\ik-\pk)[\fe]}^2+\nms{\nabla_x(\ik-\pk)[\fe]}^2+\e\nms{\nabla_x^2(\ik-\pk)[\fe]}^2\Big)\no\\
  \ls&\,\big(\e+\zz\big)\ee+\e\dd+\e^{2k+3}.\no
\end{align}
Multiplying \eqref{ud=} by a large constant $C_2$  and adding it to the sum of \eqref{estimate 0'}, \eqref{estimate 1'},  and \eqref{estimate 2'}, we have
\begin{align}
    \frac{\ud}{\ud t}\bigg(\ee-\kappa_2\Big(\e\mathcal{E}^{mac}_1+\e^2\mathcal{E}^{mac}_2\Big)\bigg)+\frac{3}{2}\dd\ls \big(\e+\zz\big)\ee+\e\dd+\e^{2k+3}.
\end{align}
where $\dd$ is given in \eqref{ddd=}, and
\begin{align*}
    \ee=&C_1\bigg[\sum_{i=0}^2\e^i\nm{\sqrt{\frac{4\pi  T }{u^0}}\nabla_x^i\fe}^2-\kappa_2\Big(\e\mathcal{E}^{mac}_1+\e^2\mathcal{E}^{mac}_2\Big)\bigg]\\
   &+Y\Big(\nmw{(\ik-\pk)[\fe]}^2+\e\nmww{\nabla_x(\ik-\pk)[\fe]}^2+\nmwww{\nabla_x^2(\ik-\pk)[\fe]}^2\Big).\no
\end{align*}
Note that
\begin{align*}
 \e\mathcal{E}^{mac}_1+\e^2\mathcal{E}^{mac}_2&\ls\sum_{i=1}^2\e^i\nm{\nabla_x^{i-1}\fe}\nm{\nabla_x^i\fe}\ls\e^{\frac{1}{2}}\Big(\nm{\fe}^2+\e\nm{\nabla_x\fe}^2+\e^2\nm{\nabla_x^2\fe}^2\Big)\no
  \end{align*}
by \eqref{i11=}. This verifies \eqref{eed=}.
When $\e$ is sufficiently small, we know $\e\ls\zz$. Thus, we have
\begin{align}
    \frac{\ud}{\ud t}\ee+ \dd\ls \zz\ee+\e^{2k+3}.
\end{align}
By Gronwall's inequality, for $t\leq t_0$, we have
\begin{align}
    \ee(t)+\int_0^t\dd(s)\ud s \ls\ue^{\zz t}\ee(0)+\e^{2k+3}\int_0^t\ue^{\zz(t-s)}\ud s\ls \ue^{\zz t}\ee(0)+\zz^{-1}\e^{2k+3}.
\end{align}
Due to \eqref{assump=}, we know $\zz t_0\ls1$. Hence, we have
\begin{align}
    \ee(t)+\int_0^t\dd(s)\ud s \ls\ee(0)+\e^{2k+3}, \qquad t\leq t_0.
\end{align}
This verifies the validity of \eqref{thm2=} and \eqref{rr 01=}.\\
\ \\
\textbf{Proof of Positivity:}
It is analogous to the corresponding part in the proof of Proposition~\ref{result}. 
We omit the proof for brevity.


\appendix

\makeatletter
\renewcommand \theequation {%
A.%
\ifnum\c@subsection>\z@\@arabic\c@subsection.%
\fi\@arabic\c@equation} \@addtoreset{equation}{section}
\@addtoreset{equation}{subsection} \makeatother

\section {Expansion of the Relativistic Vlasov-Maxwell-Landau System}\label{App}

In this part, we list our result about the construction and regularity estimates of the coefficients in the Hilbert expansion \eqref{expan}.
For any integer $n\in[1, 2k-1]$, we decompose  $\m ^{-\frac{1}{2}}F_{n}$ as the sum of macroscopic and microscopic parts:
\begin{align}\label{decomA}
\m ^{-\frac{1}{2}}F_{n}&={\bf P}\left[\m ^{-\frac{1}{2}}F_{n}\right]+(\ik-\pk )\left[\m ^{-\frac{1}{2}}F_{n}\right]\\
&=\Big(a_{n}(t,x)+b_{n}(t,x)\cdot p+c_{n}(t,x) p^0\Big)\mh +(\ik-\pk )\left[\m ^{-\frac{1}{2}}F_{n}\right].\no
\end{align}

\begin{proposition}
\label{fn}
For any integer $n\in[0,2k-2]$, assume that $(F_i, E_i, B_i)$ have been constructed for all $0\leq i\leq n$. Then  the  microscopic part $(\ik-\pk )\left[\m ^{-\frac{1}{2}}F_{n+1}\right]$ can be written as:
\begin{align}\label{inverse}
(\ik-\pk )\Big(\m ^{-\frac{1}{2}}F_{n+1}\Big)=&\;\li^{-1}\bigg[-\m ^{-\frac{1}{2}}\Big(\dt F_{n}
+\hat{p}\cdot \nabla_xF_{n}-\frac{1}{\e}\sum_{\substack{i+j=n+1\\i,j\geq1}}\big[\mathcal{C}(F_{i},F_{j})\\
 &\hspace{1cm}+\mathcal{C}(F_{i},F_{j})\big] +\sum_{\substack{i+j=n\\i,j\geq0}} \Big(E_i+\hat{p} \times B_i \Big)\cdot\nabla_pF_{j}\Big)\bigg].\no
\end{align}

And $a_{n+1}(t,x), b_{n+1}(t,x), c_{n+1}(t,x)$, $E_{n+1}(t,x), B_{n+1}(t,x)$ satisfy the following system:
\begin{align}\label{number}
\begin{aligned}
\dt \left(n u^0a_{n+1}+(\en +P)u^0(u \cdot b_{n+1})+(\en (u^0)^2+P|u |^2)c_{n+1}\right)\\
+\nabla_x\cdot\left(n u  a_{n+1}+(\en +P)u  (u \cdot b_{n+1})+P b_{n+1}+(\en +P)u^0 u  c_{n+1}\right)\\
+\nabla_x\cdot\int_{\mathbb R^3}  \hat{p}\mh (\ik-\pk )\left[\m ^{-\frac{1}{2}}F_{n+1}\right]\ud p&=0,
\end{aligned}
\end{align}

\begin{align}
& \dt \bigg\{(\en +P)u^0u_{j}a_{n+1}+\frac{n}{\gamma  K_2(\gamma )}\Big((6K_3(\gamma )+\gamma  K_2(\gamma ))u^0 u_{j}(u \cdot b_{n+1})+K_3(\gamma )u^0 b_{n+1,j}\Big)\no\\
&+\frac{n}{\gamma  K_2(\gamma )}\Big((5K_3(\gamma )+\gamma  K_2(\gamma ))(u^0)^2  +K_3(\gamma )|u |^2\Big)u_{j}c_{n+1}\bigg\}\no\\
&+\nabla_x\cdot\bigg((\en +P)u_{j}u  a_{n+1}+\frac{n}{\gamma  K_2(\gamma )}(6K_3(\gamma )+\gamma  K_2(\gamma ))u_{j} u  \Big((u \cdot b_{n+1})+u^0c_{n+1}\Big)\bigg)\no\\
&+\partial_{x_j}(Pa_{n+1})+\nabla_x\cdot\left(\frac{nK_3(\gamma )}{\gamma  K_2(\gamma )}(u b_{n+1,j}+u_{j}b_{n+1})\right)\label{moment}\\
&+ \partial_{x_j}\left(\frac{nK_3(\gamma )}{\gamma  K_2(\gamma )}\Big(u \cdot b_{n+1}+u^0c_{n+1}\Big)\right)\no\\
&+ E_{0,j}\left(n u^0a_{n+1}+(\en +P)u^0(u \cdot b_{n+1})+(\en (u^0)^2+P|u |^2)c_{n+1}\right)\no\\
&+ \left(\Big(n u  a_{n+1}+(\en +P)u  (u \cdot b_{n+1})+P b_{n+1}+(\en +P)u^0 u  c_{n+1}\Big)\times B \right)_j\no\\
&+\left(n u^0E_{n+1,j}+\Big(n u\times B_{n+1}\Big)_j\right)\no\\
&+ \sum_{\substack{k+l=n+1\\k,l\geq1}}E_{k,j}\left(n u^0a_{l}+(\en +P)u^0(u\cdot b_{l})+(\en (u^0)^2+P|u|^2)c_{l}\right)\no\\
&+  \sum_{\substack{k+l=n+1\\k,l\geq1}}\left(\Big(n u a_{l}+(\en +P)u(u\cdot b_{l})+Pb_{l}+(\en +P)u^0 uc_{l}\Big)\times B_k\right)_j\no\\
&+\nabla_x\cdot\int_{\mathbb R^3}  \frac{p_jp}{p^0}\mh (\ik-\pk )\left[\m ^{-\frac{1}{2}}F_{n+1}\right]\,\ud p+ \left(\int_{\mathbb R^3}  \hat{p}\times B \mh (\ik-\pk )\left[\m ^{-\frac{1}{2}}F_{ n+1}\right]\,\ud p\right)_j\no\\
&+ \sum_{\substack{k+l=n+1\\k,l\geq1}}\left(\int_{\mathbb R^3}  \hat{p}\times B_k\mh (\ik-\pk )\left[\m ^{-\frac{1}{2}}F_{l}\right]\,\ud p\right)_j=0,\no
\end{align}
for $j=1, 2, 3$ with $b_{n+1}=(b_{n+1,1}, b_{n+1,2}, b_{n+1,3})$, $E_{n+1}=(E_{n+1,1}, E_{n+1,2}, E_{n+1,3})$,

\begin{align}
& \dt \bigg\{(\en (u^0)^2+P|u |^2) a_{n+1}+\frac{n(u \cdot b_{n+1})}{\gamma  K_2(\gamma )}\Big((5K_3(\gamma )+\gamma  K_2(\gamma )) (u^0)^2+K_3(\gamma )|u |^2\Big)\no\\
& +\frac{n}{\gamma  K_2(\gamma )}\Big((3K_3(\gamma )+\gamma  K_2(\gamma ))(u^0)^2  +3K_3(\gamma )|u |^2\Big)u^0c_{n+1}\bigg\}\no\\
&+\nabla_x\cdot\left((\en +P)u^0 u  a_{n+1}\right)\no\\
&+\nabla_x\cdot\bigg\{\frac{n}{\gamma  K_2(\gamma )}(6K_3(\gamma )+\gamma  K_2(\gamma ))u^0 u (u \cdot b_{n+1})+\frac{nu^0K_3(\gamma )}{\gamma  K_2(\gamma )}u^0b_{n+1}\label{energy}\\
&+\frac{n}{\gamma  K_2(\gamma )}\Big((5K_3(\gamma )+\gamma  K_2(\gamma ))(u^0)^2+K_3(\gamma )|u |^2\Big)u c_{n+1} \bigg\}\no\\
&+\bigg(n u \cdot E_{n+1}+ n u \cdot E  a_{n+1}+(\en +P)(u \cdot b_{n+1})(u \cdot E )\no\\
&+PE \cdot b_{n+1}+(\en +P)u^0(u \cdot E )c_{n+1}\bigg)\no\\
&+ \int_{\mathbb R^3}  \hat{p}\mh (\ik-\pk )\left[\m ^{-\frac{1}{2}}F_{n+1}\right]\,\ud p \cdot E \no\\
&+ \sum_{\substack{k+l=n+1\\k,l\geq1}}\Big(n u\cdot E_k a_{l}+(\en +P)(u\cdot b_{l})(u\cdot E_k)+PE_k\cdot b_{l}+(\en +P)u^0(u\cdot E_k)c_{l}\no\\
&+\int_{\mathbb R^3}\hat{p}\mh (\ik-\pk )\left[\m ^{-\frac{1}{2}}F_{l}\right]\,\ud p \cdot E_k\Big)+\nabla_x\cdot\int_{\mathbb R^3}p\mh (\ik-\pk )\left[\m ^{-\frac{1}{2}}F_{n+1}\right]\ud p=0,\no
\end{align}

\begin{align}
    &\dt E_{n+1}-\nabla_x \times B_{n+1} \no\\
    &\qquad=
    4\pi\left(n u  a_{n+1}+Pb_{n+1}+(\en +P)u (u\cdot b_{n+1})+(\en +P)u^0u c_{n+1}\right)\no\\
    &\qquad+4\pi\int_{\mathbb R^3} \left( \hat{p}\mh {\{\bf I-P\}}\left[\m ^{-\frac{1}{2}}F_{n+1}\right]\right)\,\ud p, \no\\
    &\dt  B_{n+1}+ \nabla_x \times E_{n+1}=0,\label{EM}\\
    & \nabla_x\cdot E_{n+1}=
    -4\pi\left(n u^0a_{n+1}
    +(\en +P)u^0(u\cdot b_{n+1})+(\en (u^0)^2+P|u|^2)c_{n+1}\right), \no\\
    & \nabla_x\cdot  B_{n+1}=0.\no
\end{align}
Furthermore, assume $a_{n+1}(0,x), b_{n+1}(0,x), c_{n+1}(0,x), E_{n+1}(0,x)$, $B_{n+1}(0,x)\in H^N, N\geq1$,
be given initial data to the system consisted of equations \eqref{number}, \eqref{moment}, \eqref{energy} and \eqref{EM}. Then the
linear system is well-posed in $C^0([0,\infty);H^N)$. Moreover, it holds that
\begin{align} 
&|F_{n+1}|\lesssim (1+t)^{n+1}\m ^{1_-},\qquad |\nabla_pF_{n+1}|\lesssim (1+t)^{n+1}\m ^{1_-},\no\\
&|\nabla_xF_{n+1}|\lesssim (1+t)^{n+1} \m ^{1_-},\qquad |\nabla_x\nabla_pF_{n+1}|\lesssim (1+t)^{n+1}\m ^{1_-},\no\\
&|\nabla_x^2F_{n+1}|\lesssim (1+t)^{n}\m ^{1_-},\qquad |\nabla_x^2\nabla_pF_{n+1}|\lesssim (1+t)^{n+1}\m ^{1_-},\label{growth0}\\
&|E_{n+1}|+|B_{n+1}|+|\nabla_xE_{n+1}|+|\nabla_xB_{n+1}|+|\nabla_x^2E_{n+1}|+|\nabla_x^2B_{n+1}|\lesssim(1+t)^{n+1}.\no 
\end{align}

\end{proposition}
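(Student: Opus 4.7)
\medskip

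\textbf{Proof proposal for Proposition \ref{fn}.} The plan is to proceed by strong induction on $n$, where the base case ($n=0$, i.e. $F_0 = \mathbf{M}$ with $(n,u,T,E,B)$ coming from Theorem \ref{Euler-Maxwell}) is already subsumed by the hydrodynamic existence result. For the inductive step, I would fix the $\e^n$-order equation from the expansion (the block labeled ``$\e^n$-order'' in the excerpt), which after using $\mathcal{C}[F_0,F_0]=0$ and the fact that $\mathcal{C}[F_0,\cdot]+\mathcal{C}[\cdot,F_0]$ is (up to the $\mathbf{M}^{1/2}$-conjugation) the linearized operator $\li$, reads as an equation for $F_{n+1}$ of the form $\li[\m^{-1/2}F_{n+1}] = \mathcal{R}_n$, where $\mathcal{R}_n$ depends only on quantities with index $\leq n$.

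Next, I would split $\m^{-1/2}F_{n+1} = \pk[\m^{-1/2}F_{n+1}] + (\ik-\pk)[\m^{-1/2}F_{n+1}]$. The microscopic piece is immediate: since $\li$ is self-adjoint with null space $\mathcal{N}$ and is invertible on $\mathcal{N}^{\perp}$ (Lemma \ref{ss 01}), one obtains the closed formula \eqref{inverse} directly, provided the source $\mathcal{R}_n$ has vanishing projection onto $\mathcal{N}$. That solvability condition is exactly what produces the macroscopic system: testing the $\e^n$-order equation against the five collision invariants $\{1,p_1,p_2,p_3,p^0\}$ eliminates $\li$ and leaves first-order evolution equations in $(a_{n+1},b_{n+1},c_{n+1})$ with source terms built from lower-order $F_i$, the Maxwellian moments in \eqref{aa 01}--\eqref{aa 02}, and the Lorentz force terms $(E_j+\hat{p}\times B_j)$. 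After a somewhat long but mechanical computation using the Bessel-function identities for $K_j(\gamma)$, the three moment identities reproduce exactly \eqref{number}, \eqref{moment}, \eqref{energy}. Coupling with Amp\`ere/Faraday and the two Gauss laws (where the densities/currents are the $1$ and $p$-moments of $F_{n+1}$, i.e.\ are linear in $a_{n+1},b_{n+1},c_{n+1}$) closes the linearized r-VML system \eqref{EM}.

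For the well-posedness in $C^0([0,\infty);H^N)$, I would recognize this linearized relativistic Euler--Maxwell-type system as a linear symmetric hyperbolic system in $(a_{n+1},b_{n+1},c_{n+1},E_{n+1},B_{n+1})$ with coefficients that are smooth functions of the background $(n,u,T,E,B)$ constructed in Theorem \ref{Euler-Maxwell} and with forcing generated from $F_i$, $E_i$, $B_i$ for $i\leq n$ (including the microscopic moments $\int \hat{p}\m^{1/2}(\ik-\pk)[\m^{-1/2}F_{n+1}]\,dp$, which by \eqref{inverse} is linear in $(a_{\leq n},b_{\leq n},c_{\leq n},E_{\leq n},B_{\leq n})$ with coefficients bounded by $\mathbf{M}^{1_-}$). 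Standard linear symmetric hyperbolic theory on $\r^3$ then yields a unique global-in-time $H^N$ solution for any $H^N$ datum at $t=0$. The growth bounds \eqref{growth0} are obtained by a straightforward energy/Duhamel argument: at each induction step, the forcing has a source of size $(1+t)^n$ by the inductive hypothesis, so Gronwall (together with the fact that the background fields decay like $(1+t)^{-\beta_0}$) yields a bound of order $(1+t)^{n+1}$. The pointwise Maxwellian decay $|F_{n+1}|\lesssim \mathbf{M}^{1_-}$ and its derivative versions follow because $\li^{-1}$ preserves Gaussian decay (with a slight loss), together with the explicit fact that $\pk[\m^{-1/2}F_{n+1}]$ is a polynomial in $p$ times $\mathbf{M}^{1/2}$.

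The main obstacle I foresee is not any single step but the bookkeeping: one must verify that the right-hand side $\mathcal{R}_n$ of the $\e^n$-order equation does lie in $\mathcal{N}^{\perp}$ after subtracting the contribution from the five-dimensional macroscopic evolution, so that $\li^{-1}$ can be applied. This is automatic once the three macroscopic equations \eqref{number}, \eqref{moment}, \eqref{energy} are imposed, but the algebra that establishes this ``Fredholm solvability = macroscopic conservation law'' correspondence, and in particular the appearance of the Bessel-function ratios $K_j(\gamma)/(\gamma K_2(\gamma))$ in the coefficients, is the genuinely computational heart of the argument; everything else (linear hyperbolic theory, Maxwellian decay of $\li^{-1}$, Gronwall to get $(1+t)^{n+1}$) is standard.
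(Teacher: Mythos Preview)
Your overall scheme --- invert $\li$ on $\mathcal{N}^\perp$ for the microscopic part, obtain the macroscopic fluid--Maxwell block from moment conditions, close it by linear symmetric hyperbolic theory, then propagate the $(1+t)^{n+1}$ growth by Gronwall and the Maxwellian decay through $\li^{-1}$ --- is exactly the standard argument, and it is what the paper invokes (the paper simply refers to \cite[Appendix~3]{Guo.Xiao2021} and omits all details).

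There is, however, one index slip in your description that is worth correcting, because it reflects the actual inductive structure. You write that testing the $\e^n$-order equation against the collision invariants produces the evolution system \eqref{number}--\eqref{energy} for $(a_{n+1},b_{n+1},c_{n+1})$. It does not: the transport part of the $\e^n$-order equation is $\partial_t F_n+\hat p\cdot\nabla_x F_n+\cdots$, so its moments give $\partial_t$ of the moments of $F_n$, i.e.\ the evolution of $(a_n,b_n,c_n)$, which by the inductive hypothesis is already known. The system \eqref{number}--\eqref{energy} for $(a_{n+1},b_{n+1},c_{n+1})$ comes instead from testing the $\e^{n+1}$-order equation against $\{1,p,p^0\}$: there the collision terms $\c[F_{n+2},F_0]+\c[F_0,F_{n+2}]$ drop out by orthogonality, the transport of $F_{n+1}$ supplies the $\partial_t$ and $\nabla_x$ of its moments, and the microscopic piece $(\ik-\pk)[\m^{-1/2}F_{n+1}]$ --- which you have just computed from \eqref{inverse} --- enters as a known source (the integrals $\int\hat p\,\m^{1/2}(\ik-\pk)[\m^{-1/2}F_{n+1}]\,dp$ etc.\ appearing explicitly in \eqref{number}--\eqref{energy}). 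So the correct order of operations within the step is: first \eqref{inverse} from the $\e^n$ equation, \emph{then} \eqref{number}--\eqref{EM} from the $\e^{n+1}$ equation. With that adjustment your outline is complete.
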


\begin{proof} 
The whole proof follows from analogous arguments as in  \cite[Appendix 3]{Guo.Xiao2021}. We omit the details for brevity.
\end{proof}

\makeatletter
\renewcommand \theequation {%
B.%
\ifnum\c@subsection>\z@\@arabic\c@subsection.%
\fi\@arabic\c@equation} \@addtoreset{equation}{section}
\@addtoreset{equation}{subsection} \makeatother

\section {Expansion of the Relativistic Landau Equation}\label{App=}

In this part, we list our result about the construction and regularity estimates of the coefficients in the Hilbert expansion \eqref{expan}.
The proof can be done in a similar way as that in  \cite[Appendix 3]{Guo.Xiao2021}, so we only record the results.
For any integer $n\in[1, 2k-1]$, we decompose  $\m ^{-\frac{1}{2}}F_{ n}$ as the sum of macroscopic and microscopic parts:
\begin{align}\label{decom=}
\frac{F_n}{\mh }&={\bf P}\left[\m ^{-\frac{1}{2}}F_n\right]+(\ik-\pk)\left[\m ^{-\frac{1}{2}}F_n\right]\\
&=\Big(a_n(t,x)+b_n(t,x)\cdot p+c_n(t,x) p^0\Big)\mh +(\ik-\pk)\left[\m ^{-\frac{1}{2}}F_n\right].\no
\end{align}

\begin{proposition}\label{fn=} For any integer $n\in[0,2k-2]$, assume that $F_i$ have been constructed for all $0\leq i\leq n$. Then  the  microscopic part $(\ik-\pk)\left[\m ^{-\frac{1}{2}}F_{n+1}\right]$ can be written as:
\begin{align}
\begin{aligned}
(\ik-\pk)\Big[\m ^{-\frac{1}{2}}F_{n+1}\Big]=\li^{-1}\Big[-\m ^{-\frac{1}{2}}\Big(\partial_tF_n
+\hat{p}\cdot \nabla_xF_n-\sum_{\substack{i+j=n+1\\i,j\geq1}}\c[F_i,F_j]\Big)\Big].
\end{aligned}
\end{align}
And $a_{n+1}(t,x), b_{n+1}(t,x), c_{n+1}(t,x)$ satisfy  \eqref{moment}, \eqref{moment} and \eqref{energy} by deleting all terms related to the electromagnetic field.
Furthermore, assume $a_{n+1}(0,x), b_{n+1}(0,x), c_{n+1}(0,x)\in H^N$ with $N\geq 1$
are given initial data to the corresponding linear system. Then this
linear system is well-posed in $C^0([0,\infty);H^N)$. Moreover, it holds that for sufficiently large $N$
\begin{align}
&|F_{n+1}|\lesssim \mathbf{M}^{1-},\qquad |\nabla_pF_{n+1}|\lesssim \mathbf{M}^{1-},\nonumber\\
&|\nabla_xF_{n+1}|\lesssim \mathbf{M}^{1-},\qquad |\nabla_x\nabla_pF_{n+1}|\lesssim \mathbf{M}^{1-},\label{growth0=}\\
&|\nabla_x^2F_{n+1}|\lesssim \mathbf{M}^{1-},\qquad |\nabla_x^2\nabla_pF_{n+1}|\lesssim \mathbf{M}^{1-}.\nonumber
\end{align}

\end{proposition}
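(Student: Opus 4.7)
\textbf{Proof proposal for Proposition \ref{fn=}.}
My plan is to proceed by induction on $n$, mirroring the construction in Proposition \ref{fn} for the r-VML system but with all electromagnetic contributions dropped. Having $F_0,\dots,F_n$ in hand, I read off the $\e^{n}$ identity in \eqref{expan2=}, namely
\begin{equation*}
\c[F_0,F_{n+1}]+\c[F_{n+1},F_0]=\partial_t F_n+\hat p\cdot\nabla_x F_n-\sum_{\substack{i+j=n+1\\ i,j\geq 1}}\c[F_i,F_j],
\end{equation*}
and rewrite the left-hand side as $-\m^{1/2}\li[\m^{-1/2}F_{n+1}]$ using \eqref{LAK}. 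Since $\li$ is invertible on the microscopic subspace $(\ik-\pk)L^2_p$ by Lemma \ref{ss 01}, this immediately yields the formula for $(\ik-\pk)[\m^{-1/2}F_{n+1}]$ claimed in the proposition, provided the right-hand side is microscopic (i.e.\ orthogonal to $\mathcal{N}$ in $L^2_p$). That orthogonality is precisely the solvability condition at order $\e^{n+1}$, and it supplies the evolution equations for the macroscopic coefficients $(a_{n+1},b_{n+1},c_{n+1})$.

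To derive those evolution equations I project the $\e^{n+1}$ relation against the collision invariants $1, p, p^0$, integrate by parts, and use the expansion \eqref{decom=}. The resulting system is exactly the r-LAN analogue of \eqref{number}, \eqref{moment}, \eqref{energy}: the principal part is a symmetric hyperbolic system in $(a_{n+1},b_{n+1},c_{n+1})$ whose coefficients are smooth functions of $(n,u,T)$ constructed in Theorem \ref{Eul-exi}, and whose source terms are fluxes of $(\ik-\pk)[\m^{-1/2}F_j]$ for $j\leq n$ that were already built at earlier inductive steps. Because $(n,u,T)\in C^0([0,t_0];H^N)$ with smallness \eqref{semp 3'}, the coefficient matrices are uniformly positive definite (in the sense required by Friedrichs' theory) and the standard $L^2$-based energy method for symmetric hyperbolic systems gives a unique solution in $C^0([0,t_0];H^N)$ once initial data in $H^N$ are prescribed, exactly as in \cite[Appendix 3]{Guo.Xiao2021}.

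With $(a_{n+1},b_{n+1},c_{n+1})$ in hand, I recover $F_{n+1}=\m^{1/2}\pk[\m^{-1/2}F_{n+1}]+\m^{1/2}(\ik-\pk)[\m^{-1/2}F_{n+1}]$ and prove \eqref{growth0=}. The macroscopic piece enjoys $\m^{1-}$ pointwise decay because it is a polynomial in $p$ multiplying $\m^{1/2}$, and the polynomial is absorbed into $\m^{1-}$ by trading off an arbitrarily small power of $\m$. For the microscopic piece, $\li^{-1}$ maps polynomially weighted Gaussians into polynomially weighted Gaussians (with a finite loss of polynomial weight), which is again absorbed into $\m^{1-}$; this uses the explicit expressions \eqref{mathA}--\eqref{mathK} for $\aa,\kk$ together with the Gaussian decay of $\Phi(p,q)\m^{1/2}(q)$ in $q$. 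Spatial derivatives land either on $(n,u,T)$ (which are bounded in $H^N\cap L^\infty$ on $[0,t_0]$ by \eqref{semp 3'}) or on $\m$, producing polynomial prefactors in $p$ that are once more absorbed by $\m^{1-}$; the same holds for mixed $\nabla_x\nabla_p$ derivatives. Since $[0,t_0]$ is a fixed compact interval, the symmetric hyperbolic energy bound and Sobolev embedding give uniform-in-$t$ pointwise estimates with no polynomial $(1+t)$ growth, which is the distinguishing feature of \eqref{growth0=} compared to the r-VML bounds \eqref{growth0}.

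The main technical obstacle, as in the r-VML case, is verifying that the coupled system for $(a_{n+1},b_{n+1},c_{n+1})$ is truly symmetrizable hyperbolic so that propagation of $H^N$ regularity is available; this boils down to identifying the correct symmetrizer from the Bessel-function-dependent coefficients in \eqref{moment} and \eqref{energy}, and then checking that the source terms inherit $H^N$ regularity from the inductive hypothesis. The absence of the Maxwell subsystem and of the Lorentz-force coupling in \eqref{moment} removes the most delicate structural issue encountered in Proposition \ref{fn}, so once the symmetrizer is identified, everything reduces to standard linear hyperbolic theory on the fixed interval $[0,t_0]$, and the proof can be kept brief by appealing to \cite[Appendix 3]{Guo.Xiao2021}.
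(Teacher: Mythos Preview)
Your proposal is correct and follows essentially the same route as the paper: the paper's ``proof'' of Proposition \ref{fn=} is simply a pointer to \cite[Appendix 3]{Guo.Xiao2021} (the same reference invoked for Proposition \ref{fn}), and your outline---extract the microscopic part from the $\e^n$ relation via $\li^{-1}$, obtain the linear symmetric hyperbolic system for $(a_{n+1},b_{n+1},c_{n+1})$ from the solvability condition at the next order, and then derive \eqref{growth0=} by induction using the Gaussian mapping properties of $\li^{-1}$ and the boundedness of the Euler data on $[0,t_0]$---is precisely that argument with the electromagnetic terms removed. One small wording issue: the orthogonality of the right-hand side at order $\e^n$ is the constraint already used to close $(a_n,b_n,c_n)$ at the previous step, not the one producing the equations for $(a_{n+1},b_{n+1},c_{n+1})$; as you correctly state in the following paragraph, the latter come from projecting the $\e^{n+1}$ relation.
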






\bibliographystyle{siam}
\bibliography{Reference}

\end{document}